\def\firstauth#1{}
\def\second#1{}
\def\third#1{}
\begin{document}

\title{Colour-biased Hamilton cycles in dense graphs and random~graphs}

\author[Behague]{Natalie Behague}
\author[Chakraborti]{Debsoumya Chakraborti}
\author[León]{Jared León}

\email{\{natalie.behague, debsoumya.chakraborti, jared.leon\}@warwick.ac.uk}
\address{Mathematics Institute, University of Warwick, Coventry, UK}

\thanks{NB and DC were supported by the European Research Council (ERC) under the European Union Horizon 2020 research and innovation programme (grant agreement No.\ 947978). JL is supported by the Warwick Mathematics Institute Centre for Doctoral Training.}

\begin{abstract}
A classical result of Dirac says that every $n$-vertex graph with minimum degree at least $\frac{n}{2}$ contains a Hamilton cycle. A `discrepancy' version of Dirac's theorem was shown by Balogh--Csaba--Jing--Pluh\'ar, Freschi--Hyde--Lada--Treglown, and Gishboliner--Krivelevich--Michaeli as follows. Every $r$-colouring of the edge set of every $n$-vertex graph with minimum degree at least $(\frac{1}{2} + \frac{1}{2r} + o(1))n$ contains a Hamilton cycle where one of the colours appears at least $(1+o(1))\frac{n}{r}$ times. In this paper, we generalize this result by asymptotically determining the maximum possible value $f_{r,\alpha}(n)$ for every $\alpha \in [\frac{1}{2}, 1]$ such that every $r$-colouring of the edge set of every $n$-vertex graph with minimum degree at least $\alpha n$ contains a Hamilton cycle where one of the colours appears at least $f_{r,\alpha}(n)$ times. In particular, we show that $f_{r,\alpha}(n) = (1-o(1)) \min\set{(2\alpha - 1)n, \frac{2\alpha n}{r}, \frac{2n}{r+1}}$ for every $\alpha\in [\frac{1}{2} + \frac{1}{2r}, 1]$. (It is known that $f_{r,\alpha}(n) = \frac{n}{r}$ for every $\alpha\in [\frac{1}{2},\frac{1}{2} + \frac{1}{2r}]$ whenever $2r$ divides $n$.)

A graph $H$ is called an $\alpha$-residual subgraph of a graph $G$ if $d_H(v)\ge \alpha d_{G[V(H)]}(v)$ for every $v\in V(H)$. Extending Dirac's theorem in the setting of random graphs, Lee and Sudakov showed the following. The Erd\H{o}s--R\'enyi random graph $G(n,p)$, with $p$ above the Hamiltonicity threshold, typically has the property that every $(\frac{1}{2} +o(1))$-residual spanning subgraph contains a Hamilton cycle. Motivated by this, we prove the following random version of our `discrepancy' result. The random graph $G \sim G(n,p)$, with $p$ above the Hamiltonicity threshold, typically satisfies that every $r$-colouring of the edge set of every $\alpha$-residual spanning subgraph of $G$ contains a Hamilton cycle where one of the colours appears at least $f_{r,\alpha}(n)$ times. This is asymptotically optimal and strengthens a result of Gishboliner, Krivelevich, and Michaeli. We also obtain a more general hitting time result. 
\end{abstract}

\maketitle

\section{Introduction}
Determining whether a graph contains a Hamilton cycle (i.e., a cycle that visits every vertex of the graph) is one of the classical NP-complete problems listed in Karp's seminal paper~\cite{Karp1972reducibility}. Therefore, it has been an important direction of research in graph theory to study natural conditions that ensure Hamiltonicity. An early and influential such result was obtained in 1952 by Dirac~\cite{Dirac1952some} who showed that every $n$-vertex graph with minimum degree at least $\frac{n}{2}$ contains a Hamilton cycle. This minimum degree condition is best possible. Inspired by Dirac's theorem, numerous results have appeared in the literature throughout the last few decades (see, e.g., \cite{Bondy1995basic,Bottcher2009proof,Cuckler2009entropy,Cuckler2009hamilton,Krivelevich2014robust}).

\subsection{`Discrepancy' results on Hamilton cycles}
\emph{Discrepancy} is a fundamental notion in mathematics that studies the `irregularity' in a distribution. This notion has found applications in diverse areas such as ergodic theory, number theory, statistics, geometry, and computer science.
In combinatorial discrepancy theory, the central question is to study, for any vertex-colouring of a given hypergraph $\mathcal{H}$ with two colours, existence of an unbalanced hyperedge (i.e., a hyperedge with significantly more vertices in one colour than the other).
In this paper, we study such questions in the context of graphs, where the vertex set of $\mathcal{H}$ is the edge set of some given graph $G$ and the hyperedges of $\mathcal{H}$ correspond to subgraphs of $G$ with a given type. These questions have a rich and influential history dating back to Erd\H{o}s~\cite{Erdos1995discrepancy,Erdos1971imbalances}.

We will consider colourings with two or more colours. An \defin{$r$-edge-colouring} of a graph $G$ refers to an assignment of $r$ colours to the edge set of $G$.
In this paper, we are interested in the following problem. Given graphs $G$ and $H$, determine the largest $t$ such that any $r$-edge-colouring of $G$ contains a copy of $H$ with at least $t$ edges of the same colour. This problem has been recently studied in \cite{Balogh2020on,Freschi2021a,Gishboliner2022discrepancies} when $H$ is a Hamilton cycle or a spanning tree. For Hamilton cycles, the following result is established.
\begin{theorem}[\cite{Balogh2020on,Freschi2021a,Gishboliner2022discrepancies}] \label{thm:colour biased Ham cycle with min degree condition}
    Let $r\ge 2$.
    For all $\epsilon >0$, there exists $\delta >0$ such that for every $r$-edge-colouring of every $n$-vertex graph with minimum degree at least $\left(\frac{1}{2} + \frac{1}{2r} + \epsilon\right)n$, there is a Hamilton cycle with one colour appearing at least $\left(\frac{1}{r} + \delta\right)n$ times. 
\end{theorem}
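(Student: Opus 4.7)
The plan is to combine the multicolour Szemerédi Regularity Lemma with a stability analysis and the absorbing method, reducing the proof to a discrete discrepancy statement on the reduced graph. First I apply the Regularity Lemma (simultaneously refining the $r$ colour classes) with parameter $\epsilon_0 \ll \epsilon$, producing a partition $V_0, V_1, \dots, V_k$ and a reduced graph $R$ on the clusters. Each regular pair of sufficient density has a unique \emph{majority colour}; assigning each edge of $R$ this colour gives an $r$-edge-coloured graph on $k$ vertices, and a standard calculation shows that $R$ inherits a minimum degree of at least $\left(\tfrac{1}{2} + \tfrac{1}{2r} + \tfrac{\epsilon}{2}\right)k$.

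Next I would establish the following reduced-graph discrepancy statement: every $r$-edge-coloured graph on $k$ vertices of minimum degree at least $\left(\tfrac{1}{2} + \tfrac{1}{2r} + \tfrac{\epsilon}{2}\right)k$ admits a Hamilton cycle using some colour on at least $\left(\tfrac{1}{r} + \delta'\right)k$ edges, for some $\delta' = \delta'(\epsilon, r) > 0$. My approach would be a stability dichotomy: either the coloured graph is \emph{far} from every tight extremal configuration, in which case one takes any Hamilton cycle of $R$ (guaranteed by Dirac's theorem) and uses the $\Omega(\epsilon k)$-excess degree to perform repeated edge-swapping operations that increase the count of some fixed colour; or it is close to a tight configuration, whose rigid structure one handles by a direct construction of a biased Hamilton cycle tailored to each family.

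Finally, I lift the biased Hamilton cycle $H_R$ of $R$ to a Hamilton cycle $H$ of $G$ via the standard absorbing-plus-connecting machinery. I reserve a small absorbing structure inside the clusters; for each edge $(V_i,V_j) \in H_R$ with majority colour $c$ I extract a long path through $V_i \cup V_j$ whose edges lie almost entirely in colour $c$ (possible by regularity, since on that pair the density in colour $c$ is at least a $\tfrac{1}{r}$-fraction of the total); I glue these paths together using short connecting paths of possibly arbitrary colour, at a cost of $O(\epsilon_0 n)$ edges; and I close up via the absorber. The colour profile of $H$ then tracks that of $H_R$ up to an additive $O(\epsilon_0 n)$ error, so some colour appears on at least $\left(\tfrac{1}{r} + \delta\right)n$ edges of $H$, where $\delta \ll \delta'$.

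The hard part is the reduced-graph statement used in the second step. The threshold $\left(\tfrac{1}{2} + \tfrac{1}{2r}\right)k$ is exactly tight: there are $r$-edge-coloured constructions at this minimum degree in which every Hamilton cycle uses each colour exactly $k/r$ times. Consequently the proof must precisely identify the (few) extremal configurations and show that the additional $\tfrac{\epsilon}{2} k$ in the minimum degree destroys every one of them, supplying enough flexibility to bias some colour. Carrying out this classification together with the corresponding perturbation/swapping argument is the technically demanding core of the theorem, and it is here that the various existing proofs in~\cite{Balogh2020on,Freschi2021a,Gishboliner2022discrepancies} diverge in their methods.
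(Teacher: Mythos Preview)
Your regularity-lemma framework is the right starting point, but the reduction you propose is essentially non-reductive: your ``reduced-graph discrepancy statement'' in step two is precisely the theorem you are trying to prove, just with $k$ in place of $n$. You acknowledge this yourself by calling it ``the technically demanding core'' and deferring to \cite{Balogh2020on,Freschi2021a,Gishboliner2022discrepancies} for the actual argument. A stability dichotomy plus edge-swapping could in principle be made to work, but as written you have not supplied a proof of the key step, only a plan for one.

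The paper (which recovers \Cref{thm:colour biased Ham cycle with min degree condition} as a special case of \Cref{thm:deterministic main}) sidesteps this difficulty by reducing to a \emph{matching} problem in the reduced graph rather than a Hamilton-cycle problem. After applying the regularity lemma, instead of seeking a colour-biased Hamilton cycle in $R$, the paper finds a large \emph{monochromatic matching} in $R$ via the self-contained Ramsey-type result \Cref{thm:intro large monochromatic matching}: every $r$-edge-coloured graph on $k$ vertices with minimum degree $d$ contains a monochromatic matching of size $\min\{d/r,\,(k-1)/(r+1)\}$. Each edge of this matching lifts, via regularity and \Cref{lem:almost Hamilton path}, to a long monochromatic path between the corresponding clusters, yielding a monochromatic linear forest in $G$ with a bounded number of components and roughly $\min\{2\alpha n/r,\, 2n/(r+1)\}$ edges. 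Finally, P\'osa's classical \Cref{lem:posa-force-hamilton} (any linear forest of size at most $2t$ in a graph of minimum degree $\tfrac{n}{2}+t$ can be completed to a Hamilton cycle) absorbs this forest into a Hamilton cycle, with no need for a bespoke absorbing/connecting scheme.

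The conceptual gain is that matchings in the reduced graph are far easier to analyse than Hamilton cycles: the matching result admits a short direct proof via the Tutte--Berge formula (\Cref{sec:mono matchings}), with no extremal classification or stability analysis, and the tight threshold $\tfrac12+\tfrac1{2r}$ falls out automatically from the interplay between the matching bound and P\'osa's lemma.
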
 

This was shown to be best possible by constructing $n$-vertex graphs with minimum degree at least $\left(\frac{1}{2} + \frac{1}{2r}\right)n$ with an $r$-edge-colouring of it such that there is no Hamilton cycle where some colour appears more than $\frac{n}{r}$ times. In this context, there is a recent stability result~\cite{Chen2025optimal}. In this paper, we prove the following result, where we assume a more general condition on the minimum degree.
\begin{restatable}{theorem}{deterministic}\label{thm:deterministic main}
  Let $r\ge 2$ and $\alpha\in [\frac{1}{2} + \frac{1}{2r}, 1]$. For all $\epsilon >0$, there exists $n_\eps$ such that the following holds for every $n\ge n_\eps$. 
  For every $r$-edge-colouring of every $n$-vertex graph with minimum degree at least $\alpha n$, there is a Hamilton cycle with one colour appearing at least $(1-\eps) \min\set{(2\alpha - 1)n, \frac{2\alpha n}{r}, \frac{2n}{r+1}}$ times.
\end{restatable}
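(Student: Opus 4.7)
The plan is to apply the absorbing method to build a Hamilton cycle biased toward a chosen colour. Let $G$ be an $n$-vertex graph with $\delta(G)\ge \alpha n$ and $\chi$ an $r$-edge-colouring of $G$. By averaging over the colour classes, some colour is used on at least $e(G)/r$ edges; call it \emph{red}, and write $G_R$ for the red subgraph. A cleaning step passes to a $(1-o(1))n$-vertex subset on which $G_R$ has an almost-regular degree sequence. Red will be the dominant colour in our final Hamilton cycle, and since the target bound is symmetric in the colours we may in the end run the whole argument with each colour as the candidate ``red'' and retain the best output.

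Using that $\alpha>\tfrac12$, I would build an absorbing path $P_A$ of length $o(n)$ with the standard property: any $S\subseteq V(G)\setminus V(P_A)$ of size $o(n)$ can be merged with $P_A$ into a Hamilton path of $V(P_A)\cup S$ sharing $P_A$'s endpoints. Weighting the absorbing gadgets toward red edges, we may ensure $P_A$ has $o(n)$ non-red edges. The crux is then to find on $V(G)\setminus V(P_A)$ a long path with at least $(1-\eps)\min\set{(2\alpha-1)n,\tfrac{2\alpha n}{r},\tfrac{2n}{r+1}}$ red edges; the absorber closes it into the desired Hamilton cycle.

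To find the long red-rich path, I would start from any Hamilton cycle (given by Dirac's theorem since $\alpha>\tfrac12$) and apply P\'osa-style rotations that each swap a non-red edge of the cycle for a red one. The three bounds in the minimum correspond to three distinct obstructions: $(2\alpha-1)n$ is the ``excess degree'' above the Dirac threshold, bounding the reservoir of edges that can be swapped into the cycle; $\tfrac{2\alpha n}{r}$ is twice the average red-degree, limiting the total red degree attainable along the cycle via a fractional-matching-style count; and $\tfrac{2n}{r+1}$ is an $\alpha$-independent pigeonhole bound arising from a partition of the cycle into $r+1$ structural arcs, matching the lower-bound construction in the companion papers.

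The main obstacle will be the $\tfrac{2n}{r+1}$ regime. Because this bound does not strengthen with $\alpha$, the rotation-count argument alone cannot be tight; one must instead analyse all $r$ colours simultaneously and identify the global $(r+1)$-partite-like structure witnessed by the matching construction. I would attempt a stability argument here: either a short sequence of rotations directly produces a Hamilton cycle with $(1-\eps)\tfrac{2n}{r+1}$ red edges, or the colouring is forced into a near-extremal structure in which permuting the roles of the colours lets the parallel run with a different colour as ``red'' succeed. Combining the three regime arguments into a single uniform bound, without losing constants between regimes, will be the most delicate bookkeeping step of the proof.
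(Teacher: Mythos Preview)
Your plan has a genuine gap at its core: the step ``apply P\'osa-style rotations that each swap a non-red edge of the cycle for a red one'' is not a well-defined operation. A P\'osa rotation changes the endpoint of a path by breaking one edge and inserting another incident to the old endpoint; it gives no control over which colours get swapped in or out, and there is no known mechanism to steer rotations toward a target colour class. The heuristics you offer for the three regimes (``excess degree'', ``twice the average red-degree'', ``$(r+1)$-partite structure'') are plausible intuitions for why the bounds are tight, but none of them is an argument that some sequence of rotations actually \emph{reaches} those bounds. You explicitly concede that the $\tfrac{2n}{r+1}$ regime would require an unspecified stability analysis, and the other two regimes are no more complete: a fractional-matching count on red degrees bounds the total red degree along the cycle, not the number of red edges you can force into a single Hamilton cycle. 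Running the argument in parallel for each colour does not rescue this, because you never establish that the rotation process succeeds for \emph{any} fixed colour.

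The paper takes a completely different route that sidesteps all of this. It first proves, as an independent Ramsey-type result, that every $r$-edge-coloured graph on $n$ vertices with minimum degree $d$ contains a monochromatic matching of size at least $\min\{d/r,\,(n-1)/(r+1)\}$; this is where the two nontrivial terms in the minimum originate, and the proof is a careful Tutte--Berge argument rather than anything rotation-based. It then applies a multicolour sparse regularity lemma to the coloured graph $H$, obtains a reduced graph with minimum degree roughly $\alpha t$, and invokes the matching theorem \emph{on the reduced graph} to find a large monochromatic matching there. Each matching edge lifts (via a standard bipartite density lemma) to a long monochromatic path in $H$, yielding a monochromatic linear forest with $O(1)$ components and roughly $\min\{\tfrac{2\alpha n}{r},\tfrac{2n}{r+1}\}$ edges. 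Finally, the deterministic result is finished not by rotations or absorbers but by P\'osa's 1963 lemma: if $\delta(H)\ge \tfrac{n}{2}+t$ then any linear forest with at most $2t$ edges extends to a Hamilton cycle containing it. Since $\delta(H)\ge \alpha n=\tfrac{n}{2}+(2\alpha-1)\tfrac{n}{2}$, this is exactly what produces the third term $(2\alpha-1)n$ in the minimum. No absorbing, no colour-swapping, no stability analysis.
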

The above result recovers \Cref{thm:colour biased Ham cycle with min degree condition}. We also show that \Cref{thm:deterministic main} is best possible up to the $\eps$ error term. We provide constructions supporting this in the next section. We remark that the analogous results for perfect matchings are easy to derive for all the results mentioned in the introduction. Here, we state the one analogous to \Cref{thm:deterministic main}. 
\begin{corollary}\label{cor:main for perfect matching}
  Let $r\ge 2$ and $\alpha\in [\frac{1}{2} + \frac{1}{2r}, 1]$.  For all $\epsilon >0$, there exists $n_\eps$ such that the following holds for every even $n\ge n_\eps$. 
  For every $r$-edge-colouring of every $n$-vertex graph with minimum degree at least $\alpha n$, there is a perfect matching with one colour appearing at least $(1-\eps) \min\set{(2\alpha - 1)\frac{n}{2}, \frac{\alpha n}{r}, \frac{n}{r+1}}$ times.
\end{corollary}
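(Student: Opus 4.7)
The plan is to deduce this corollary directly from \Cref{thm:deterministic main} via a trivial pigeonhole argument on the two canonical perfect matchings of a Hamilton cycle.

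Given $\eps > 0$, $r \ge 2$, and $\alpha \in [\frac12 + \frac{1}{2r}, 1]$, I would apply \Cref{thm:deterministic main} with parameter $\eps$ to obtain an $n_\eps$ such that for every $r$-edge-coloured $n$-vertex graph $G$ with $\delta(G) \ge \alpha n$ (and $n \ge n_\eps$), there is a Hamilton cycle $C$ in which some colour, say colour $1$, appears at least
\[
  (1-\eps)\min\set{(2\alpha - 1)n,\, \tfrac{2\alpha n}{r},\, \tfrac{2n}{r+1}}
\]
times. Since $n$ is even, the edges of $C$ decompose into two edge-disjoint perfect matchings $M_1$ and $M_2$ obtained by taking alternate edges around $C$. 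By pigeonhole, at least one of $M_1, M_2$ contains at least half of the colour-$1$ edges of $C$, i.e.\ at least
\[
  (1-\eps)\min\set{(2\alpha - 1)\tfrac{n}{2},\, \tfrac{\alpha n}{r},\, \tfrac{n}{r+1}}
\]
edges of colour $1$. This is a perfect matching witnessing the required bound.

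There is essentially no obstacle: the whole content is the observation that a Hamilton cycle on an even number of vertices splits into two perfect matchings, so a colour that is abundant on $C$ must be abundant on one of the halves, and the stated perfect-matching bound is precisely half of the Hamilton-cycle bound from \Cref{thm:deterministic main}. The only mild point to record is that the hypotheses $r \ge 2$ and $\alpha \ge \frac12 + \frac{1}{2r}$ are exactly those needed to invoke \Cref{thm:deterministic main}, and that we take the same $n_\eps$ supplied by that theorem.
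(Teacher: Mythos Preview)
Your proof is correct and follows exactly the same approach as the paper: apply \Cref{thm:deterministic main} to obtain a colour-biased Hamilton cycle, split it into two perfect matchings by taking alternate edges, and use pigeonhole to find one matching with at least half of the abundant colour.
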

\begin{proof}
    By applying \Cref{thm:deterministic main}, there exists a Hamilton cycle with one colour appearing at least $k:= (1-\eps) \min\set{(2\alpha - 1)n, \frac{2\alpha n}{r}, \frac{2n}{r+1}}$ times. Fix such a Hamilton cycle and say, without loss of generality, that $k$ of its edges are blue. Now, consider the two perfect matchings obtained by taking alternate edges of the cycle. One of these perfect matchings contains at least half of the blue edges, i.e., $\frac{k}{2}$ blue edges. This finishes the proof.
\end{proof}

\subsection{Resilience of Hamilton cycles in random graphs}
Studying Hamiltonicity has been an active area of research in the field of random graphs. In this paper, we will focus on the Erd\H{o}s--R\'enyi random graph $G(n,p)$ on $n$ vertices where every possible edge is present independently with probability $p$.
The study of Hamiltonicity in the random graph $G(n,p)$ was pioneered by the breakthrough work of P\'osa~\cite{Posa1976Hamiltonian}, who introduced the versatile \emph{P\'osa's rotation-extension technique}, and was subsequently advanced by Kor\v{s}unov~\cite{Korsunov1977solution}. Their results were sharpened by Bollob\'as~\cite{Bollobas1984the} and Koml\'os--Szemer\'edi~\cite{Komlos1983limit} who proved that the threshold for $G(n,p)$ to contain a Hamilton cycle is at $p=\frac{\log{n} + \log\log{n}}{n}$.
A striking result in this area says that the `bottleneck' for the random graph to contain a Hamilton cycle is having minimum degree $2$. This concept is made precise in the hitting time results of Ajtai--Koml\'os--Szemer\'edi~\cite{Ajtai1985First} and Bollob\'as~\cite{Bollobas1984the}. For more on this, see~\Cref{subsec:hitting time}. For a nice introduction to this area along with shorter proofs of many of these results, see the survey~\cite{Krivelevich2015Hamiltonicity}. For other related problems and a historical overview of this area, we suggest the readers to consult the survey~\cite{Frieze2019hamilton}.

One way to view Dirac's theorem is that if one starts with a complete graph on $n$ vertices and deletes fewer than half of the edges incident to every vertex, then the residual graph contains a Hamilton cycle. Such a phenomenon was also shown to be true for random graphs above the Hamiltonicity threshold by Lee and Sudakov~\cite{Lee2012Dirac}. Sudakov and Vu~\cite{Sudakov2008local} initiated the systematic study of such phenomena in random graphs using the terminology of `resilience' of a given graph property. For some results on this topic, see, e.g.~\cite{Allen2020the,Alon2010increasing,Balogh2011local,Bottcher2013almost,Conlon2016combinatorial,Haxell1995turan,Haxell1996turan,Huang2012bandwidth,Lee2012Dirac,Schacht2016extremal}. 

There are two main types of resilience that are studied: (i) global resilience and (ii) local resilience. In this paper, we focus on the latter and say a graph $G$ is \defin{$\alpha$-resilient} with respect to a given property $\mathcal{P}$ if for every subgraph $H$ of $G$ satisfying $d_H(v)\le \alpha d_G(v)$ for each $v\in V(G)$, the graph $G-H$ has the property $\mathcal{P}$. In our paper, the property we are interested in is Hamiltonicity. Lee and Sudakov~\cite{Lee2012Dirac} showed that the random graph $G(n,p)$ with $p$ above the Hamiltonicity threshold is $(1/2-o(1))$-resiliently Hamiltonian w.h.p.\footnote{We say an event $A_n$ occurs \defin{with high probability} (w.h.p., in short) if $\lim_{n\rightarrow \infty} \Prob{A_n} = 1$.}, which is best possible up to the $o(1)$ term. Here and everywhere else in this paper, $o(1)$ (respectively $\omega(1)$) denotes a function of $n$ that goes to zero (respectively infinity) as $n$ tends to infinity. 
The above result can be stated in the following equivalent form. 
Following literature, a subgraph $H$ of a graph $G$ is called \defin{$\alpha$-residual} if $d_H(v) \ge \alpha d_{G[V(H)]}(v)$ for all vertices $v\in V(H)$. A subgraph $H$ of a graph $G$ is called \defin{spanning} if $V(H)=V(G)$.

\begin{theorem}[\cite{Lee2012Dirac, Montgomery2019resilient,Nenadov2019Resilience}] \label{thm:Dirac theorem in random graphs}
    Let $\epsilon >0$. If $p =  p(n) \ge \frac{\log{n} + \log\log{n} + \omega(1)}{n}$ and $G \sim G(n,p)$, then \whp every $\left(\frac{1}{2} + \epsilon\right)$-residual spanning subgraph of $G$ contains a Hamilton cycle. 
\end{theorem}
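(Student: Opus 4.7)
The plan is to combine P\'osa's rotation-extension technique with a two-round exposure (``sprinkling'') argument. Write $G \sim G(n,p)$ as $G_1 \cup G_2$ with independent $G_1 \sim G(n, p_1)$ and $G_2 \sim G(n, p_2)$ chosen so that $(1 - p_1)(1 - p_2) = 1 - p$, where $p_1$ still sits above the Hamiltonicity threshold in the same asymptotic sense as $p$, and $p_2 \cdot \binom{n}{2} = \omega(1)$ provides a sparse reservoir of ``sprinkled'' edges that are revealed only at the very end to close a long path into a cycle. The adversary picks the residual spanning subgraph $H$ after seeing both rounds, so the key expansion properties of $G_1$ that I rely on must be \emph{deletion-robust}: they need to survive every $(\tfrac{1}{2}+\epsilon)$-residual thinning.

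The central pseudorandom property I would establish is: w.h.p.\ every $(\tfrac{1}{2}+\epsilon)$-residual spanning subgraph $H_1$ of $G_1$ satisfies (a) $|N_{H_1}(S)| \ge 2|S|$ for every $S \subseteq V(G)$ with $|S| \le cn / \log n$ for some constant $c = c(\epsilon) > 0$, and (b) for every pair of disjoint sets $A, B$ with $|A|, |B| \ge \eta n$ there is at least one edge of $H_1$ between $A$ and $B$. Property (b) follows from a union bound: $e_{G_1}(A,B) = \Theta(n^2 p_1)$ while the adversary removes at most a $(\tfrac{1}{2}-\epsilon)$-fraction of the edges at each vertex, so a surplus of at least $\epsilon n^2 p_1$ edges survives. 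Property (a) is the heart of the matter and uses that, for typical small sets $S$ in $G(n,p_1)$, almost all vertices of $N_{G_1}(S) \setminus S$ have exactly one $G_1$-edge into $S$, leaving the adversary no room to prune $N(S)$ below $2|S|$ under the $\alpha$-residual constraint. Given (a) and (b), a standard P\'osa rotation-extension analysis on $H_1$ produces either a Hamilton cycle directly or a Hamilton path together with a family of $\Omega(n^2)$ ``booster'' endpoint pairs whose insertion would close the cycle; exposing $G_2$ at the very end, each booster appears independently with probability $p_2$, so w.h.p.\ at least one lies in $H$ and yields the desired Hamilton cycle.

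The main obstacle is that, near the Hamiltonicity threshold, $G(n,p)$ contains vertices of bounded degree (as low as $2$): at such a vertex $v$, the residual condition only forces $d_H(v) \ge \lceil (\tfrac{1}{2}+\epsilon) d_G(v) \rceil$, which may equal $2$, giving the adversary essentially full local control and breaking the small-set expansion (a) if applied naively. I would address this by an \emph{absorption}-style separation step. Using standard facts that the low-degree vertices in $G(n,p)$ at this density w.h.p.\ form an independent set whose pairwise distances in $G$ are $\omega(1)$, I first run the rotation-extension argument on the high-degree core to obtain a Hamilton cycle there, and then locally reroute the cycle through each low-degree vertex using its two forced neighbors in $H$. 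Arranging the core rotations so that the second neighborhoods of the low-degree vertices are never disturbed, so that the insertion step is always feasible, is the most delicate technical point I expect to face.
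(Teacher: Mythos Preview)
The paper does not prove this theorem: it is quoted verbatim as a known result from \cite{Lee2012Dirac, Montgomery2019resilient, Nenadov2019Resilience}, with no proof given, so there is no in-paper argument to compare your proposal against. Nonetheless, your sketch has a genuine gap worth flagging.

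The sprinkling step, as written, does not survive the adversary. You correctly note that the adversary chooses $H$ after seeing all of $G = G_1 \cup G_2$, and you draw the right conclusion for the expansion properties (they must hold for every residual subgraph). But in the closing step you write that ``each booster appears independently with probability $p_2$, so w.h.p.\ at least one lies in $H$''---this conflates $G_2$ with $H$. A booster edge present in $G_2$ need not survive into $H$: if $p_2 = o(p)$ (as it must be if $p_1$ is to remain above the Hamiltonicity threshold), then $G_2$ contributes an $o(1)$-fraction of every vertex's $G$-degree, and the adversary can legally strip out \emph{every} edge of $G_2$ while staying $(\tfrac{1}{2}+\epsilon)$-residual in $G$. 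The independence you hope to exploit is therefore illusory once the adversary acts. A related issue is that $H_1 := H \cap G_1$ is not in general $(\tfrac{1}{2}+\epsilon)$-residual in $G_1$, so your properties (a) and (b), phrased for residual subgraphs of $G_1$, are not directly applicable to the object you actually need.

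The fix used in the cited references---and the one this paper adopts for its own, more general, \Cref{thm:main result of 7th section}---is to drop sprinkling and argue directly that $H$ itself contains enough boosters: P\'osa rotation on a sparse expander inside $H$ yields $\Omega(n)$ endpoints $v$, each with $(\tfrac{1}{2}+\Omega(1))n$ partners $u$, and it is precisely the residual condition $d_H(v) \ge (\tfrac{1}{2}+\epsilon) d_{G}(v)$ that forces one of those pairs to be an $H$-edge (cf.\ \Cref{lem:find-linear-v-linear-u-many-e-boost,lem:many-boosters-in-residual}). Your absorption idea for the low-degree vertices is, by contrast, on the right track and close to how the threshold case is handled in \cite{Montgomery2019resilient,Nenadov2019Resilience}.
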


We remark that a hitting time version of the above result was obtained independently by Montgomery~\cite{Montgomery2019resilient} and Nenadov--Steger--Truji\'c~\cite{Nenadov2019Resilience} -- see Section~\ref{subsec:hitting time} for more detail on this and other hitting time results. As noted in the concluding remarks of~\cite{Krivelevich2014robust}, we have the following easy consequence of \Cref{thm:Dirac theorem in random graphs}. For $0\le p\le 1$ and a graph $G$, we denote by $G(p)$ the random subgraph of $G$ obtained from keeping every edge of $G$ independently with probability $p$.

\begin{corollary}\label{cor:Lee-Sudakov}
    For all $\epsilon >0$, there exists $C >0$ such that if $p =  p(n) \ge \frac{C\log{n}}{n}$ and $G$ is an $n$-vertex graph with minimum degree at least $\left(\frac{1}{2} + \epsilon\right)n$, then \whp the random graph $G(p)$ contains a Hamilton cycle. 
\end{corollary}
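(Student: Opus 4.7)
The plan is to couple $G(p)$ with an intersection $G \cap G'$, where $G' \sim G(n,p)$ is sampled independently, and then apply \Cref{thm:Dirac theorem in random graphs} to $G'$. Since each edge of $G$ is retained independently with probability $p$ in either construction, $G \cap G'$ and $G(p)$ have the same distribution, so it will suffice to show that w.h.p.\ $G \cap G'$ contains a Hamilton cycle. The strategy is to exhibit $G \cap G'$ as a $(\tfrac12 + \tfrac{\epsilon}{2})$-residual spanning subgraph of $G'$, making \Cref{thm:Dirac theorem in random graphs} directly applicable.

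To verify the residual condition, I would fix $v \in V(G)$ and split $V(G) \setminus \{v\}$ into $A_v := N_G(v)$ (with $|A_v| \ge (\tfrac12 + \epsilon)n$) and $B_v := V(G) \setminus (\{v\} \cup A_v)$ (with $|B_v| \le (\tfrac12 - \epsilon)n$). Letting $X_v$ and $Y_v$ denote the number of edges of $G'$ from $v$ to $A_v$ and to $B_v$ respectively, we have $d_{G \cap G'}(v) = X_v$ and $d_{G'}(v) = X_v + Y_v$. A standard Chernoff bound together with a union bound over $v \in V(G)$ then yields, for any prescribed $\delta > 0$ and $C = C(\delta)$ large enough, that w.h.p.\ every vertex satisfies $X_v \ge (1-\delta) p |A_v|$ and $Y_v \le (1+\delta) p |B_v|$. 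A brief algebraic check then shows that these inequalities force $X_v / (X_v + Y_v) \ge \tfrac12 + \tfrac{\epsilon}{2}$, once $\delta$ is chosen sufficiently small in terms of $\epsilon$.

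Finally, by taking $C$ large enough that $p$ also lies above the Hamiltonicity threshold $\frac{\log n + \log \log n + \omega(1)}{n}$, I can apply \Cref{thm:Dirac theorem in random graphs} to $G'$ with $\epsilon/2$ in place of $\epsilon$ to conclude that w.h.p.\ every $(\tfrac12 + \tfrac{\epsilon}{2})$-residual spanning subgraph of $G'$ is Hamiltonian. Intersecting this event with the event of the previous paragraph shows that $G \cap G'$ is Hamiltonian w.h.p., completing the proof. I do not expect any serious obstacle here; the only point requiring care is to order the constants $\epsilon \to \delta \to C$ so that a single $C$ simultaneously handles the Chernoff concentration and the threshold hypothesis of \Cref{thm:Dirac theorem in random graphs}.
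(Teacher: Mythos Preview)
Your proposal is correct and matches the paper's approach: the paper does not prove this corollary explicitly, but its proof of the more general \Cref{cor:G(p)} proceeds by exactly the same coupling of $G(p)$ with $G\cap G'$ for $G'\sim G(n,p)$, followed by Chernoff bounds to verify the residual condition and an appeal to \Cref{thm:Dirac theorem in random graphs}. One minor technical point: the Chernoff bound on $Y_v$ as you state it need not give $o(1/n)$ failure probability when $|B_v|p=o(\log n)$; the paper sidesteps this by bounding $d_{G'}(v)\le(1+\delta)np$ directly rather than bounding $Y_v$ separately, which is a cleaner route to the same conclusion.
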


Krivelevich, Lee, and Sudakov~\cite{Krivelevich2014robust} showed that $\epsilon$ can be taken to be zero in the above result.

\subsection{Resilience meets discrepancy}
Gishboliner, Krivelevich, and Michaeli~\cite{Gishboliner2021colourbiased} considered the above problem of finding colour-biased Hamilton cycles in random graphs and showed the following. 

\begin{theorem}[\cite{Gishboliner2021colourbiased}] \label{thm:colour biased Ham cycle in random graph}
    Let $r\ge 2$ and $\eps >0$. 
    If $p =  p(n) \ge \frac{\log{n} + \log\log{n} + \omega(1)}{n}$ and $G \sim G(n,p)$, then \whp for every $r$-edge-colouring of $G$, there is a Hamilton cycle with one colour appearing at least $\left(\frac{2}{r+1} - \eps\right)n$ times. 
\end{theorem}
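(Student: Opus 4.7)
The bound $\tfrac{2n}{r+1}$ coincides with the $\alpha = 1$ case of \Cref{thm:deterministic main}, so the heuristic is that a random graph above the Hamiltonicity threshold should behave, for the colour-biased problem, like a graph of near-maximum density. My plan has three stages: sprinkle the randomness, build a long path heavily weighted in one colour in the first sample, and close it up using the remaining edges.

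\smallskip

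\emph{Sprinkling.} Write $p = p_1 + p_2 - p_1 p_2$ with $p_1 \ge \tfrac{\log n + \log\log n + \omega(1)}{n}$ still above the Hamiltonicity threshold and $p_2$ small but satisfying $p_2 n \gg \log n$. Realise $G = G_1 \cup G_2$ with the $G_i$ independent; the adversary's colouring applies to all of $G$, but the main combinatorial work will be done inside $G_1$, with $G_2$ kept in reserve for the closure step.

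\smallskip

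\emph{A long colour-biased path in $G_1$.} By pigeonhole some colour $c$ carries at least a $1/r$-fraction of $E(G_1)$. I would combine standard P\'osa expansion (guaranteed in $G_1$ above the Hamiltonicity threshold) with a colour-aware rotation/extension to construct a path $P \subseteq G_1$ of length $(1-o(1))n$ containing at least $\bigl(\tfrac{2}{r+1} - \tfrac{\eps}{2}\bigr)n$ edges of colour $c$. One concrete route is to first extract from the colour-$c$ subgraph a union of vertex-disjoint paths and cycles covering roughly $\tfrac{2n}{r+1}$ edges (a ``target structure'') and then run P\'osa rotation/extension in $G_1$ on a path built around this structure, paralleling the deterministic proof of \Cref{thm:deterministic main}. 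A more direct alternative is a greedy rotation scheme: at each step, among the many path-endpoints accessible through rotations, locate one incident to a colour-$c$ edge outside the current path and swap it in.

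\smallskip

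\emph{Closing and the main obstacle.} Finally, use $G_2$ together with unused boosters of $G_1$ to absorb the $o(n)$ vertices missing from $P$ and close it into a Hamilton cycle at a cost of only $o(n)$ edges, via the absorbing-path machinery already underlying \Cref{thm:Dirac theorem in random graphs}. The hard part is the biased rotation/extension: the colour-$c$ subgraph has minimum degree no better than $\omega(1)$, so a P\'osa rotation can both create and destroy colour-$c$ edges and the bias is not monotone. I expect the cleanest resolution is the target-structure variant, where rotations need only preserve a preselected colour-$c$ substructure rather than maximise a dynamically changing quantity.
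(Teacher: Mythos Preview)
Your outline has the right high-level shape, but the central step---producing a monochromatic linear structure with $\sim \tfrac{2n}{r+1}$ edges---is asserted rather than argued, and this is precisely where all the difficulty sits. You pick a colour $c$ by pigeonhole on edge count and then claim the colour-$c$ subgraph of $G_1$ contains a union of vertex-disjoint paths and cycles of that size. But the adversary colours after seeing $G$, the colour-$c$ subgraph has average degree only $\Theta((\log n)/r)$, and there is no reason the \emph{majority} colour is the one supporting a long linear forest; nothing in your sketch forces the sharp constant $\tfrac{2}{r+1}$. Your own second route (greedy colour-aware rotation) you already flag as non-monotone and hence unworkable as stated, so neither alternative is a proof.

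In the paper this theorem is recovered as the $\alpha=1$ case of \Cref{thm:main}, and the monochromatic linear forest is produced not by a direct argument inside one colour class but via the sparse multicolour regularity lemma: one passes to a reduced graph on $t=O(1)$ clusters, applies the Ramsey-type matching result \Cref{thm:intro large monochromatic matching} there to obtain a monochromatic matching of size $\sim t/(r+1)$, and lifts each matching edge back to a path of length $\sim 2n/t$ between the corresponding regular pair. The factor $2$ in $\tfrac{2n}{r+1}$ appears exactly in this lifting step---it is not a property of any single colour class of $G$---and this is the idea your sketch is missing. (The deterministic proof of \Cref{thm:deterministic main} you invoke as a parallel already uses this regularity route, not a bare P\'osa argument.) A secondary gap: your closure step must avoid dismantling the monochromatic paths during rotation--extension; the paper handles this by encoding the linear forest as an auxiliary matching $\mathcal{M}$ and developing an $\mathcal{M}$-respecting P\'osa rotation (\Cref{sec:resilience of Hamiltonicity}), which your sprinkling-plus-absorption description does not address.
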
 
This was also shown to be best possible up to the $\eps$ term -- for a matching construction, see the next section. We combine the preceding lines of research to obtain the following stronger result. 
\begin{restatable}{theorem}{main} \label{thm:main}
    Let $r\ge 2$, $\alpha\in [\frac{1}{2} + \frac{1}{2r}, 1]$, and $\epsilon >0$. If $p =  p(n) \ge \frac{\log{n} + \log\log{n} + \omega(1)}{n}$ and $G \sim G(n,p)$, then \whp for every $r$-edge-colouring of every $\alpha$-residual spanning subgraph of $G$, there is a Hamilton cycle with one colour appearing at least $(1-\eps) \min\set{(2\alpha - 1)n, \frac{2\alpha n}{r}, \frac{2n}{r+1}}$ times. 
\end{restatable}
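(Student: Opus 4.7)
The plan is to combine the deterministic discrepancy machinery behind \Cref{thm:deterministic main} with the standard toolkit for Hamiltonicity in near-threshold random graphs. We set up a two-round exposure $p = p_1 + p_2$ with $p_2$ a tiny ``booster'' layer, and in the first round we fix an arbitrary $\alpha$-residual spanning subgraph $H\subseteq G_1\sim G(n,p_1)$ together with its $r$-edge-colouring $H = H_1\cup\dots\cup H_r$. Here $H$ has minimum degree only of order $\alpha np_1 = \Theta(\log n)$, so we cannot apply \Cref{thm:deterministic main} to $H$ directly; instead we port its proof ideas over to a form that exploits the quasi-random properties of $G_1$ (degree regularity, near-uniform common neighbourhoods on linear sets, expansion of vertex subsets) that hold \whp.

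First, a colour-selection and reservoir step. A double counting over colour degrees in $H$ identifies a colour, say blue, such that many vertices $v$ have blue-degree at least $\alpha d_H(v)/r$; this is our target colour. Using $G_1$-randomness, we build a short flexible absorbing path $P_0$ that can absorb any small leftover vertex set into the eventual Hamilton cycle. Second, we run a colour-biased P\'osa rotation-extension, starting from $P_0$, to produce a long path $P$ carrying at least $(1-\eps)f$ blue edges, where $f:=\min\{(2\alpha-1)n,\tfrac{2\alpha n}{r},\tfrac{2n}{r+1}\}$. At each step we replace an endpoint via a rotation chosen so that the swap preserves (or increases) the running blue count, using the fact that both $G_1$ and the blue subgraph expand well from any linear-size endpoint pool. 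Finally, we close $P$ into a Hamilton cycle using the booster edges from $G_2$ together with the absorbing structure from $P_0$, in the spirit of the Ajtai--Koml\'os--Szemer\'edi and Bollob\'as hitting-time proofs, and observe that this closure alters the blue count by only $o(n)$.

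The main obstacle is \textbf{coupling rotation-extension with colour bookkeeping in the logarithmic-degree regime}. In the deterministic setting of \Cref{thm:deterministic main} each vertex has $\alpha n$ neighbours, so blue-rich rotation endpoints are abundant and one can afford to discard a large fraction at each swap; in our random setting each vertex of $H$ has only $\Theta(\log n)$ neighbours, and a single careless rotation can destroy a constant fraction of the nearby blue edges. To get around this, we carry out the rotations on a carefully grown ``rotation tree'' that tracks both the endpoint pool and a running blue-count, and argue via the concentration of monochromatic degrees on random vertex subsets that a $(1-o(1))$-fraction of the branches keep the blue count within $o(n)$ of the target. The three terms in $f$ correspond to three different extremal configurations and each requires a tailored sub-argument: $(2\alpha-1)n$ is witnessed by common-neighbourhood rotations, $\tfrac{2\alpha n}{r}$ by a global averaging of colour degrees, and $\tfrac{2n}{r+1}$ by the construction matching \Cref{thm:colour biased Ham cycle in random graph}. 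A hitting-time strengthening should then follow by plugging the hitting-time version of \Cref{thm:Dirac theorem in random graphs} into the closure step.
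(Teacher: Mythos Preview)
Your plan has a genuine structural gap: the two-round exposure does not interact correctly with the adversarial nature of the statement. The graph $H$ and its colouring are chosen by an adversary \emph{after} seeing all of $G\sim G(n,p)$, so you cannot first expose $G_1$, fix $H\subseteq G_1$, and then reveal fresh ``booster'' edges from $G_2$ to close the cycle. Any edge of $G_2$ may have been deleted by the adversary when forming $H$, so $G_2$ provides no reserve of guaranteed-present edges. For the same reason, the closure step ``in the spirit of the hitting-time proofs'' does not go through here: those proofs use randomness that is independent of the structure being completed, and no such independence is available once $H$ is adversarial over all of $G$. This is precisely why resilience results at the threshold (e.g.\ \Cref{thm:Dirac theorem in random graphs,thm:hittimg time montgomery}) require substantially more work than a sprinkling argument.

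The second gap is that ``colour-biased P\'osa rotation-extension'' is not a known technique, and your sketch does not indicate how to make it one. A single rotation replaces one edge by another; there is no mechanism forcing the new edge to be blue, and the constructions in \Cref{sec:optimality} show that the ``right'' colour to aim for can depend globally on the colouring in a way that a local degree-count cannot detect. Your three tailored sub-arguments for the three terms of $f$ are each left as a promise. The paper avoids this entirely by decoupling the colour problem from the Hamiltonicity problem: it first applies the sparse regularity lemma to $H$ and uses \Cref{thm:intro large monochromatic matching} on the reduced graph to extract a monochromatic linear forest of size $(1-\eps)\min\{\tfrac{2\alpha n}{r},\tfrac{2n}{r+1}\}$ with $O(1)$ components (\Cref{cor:random graph large monochromatic linear forest}); it then runs a clean-up (\Cref{thm:clean-up_main}) so that the leftover set $U$ has the right residual-degree properties; finally it encodes the paths as a matching $\mathcal{M}$ and proves an $\mathcal{M}$-respecting analogue of resilient Hamiltonicity on $H[U]$ (\Cref{thm:main result of 7th section}), so that the resulting Hamilton cycle on $U$ lifts back to one in $H$ containing almost all of the monochromatic forest. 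The third term $(2\alpha-1)n$ then falls out automatically from the clean-up bound on $|U|$, with no separate argument needed.
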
 

\Cref{thm:deterministic main} is the $p=1$ case of our above result. 
In the same way that \Cref{cor:Lee-Sudakov} is derived from \Cref{thm:Dirac theorem in random graphs}, we obtain the following as a consequence of \Cref{thm:main}. 

\begin{restatable}{recorollary}{corGp}
  \label{cor:G(p)}
  Let $r\ge 2$ and $\alpha\in [\frac{1}{2} + \frac{1}{2r}, 1]$. 
    For all $\epsilon >0$, there exists $C >0$ such that if $p =  p(n) \ge \frac{C\log{n}}{n}$ and $G$ is an $n$-vertex graph with minimum degree at least $\alpha n$, then \whp for every $r$-edge-colouring of the random graph $G(p)$, there is a Hamilton cycle with one colour appearing at least $(1-\eps) \min\set{(2\alpha - 1)n, \frac{2\alpha n}{r}, \frac{2n}{r+1}}$ times. 
\end{restatable}

In the above result as well as in \Cref{thm:main}, both the range of edge probability $p$ and the `colour bias' we obtain are asymptotically best possible. We will show this optimality in \Cref{sec:optimality}.

\subsection{Hitting time result}\label{subsec:hitting time}

We adapt the techniques we used to prove \Cref{thm:main} in order to prove a similar statement in the random graph process, leading to a more precise version of the theorem. Before stating that, we start with some basic definitions and classical hitting time results on Hamiltonicity.

The~\defin{$n$-vertex random graph process} is a sequence of graphs $\set{G_{n,M}}_{M \geq 0} = G_0, G_1, \dots, G_{\binom{n}{2}}$, each with vertex set $V$ of size $n$, such that $G_0$ contains no edges and, for each $i \in [\binom{n}{2}]$, $G_i$ is formed by adding an edge to $G_{i - 1}$ uniformly at random from the non-adjacent pairs of vertices of $G_{i - 1}$. We say that an event \defin{holds in almost every random graph process} if \whp the event holds in every graph of the random graph process $\set{G_{n,M}}_{M \geq 0}$.

Clearly, we can also form the above random graph process by giving an ordering of the pairs of vertices uniformly at random, and then adding the edges to the empty graph with vertex set $V$ in this order. All sets of $M$ edges are equally likely to appear as the first $M$ edges in this ordering; thus for each fixed $M$, $G_{n,M}$ is a random graph on $n$ vertices with $M$ edges chosen uniformly at random.

As briefly mentioned earlier, Ajtai--Koml\'os--Szemer\'edi~\cite{Ajtai1985First} and Bollob\'as~\cite{Bollobas1984the} showed that the following happens in almost every random graph process: the edge that increases the minimum degree from $1$ in $G_{n,M-1}$ to $2$ in $G_{n,M}$, for some $M$, also creates a Hamilton cycle in $G_{n,M}$. It turns out that this obstacle not only makes the graph Hamiltonian, but also makes it resiliently Hamiltonian as shown by Montgomery~\cite{Montgomery2019resilient} and Nenadov--Steger--Truji\'c~\cite{Nenadov2019Resilience}. They proved the following result, which is a hitting time version of~\Cref{thm:Dirac theorem in random graphs}. The minimum degree of a graph $G$ is denoted by $\delta(G)$.

\begin{theorem}[\cite{Montgomery2019resilient,Nenadov2019Resilience}] 
\label{thm:hittimg time montgomery}
Let $\eps >0$. Then, in almost every random graph process $\set{G_{n,M}}_{M \geq 0}$, if $\delta(G_{n,M}) \geq 2$, then every $(\frac{1}{2}+\eps)$-residual spanning subgraph of $G_{n,M}$ contains a Hamilton cycle.
\end{theorem}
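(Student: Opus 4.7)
The plan is to combine precise structural information about $G := G_{n,M^*}$ at the hitting time $M^*$ at which $\delta$ first reaches $2$ with a resilient Hamiltonicity argument in the spirit of Lee--Sudakov (\Cref{thm:Dirac theorem in random graphs}). The crucial observation is that although $\delta(G) = 2$, most vertices have degree $\Theta(\log n)$, and the residual condition $d_H(v) \ge (1/2+\eps)d_G(v)$ preserves almost all edges at low-degree vertices; in particular, $\lceil (1/2+\eps)\cdot 2\rceil = 2$, so every degree-$2$ vertex retains \emph{both} of its incident edges in $H$. First, I would establish the following standard suite of properties holding \whp for $G$: (i) the set $L$ of vertices of degree below $(\log n)^{1/3}$ has size $|L|\le n^{o(1)}$; (ii) vertices in $L$ are pairwise at distance at least $4$ in $G$; (iii) every neighbour of a vertex of $L$ has degree $\Theta(\log n)$; (iv) $G$ has P\'osa-type expansion, i.e., $|N_G(S)\setminus S|\ge 2|S|$ for every $S$ with $|S|\le n/(\log n)^2$; and (v) every pair of disjoint vertex sets of size at least $n/(\log n)^{10}$ spans many edges. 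All of these follow from Chernoff bounds on $G(n,M^*)$ together with the usual coupling from the process to $G(n,M)$.

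Now fix any $(1/2+\eps)$-residual spanning subgraph $H\subseteq G$. Since vertices in $L$ keep almost all of their edges in $H$ (and degree-$2$ vertices keep both), I would build an absorbing structure $\mathcal{A}$ consisting of $O(|L|)$ internally vertex-disjoint paths of constant length in $H$ such that every $v\in L$ lies in the interior of some path of $\mathcal{A}$, every edge incident to a degree-$2$ vertex is used, and the endpoints of the paths are high-degree vertices. Property (ii) lets each $v\in L$ be handled locally, while (iii) guarantees that the required endpoints can be chosen among vertices whose neighbourhoods in $H$ still contain $\Theta(\log n)$ edges outside $V(\mathcal{A})$. This forces the Hamilton cycle to use every ``dangerous'' edge and reduces the problem to stitching the paths of $\mathcal{A}$ together through the bulk of the graph.

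Contracting each path of $\mathcal{A}$ to a single edge yields a graph $H'$ whose minimum degree is essentially $(1/2+\eps)\log n$, placing us in the Krivelevich--Lee--Sudakov regime. I would complete the absorber into a Hamilton cycle by iterating P\'osa's rotation-extension technique together with the booster lemma of Ajtai--Koml\'os--Szemer\'edi: (iv) produces $\Omega(n)$ boosters for any longest path in $H'$, and (v) provides connectivity between the small P\'osa end-sets. The main obstacle is that the adversary chooses $H$ after seeing $G$, so one cannot directly union-bound over adversary choices; the standard fix is \emph{sprinkling}, writing $G = G^{(1)}\cup G^{(2)}$ where $G^{(2)}$ is a tiny random edge set reserved for closing the final cycle. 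All expansion and absorber work is done in $G^{(1)}$ uniformly over adversary choices, and the final booster edge is drawn from $G^{(2)}$. The delicate point is that in the hitting-time regime there are essentially no edges to spare --- the condition $\delta\ge 2$ must itself be preserved by $G^{(1)}$ --- and making the sprinkling go through at the very last edge before the minimum degree reaches $2$ is the technical heart of the argument.
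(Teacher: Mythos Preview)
The paper does not prove this theorem: it is quoted as a known result of Montgomery and of Nenadov--Steger--Truji\'c, and no proof is supplied. So there is no ``paper's proof'' to compare against here; the paper only uses an $\mathcal{M}$-respecting variant of Montgomery's framework (Section~7) to prove its own, stronger hitting-time statement (\Cref{thm:hitting-time-main}).

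That said, your proposal has a genuine gap at the sprinkling step, and you half-acknowledge it yourself. The adversary chooses $H$ after seeing \emph{all} of $G$, including whatever set $G^{(2)}$ you try to reserve. Since the residual condition only forces $d_H(v)\ge (\tfrac12+\eps)d_G(v)$, and $G^{(2)}$ contributes a vanishing fraction of the degree at every typical vertex, the adversary may legally delete every edge of $G^{(2)}$ at almost all vertices. So nothing stops the ``final booster edge from $G^{(2)}$'' from being removed. Sprinkling of this kind works when the adversary is oblivious to the sprinkle, not here.

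The route that actually works (and that Section~7 of the paper follows, after Montgomery) avoids sprinkling entirely. One first shows that any $(\tfrac12+\eps)$-residual subgraph $H$ contains a \emph{sparse} spanning expander $H_0$ with at most $\delta p n^2$ edges. One then proves, for every fixed such $H_0$ (not just those contained in $G$), that with extremely high probability $G(n,p)$ has many boosters for $H_0$, enough that at least one survives in \emph{every} $(\tfrac12+\eps)$-residual subgraph. The union bound over all candidate sparse $H_0$'s is affordable precisely because $e(H_0)\le \delta p n^2$ is tiny (cf.\ \Cref{prop:sparse-containment-Gnp}); this is the substitute for sprinkling. Your absorber for low-degree vertices is fine and is indeed needed, but the engine that closes the cycle against an adaptive adversary has to be this sparse-expander-plus-union-bound mechanism, not a reserved random edge set.
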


The hitting time version of \Cref{thm:main} is as follows.
\begin{restatable}{theorem}{hittingtime}
  \label{thm:hitting-time-main}
  Let $r \geq 2$, $\alpha \in [\frac{1}{2} + \frac{1}{2r}, 1]$, and $\eps > 0$. Then, in almost every $n$-vertex random graph process $\set{G_{n,M}}_{M \geq 0}$, the following is true for each $0 \leq M \leq \binom{n}{2}$. If $\delta(G_{n,M}) \geq 2$, then for every $r$-edge-colouring of every $\alpha$-residual spanning subgraph of $G_{n,M}$, there is a Hamilton cycle with one colour appearing at least $(1 - \eps) \min\set{(2\alpha - 1)n, \frac{2\alpha n}{r}, \frac{2n}{r+1}}$ times.
\end{restatable}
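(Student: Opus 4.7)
The plan is to follow the hitting-time framework of Montgomery~\cite{Montgomery2019resilient} and Nenadov--Steger--Truji\'c~\cite{Nenadov2019Resilience} that yields \Cref{thm:hittimg time montgomery}, fed with the colour-biased ingredients developed to prove \Cref{thm:main}. Let $M^\star = \min\set{M : \delta(G_{n,M}) \geq 2}$ denote the hitting time for minimum degree at least $2$; w.h.p.\ $M^\star = \frac{1}{2}n(\log n + \log\log n) + O(n)$. Fix a function $\omega = \omega(n)$ tending to infinity sufficiently slowly and set $M_0 = \frac{1}{2}n(\log n + \log\log n + \omega)$. I would split the range $M^\star \leq M \leq \binom{n}{2}$ into a \emph{dense regime} $M \geq M_0$ and a \emph{near-threshold regime} $M^\star \leq M \leq M_0$, and handle them separately.

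For the dense regime, I would set $p = M/\binom{n}{2}$ so that $p \geq (\log n + \log \log n + \omega(1))/n$ lies above the threshold appearing in \Cref{thm:main}. The standard coupling between $G_{n,M}$ and $G(n,p)$, together with the monotonicity of the failure event in a suitable enlarged probability space, transfers the conclusion of \Cref{thm:main} to $G_{n,M}$. A union bound over a geometric sequence of values $M \in [M_0, \binom{n}{2}]$ then yields the uniform statement over this range.

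For the near-threshold regime, I would first collect the standard structural facts about $G_{n,M}$ used in~\cite{Montgomery2019resilient, Nenadov2019Resilience}: w.h.p., the set $S = S(G_{n,M})$ of \emph{small} vertices (say of degree at most $\log^{0.9} n$) has size $|S| \leq n^{o(1)}$, any two vertices of $S$ lie at graph distance at least $4$, and every vertex outside $S$ has degree $\Theta(\log n)$. Given any $r$-edge-colouring of any $\alpha$-residual spanning subgraph $H$ of $G_{n,M}$, the $\alpha$-residual condition forces each $v \in S$ to retain at least $\lceil \alpha \deg_{G_{n,M}}(v) \rceil \geq 2$ neighbours in $H$. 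The plan is then an absorption-style argument: for each $v \in S$, reserve two of its $H$-neighbours to form a short path $P_v = u_v v w_v$; apply a robust version of \Cref{thm:deterministic main} to the bulk $H[V \setminus S]$ (whose Dirac-type hypothesis survives the removal of the $n^{o(1)}$ vertices of $S$) to build a colour-biased Hamilton cycle on $V \setminus S$; and finally splice the paths $P_v$ into this cycle, using a small batch of freshly sprinkled edges from $G_{n,M} \setminus G_{n,M-n^{0.1}}$ to carry out the splicings.

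The main obstacle I expect is preserving the required colour bias through the absorption of $S$. Each splicing can destroy only a bounded number of majority-colour edges, so the total loss is $O(|S|) = n^{o(1)}$, which is negligible compared with the target bias $\Theta(n)$; however, showing that the splicings can actually be performed simultaneously for every $v \in S$, every $r$-colouring, and every $\alpha$-residual $H$ is the real technical content. This requires strengthening the proof of \Cref{thm:deterministic main} to a \emph{robust} form producing a colour-biased Hamilton cycle that can be re-routed through any prescribed short vertex set without losing more than $o(n)$ edges of the dominant colour, in the spirit of the absorber/reservoir constructions of~\cite{Montgomery2019resilient, Nenadov2019Resilience} but now carrying an additional colour-bias budget.
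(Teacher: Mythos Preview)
Your two-regime plan diverges from the paper's approach and, as written, has a genuine gap in the dense regime and an unnecessary detour in the near-threshold regime.

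In the paper, there is no splitting into regimes. Instead, all of the ingredients that feed into the proof of \Cref{thm:main}---namely \Cref{cor:random graph large monochromatic linear forest}, \Cref{thm:clean-up_main}, and \Cref{thm:main result of 7th section}---are stated and proved for $G(n,p)$ with error probability $o(n^{-3})$, not merely w.h.p. This is precisely so that one can apply the model-switching \Cref{lemma:model-switching} to get each of them in $G_{n,M}$ with probability $1-o(n^{-2})$, and then take a single union bound over all $M\ge \frac{19}{40}n\log n$ (there are at most $\binom{n}{2}$ such values). The proof then literally re-runs the argument for \Cref{thm:main} inside each $G_{n,M}$, with the two structural hypotheses \ref{item:mindeg2} and \ref{item:property low_deg_vertices_far_apart} supplied by the hitting-time assumption $\delta(G_{n,M})\ge 2$ and by \Cref{lem:small_deg_vertex_separation_in_G_{n,M}} respectively. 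No absorption, no sprinkling, no robust rerouting is needed.

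Your dense-regime step does not go through as described. \Cref{thm:main} is stated only w.h.p., so you cannot directly union-bound it over all $M$; and the failure event ``some $r$-colouring of some $\alpha$-residual spanning subgraph has no colour-biased Hamilton cycle'' is not monotone in the edge set, so the appeal to ``monotonicity of the failure event'' and a geometric sequence of values of $M$ is not justified. The fix is exactly what the paper does: go back to the \emph{components} of the proof and make each hold with probability $1-o(n^{-3})$.

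For the near-threshold regime, your absorption sketch is plausible in spirit but is not how the paper proceeds, and the sprinkling idea is problematic: the ``fresh'' edges of $G_{n,M}\setminus G_{n,M-n^{0.1}}$ are not independent of the adversary's choice of residual subgraph and colouring, since those choices are made on $G_{n,M}$. In the paper, the low-degree vertices are absorbed deterministically inside the clean-up step (\Cref{thm:clean-up_main}) using only \ref{item:mindeg2} and \ref{item:property low_deg_vertices_far_apart}, which are available uniformly over the whole range $M\ge \frac{19}{40}n\log n$; there is no separate near-threshold argument.
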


\subsection{Ramsey-type results on matchings}\label{sec:ramsey matchings}

A key ingredient of our proofs of the main results is a Ramsey-type statement about finding large monochromatic matchings in edge-coloured graphs with a given minimum degree. Before stating it, we briefly mention some known results along this line. 

A classical result of Cockayne and Lorimer \cite{Cockayne1975the} implies that every $r$-edge-colouring of the complete graph $K_n$ contains a monochromatic matching of size at least $\frac{n-1}{r+1}$. Gishboliner, Krivelevich, and Michaeli~\cite{Gishboliner2021colourbiased} extended this to almost complete host graphs and used that to prove their \Cref{thm:colour biased Ham cycle in random graph}. We strengthen these results and obtain the following pleasing statement.

\begin{theorem}
  \label{thm:intro large monochromatic matching}
  Let $G$ be an $r$-edge-coloured $n$-vertex graph with minimum degree $d$. Then $G$ contains a monochromatic matching with at least $\min\left\{\frac{d}{r}, \frac{n-1}{r+1}\right\}$ edges. 
\end{theorem}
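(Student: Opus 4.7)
The plan is to prove the result by induction on the number of colours $r$. The base case $r=1$ reduces to the classical fact that any $n$-vertex graph of minimum degree $d$ has a matching of size at least $\min\{d,\lfloor n/2\rfloor\}\ge \min\{d,(n-1)/2\}$, which is the $r=1$ instance of the bound.

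For the inductive step I would take a largest monochromatic matching $M$ of $G$, say of colour $1$ with $m:=|M|$, and set $B:=V(G)\setminus V(M)$, so $|B|=n-2m$. By the maximality of $M$ in colour $1$, the induced subgraph $G[B]$ contains no colour-$1$ edge and is therefore $(r-1)$-edge-coloured; each vertex of $B$ retains at least $d-2m$ neighbours inside $B$, so $\delta(G[B])\ge d-2m$. Applying the inductive hypothesis to $G[B]$ yields a monochromatic matching $M'$ in $G$ (of some colour other than $1$) with $|M'|\ge\min\{(d-2m)/(r-1),\,(n-2m-1)/r\}$; since $M$ is globally maximum we have $|M'|\le m$, which rearranges into a lower bound on $m$.

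To bridge the gap between the weaker bound $m\ge\min\{d/(r+1),\,(n-1)/(r+2)\}$ obtained in this way and the sharp bound $\min\{d/r,(n-1)/(r+1)\}$, I would supplement the induction with the following independent-set observation: for each colour $c$, the set $U_c$ of vertices left unmatched by a maximum matching in $G_c$ is independent in $G_c$ with $|U_c|\ge n-2m$; hence $\bigcap_c U_c$ is independent in the whole graph $G$, so $|\bigcap_c U_c|\le n-d$ by the minimum-degree assumption, and inclusion--exclusion gives $\sum_c \nu(G_c)\ge d/2$. A case analysis according to which of $d/r$ and $(n-1)/(r+1)$ realises the minimum concludes the proof: when $(n-1)/(r+1)$ is the smaller quantity, $\delta(G)\ge d > r(n-1)/(r+1)$ forces $G$ to be almost complete and a Cockayne--Lorimer-style argument (adapted from the complete-graph case) delivers the bound.

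The main obstacle is recovering the sharp factor $d/r$ rather than $d/(2r)$ in the other regime: the bipartite-style accounting that would immediately give $d/r$ only holds when each colour class satisfies K\"onig's equality $\tau(G_c)=\nu(G_c)$, whereas the general bound $\tau\le 2\nu$ loses a factor of $2$. The extremal configuration is a complete multipartite blow-up of $K_{r+1}$ with one part of size $n-d$ and the remaining $r$ parts of size $d/r$, where each colour class is a complete bipartite graph between a small part and its complement; a careful coupling of the inductive step with the vertex-cover structure of each $G_c$, using that near-tightness of the independent-set bound $|\bigcap_c U_c|\le n-d$ forces precisely this multipartite structure, should close the argument.
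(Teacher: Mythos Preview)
Your proposal has a genuine gap that you yourself identify but do not close. The induction on $r$ only yields $m \ge \min\{d/(r+1),\,(n-1)/(r+2)\}$, and the independent-set argument only yields $m \ge d/(2r)$; neither reaches the sharp bound. The suggested repair in your final paragraph --- that near-tightness of $|\bigcap_c U_c|\le n-d$ ``forces'' the extremal multipartite structure --- is not a proof: there is a continuum of near-extremal configurations that are not multipartite, and recovering the exact factor $1/r$ from a stability argument would essentially require proving the theorem by other means. Your appeal to a ``Cockayne--Lorimer-style argument'' in the regime $d>r(n-1)/(r+1)$ is circular, since the statement you are trying to prove \emph{is} the extension of Cockayne--Lorimer to minimum degree $r(n-1)/(r+1)$; indeed, the paper derives the $(n-1)/(r+1)$ case from the $d/r$ case by passing to a spanning subgraph with $\delta=\lfloor rn/(r+1)\rfloor$, so both regimes rest on the same hard inequality.

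The paper's argument is entirely different and does not use induction on $r$. It applies the Tutte--Berge formula to every colour class simultaneously: for each $i$ one obtains $U_i$ with $|U_i|\le \mu$ and $c(G_i-U_i)\ge n-2\mu+|U_i|$. The analysis then focuses on the common core $U=V(G)\setminus\bigcup_i U_i$, on which every $G_i[U]$ is still highly disconnected. A chain of claims shows that $|U|$ is simultaneously at least $n-r\mu+1$ and at most $4\mu+1$, that each $G_i[U]$ has at least two components, and (crucially) that every vertex of $U$ has neighbours in at least two colours inside $U$. Feeding these into the elementary bound $c(F)\le\sum_v 1/(d_F(v)+1)$ and summing over colours produces an inequality in the two parameters $x=n-r\mu$ and $y=|U|-x$ that is incompatible with $\mu<\delta(G)/r$. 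The sharp constant $1/r$ comes out of this joint component-counting; there is no factor-of-two loss because the argument never passes through the inequality $\tau\le 2\nu$.
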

This is best possible as shown by matching constructions in \Cref{sec:optimality}. A particularly nice corollary of \Cref{thm:intro large monochromatic matching} is that every $r$-edge-coloured $n$-vertex graph with minimum degree at least $\frac{r(n-1)}{r+1}$ contains a monochromatic matching of size at least $\frac{n-1}{r+1}$. This significantly improves the previously mentioned result of Cockayne and Lorimer \cite{Cockayne1975the}. On a different note, a related `Ramsey-Tur\'an' variant of this result is established in a recent preprint \cite{Keevash2025a}.

\begin{organization*}
The rest of this paper is organized as follows. The next section contains constructions showing the tightness of our main results and outlines some of the main proof ideas. This outline also briefly (but in more detail than here) describes the contents of the subsequent sections. In \Cref{sec:prop-binom-rand}, we collect a few useful probabilistic tools and results about random graphs. In \Cref{sec:mono matchings}, we prove \Cref{thm:intro large monochromatic matching}. This is then used in \Cref{sec:lonG_{n,M}ono_paths} to prove a more complicated Ramsey-type statement about finding large monochromatic linear forests in edge-coloured dense graphs, which is then used to prove \Cref{thm:deterministic main}. We then prove more auxiliary results in \Cref{sec:clean-up,sec:resilience of Hamiltonicity}, which are finally combined in \Cref{sec:wrap up} to prove \Cref{thm:main,cor:G(p),thm:hitting-time-main}.
\end{organization*}

\begin{notation*}
  We write $[n]$ to denote the set $\{1,\dots,n\}$. For a set $X$ and an integer $k$, denote by $\binom{X}{k}$ the set of all subsets of $X$ of size $k$. 
  For three numbers $\alpha,\beta,\gamma$, we write $\alpha=\beta\pm \gamma$ to mean that $\beta-\gamma\le \alpha \le \beta+\gamma$. 
  Throughout this paper, we omit the rounding signs whenever they are not crucial to our arguments.

  We use standard graph theoretic notation. Consider a graph $G$. We denote its
  vertex set by $V(G)$ and its edge set by $E(G)$. The number of edges in $G$ is denoted by $e(G)$, which is also referred to as \defin{size} of $G$. When $H$ is a subgraph of $G$, we write $H\subseteq G$.
  For $A\subseteq V(G)$, we denote by $G[A]$ the subgraph of $G$ induced by $A$, and denote by $e_G(A)$ the number of edges in the graph $G[A]$. For $A\subseteq V(G)$, we denote by $G-A$ the graph $G[V(G)\setminus A]$.
  For $A,B\subseteq V(G)$, we denote by $e_G(A,B)$ the number of pairs $(a,b)\in A\times B$ such that $ab\in E(G)$. Note that $e_G(A,A) = 2e_G(A)$ for $A\subseteq V(G)$.
  For a vertex $v\in V(G)$, we denote by $N_G(v)$ the neighbourhood of $v$ in $G$, and denote by $d_G(v)$ the degree of $v$. 
  For a vertex $v\in V(G)$ and a set of vertices $A \subseteq V(G)$, let $d_G(v, A) = |N_G(v) \cap A|$. 
  For $A\subseteq V(G)$, we write $N_G(A) = (\bigcup_{v\in A} N_G(v))\setminus A$. We often omit the subscripts when the graph $G$ is clear from the context.
  For a set $A$, we say $G$ \defin{covers} $A$ if $A\subseteq V(G)$, and we say $G$ \defin{spans} $A$ if $A=V(G)$.
  For two graphs $G$ and $H$ on the same vertex set $V$, we write $G + H$ or $G - H$ to denote the graph with vertex set $V$ and with edge set $E(G)\cup E(H)$ or $E(G)\setminus E(H)$, respectively. If $H$ is a graph with a single edge $e$, we write $G + e$ or $G - e$ to mean $G + H$ or $G - H$, respectively.
\end{notation*}

\bigskip \section{Optimality and proof ideas of the main results}\label{sec:optimality}
\subsection{Optimality}

In this section, we show the tightness of our main results (i.e., \Cref{thm:deterministic main,cor:main for perfect matching,thm:main,thm:intro large monochromatic matching}). There are three competing constructions for these results, and each of them is optimal for certain regimes. Note that~\Cref{construction:small_alpha} is a generalization of the construction used to show tightness of \Cref{thm:colour biased Ham cycle with min degree condition} in \cite{Freschi2021a,Gishboliner2022discrepancies}, and~\Cref{construction:large_alpha} is the construction used to show tightness of \Cref{thm:colour biased Ham cycle in random graph} in~\cite{Gishboliner2021colourbiased}.

\begin{con}\label{construction:small_alpha} Suppose that $\alpha n$ is an integer less than $n$. Let $F$ be a graph on $n$ vertices partitioned into $r$ sets $V_1, \dots, V_r$, where $|V_r| = \alpha n$ and  $|V_i| \in \left\{\left\lfloor\frac{(1 - \alpha) n}{r - 1}\right\rfloor, \left\lceil\frac{(1 - \alpha) n}{r - 1}\right\rceil\right\}$ for each $i \in [r-1]$ so that $\sum_{i=1}^{r-1} |V_i| = (1-\alpha)n$. Let the edges of $F$ be all pairs of vertices that intersect $V_r$. Note that the minimum degree of $F$ is $|V_r| = \alpha n$.
We colour the edges of $F$ with $r$-colours as follows. For every $i \in [r - 1]$, every edge that intersects set $V_i$ and $V_r$ has colour $i$. Every other edge, that is, every edge contained in $V_r$, has colour $r$. See \Cref{fig:construction_small_alpha} for an example.
\end{con}

\begin{con}\label{construction:med_alpha}
Suppose that $\alpha n$ is an integer less than $n$. Let $F$ be a graph on $n$ vertices partitioned into sets $V_1, \dots, V_{r + 1}$, where $|V_{r+1}| = (1 - \alpha)n$, and $|V_i| \in \left\{\left\lfloor\frac{\alpha n}{r}\right\rfloor, \left\lceil\frac{\alpha n}{r}\right\rceil\right\}$ for every $i \in [r]$ so that $\sum_{i=1}^{r} |V_i| = \alpha n$. Let the edges of $F$ be all pairs that intersect the set $\cup_{i \in [r]} V_i$. Note that the minimum degree of $F$ is $\sum_{i=1}^{r} |V_i| = \alpha n$.
We colour the edges of $F$ with $r$-colours as follows.  For each $i, j \in [r]$ with $i \leq j$, each edge incident to both sets $V_i$ and $V_{r + 1}$ has colour $i$, and each edge incident to both sets $V_i$ and $V_j$ (possibly the same set) has colour $i$. See \Cref{fig:construction_medium_alpha} for an example.
\end{con}

\begin{con}\label{construction:large_alpha}
Let $F$ be a complete graph on $n$ vertices partitioned into sets $V_1, \dots, V_{r + 1}$, each  of size $\left\lfloor\frac{n}{r + 1}\right\rfloor$ or $\left\lceil\frac{n}{r + 1}\right\rceil$, with $|V_{r + 1}| = \left\lceil\frac{n}{r + 1}\right\rceil$. We colour the edges of $F$ with $r$-colours as follows. For every $i \in [r]$, the colour of the edges inside part $V_i$ is $i$, and inside part $V_{r + 1}$ is $r$. For each $i, j \in [r + 1]$ with $i < j$, the colour of an edge incident to both parts $V_i$ and $V_j$ is $i$. See \Cref{fig:construction_large_alpha} for an example.
\end{con}

  \begin{figure}[ht]
    \centering
    \begin{subfigure}[t]{0.48\textwidth}
    \centering
    \includegraphics[scale=.5]{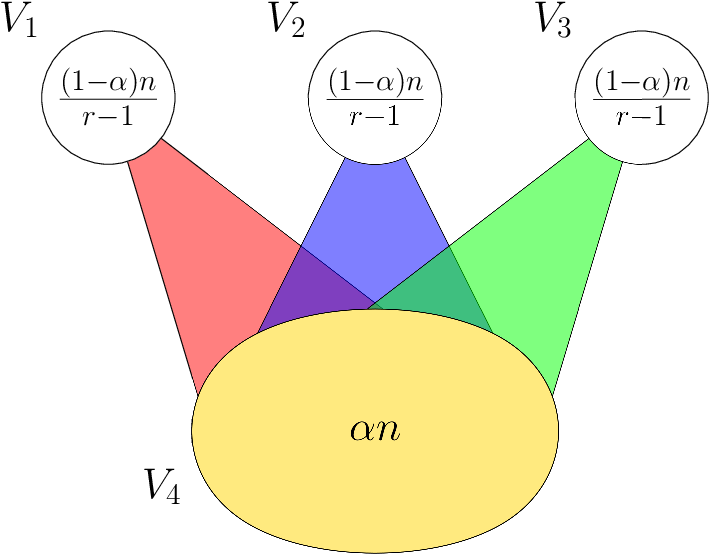}
    \caption{\Cref{construction:small_alpha}}
    \label{fig:construction_small_alpha}
\end{subfigure}
~
  \begin{subfigure}[t]{0.48\textwidth}
  \centering
    \includegraphics[scale=.5]{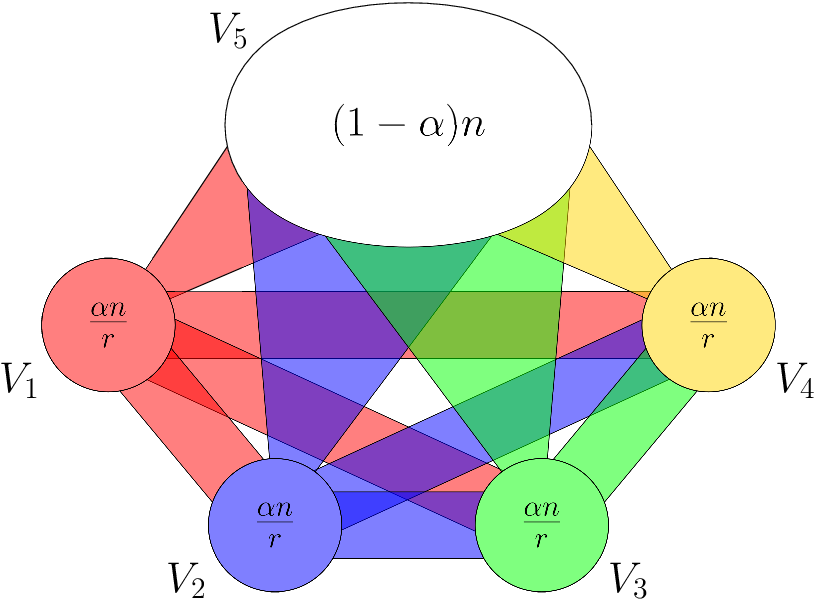}
    \caption{\Cref{construction:med_alpha}}
    \label{fig:construction_medium_alpha}
    \end{subfigure}

    \bigskip
    
    \begin{subfigure}[t]{\textwidth}
    \centering
    \includegraphics[scale=.5]{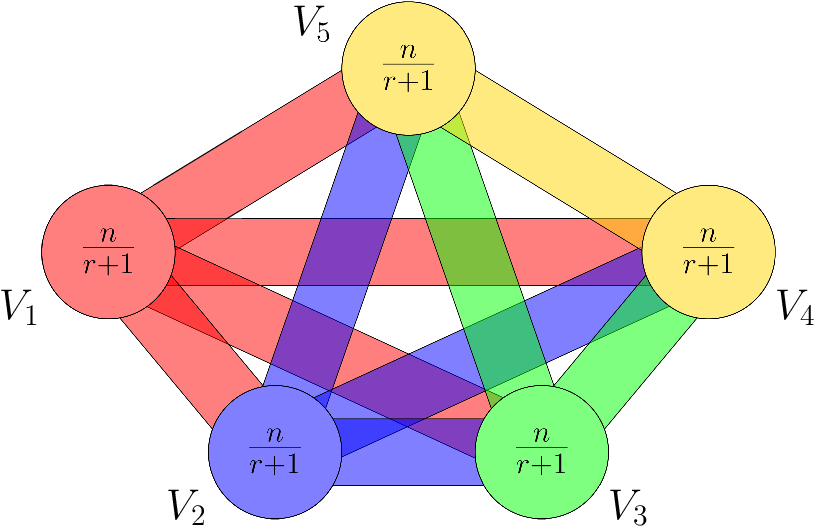}
    \caption{\Cref{construction:large_alpha}}
    \label{fig:construction_large_alpha}
\end{subfigure}
    \caption{Diagrams showing \Cref{construction:small_alpha}, \ref{construction:med_alpha} and~\ref{construction:large_alpha} with $r=4$ colours.}
\end{figure}

First we will use these constructions to show that \Cref{thm:intro large monochromatic matching} is best possible. We will show that 
\begin{proposition}
For all $0 \le d \le n-1$ there exists an $n$-vertex graph with minimum degree at least $d$ such that the size of a maximum monochromatic matching is at most $\lceil k \rceil$, where $k = \min\left\{ \frac{d}{r}, \frac{n-1}{r+1}\right\}$. 
\end{proposition}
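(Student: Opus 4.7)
The plan is to split on which term of the minimum defining $k$ is smaller and use a different construction in each regime. Note that by monotonicity of the ceiling, $\lceil k\rceil=\min\{\lceil d/r\rceil,\lceil (n-1)/(r+1)\rceil\}$, so in each case I only need to beat the relevant ceiling.

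When $d\le r(n-1)/(r+1)$, so $\lceil k\rceil=\lceil d/r\rceil$, I would apply Construction~\ref{construction:med_alpha} with parameter $\alpha=d/n$ (taking the empty graph if $d=0$, and $d$ an integer in $[0,n-1]$ so that $\alpha n$ is an integer less than $n$). By design, every vertex of $V_{r+1}$ has degree $\alpha n=d$ and every other vertex has degree $n-1$, so the minimum degree is exactly $d$. The key observation is that every edge of colour $i\in[r]$ is incident to a vertex of $V_i$: an edge of colour $i$ is either inside $V_i$, between $V_i$ and $V_{r+1}$, or between $V_i$ and some $V_j$ with $j>i$. Consequently every monochromatic matching in colour $i$ has size at most $|V_i|\le \lceil\alpha n/r\rceil=\lceil d/r\rceil$, which matches the target.

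When $d>r(n-1)/(r+1)$, so $\lceil k\rceil=\lceil(n-1)/(r+1)\rceil$, I would apply Construction~\ref{construction:large_alpha}. The underlying graph is $K_n$, so the minimum degree is $n-1\ge d$ trivially. For the matching bound, every colour-$i$ edge with $i\in[r-1]$ is incident to $V_i$ (matching size at most $|V_i|$), while every colour-$r$ edge lies inside $V_r\cup V_{r+1}$ (matching size at most $\lfloor(|V_r|+|V_{r+1}|)/2\rfloor$). Writing $n=m(r+1)+s$ with $0\le s\le r$, we have $\lceil(n-1)/(r+1)\rceil=m$ if $s\in\{0,1\}$ and $m+1$ otherwise; using the freedom to place $V_r$ among the parts of size $\lfloor n/(r+1)\rfloor$ whenever $s\le r-1$, a short case analysis confirms both of the above quantities are at most $\lceil(n-1)/(r+1)\rceil$. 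The tightest check is $s=1$: here $|V_{r+1}|=m+1$ and $|V_r|=m$, giving a colour-$r$ bound of $\lfloor(2m+1)/2\rfloor=m$, while the other colours give at most $|V_i|=m$ as well.

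The only actual difficulty is this off-by-one arithmetic in the second case: the naive bound $\lceil n/(r+1)\rceil$ would be strictly larger than $\lceil(n-1)/(r+1)\rceil$ precisely when $n\equiv 1\pmod{r+1}$, and one must exploit the fact that colour $r$ is supported on just two parts (so that its matching bound is an \emph{average}, not a maximum, of part sizes) to avoid losing this factor. Once this arithmetic is handled, the proposition is immediate from the two constructions.
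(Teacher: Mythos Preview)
Your proof is correct and follows the same approach as the paper: use Construction~\ref{construction:med_alpha} when $k=d/r$ and Construction~\ref{construction:large_alpha} when $k=(n-1)/(r+1)$, bounding each colour-$i$ matching by $|V_i|$ (and colour $r$ in the second case by $\lfloor(|V_r|+|V_{r+1}|)/2\rfloor$). Your off-by-one analysis in the second case is slightly more elaborate than needed: the paper simply observes that $\lceil n/(r+1)\rceil=\lceil(n-1)/(r+1)\rceil$ unless $n\equiv 1\pmod{r+1}$, and in that single residual case the constraint $|V_{r+1}|=\lceil n/(r+1)\rceil$ already forces all other parts to have size $\lfloor n/(r+1)\rfloor$, so no ``freedom to place $V_r$'' is required.
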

\begin{proof}
First assume that $k = \frac{d}{r}$  and consider \Cref{construction:med_alpha} with $\alpha n = d$. The size of a monochromatic matching in $F$ is at most $\max_{i \in [r]}|V_i| = \left\lceil\frac{\alpha n}{r}\right\rceil = \left\lceil\frac{d}{r}\right\rceil$. Then suppose that $ k = \frac{n-1}{r+1}$ and consider \Cref{construction:large_alpha}. The size of a monochromatic matching is at most $m:=\max\left\{\left\lfloor\frac{|V_r| + |V_{r+1}|}{2}\right\rfloor, \max_{i \in [r-1]}|V_i|\right\}$. Clearly, $m \le \left\lceil\frac{n}{r+1}\right\rceil$. In fact, it is easy to check that either $\left\lceil\frac{n}{r+1}\right\rceil = \left\lceil\frac{n-1}{r+1}\right\rceil$, or that $n\equiv 1$ modulo ${r+1}$, in which case $m = \left\lfloor\frac{n}{r+1}\right\rfloor = \left\lceil\frac{n-1}{r+1}\right\rceil$ (because $|V_{r + 1}| = \left\lceil\frac{n}{r + 1}\right\rceil$ by construction). Either way, the size of a monochromatic matching is at most $\left\lceil\frac{n-1}{r+1}\right\rceil$.
\end{proof}

The following shows the asymptotic tightness of \Cref{thm:deterministic main,cor:main for perfect matching}.
\begin{proposition}
\label{prop:construction_deterministic_tight}
  Let $n,r \geq 2$ be integers, and $\alpha \in [\frac{1}{2} + \frac{1}{2r}, 1)$ be such that $\alpha n$ is an integer. Let $k = \min\left\{(2\alpha - 1)n, \frac{2\alpha n}{r}, \frac{2n}{r + 1}\right\}$. Then, there
  exists an $n$-vertex graph with minimum degree at least $\alpha n$, together with an $r$-edge-colouring, such that every Hamilton cycle contains at most $2\left\lceil \frac{k}{2} \right\rceil$ edges of the same colour, and, if $n$ is even, every perfect matching contains at most $\left\lceil \frac{k}{2} \right\rceil$ edges of the
  same colour.
\end{proposition}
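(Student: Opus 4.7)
The plan is to establish \Cref{prop:construction_deterministic_tight} by directly analysing the three constructions already defined, one for each of the three terms in $k = \min\{(2\alpha - 1)n,\, \frac{2\alpha n}{r},\, \frac{2n}{r+1}\}$. Given $\alpha$ in the stated range, one of these three values attains the minimum, and I would use whichever of \Cref{construction:small_alpha,construction:med_alpha,construction:large_alpha} is paired with that term.

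For \Cref{construction:small_alpha}, I would first observe that $I := V_1 \cup \cdots \cup V_{r-1}$ is independent, that every edge between $I$ and $V_r$ has a non-$r$ colour, and that every edge inside $V_r$ has colour $r$. In a Hamilton cycle, both cycle-neighbours of each $v \in V_i$ (with $i \in [r-1]$) lie in $V_r$, so the cycle contains exactly $2|V_i|$ edges of colour $i$, and hence exactly $n - 2(1-\alpha)n = (2\alpha - 1)n$ edges of colour $r$. The hypothesis $\alpha \geq \frac{1}{2} + \frac{1}{2r}$ is equivalent to $\frac{1-\alpha}{r-1} \leq \frac{2\alpha - 1}{2}$, which by monotonicity of $\lceil \cdot \rceil$ gives $2|V_i| \leq 2\lceil (1-\alpha)n/(r-1)\rceil \leq 2\lceil (2\alpha - 1)n/2\rceil = 2\lceil k/2\rceil$, so every colour meets the prescribed bound. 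For a perfect matching, each $v \in I$ is matched to a distinct vertex of $V_r$, so $(1-\alpha)n$ vertices of $V_r$ are used and the remaining $(2\alpha - 1)n$ are matched within $V_r$, giving $(2\alpha - 1)n/2 = k/2$ edges of colour $r$; the other colours are handled analogously.

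For \Cref{construction:med_alpha}, the colouring is set up so that every edge of colour $i \in [r]$ has at least one endpoint in $V_i$; consequently any Hamilton cycle contains at most $2|V_i| \leq 2\lceil \alpha n/r\rceil = 2\lceil k/2\rceil$ edges of colour $i$, and any perfect matching at most $|V_i| \leq \lceil k/2\rceil$. For \Cref{construction:large_alpha}, the colour-$i$ edges for $i \in [r-1]$ are again incident to $V_i$, giving at most $2|V_i| \leq 2\lceil n/(r+1)\rceil = 2\lceil k/2\rceil$ such edges in a Hamilton cycle; the colour-$r$ edges are precisely those of $G[V_r \cup V_{r+1}]$, and since $V_1 \cup \cdots \cup V_{r-1}$ is non-empty in the stated regime, a Hamilton cycle uses at most $|V_r \cup V_{r+1}| - 1 \leq 2\lceil n/(r+1)\rceil - 1 < 2\lceil k/2\rceil$ of them, while a perfect matching uses at most $\lfloor |V_r \cup V_{r+1}|/2\rfloor \leq \lceil n/(r+1)\rceil = \lceil k/2\rceil$.

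Once the right construction is chosen, the argument is essentially by inspection of the combinatorial description, so I expect the main work to be careful bookkeeping: verifying that the rounding inequalities (especially $2\lceil(1-\alpha)n/(r-1)\rceil \leq 2\lceil(2\alpha-1)n/2\rceil$ in \Cref{construction:small_alpha}) are never violated for admissible $n$. In each case this reduces to elementary integer arithmetic using $\alpha n \in \mathbb{Z}$ and $\alpha \geq \frac{1}{2} + \frac{1}{2r}$.
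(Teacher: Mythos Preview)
Your proposal is correct and follows essentially the same approach as the paper: split into three cases according to which term attains the minimum $k$, use the matching construction from \Cref{construction:small_alpha,construction:med_alpha,construction:large_alpha} respectively, and verify the colour bounds by the same incidence/counting arguments (including the key inequality $\frac{1-\alpha}{r-1} \le \frac{2\alpha-1}{2}$ from $\alpha \ge \frac{1}{2}+\frac{1}{2r}$). The only cosmetic difference is that for colour $r$ in \Cref{construction:large_alpha} you sharpen to $|V_r\cup V_{r+1}|-1$ whereas the paper uses $|V_r|+|V_{r+1}|$ directly; both suffice.
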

\begin{proof}
We will split into three cases depending on whether $k = (2\alpha - 1)n$ or $k = \frac{2\alpha n}{r}$ or $k = \frac{2 n}{r+1}$.

\medskip
Assume first that $k = (2\alpha - 1)n$ and consider~\Cref{construction:small_alpha}. Let $\mathcal{C}$ be a Hamilton cycle in $F$. Clearly, for every vertex $v \in V_{i}$ with $i \in [r - 1]$, the cycle $\mathcal{C}$ contains exactly two edges incident to $v$, both of which have colour $i$. Furthermore, every edge of colour $i$ is incident to one such vertex. Hence, $\mathcal{C}$ contains exactly $2|V_i| \le 2\left\lceil\frac{(1 - \alpha)n}{r - 1}\right\rceil$ edges of colour $i$. Also, given that $\mathcal{C}$ contains $n$ edges, it contains $n - \sum_{i=1}^{r-1}2|V_i| = n - 2(1 - \alpha)n = (2\alpha - 1)n = k$ edges of colour $r$. Notice that $\frac{(1 - \alpha)}{r - 1} \le \frac{2\alpha - 1}{2}$ given that $\alpha \geq \frac{1}{2} + \frac{1}{2r}$ and so $2\left\lceil\frac{(1 - \alpha)n}{r - 1}\right\rceil \le 2\left\lceil\frac{(2\alpha - 1)n}{2}\right\rceil = 2\left\lceil \frac{k}{2} \right\rceil$. Hence, $\mathcal{C}$ has at most $2\lceil \frac{k}{2} \rceil$ edges of the same colour.  Similarly, if $\mathcal{M}$ is a perfect matching in $F$ then $\mathcal{M}$ contains exactly $|V_i|$ edges of colour $i$, and $\frac{n}{2} - \sum_{i=1}^{r-1}|V_i| = \frac{k}{2}$ edges of colour $r$. Thus $\mathcal{M}$ contains at most $\left\lceil \frac{k}{2} \right\rceil$ edges of the same colour.

\medskip
Suppose now that $k = \frac{2\alpha n}{r}$ and consider~\Cref{construction:med_alpha}. Let $\mathcal{C}$ and $\mathcal{M}$ be a Hamilton cycle and a perfect matching in $F$, respectively. Note that for every $i \in [r]$, all the edges of colour $i$ are incident to part $V_i$. For every vertex $v \in V_i$, the cycle $\mathcal{C}$ contains exactly two edges incident to $v$, and the matching $\mathcal{M}$ contains exactly one edge incident to $v$. Then $\mathcal{C}$ contains at most $2|V_i| \le 2\left\lceil\frac{\alpha n}{r}\right\rceil = 2\left\lceil \frac{k}{2}\right\rceil$ edges of colour $i$, and $\mathcal{M}$ contains at most $|V_i| \le \left\lceil \frac{k}{2}\right\rceil$ edges of colour $i$.

\medskip
Finally, suppose that $k = \frac{2n}{r + 1}$ and consider~\Cref{construction:large_alpha}. Let $\mathcal{C}$ and $\mathcal{M}$ be a Hamilton cycle and a perfect matching in $F$, respectively. For every $i \in [r - 1]$, every edge of colour $i$ is incident to part~$V_i$. For every vertex $v \in V_i$, the cycle $\mathcal{C}$ contains exactly two edges incident to $v$, and the matching $\mathcal{M}$ contains exactly one edge incident to $v$. Then for every colour $i\in [r-1]$, the cycle $\mathcal{C}$ contains at most $2|V_i| \le 2\left\lceil\frac{n}{r + 1}\right\rceil = 2\left\lceil \frac{k}{2} \right\rceil$ edges of colour $i$, and the matching $\mathcal{M}$ contains at most $|V_i| \le \left\lceil \frac{k}{2} \right\rceil$ edges of colour $i$. Also, all the edges of colour $r$ are contained in $V_r \cup V_{r + 1}$. Therefore, $\mathcal{C}$ contains at most $|V_r| + |V_{r + 1}| \le 2\left\lceil \frac{k}{2} \right\rceil$ edges of colour $r$, and $\mathcal{M}$ contains at most $\frac{|V_{r}| + |V_{r + 1}|}{2} \le  \left\lceil \frac{k}{2} \right\rceil$ edges of colour $r$.
\end{proof}

As a consequence of \Cref{prop:construction_deterministic_tight}, we obtain the following result, which shows the asymptotic tightness of \Cref{thm:main}.

\begin{proposition}
  Let $n,r \geq 2$ be integers, $\alpha \in [\frac{1}{2} + \frac{1}{2r}, 1)$,
  and $\eps > 0$.  Let also $k_{\alpha} = k_{\alpha}(n) = \min\set{(2\alpha - 1)n, \frac{2\alpha n}{r}, \frac{2n}{r + 1}}$,
  and $p = p(n) \geq \frac{\log{n} + \log\log{n} + \omega(1)}{n}$, and suppose $G \sim G(n, p)$. Then \whp there exists an $r$-edge-colouring of an $\alpha$-residual spanning subgraph of $G$ such that every Hamilton cycle has at most $(1 + \eps)k_{\alpha}$ edges of the same colour. 
\end{proposition}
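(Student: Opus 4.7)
The plan is to prove tightness by exhibiting, for each regime of $\alpha$, an explicit $\alpha$-residual spanning subgraph $H$ of $G$ together with an $r$-edge-colouring that mirrors one of the three extremal constructions used in \Cref{prop:construction_deterministic_tight}. The regimes correspond to which term attains the minimum in $k_\alpha$: I use \Cref{construction:small_alpha} when $(2\alpha-1)n$ is the minimum, \Cref{construction:med_alpha} when $\tfrac{2\alpha n}{r}$ is the minimum, and \Cref{construction:large_alpha} when $\tfrac{2n}{r+1}$ is the minimum.

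The case $k_\alpha = \tfrac{2n}{r+1}$ is immediate: take $H = G$, which is trivially $\alpha$-residual since $\alpha < 1$; sample a uniformly random equitable partition $V(G) = V_1 \cup \cdots \cup V_{r+1}$ into parts of size $\lceil n/(r+1)\rceil$ or $\lfloor n/(r+1)\rfloor$; and colour every edge of $G$ according to the rule of \Cref{construction:large_alpha}. The counting argument from the proof of \Cref{prop:construction_deterministic_tight} in that case depends only on the part sizes, so it applies verbatim and yields at most $2\lceil n/(r+1)\rceil \leq (1+\eps)k_\alpha$ edges of any single colour on every Hamilton cycle of $G$.

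The cases $k_\alpha = (2\alpha-1)n$ and $k_\alpha = \tfrac{2\alpha n}{r}$ are parallel, so I describe only the first. Fix a small slack parameter $\eta = \eta(\eps, \alpha, r) > 0$ and sample a uniformly random partition $V(G) = V_1 \cup \cdots \cup V_r$ shaped as in \Cref{construction:small_alpha}, but with $|V_r|$ inflated to $(\alpha+\eta)n$ (and the remaining parts balanced in the other direction). Let $B := \{v \in V_1 \cup \cdots \cup V_{r-1} : d_G(v, V_r) < \alpha d_G(v)\}$ be the set of vertices on which the construction's deletion rule would violate $\alpha$-residuality. Define $H$ by deleting from $G$ every edge inside $V_1 \cup \cdots \cup V_{r-1}$ that is \emph{not} incident to $B$, colour every surviving edge as in \Cref{construction:small_alpha}, and assign the dominant colour $r$ to every restored edge. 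Directly from this description, $H$ is $\alpha$-residual: vertices in $V_r$ or in $B$ keep all of their $G$-edges, while vertices outside $B$ (and outside $V_r$) retain precisely their neighbours in $V_r$, which by definition number at least $\alpha d_G(v)$.

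The crux is bounding $|B|$. For each vertex $v$, Hoeffding's inequality for the hypergeometric distribution yields $\Pr[v \in B \mid G] \leq \exp(-2\eta^2 d_G(v))$; averaging against $d_G(v) \sim \operatorname{Bin}(n-1, p)$ with $p \geq (\log n + \log\log n + \omega(1))/n$ gives $\mathbb{E}|B| \leq n \cdot (1 - p(1 - e^{-2\eta^2}))^{n-1} \leq n^{1-c}$ for some $c = c(\eta) > 0$, and Markov's inequality then yields $|B| = o(n)$ w.h.p.\ over the joint randomness of $G$ and the partition. The colour-count bound on a Hamilton cycle $\mathcal{C}$ of $H$ now follows by repeating the computation in \Cref{prop:construction_deterministic_tight} with two bookkeeping corrections: the $\eta n$ inflation of $V_r$ contributes $O(\eta n)$ to each colour class, and the at most $O(|B|) = o(n)$ restored edges that $\mathcal{C}$ can use (each restored edge has both endpoints in $B$) contribute at most $o(n)$ to the dominant colour. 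Choosing $\eta$ small enough compared to $\eps(2\alpha - 1)$ (which is bounded below by $\eps/r$ in the relevant range) squeezes both corrections inside the $(1+\eps)$ factor. The main obstacle is precisely controlling $B$: near the Hamiltonicity threshold, $G(n,p)$ has vertices of degree as small as $2$, so $\alpha$-residuality forbids removing any edge at such a vertex, and the restoration step is exactly what rescues the residuality constraint at negligible cost to the colour balance.
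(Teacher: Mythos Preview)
Your argument is essentially correct, with one small slip: restored edges are those inside $V_1\cup\cdots\cup V_{r-1}$ incident to $B$, so they have \emph{at least one} endpoint in $B$, not both. The bound you need still holds, since any Hamilton cycle uses at most $2|B|$ edges meeting $B$; but the parenthetical justification should be fixed.

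That said, your route differs from the paper's in execution. The paper avoids the three-way case split and the restoration device entirely: it inflates $\alpha$ to some $\alpha'\in[(1+\delta)\alpha,(1+2\delta)\alpha]$, invokes \Cref{prop:construction_deterministic_tight} once with parameter $\alpha'$ to obtain a single coloured graph $F$ on $V(G)$ with $\delta(F)=\alpha' n$, and sets $G'\coloneqq G\cap F$ with the inherited colouring. A Chernoff bound gives $d_{G'}(v)\ge(1-\tfrac{\delta}{3})\,d_F(v)\,p$ and $d_G(v)\le(1+\tfrac{\delta}{3})\,np$ for every vertex, and these combine (using $d_F(v)\ge(1+\delta)\alpha n$) to yield $d_{G'}(v)\ge\alpha d_G(v)$. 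Since every Hamilton cycle of $G'$ is also a Hamilton cycle of $F$, the deterministic colour bound $2\lceil k_{\alpha'}/2\rceil\le(1+\eps)k_\alpha$ transfers directly. This is tidier: no bad set, no restoration, no regime analysis, and it leverages \Cref{prop:construction_deterministic_tight} as a black box. Your approach, on the other hand, is more explicit about what happens at low-degree vertices near the threshold: rather than demanding per-vertex degree concentration, you tolerate a sublinear set $B$ of failures and repair them by hand, which costs more bookkeeping but makes the handling of threshold-degree vertices transparent.
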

\begin{proof}
Fix $r \geq 2$, $\alpha \in [\frac{1}{2} + \frac{1}{2r}, 1)$, $\eps > 0$, and let $n$ be a sufficiently large integer relative to $r,\alpha,\eps$. Let $\delta = \min\set{\frac{1-\alpha}{4\alpha}, \frac{(2\alpha - 1)\eps}{8\alpha}}$ and let $\alpha'$ be such that $(1 + \delta)\alpha \le \alpha' \le (1 + 2\delta)\alpha$ and $\alpha' n$ is an integer. Note that $\alpha'\in [\frac{1}{2} + \frac{1}{2r}, 1)$. Let $F$ be the $r$-edge-coloured graph on the same vertex set as $G$ obtained from applying \Cref{prop:construction_deterministic_tight} with $r$ and $\alpha'$ (in place of $\alpha$). Let $G'$ be the $r$-edge-coloured spanning subgraph of $G$ with edge set $E(G) \cap E(F)$, where the colouring of $G'$ is the restriction of the colouring of $F$ to the edges of $G'$. We claim that \whp $G'$ is an $\alpha$-residual spanning subgraph of $G$. Indeed, by Chernoff's bound (see, \Cref{lem:chernoff}), \whp the following hold for every vertex $v$ of $F$. 
\begin{align*}
    d_{G'}(v) &\geq \paren{1 - \frac{\delta}{3}} d_F(v) p, & &\text{and} & d_G(v) &\leq \paren{1 + \frac{\delta}{3}} np.
\end{align*}
We also have $d_F(v) \geq \alpha' n \geq (1 + \delta)\alpha n$. Combining these facts with $0\le \delta\le 1$, we have
\begin{equation*}
    d_{G'}(v) \geq \paren{1 - \frac{\delta}{3}} d_F(v) p \geq \paren{1 - \frac{\delta}{3}} (1 + \delta)\alpha
    n p \geq \paren{\frac{\paren{1 - \frac{\delta}{3}}\paren{1 + \delta}}{1 + \frac{\delta}{3}}} \alpha d_G(v) \geq \alpha d_G(v).
\end{equation*}
Denote $k_{\alpha'} = k_{\alpha'}(n) = \min\set{(2\alpha' - 1)n, \frac{2\alpha' n}{r}, \frac{2n}{r + 1}}$. By \Cref{prop:construction_deterministic_tight}, we know that every Hamilton cycle in $F$ has at most $2\left\lceil\frac{k_{\alpha'}}{2}\right\rceil$ edges of the same colour. We now show that $k_{\alpha'} \leq (1 + \frac{\eps}{2})k_{\alpha}$. Analyzing each term of the set being minimized in the definition of $k_{\alpha'}$, first we have $2\alpha' - 1 \leq 2(1 + 2\delta)\alpha - 1 = 2\alpha - 1 + 4\alpha\delta \le (1 + \frac{\eps}{2})(2\alpha - 1)$. Second, $\frac{2\alpha'}{r} \leq (1 + 2\delta) \frac{2\alpha}{r} \leq (1 + \frac{\eps}{2}) \frac{2\alpha}{r}$. Given that the remaining term does not depend on $\alpha'$, the conclusion holds.

Finally, given that $G'$ is a spanning subgraph of $F$, every Hamilton cycle in $G'$ is also a Hamilton cycle in $F$. Therefore, every Hamilton cycle in $G'$ has at most $2\left\lceil\frac{k_{\alpha'}}{2}\right\rceil \leq k_{\alpha'} + 2 \leq (1 + \frac{\eps}{2})k_{\alpha} + 2 \leq (1 + \eps)k_{\alpha}$ edges of the same colour, where the last inequality uses the fact that $n$ was chosen sufficiently large. This finishes the proof.
\end{proof}

\subsection{Outline of proof}\label{sec:proof ideas}
We now give a brief and simplified outline of the proofs of \Cref{thm:deterministic main,thm:main}. (Recall that \Cref{thm:deterministic main} is the $p=1$ case of \Cref{thm:main}.) For that, let $G \sim G(n,p)$ with $p$ above the Hamiltonicity threshold. Let $H$ be an $\alpha$-residual subgraph of $G$, and fix an arbitrary edge-colouring of $H$ with $r$ colours. Similar to \cite{Gishboliner2021colourbiased}, our core strategy is to first obtain a large monochromatic \defin{linear forest} (i.e., a forest where each connected component is a path) in $H$, and then extend it to a Hamilton cycle using techniques such as P\'osa's rotation-extension. However, due to the generality of our settings, a lot of complication arises in executing this idea. We divide our arguments into four main steps, and we refer the reader to \Cref{fig:outline of proof} for a schematic overview while following these steps.

\begin{figure}[ht]
    \centering
    \includegraphics[scale=1]{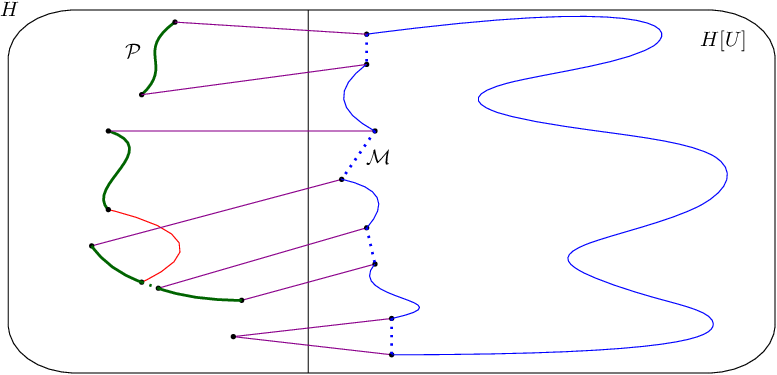}
    \caption{Outline of proof}
    \label{fig:outline of proof}
\end{figure}

\noindent {\bf Step~1.} First, we prove \Cref{thm:intro large monochromatic matching}, which is a Ramsey-type statement about finding large monochromatic matchings in edge-coloured graphs with a given minimum degree. This is done in \Cref{sec:mono matchings}. 

\medskip
\noindent {\bf Step~2.} We then find a large monochromatic linear forest $\mathcal{P}$ (i.e., the green paths in \Cref{fig:outline of proof}) in~$H$ with a constant number of paths and using at least $(1-o(1)) \min\left\{\frac{2\alpha n}{r}, \; \frac{2n}{r+1}\right\}$ edges in total (see, \Cref{cor:random graph large monochromatic linear forest}). To do so, we apply a `multicolour' variant of the sparse regularity lemma to $H$ to obtain an edge-coloured reduced graph. Then, using \Cref{thm:intro large monochromatic matching}, we find a large monochromatic matching in the reduced graph, each edge of which gives rise to a long monochromatic path in the original graph. This is inspired by the proof of~\Cref{thm:colour biased Ham cycle in random graph} by Gishboliner, Krivelevich, and Michaeli~\cite{Gishboliner2021colourbiased} and is covered in \Cref{sec:lonG_{n,M}ono_paths}. This step, together with a classical lemma of P\'osa (\Cref{lem:posa-force-hamilton}), already allows us to extend the large monochromatic forest $\mathcal{P}$ to a Hamilton cycle, thus proving \Cref{thm:deterministic main}. However, such arguments are not sufficient to prove \Cref{thm:main}, and we need a couple more steps to deal with it.

\medskip
\noindent {\bf Step~3.} To extend the large monochromatic linear forest $\mathcal{P}$ obtained in Step~2 to a Hamilton cycle, we next make minor alterations to `clean up' the graph $H$ (as shown by the addition of a red path and removal of a green edge in \Cref{fig:outline of proof}). We want to ensure that the graph induced on the unused vertices (i.e., not used in the linear forest) has sufficiently high degree. First, we cover the low-degree vertices of $H$ by adding a short path including each such vertex to the linear forest. We then also extend the paths to cover any unused vertex with a small degree to the set of unused vertices. In addition, we break up some of these paths (without removing too many of the monochromatic edges) to ensure that all endpoints of all paths have high degree to the set $U$ of as yet unused vertices. This last property is then used to extend the paths (using the purple edges in \Cref{fig:outline of proof}) to endpoints in a `well-behaved' subset of $U$. This well-behaviour and the high minimum degree of $H[U]$ will be useful in Step~4 to extend the `almost monochromatic' linear forest to a Hamilton cycle. Step~3 is executed in~\Cref{sec:clean-up}. 

\medskip
\noindent {\bf Step~4.} We now represent the paths of the `almost monochromatic' linear forest by a matching $\mathcal{M}$ where an edge is added between the two endpoints of a path in $U$ (see the blue dotted edges in \Cref{fig:outline of proof}). We next show that the graph $H[U]$ contains an expander that is `$\mathcal{M}$-respecting' in a natural sense. We then use P\'osa's rotation and extension technique and the idea of `booster' edges to find a Hamilton cycle on $U$ using the edges of $H[U]$ and the matching $\mathcal{M}$. In particular, the Hamilton cycle uses every edge of $\mathcal{M}$, so that replacing the edges of $\mathcal{M}$ by the corresponding paths gives a Hamilton cycle in $H$ with appropriate colour-bias. The ideas in this step are inspired by the proof of \Cref{thm:hittimg time montgomery} by Montgomery~\cite{Montgomery2019resilient}, with relatively minor adaptations to ensure that all steps are `$\mathcal{M}$-respecting' -- that is, if a vertex of $\mathcal{M}$ is used, then so is the corresponding edge. This is covered in \Cref{sec:resilience of Hamiltonicity}.

\medskip
See \Cref{fig:overview} for a diagram of these key steps of the proof. All these steps are combined in \Cref{sec:wrap up} to prove \Cref{thm:main,thm:hitting-time-main}. Before proceeding with our proofs, we devote the following section to collecting probabilistic tools that will be helpful throughout this paper.

\begin{figure}[h]
\centering
    \begin{tikzpicture}[
node distance = 5mm and 5mm,
  start chain = going below,
 block/.style = {draw, rounded corners, thick, align=center, minimum height=2em, inner sep=5pt,
                on chain},
 plate/.style={draw, shape=rectangle, rounded corners, dotted, align=left, text width=2cm, inner sep=4pt, label={[xshift=28mm,yshift=7mm]south west:#1}},
 FITout/.style = {draw, thick, rounded corners, dotted,inner sep=2mm, fit=#1}
  ]
    \node (main) [block] {\cref{thm:main,thm:hitting-time-main} \\
    (\Cref{sec:wrap up})};
    \node[draw=none] (dot1) [below= of main] {};
    \node (matching) [block] [below left=5mm and 4mm of dot1] {
    `Well-behaved' linear forest \\  represented by matching $\mathcal{M}$ in \\ set $U$ of as yet unused vertices
    };
    \node[draw=none] (dot2)  [below=12mm of matching] {};
    \node (cleanup) [block] [right=3mm of dot2]  {Clean-up \\
    (\cref{sec:clean-up})};
    \node (linearforest) [block] [left=3mm of dot2] {
    Large monochromatic \\ linear forest in $H$ \\(\cref{sec:lonG_{n,M}ono_paths})
    };
    \node (monomatching) [block] [below=of linearforest] {
   \Cref{thm:intro large monochromatic matching} \\ Large monochromatic\\ matching  in dense graphs 
    \\(\cref{sec:mono matchings})
    };
    \node (deterministic) [block] [left= 5 mm of matching] {
   \Cref{thm:deterministic main}
    \\(\cref{sec:lonG_{n,M}ono_paths})
    };
    \node (expander) [block] [below right =7mm and 4mm of dot1]  {$\exists$ expander \\ (\cref{sec:expander})};
    \node[draw=none] (dot)  [right=2.5mm of expander] {};
    \node (glue) [block] [right=2.5mm of dot] { expander $\Rightarrow$ \\ Ham.~cycle \\ (\cref{sec:join_paths_to_cycle})};
    
\node[FITout={(expander) (glue)}, 
      label={below:{In $H[U]$ and {`$\mathcal{M}$-respecting'}}}] (x1) {};
    
  \draw[->, thick,  to path={-- (\tikztotarget)}]
(monomatching) edge (linearforest);
  \draw[->, thick,  to path={-- (\tikztotarget)}]
(linearforest) edge (deterministic);
  \draw[->, thick]
 (dot2.center) -- (matching);
  \draw[-, thick]
   (matching) |- (dot1.center);
    \draw[-, thick]
   (dot.center) |- (dot1.center) ;
    \draw[->, thick]
 (dot1.center) -- (main);
  \draw[-, thick, to path={-- (\tikztotarget)}]
 (expander) edge (glue)
 (linearforest) edge (cleanup); 
 \draw[->, thick, dotted, to path={-- (\tikztotarget)}]
 (matching) edge (x1);
    \end{tikzpicture}
    \caption{Overview of the proof structure}
    \label{fig:overview}
\end{figure}

\bigskip \section{Probabilistic tools and properties of random graphs}
\label{sec:prop-binom-rand}
Our most fundamental tool is Chernoff's bound (see, e.g., \cite[Corollary~2.3]{janson2011random} or \cite[Lemma~2.2]{Montgomery2019resilient}).
\begin{lemma}[Chernoff's bound]
  \label{lem:chernoff}
  If $X$ is a binomial random variable, and $0 < \eps \leq \frac{3}{2}$, then
  \begin{equation*}
    \Prob{|X - \Expect X| \geq \eps \Expect X} \leq 2 \exp{\paren{-\frac{\eps^2 \Expect X}{3}}}. \qed
  \end{equation*}
\end{lemma}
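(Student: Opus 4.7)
The plan is to prove the upper and lower tail bounds about $\mu := \Expect X$ separately, each with bound $\exp(-\eps^2 \mu / 3)$, and then combine them via a union bound to account for the factor of $2$ in the statement. Since $X$ is binomial, I can write $X = X_1 + \cdots + X_n$ as a sum of independent Bernoulli variables with $\sum_i p_i = \mu$. The starting point is the classical exponential-moment (Chernoff bounding) technique: apply Markov's inequality to $e^{tX}$ for a parameter $t$ that will be optimized at the end.

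For the upper tail, for any $t > 0$ Markov's inequality gives
\[
  \Prob{X \geq (1+\eps)\mu} \;=\; \Prob{e^{tX} \geq e^{t(1+\eps)\mu}} \;\leq\; e^{-t(1+\eps)\mu} \cdot \Expect e^{tX}.
\]
Independence together with the elementary inequality $1+x \leq e^x$ then yields
\[
  \Expect e^{tX} \;=\; \prod_i \paren{1 - p_i + p_i e^t} \;\leq\; \exp\!\paren{\mu(e^t - 1)}.
\]
Choosing $t = \ln(1+\eps)$ to balance the two exponentials produces the well-known sharp form $\Prob{X \geq (1+\eps)\mu} \leq \big(e^\eps / (1+\eps)^{1+\eps}\big)^{\mu}$. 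The lower tail is handled symmetrically by applying Markov's inequality with $t < 0$ to obtain $\Prob{X \leq (1-\eps)\mu} \leq \big(e^{-\eps} / (1-\eps)^{1-\eps}\big)^{\mu}$; note that the lower-tail event is trivially empty for $\eps \geq 1$ since $X \geq 0$, so only $\eps \in (0,1)$ matters in that case.

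What remains is the pair of analytic inequalities
\[
  \frac{e^\eps}{(1+\eps)^{1+\eps}} \leq \exp\!\paren{-\tfrac{\eps^2}{3}} \text{ for } 0 < \eps \leq \tfrac{3}{2}, \quad\text{and}\quad \frac{e^{-\eps}}{(1-\eps)^{1-\eps}} \leq \exp\!\paren{-\tfrac{\eps^2}{3}} \text{ for } 0 < \eps < 1.
\]
After taking logarithms, each reduces to showing that a smooth one-variable function of $\eps$ is non-positive on the stated interval, which is routine via Taylor expansion at $\eps = 0$ together with a monotonicity check up to the relevant endpoint.

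Since Chernoff's bound is a classical result, there is no substantive obstacle in the plan; the only technical subtlety lies in the calculus step above. The constant $3$ in the exponent is precisely what constrains the range to $\eps \leq \tfrac{3}{2}$ in the upper-tail hypothesis, and a sharper constant would tighten this range. In any case, the argument can be short-circuited by citing one of the standard references listed immediately after the statement.
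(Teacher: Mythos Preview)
Your proposal is correct and is the standard exponential-moment argument that appears in the references the paper cites for this lemma. The paper itself gives no proof at all (note the \texttt{\textbackslash qed} in the statement and the citations to \cite{janson2011random} and \cite{Montgomery2019resilient}), so there is nothing further to compare.
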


The following variant of Chernoff's bound can be found in \cite[Proposition~2.3]{Montgomery2019resilient}.
\begin{proposition}
  \label{prop:weird-chernoff}
  Let $n \in \naturals$ and let $0 \leq p \leq 1$.
  Suppose $X_1, \dots, X_n$ are independent random variables, each
  equal to $1$ with probability $p$, and $0$
  otherwise. Suppose $\delta_i \in \set{1, 2}$, for each $i \in [n]$,
  and denote $X = \sum_{i \in [n]} \delta_i X_i$. Then, for each $0 <
  \eps < 1$, we have
  \begin{equation*}
    \Prob{|X - \Expect{X}| \geq \eps \Expect{X}} \leq 4
    \exp\paren{-\frac{\eps^2 \Expect{X}}{9}}.
  \end{equation*}
\end{proposition}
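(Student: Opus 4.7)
The plan is to split $X$ according to the values of $\delta_i$ and apply the standard Chernoff bound (\Cref{lem:chernoff}) to each piece. First I would partition the index set as $I_1 = \set{i \in [n] : \delta_i = 1}$ and $I_2 = \set{i \in [n] : \delta_i = 2}$, and set $S_j = \sum_{i \in I_j} X_i$ for $j \in \set{1,2}$. Then $S_1$ and $S_2$ are independent binomial random variables and $X = S_1 + 2 S_2$, whence $\Expect{X} = \Expect{S_1} + 2\Expect{S_2}$.

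The triangle inequality gives $|X - \Expect{X}| \leq |S_1 - \Expect{S_1}| + 2|S_2 - \Expect{S_2}|$, so the event $\set{|X - \Expect{X}| \geq \eps \Expect{X}}$ is contained in $\set{|S_1 - \Expect{S_1}| \geq \eps \Expect{X}/2} \cup \set{|S_2 - \Expect{S_2}| \geq \eps \Expect{X}/4}$. I would then apply \Cref{lem:chernoff} to each $S_j$ with deviation parameter $\eps_j$ chosen so that $\eps_j \Expect{S_j}$ matches the corresponding threshold. The relations $\Expect{S_1} \leq \Expect{X}$ and $2\Expect{S_2} \leq \Expect{X}$ yield $\eps_j^2 \Expect{S_j} \geq c \eps^2 \Expect{X}$ for some explicit $c > 0$, and combining with a union bound produces a tail bound of the target form $4 \exp(-c' \eps^2 \Expect{X})$ for some $c' > 0$. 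The degenerate cases $I_1 = \emptyset$ or $I_2 = \emptyset$ reduce immediately to a direct application of \Cref{lem:chernoff}.

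The main obstacle will be extracting the specific constant $1/9$ claimed in the statement, rather than the somewhat weaker constant that the naive split yields. Two refinements should suffice: first, splitting $\eps \Expect{X}$ between the two components in proportion to $\Expect{S_j}$ (instead of evenly) removes the slack from the triangle inequality; second, the range $\eps_j > 3/2$, where \Cref{lem:chernoff} does not apply directly, can be handled using the alternative form $\Prob{S_j \geq (1+\eps_j)\Expect{S_j}} \leq \exp\paren{-((1+\eps_j)\log(1+\eps_j) - \eps_j) \Expect{S_j}}$, which is stronger in that regime. Alternatively, a single-shot moment generating function bound directly on $X$ --- using $\Expect{\exp(s(\delta_i X_i - \delta_i p))} \leq \exp(p(\exp(s \delta_i) - 1 - s \delta_i))$ together with $\sum_i \delta_i^2 \leq 2 \sum_i \delta_i$ --- gives an MGF bound of the shape $\exp(C s^2 \Expect{X})$ for $|s|$ small, and optimising over $s$ with a careful Taylor estimate of $\exp(x) - 1 - x$, followed by a union bound over the two tails (which produces the factor $4$), should recover the stated constant in one stroke.
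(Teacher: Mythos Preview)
The paper does not prove this proposition; it is quoted from \cite[Proposition~2.3]{Montgomery2019resilient} without argument, so there is no in-paper proof to compare against.

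Your plan is sound, and your direct MGF route works cleanly: the bound $\Expect{e^{s(\delta_iX_i-\delta_ip)}}\le\exp\bigl(p(e^{s\delta_i}-1-s\delta_i)\bigr)$, together with $e^x-1-x\le x^2$ for $|x|\le 1$ and $\sum_i\delta_i^2\le 2\sum_i\delta_i$, gives $\Expect{e^{s(X-\Expect{X})}}\le\exp(2s^2\Expect{X})$ for $|s|\le\tfrac12$; optimising at $s=\eps/4$ and treating both tails yields $2\exp(-\eps^2\Expect{X}/8)$, which is stronger than the stated bound. One caution on your first route: splitting the deviation \emph{proportionally} to $\Expect{S_j}$ produces $2\exp(-\eps^2\Expect{S_1}/3)+2\exp(-\eps^2\Expect{S_2}/3)$, and this is not of the required form when one of $\Expect{S_1},\Expect{S_2}$ is much smaller than $\Expect{X}$. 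To recover the constant $1/9$ via splitting, you should instead choose thresholds $a,b$ with $a+2b=\eps\Expect{X}$ so as to equalise $a^2/\Expect{S_1}$ and $b^2/\Expect{S_2}$; then Cauchy--Schwarz in the form $\bigl(\sqrt{\Expect{S_1}}+2\sqrt{\Expect{S_2}}\bigr)^2\le 3\,\Expect{X}$ delivers exactly $4\exp(-\eps^2\Expect{X}/9)$, with your large-$\eps_j$ refinement covering the degenerate cases.
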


We can bound the degrees in $G(n,p)$ using the following pair of standard lemmas. 
\begin{lemma}[\cite{frieze2015introduction}]\label{lem:lower_bound_degrees}
    Let $G \sim G(n,p)$ with $p \ge \frac{\log{n} + \log\log{n} + \omega(1)}{n}$. Then \whp $d(v) \ge 2$ for all $v \in V(G)$. 
\end{lemma}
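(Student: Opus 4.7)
The plan is to apply the first moment method: bound the probability that a single vertex has degree at most $1$, then union bound over the $n$ vertices. For a fixed $v \in V(G)$, the degree $d(v)$ is binomial with parameters $n-1$ and $p$, so
\begin{equation*}
  \Prob{d(v) \leq 1} = (1-p)^{n-1} + (n-1)p(1-p)^{n-2}.
\end{equation*}
Using the standard estimate $(1-p)^{n-1} = e^{-(1+o(1))pn}$ (valid since $p = o(1)$), together with the lower bound $pn \geq \log n + \log\log n + \omega(1)$, I would deduce
\begin{equation*}
  (1-p)^{n-1} \leq \exp\bigl(-\log n - \log\log n - \omega(1)\bigr) = \frac{1}{n \log n} \cdot e^{-\omega(1)}.
\end{equation*}

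For the second term, I would bound $(n-1)p(1-p)^{n-2} \leq np \cdot (1-p)^{n-1}/(1-p)$. Since $p = o(1)$, we have $1-p \geq 1/2$ for $n$ large, and $np \leq 2\log n$ (say, for $n$ large enough, since $p \leq 2\log n / n$ in the regime of interest; if $p$ is larger the statement is even easier and can be handled by monotonicity, coupling $G(n,p)$ with a $G(n,p')$ for $p' = \Theta(\log n / n)$). Combining,
\begin{equation*}
  \Prob{d(v) \leq 1} \leq \frac{C \log n}{n \log n} \cdot e^{-\omega(1)} = \frac{C}{n} \cdot e^{-\omega(1)}
\end{equation*}
for some absolute constant $C$.

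A union bound over all $n$ vertices then gives $\Prob{\delta(G) \leq 1} \leq C e^{-\omega(1)} = o(1)$, as desired. The only mild subtlety is handling the monotonicity comment: since adding edges can only increase minimum degree, it suffices to prove the statement for $p$ equal to the stated lower bound, which is $O(\log n / n)$, so no separate argument is needed for large $p$. No step here is a real obstacle; the argument is a routine first-moment computation with the standard choice of $p$ at the connectivity/Hamiltonicity threshold.
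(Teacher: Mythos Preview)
Your argument is correct and is the standard first-moment computation for this threshold. Note, however, that the paper does not actually give a proof of this lemma: it is stated with a citation to a textbook and used as a black box, so there is no ``paper's own proof'' to compare against. Your write-up would serve perfectly well as a self-contained proof if one were desired.
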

For the convenience of proving \Cref{thm:hitting-time-main}, we often specify some error terms on the probability in the following lemmas. How these error terms will help us is explained just before stating \Cref{lemma:model-switching}.
\begin{lemma}\label{lem:upper_bound_degrees}
    Let $G \sim G(n,p)$ with $p \ge \frac{\log{n}}{2n}$. Then with probability $1-o(n^{-3})$, we have $d(v) \le 10np$  for all $v \in V(G)$.
\end{lemma}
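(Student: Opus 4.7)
The plan is to bound the upper tail of a single vertex degree using a Chernoff-type inequality and then take a union bound over the $n$ vertices. Fix a vertex $v$; then $d(v)\sim\mathrm{Bin}(n-1,p)$ has mean $\mu_v=(n-1)p\le np$, and the event $\{d(v)\ge 10np\}$ is contained in $\{d(v)\ge 10\mu_v\}$, so it suffices to estimate the latter.

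The main subtlety is that Lemma~\ref{lem:chernoff} is stated only for relative deviations $\eps\le 3/2$, which is too weak for a factor-of-$10$ overshoot of the mean. Instead I would invoke the standard multiplicative large-deviation form
\[
  \Prob{X\ge(1+\delta)\mu}\le \left(\frac{e^{\delta}}{(1+\delta)^{1+\delta}}\right)^{\mu}\qquad\text{for every }\delta>0,
\]
which follows from the usual moment-generating-function argument (Markov applied to $e^{\lambda X}$, optimised in $\lambda>0$). Setting $\delta=9$ and using $e^{9}/10^{10}\le e^{-14}$ gives
\[
  \Prob{d(v)\ge 10np}\le \Prob{d(v)\ge 10\mu_v}\le e^{-14(n-1)p}.
\]

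Plugging in the hypothesis $p\ge \tfrac{\log n}{2n}$, we obtain $(n-1)p\ge \tfrac{n-1}{2n}\log n\ge \tfrac{2}{5}\log n$ once $n$ is large enough, so $\Prob{d(v)\ge 10np}\le n^{-28/5}$. A union bound over all $v\in V(G)$ yields
\[
  \Prob{\exists\, v\in V(G):\; d(v)> 10np}\;\le\; n\cdot n^{-28/5}\;=\;n^{-23/5}\;=\;o(n^{-3}),
\]
as required.

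I do not anticipate any real obstacle: the only substantive point is that one must use a tail bound effective in the large-deviation regime rather than the constant-factor version recorded in Lemma~\ref{lem:chernoff}. An equivalent option, giving the same conclusion with an even shorter calculation, is the well-known consequence that for $t\ge 7\,\mathbb{E}X$ one has $\Prob{X\ge t}\le e^{-t}$ (see, e.g., \cite[Corollary~2.4]{janson2011random}); applied with $t=10np$, it immediately produces $\Prob{d(v)\ge 10np}\le n^{-5}$, which is more than enough for the union bound.
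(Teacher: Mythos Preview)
Your proposal is correct and follows essentially the same approach as the paper: a large-deviation tail bound on a single binomial degree followed by a union bound. The only cosmetic difference is that the paper bounds the tail directly via $\Prob{d(v)\ge 10np}\le \binom{n}{10np}p^{10np}\le (e/10)^{10np}\le n^{-5}$ rather than invoking the multiplicative Chernoff inequality, but this is the same computation in slightly different packaging.
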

\begin{proof}
For every vertex $v\in V(G)$, we have
\[
\Prob{d(v)\ge 10np}\le \binom{n}{10np}p^{10np}\le \left(\frac{enp}{10np}\right)^{10np}\le \left(\frac{1}{e}\right)^{5\log n}=n^{-5}.
\]
This, together with a union bound over all vertices in $V(G)$, implies the lemma.
\end{proof}

The following pair of lemmas allows us to bound the number of edges in large sets and small sets, respectively, in random graphs.
\begin{lemma}[Lemma 2.4 of \cite{Montgomery2019resilient}]\label{lem:large-sets-pseudorandom-property-Gnp}
Let $0 < \lambda < 1$ and $G \sim G(n,p)$. Then with probability $1-o(n^{-3})$, for all $A,B\subseteq V(G)$ satisfying $p|A||B|\ge 100n/\lambda^2$, we have $e_G(A,B) = (1\pm \lambda)p|A||B|$.
\end{lemma}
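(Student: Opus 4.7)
The plan is a first-moment concentration argument for each fixed pair $(A,B)$, followed by a union bound over the roughly $4^n$ possible pairs. I would fix $A, B \subseteq V(G)$ and write $e_G(A,B) = \sum_{e} \delta_e X_e$, where $e$ ranges over the unordered pairs of distinct vertices in $V(G)$, $X_e$ is the indicator that $e \in E(G)$, and $\delta_e \in \{0,1,2\}$ records how many ordered pairs in $A \times B$ have underlying edge $e$. Concretely, $\delta_e = 2$ exactly when both endpoints of $e$ lie in $A \cap B$, and $\delta_e = 1$ exactly when $e$ contributes one ordered pair. Hence $\mathbb{E}[e_G(A,B)] = p(|A||B| - |A \cap B|)$, which differs from the target $p|A||B|$ by at most $pn$. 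The hypothesis $p|A||B| \geq 100n/\lambda^2$ together with $p \leq 1$ forces $pn \leq p|A||B|/100$, so it suffices to show $e_G(A,B) = (1 \pm \lambda/2)\,\mathbb{E}[e_G(A,B)]$.

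Since some edges are counted with coefficient $2$, I would not apply the classical Chernoff bound (\Cref{lem:chernoff}) directly. Instead I would invoke \Cref{prop:weird-chernoff} with $\varepsilon = \lambda/2$ on the sum $\sum_e \delta_e X_e$ (dropping zero coefficients). This gives, for a fixed pair $(A,B)$, a failure probability at most $4\exp(-\lambda^2 \mathbb{E}[e_G(A,B)]/36)$. Using $\mathbb{E}[e_G(A,B)] \geq \tfrac{99}{100}\,p|A||B| \geq 99n/\lambda^2$ from the previous paragraph, this is at most $4\exp(-cn)$ with $c = 99/36$.

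Finally, I would take a union bound over all ordered pairs $(A,B)$. Each vertex of $V(G)$ independently falls into one of four categories $A\setminus B$, $B\setminus A$, $A\cap B$, or neither, so there are at most $4^n$ such pairs. The total failure probability is therefore at most $4^{n+1}\exp(-cn) = \exp\bigl(-(c - \log 4)n + O(1)\bigr)$, which is $o(n^{-3})$ because $c = 99/36 > \log 4$. The constant $100$ in the hypothesis was chosen precisely to leave this margin. I do not foresee any genuine obstacle here — the only bookkeeping subtlety is the factor of $2$ for edges lying inside $A \cap B$, which is exactly what \Cref{prop:weird-chernoff} is designed to absorb.
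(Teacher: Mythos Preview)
The paper does not give its own proof of this lemma; it is cited from \cite{Montgomery2019resilient}. Your argument --- concentration via \Cref{prop:weird-chernoff} for each fixed pair $(A,B)$ followed by a union bound over the at most $4^n$ choices --- is the standard approach and is correct in outline.

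One small arithmetic point: the bound $pn \leq p|A||B|/100$ you record is true but not sharp enough, on its own, to justify the reduction to showing $e_G(A,B) = (1 \pm \lambda/2)\,\mathbb{E}[e_G(A,B)]$ when $\lambda$ is very small (below $2/101$, the lower bound needed for $(1-\lambda/2)(1-1/100) \geq 1-\lambda$ fails). The hypothesis in fact yields the stronger estimate $p|A \cap B| \leq pn \leq (\lambda^2/100)\,p|A||B|$: just rearrange $p|A||B| \geq 100n/\lambda^2$ to $n \leq (\lambda^2/100)\,p|A||B|$ and multiply by $p \leq 1$. With this bound the reduction is valid for all $\lambda \in (0,1)$, since $(1-\lambda/2)(1-\lambda^2/100) \geq 1 - \lambda/2 - \lambda^2/100 \geq 1 - \lambda$. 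The rest of your computation, including the final comparison $99/36 > \log 4$, is correct.
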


\begin{lemma}\label{lem:small-sets-cannot-contain-too-many-edges}
For every $a,b >0$ with $a+b > 1$, there exists $C >0$ such that the following holds. Let $G \sim G(n,p)$ with $p\ge \frac{C}{n}$. Then with probability $1-o(n^{-3})$, for every set $S$ of size at most $n(np)^{-a}$, we have $e_G(S)\le |S|(np)^{b}$.
\end{lemma}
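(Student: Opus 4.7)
The plan is a union bound over the size $s=|S|$ and over all $\binom{n}{s}$ subsets of that size, combined with a Chernoff-style upper tail bound on $e_G(S)\sim\mathrm{Bin}\!\paren{\binom{s}{2},p}$. Write $\delta:=a+b-1>0$.

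A trivial first observation: when $s\le 2(np)^b+1$, the conclusion is automatic since $e_G(S)\le\binom{s}{2}=s(s-1)/2\le s(np)^b$. It therefore suffices to bound the probability of failure for $s$ in the range $[s_0,\,n(np)^{-a}]$, where $s_0:=\lceil 2(np)^b+2\rceil$. For such $s$, set $k:=s(np)^b$; the constraint $s\le n(np)^{-a}$ together with $\delta>0$ guarantees $k\ge\binom{s}{2}p$ (the mean) once $C\ge C_0(a,b)$, so the standard binomial upper tail bound applies:
\[
\Prob{e_G(S)\ge k}\;\le\;\paren{\frac{e\binom{s}{2}p}{k}}^{\!k}\;\le\;\paren{\frac{esp}{2(np)^b}}^{\!s(np)^b}\;\le\;\paren{\frac{e}{2}(np)^{-\delta}}^{\!s(np)^b},
\]
the last step using $s\le n(np)^{-a}$. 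A union bound over the $\binom{n}{s}\le(en/s)^s$ subsets of size $s$ then gives, as an upper bound on the probability of failure at size $s$,
\[
\paren{\frac{en}{s}}^{\!s}\paren{\frac{e}{2}(np)^{-\delta}}^{\!s(np)^b}. \qquad (\ast)
\]

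To finish, I would sum $(\ast)$ over $s\in[s_0,\,n(np)^{-a}]$ (at most $n$ terms) and verify that the total is $o(n^{-3})$ for $C$ sufficiently large depending on $a,b$. Taking logarithms, the exponent at size $s$ is at most $s\log(en/s)-\frac{\delta}{2}\,s(np)^b\log(np)$ (absorbing $\log(e/2)$ into $C$). In the regime $np=\omega(1)$ the Chernoff term overwhelms the union-bound term since $s\ge s_0\ge 2(np)^b$ and $(np)^b\log(np)\to\infty$, yielding a bound much better than $n^{-4}$.

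The main obstacle is the delicate regime $np=\Theta(1)$, in which $(np)^b\log(np)$ does not grow with $n$ and the asymptotic estimate above breaks down. Here $s_0=O(1)$ and the gap $\binom{s_0}{2}-s_0(np)^b$ is a bounded constant, so the bad event for $s$ near $s_0$ essentially forces $S$ to be a clique on $s_0$ vertices. I would therefore switch to a direct first-moment estimate: the expected number of copies of $K_{s_0}$ in $G(n,p)$ is $\binom{n}{s_0}p^{\binom{s_0}{2}}=O\!\paren{n^{s_0-\binom{s_0}{2}}(np)^{\binom{s_0}{2}}}$, which is $o(n^{-3})$ once $s_0\ge 5$ — an assumption secured by picking $C\ge 2^{1/b}$. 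Nearly identical (and strictly better) estimates control $s=s_0+1,s_0+2,\dots$ up to the range where the original Chernoff calculation becomes effective. Choosing $C_0(a,b)$ so that both regimes are covered simultaneously then gives the desired bound.
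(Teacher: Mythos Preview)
Your overall strategy—union bound over set sizes combined with the binomial upper-tail bound—matches the paper's. The gap is that you substitute $s\le n(np)^{-a}$ too early. With this substitution, when $np=C$ is a fixed large constant, $(\ast)$ becomes $\bigl(\tfrac{en}{s}\cdot\rho^{C^b}\bigr)^s$ with $\rho=\tfrac{e}{2}C^{-\delta}<1$, and the inner bracket exceeds $1$ for every $s<en\rho^{C^b}$—that is, for $s$ up to a \emph{constant fraction of $n$}, not just near $s_0$. So the range on which your ``original Chernoff calculation becomes effective'' is far narrower than you suggest; in particular, your claim that $np=\omega(1)$ already suffices is false (take $np=(\log n)^{1/(2b)}$, so that $(np)^b\log(np)=o(\log n)$ and the union-bound term $s\log(en/s)$ still dominates). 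Your clique-counting patch also targets the wrong event: at size $s_0$ the bad event is $e_G(S)>s_0(np)^b$, which is short of a full clique by roughly $(np)^b$ edges, so bounding the expected number of $K_{s_0}$'s does not control it.

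The fix is simply not to discard $s$. Keeping your intermediate bound $\bigl(\tfrac{en}{s}\bigr)^s\bigl(\tfrac{esp}{2(np)^b}\bigr)^{s(np)^b}$ and splitting at $s=(\log n)^2$ handles everything: for $s\le(\log n)^2$ one substitutes $s\le(\log n)^2$ in the second factor to get a term at most $(1/n)^s$ (this needs only $C^b>2$); for $s\ge(\log n)^2$ one substitutes $s\le n(np)^{-a}$ as you did to get a term at most $(1/2)^s$, and the sum from $(\log n)^2$ is already $o(n^{-3})$. This two-range split is exactly the paper's argument and is the missing ingredient in yours.
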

\begin{proof}
Choose $C$ sufficiently large compared to $a$ and $b$. Clearly, every set $S$ of size less than $(np)^{b}$ satisfies that $e_G(S)\le \binom{|S|}{2}\le |S|(np)^{b}$. By the union bound, the probability that a set $S$ with size at least  $(np)^{b}$ and at most $n(np)^{-a}$ violates the desired bound on $e(S)$ is at most 

\begin{align*}
\sum_{s=(np)^{b}}^{n(np)^{-a}} \binom{n}{s} \binom{\binom{s}{2}}{s(np)^{b}} &p^{s(np)^{b}} 
\le \sum_{s=(np)^{b}}^{n(np)^{-a}} \left(\frac{en}{s}\right)^s \left(\frac{es^2 p}{s(np)^{b}}\right)^{s(np)^{b}} \\
&= \sum_{s=(np)^{b}}^{n(np)^{-a}} \left(\frac{en}{s}\left(\frac{esp}{(np)^{b}}\right)^{(np)^{b}}\right)^s
\\
&\le \sum_{s=(np)^b}^{(\log{n})^2} \left(\frac{en}{(\log{n})^2}\left(\frac{ep(\log{n})^2}{(np)^{b}}\right)^{(np)^{b}}\right)^s + \sum_{s=(\log{n})^2}^{n(np)^{-a}} \left(e(np)^a\left(e(np)^{1-a-b}\right)^{(np)^{b}}\right)^s 
\\
&\le \sum_{s=(np)^{b}}^{(\log{n})^2} \left(\frac{1}{n}\right)^s + \sum_{s=(\log{n})^2}^{n(np)^{-a}} \left(\frac{1}{2}\right)^s
\quad = o(n^{-3}).
\end{align*}
In the above calculations, we frequently used the fact that $np\ge C$, where $C$ was chosen to be sufficiently large compared to $a$ and $b$. This finishes the proof.
\end{proof}

The next proposition bounds the likelihood of certain subgraphs appearing in $G(n,p)$.
\begin{proposition}[Proposition 2.5 of \cite{Montgomery2019resilient}]
  \label{prop:sparse-containment-Gnp}
  For each $0 < \delta < 1$, there exists $n_0$ such that, for each $n \geq n_0$ and $p \geq
  \frac{1}{n}$,
  \begin{equation*}
    \sum_{H \subseteq K_n, e(H) \leq \delta p n^2} \Prob{H \subseteq G(n, p)} \leq \exp{\paren{2\delta \log\paren{\frac{\e}{\delta}} \cdot pn^2}}.
  \end{equation*}
\end{proposition}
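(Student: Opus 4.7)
The plan is to bound the sum by an exponential-moment (Chernoff-style) identity, rather than by counting subgraphs size class by size class. First, observe that for any fixed subgraph $H \subseteq K_n$ one has $\Prob{H \subseteq G(n,p)} = p^{e(H)}$, so the quantity to be estimated equals
\[
S \;:=\; \sum_{\substack{H \subseteq K_n\\ e(H) \leq m}} p^{e(H)}, \qquad \text{where } m := \delta p n^2.
\]

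Next, for any $\lambda \in (0,1)$ and any graph $H$ with $e(H) \leq m$ we have $\lambda^{e(H)-m} \geq 1$, so $\mathbf{1}[e(H) \leq m] \leq \lambda^{e(H)-m}$. Summing this inequality over all subgraphs of $K_n$ and using the binomial identity $\sum_{H \subseteq K_n} x^{e(H)} = (1+x)^{\binom{n}{2}}$ together with $1 + x \leq \exp x$, I would obtain
\[
S \;\leq\; \lambda^{-m} \sum_{H \subseteq K_n} (p\lambda)^{e(H)} \;=\; \lambda^{-m} (1 + p\lambda)^{\binom{n}{2}} \;\leq\; \exp\paren{pn^2 \paren{\tfrac{\lambda}{2} - \delta \log \lambda}}.
\]

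The final step is to optimize the exponent over $\lambda$. The function $g(\lambda) := \lambda/2 - \delta \log \lambda$ attains its minimum at $\lambda^{\ast} = 2\delta$. When $\delta < \tfrac12$ the minimizer $\lambda^{\ast}$ lies in $(0,1)$ and the minimum value is $\delta\paren{1 - \log(2\delta)}$; a one-line check (the difference equals $\delta \log(2e/\delta) > 0$) shows this is at most $2\delta \log(e/\delta)$, which is exactly the target exponent. When $\delta \geq \tfrac12$ the minimizer leaves $(0,1)$, and here I would instead use the trivial bound $S \leq (1+p)^{\binom{n}{2}} \leq \exp(pn^2/2)$, together with the elementary inequality $2\delta \log(e/\delta) \geq \tfrac12$ for $\delta \in [\tfrac12,1)$ (which holds because $\delta \mapsto 2\delta \log(e/\delta)$ is increasing on $(0,1)$ and equals $1 + \log 2$ at $\delta = \tfrac12$).

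There is no genuine obstacle here; the argument is a short exponential-moment computation plus two elementary numerical checks, and in particular the ``$n \geq n_0$'' hypothesis is not actually needed. That hypothesis would become necessary only for the cruder direct approach $\binom{\binom{n}{2}}{k} p^k \leq \paren{en^2 p/(2k)}^k$, where one takes the maximum over $k \leq m$ and then absorbs the resulting factor of $m+1$ into the exponent for $n$ large enough (using $pn^2 \geq n$).
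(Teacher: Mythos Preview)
The paper does not supply its own proof of this proposition; it simply quotes the result from \cite{Montgomery2019resilient}. Your exponential-moment argument is correct and self-contained: the tilting inequality $\mathbf{1}[e(H) \leq m] \leq \lambda^{e(H)-m}$ for $\lambda \in (0,1)$, together with the identity $\sum_{H \subseteq K_n} x^{e(H)} = (1+x)^{\binom{n}{2}}$, gives $S \leq \exp\bigl(pn^2(\lambda/2 - \delta\log\lambda)\bigr)$, and your treatment of the optimization in both the $\delta < \tfrac12$ and $\delta \geq \tfrac12$ regimes (including the two one-line numerical checks) is accurate. Your observation that the hypothesis $n \geq n_0$ is not actually needed in this approach is also correct; that hypothesis is only required for the cruder size-class counting argument you sketch at the end.
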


When proving a statement about the random graph process, it is often convenient to prove it in $G(n, p)$, with probability $1 - o(n^{-3})$, and then use the following standard model-switching auxiliary lemma (which can be found in, e.g., \cite{MR1864966}) to obtain the same conclusion in $G_{n,M}$ with probability $1 - o(n^{-2})$, which can then be transformed into a statement about the random graph process \whp by using a union bound.

\begin{lemma}[\cite{MR1864966}]
  \label{lemma:model-switching}
  Let $n \in \naturals$, $1 \leq M \leq \binom{n}{2}$ and $p = M /
  \binom{n}{2}$, and let $\cP$ be a graph property. Then
  \begin{equation*}
    \Prob{G_{n,M} \text{ has property } \cP} \leq 2n \cdot \Prob{G(n, p)
      \text{ has property } \cP}.
  \end{equation*}
\end{lemma}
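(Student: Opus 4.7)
The plan is to use the standard conditioning argument relating $G(n,p)$ and $G_{n,M}$. The key observation is that the conditional distribution of $G(n,p)$ given that it has exactly $M$ edges coincides with the distribution of $G_{n,M}$. Indeed, in $G(n,p)$ every specific labelled graph on $n$ vertices with exactly $M$ edges receives probability $p^M(1-p)^{\binom{n}{2}-M}$, a quantity depending only on $M$; conditioning on $|E(G(n,p))|=M$ therefore yields a uniform measure on graphs with exactly $M$ edges, which is precisely the law of $G_{n,M}$.

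Writing $N=\binom{n}{2}$ and letting $X$ denote the number of edges of $G(n,p)$, so that $X$ is distributed as $\mathrm{Bin}(N,p)$, I would use the above observation to compute
\begin{equation*}
  \Prob{G(n,p)\text{ has property }\cP} \ge \Prob{G(n,p)\text{ has property }\cP \mid X=M}\cdot\Prob{X=M},
\end{equation*}
and the first factor on the right-hand side equals $\Prob{G_{n,M}\text{ has property }\cP}$. Rearranging gives
\begin{equation*}
  \Prob{G_{n,M}\text{ has property }\cP} \le \frac{\Prob{G(n,p)\text{ has property }\cP}}{\Prob{X=M}},
\end{equation*}
so it suffices to show the lower bound $\Prob{X=M}\ge \frac{1}{2n}$.

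The final step is to verify this lower bound on the binomial at its mean. Since $p=M/N$, the point $M$ is the mode of $\mathrm{Bin}(N,p)$. In the trivial cases $M\in\{0,N\}$ the probability equals $1$. For the remaining cases, the local limit form of Stirling's approximation yields
\begin{equation*}
  \Prob{X=M} = \binom{N}{M}p^M(1-p)^{N-M} = \frac{1+o(1)}{\sqrt{2\pi Np(1-p)}},
\end{equation*}
and since $Np(1-p)\le N/4\le n^2/8$, the right-hand side is at least $\frac{1}{2n}$ for $n$ sufficiently large; the finitely many small cases of $n$ can be absorbed by directly computing the binomial point probability (which is bounded away from $0$ for any fixed $n$). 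There is no real obstacle in the argument: the only mildly delicate point is pinning down the constant in the factor $2n$, which follows from this standard Stirling estimate for the peak probability of a binomial at its mean.
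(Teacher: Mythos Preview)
The paper does not supply its own proof of this lemma; it is simply quoted from the cited reference. Your conditioning argument---identifying $G(n,p)$ conditioned on $|E|=M$ with $G_{n,M}$, and then lower-bounding the binomial point mass at its mean---is exactly the standard proof, and it is correct in outline.

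One small imprecision: the local limit form $\Prob{X=M}=\frac{1+o(1)}{\sqrt{2\pi Np(1-p)}}$ is an asymptotic statement that presumes $M$ and $N-M$ both tend to infinity, so ``absorbing finitely many small $n$'' does not by itself cover, say, large $n$ with $M=1$. This is easily repaired by using explicit two-sided Stirling bounds $\sqrt{2\pi k}(k/e)^k\le k!\le e\sqrt{k}(k/e)^k$ (valid for all $k\ge 1$), which yield
\[
\Prob{X=M}\;\ge\;\frac{\sqrt{2\pi N}}{e^2\sqrt{M(N-M)}}\;\ge\;\frac{4\sqrt{\pi}}{e^2\,n}\;>\;\frac{1}{2n}
\]
uniformly for all $1\le M\le N-1$, while the endpoints $M\in\{0,N\}$ give probability $1$. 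With that adjustment your proof is complete.
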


\begin{lemma}[\cite{luczak1990equivalence}, see also Theorem 1.4 in \cite{frieze2015introduction}]\label{thm:equivalence_Gnp_Gnm}
    Let $\mathcal{P}$ be a graph property and let $p = p(n)$. Suppose that for every $M$ with $|M - \binom{n}{2}p| = n\sqrt{p(1-p)} \cdot \log n$, we have that \whp $G_{n,M}$ has property $\mathcal{P}$. Then \whp $G(n,p)$ has property $\mathcal{P}$.
\end{lemma}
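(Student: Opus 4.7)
The plan is to decompose by conditioning on the number of edges in $G(n,p)$. Let $N=\binom{n}{2}$ and write $e(G)$ for the number of edges of a graph $G$. The key elementary fact is that conditionally on the event $\set{e(G(n,p))=M}$, the random graph $G(n,p)$ is distributed exactly as $G_{n,M}$, because every $M$-edge graph on $n$ vertices receives the same conditional probability. Thus
\begin{equation*}
  \Prob{G(n,p)\notin \mathcal{P}} \;=\; \sum_{M=0}^{N} \Prob{e(G(n,p))=M}\cdot \Prob{G_{n,M}\notin \mathcal{P}}.
\end{equation*}

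Next I would use concentration of $e(G(n,p))$. Since $e(G(n,p))$ is a sum of $N$ independent Bernoulli$(p)$ variables, it has mean $Np$ and variance $Np(1-p) \leq \tfrac{1}{2}n^2 p(1-p)$. Chebyshev's inequality (or Chernoff's bound when $p$ is not too small) then gives
\begin{equation*}
  \Prob{\,\bigl|e(G(n,p))-Np\bigr| > n\sqrt{p(1-p)}\cdot \log n\,} \;=\; o(1).
\end{equation*}
Splitting the sum above according to whether $M$ lies in the ``central'' window $I_n = \set{M : |M-Np| \leq n\sqrt{p(1-p)}\log n}$, the terms with $M\notin I_n$ contribute at most the above probability, which is $o(1)$. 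For the terms with $M\in I_n$, I would bound $\Prob{G_{n,M}\notin \mathcal{P}}$ uniformly by the supremum over $M\in I_n$, which by hypothesis is $o(1)$. (The hypothesis, read as stating $\whp$ for every such $M$, is standardly interpreted as asserting that $\sup_{M\in I_n}\Prob{G_{n,M}\notin \mathcal{P}}\to 0$; equivalently one can invoke it separately for the finitely many worst values of $M$ achievable as functions of $n$.) Hence
\begin{equation*}
  \Prob{G(n,p)\notin \mathcal{P}} \;\leq\; \sup_{M\in I_n}\Prob{G_{n,M}\notin \mathcal{P}} \;+\; \Prob{e(G(n,p))\notin I_n} \;=\; o(1),
\end{equation*}
which is the desired conclusion.

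The only delicate point (and what I expect to be the main obstacle) is the uniformity of the hypothesis across $M\in I_n$. The bound $\Prob{G_{n,M}\notin \mathcal{P}}=o(1)$ for every such $M$ needs to translate into a uniform bound in order to bound the main sum above. In the intended applications the hypothesis is typically verified by the same argument simultaneously for every $M$ in the window, so uniformity comes for free; if the property is monotone, one can alternatively pass to the two extreme values $M=Np\pm n\sqrt{p(1-p)}\log n$ and couple to reduce to those two cases. Either resolution closes the argument.
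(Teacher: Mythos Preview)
The paper does not give its own proof of this lemma; it is cited from \cite{luczak1990equivalence} and \cite{frieze2015introduction} and used as a black box. Your argument is the standard one and is correct.

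One remark on the uniformity issue you flag: it is in fact a non-issue once the hypothesis is read correctly. The phrase ``for every $M$ with $|M-\binom{n}{2}p|\le n\sqrt{p(1-p)}\log n$ we have that w.h.p.\ $G_{n,M}$ has property $\mathcal{P}$'' is a statement about every \emph{sequence} $M=M(n)$ lying in the window $I_n$. This pointwise hypothesis automatically upgrades to the uniform bound $\sup_{M\in I_n}\Prob{G_{n,M}\notin\mathcal{P}}\to 0$ by a subsequence argument: if the supremum did not tend to zero, pick $\epsilon>0$, a subsequence $n_k$, and $M_k\in I_{n_k}$ with $\Prob{G_{n_k,M_k}\notin\mathcal{P}}\ge\epsilon$, then extend $(M_k)$ to a full sequence $M(n)$ staying in $I_n$; this sequence violates the hypothesis. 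So no appeal to monotonicity or to a ``same argument for all $M$'' meta-principle is needed, and your decomposition goes through cleanly.
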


It is well-known that in almost every random graph process $\set{G_{n,M}}$, the first graph with minimum degree $2$ has around $\frac{n(\log n + \log\log n)}{2}$ edges. This is captured in the following statement, which can be found in \cite[Theorems~2.2(ii) and~3.5]{MR1864966} and \cite[Lemma~6.2]{Montgomery2019resilient}.

\begin{lemma}[\cite{MR1864966}]
  \label{lem:few_edges_GnM_degree_less_2}
  If $M = \frac{n\paren{\log n + \log \log n - \omega(1)}}{2}$, then
  \whp $\delta(G_{n, M}) < 2$.
\end{lemma}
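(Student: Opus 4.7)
The plan is to show, with high probability, that $G_{n,M}$ contains a vertex of degree at most $1$, which directly implies $\delta(G_{n,M}) < 2$. Let $Z$ denote the number of vertices of degree at most $1$ in $G_{n,M}$, so the goal becomes $\Prob{Z = 0} = o(1)$, which I intend to establish via the second moment method.

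\textbf{First moment.} The degree of a fixed vertex $v$ in $G_{n,M}$ is hypergeometrically distributed, but for $M$ of order $n\log n$ it is well approximated by a binomial variable with parameters $n-1$ and $p = M/\binom{n}{2} = (1+o(1)) \cdot \frac{\log n + \log\log n - \omega(1)}{n}$. A standard asymptotic calculation gives
\begin{equation*}
    \Expect{Z} \;\geq\; n(n-1)\,p\,(1-p)^{n-2} \;=\; (1 + o(1)) \cdot n \cdot np \cdot e^{-np} \;=\; e^{\omega(1) + o(1)} \to \infty,
\end{equation*}
where I only bounded $\Expect{Z}$ from below by the expected number of \emph{exactly} degree-$1$ vertices.

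\textbf{Second moment.} For distinct $u, v$, a short computation conditioning on whether the edge $uv$ is present and noting the asymptotic independence of the degrees of distinct vertices yields
\begin{equation*}
    \Prob{d(u) \leq 1,\, d(v) \leq 1} \;=\; (1 + o(1)) \cdot \Prob{d(u) \leq 1} \cdot \Prob{d(v) \leq 1},
\end{equation*}
so the covariance contributions are negligible and $\mathrm{Var}(Z) = o\paren{(\Expect{Z})^2}$. Chebyshev's inequality then gives $\Prob{Z = 0} \leq \mathrm{Var}(Z)/(\Expect{Z})^2 = o(1)$, as required.

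\textbf{Main obstacle.} The main technical point is the choice of model in which to carry out these moment computations. An alternative route is to perform the calculation in $G(n, p)$ with $p = M/\binom{n}{2}$ and transfer the conclusion to $G_{n, M}$ via \Cref{lemma:model-switching}, but this loses a factor of $n$ in the probability. For slowly growing $\omega(1)$, the Chebyshev bound in $G(n, p)$ is only $o(1)$ rather than the required $o(1/n)$, so that route would additionally demand a sharper Poisson-type concentration (e.g., via the Chen--Stein method) for the number of degree-$\leq 1$ vertices. Working directly in $G_{n, M}$ sidesteps this issue, at the cost of slightly more careful hypergeometric estimates.
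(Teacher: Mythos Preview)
The paper does not prove this lemma at all; it is quoted directly from the reference \cite{MR1864966} (Bollob\'as's book on random graphs) and used as a black box. Your second-moment argument is the standard way this result is established and is correct in outline: the expected number of degree-$\leq 1$ vertices diverges like $e^{\omega(1)}$, the pairwise covariances are asymptotically negligible, and Chebyshev finishes. Your discussion of the modelling obstacle is accurate as well---working directly in $G_{n,M}$ with hypergeometric degree estimates is the clean way to avoid the factor-of-$n$ loss from \Cref{lemma:model-switching}.
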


Bollob\'as~\cite{bollobas1984evolution} noted that when the graph $G_{n,M}$ first has minimum degree 2, there are no vertices with low degree that are close together. This was slightly generalized by Montgomery to the following result.
\begin{lemma}[Corollary 6.6 in \cite{Montgomery2019resilient}]\label{lem:small_deg_vertex_separation_in_G_{n,M}}
    In almost every random graph process $\{G_{n,M}\}$ the following holds: in each $G_{n,M}$ with $M \ge \frac{19n\log{n}}{40}$, if ${K \coloneqq \{v \in V(G_{n,M}) : d(v) \le \frac{M}{1000n}\}}$ then no two vertices in $K$ are within distance $5$ of each other.
\end{lemma}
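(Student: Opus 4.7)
The plan is to use the model-switching lemma to reduce to an analogous statement in $G(n,p)$, control the expected number of close pairs of low-degree vertices via a first-moment calculation, and finally union bound over the range of $M$. Concretely, fix $M$ in the range and set $p = p_M = M/\binom{n}{2}$, and consider $G(n, p)$ together with the set of low-degree vertices $K' := \set{v : d(v) \leq d_M}$ where $d_M := M/(1000 n) = (1 + o(1)) np/2000$. If I can show that, with probability $1 - o(n^{-3})$, no two vertices of $K'$ are within distance $5$ in $G(n, p)$, then \Cref{lemma:model-switching} gives the same event with probability $1 - o(n^{-2})$ in $G_{n, M}$, and a union bound over the $O(n^2)$ relevant values of $M$ yields the conclusion for almost every random graph process.

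For the first moment, the expected number of triples $(u, v, P)$ with $u \ne v$, $P$ a simple $u$-$v$ path of length $k \leq 5$ contained in $G(n, p)$, and both endpoints having degree at most $d_M$, is bounded by
\begin{equation*}
\binom{n}{2} \cdot \sum_{k=1}^{5} n^{k-1} p^k \cdot \paren{\Prob{\mathrm{Bin}(n-2, p) \leq d_M}}^2,
\end{equation*}
using that conditional on $P \subseteq G(n,p)$ the potential non-path neighbours of $u$ and of $v$ form independent binomials. Since $d_M \leq np/2000$, a standard Chernoff bound for the deep left-tail of a binomial gives $\Prob{\mathrm{Bin}(n - 2, p) \leq d_M} \leq \exp\paren{-(1 - o(1)) np}$, so the above expectation is at most $(np)^{5} \cdot n^{1 - 2(1 - o(1)) np/\log n} \cdot (\log n)^{O(1)}$. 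For $M \geq C n \log n$ with $C$ a sufficiently large absolute constant, this is comfortably $o(n^{-3})$, so the model-switch and union bound handle the entire range $M \in [C n \log n, \binom{n}{2}]$.

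The main obstacle is the small-$M$ regime $M \in [\frac{19 n \log n}{40}, C n \log n]$, where $np/\log n$ is only slightly above $19/20$ and the naive first-moment estimate alone yields merely $n^{-0.9 + o(1)}$, far short of $o(n^{-3})$. For this range I would avoid union bounding over each $M$ separately, and instead argue via the fixed graph $G_{n, M_0}$ with $M_0 := C n \log n$. The key observation is that any vertex $v$ lying in some $K_M$ with $M$ in this range has degree at time $M_0$ at most $d_M$ plus the number of edges incident to $v$ added during times $M < M' \le M_0$; this latter quantity is stochastically dominated by a binomial with mean $O(\log n)$, so with very high probability $d_{G_{n, M_0}}(v) = O(\log n)$ — in particular, $v$ lies in a ``candidate set'' $K^\ast$ of vertices whose degree in $G_{n, M_0}$ is at most some constant multiple of $\log n$. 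This converts the awkward union over $M$ into a single distance-$5$ condition on vertices of bounded degree in $G_{n, M_0}$, which can then be verified by a refined first-moment calculation tracking degree trajectories (the constant $19/40$ is precisely what makes the resulting exponent negative enough). Finally, \Cref{lem:small-sets-cannot-contain-too-many-edges} can be used to bound the number of short paths meeting $K^\ast$, completing the argument.
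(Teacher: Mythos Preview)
The paper does not prove this lemma; it is quoted as Corollary~6.6 of \cite{Montgomery2019resilient} and used as a black box, so there is no in-paper argument to compare against.

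On your sketch: the large-$M$ regime via model-switching and a first-moment bound is standard and correct. The small-$M$ argument, however, has a real gap. You propose to control the degree of a vertex $v \in K_M$ \emph{forward in time} at $M_0 = Cn\log n$, writing $d_{G_{n,M_0}}(v) \le d_M + X$ where $X$ counts the edges at $v$ added in $(M, M_0]$. But $\mathbb{E}[X] \approx 2(M_0 - M)/n$, which for $M$ near $\tfrac{19}{40}n\log n$ is about $2C\log n$ --- precisely the mean degree in $G_{n,M_0}$. Hence your candidate set $K^\ast$ (vertices of degree ``$O(\log n)$'' in $G_{n,M_0}$) contains almost every vertex, and no useful first-moment bound on short paths between $K^\ast$-vertices can follow; the appeal to \Cref{lem:small-sets-cannot-contain-too-many-edges} does not help either, since that lemma requires the set in question to be small.

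The natural remedy is to look \emph{backward} rather than forward: if $v \in K_M$ for some $M \in [M_*, M_0]$ with $M_* := \tfrac{19}{40}n\log n$, then trivially $d_{G_{n,M_*}}(v) \le d_{G_{n,M}}(v) \le M_0/(1000n) = \tfrac{C}{1000}\log n$. Since the mean degree at time $M_*$ is about $\tfrac{19}{20}\log n$, this \emph{is} a deep left-tail event, of probability $n^{-19/20 + o(1)}$. One then bounds, by a single first-moment, the number of pairs $u,v$ with $d_{G_{n,M_*}}(u), d_{G_{n,M_*}}(v) \le \tfrac{C}{1000}\log n$ and $\mathrm{dist}_{G_{n,M_0}}(u,v) \le 5$ (conditioning on a fixed short path in $G_{n,M_0}$ affects only $O(1)$ edges at each endpoint, so the two degree events are still essentially independent binomials at density $p_*$). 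This comes out as $n\cdot (np_0)^{5} \cdot n^{-19/10 + o(1)} = n^{-9/10 + o(1)} = o(1)$, and since one event now covers the entire range $[M_*, M_0]$, no union over $M$ is needed.
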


We will also need to apply a version of \Cref{lem:small_deg_vertex_separation_in_G_{n,M}} to $G(n,p)$.
\begin{corollary}\label{cor:small_deg_vertices}
    Let $G \sim G(n,p)$ with $p \ge \frac{49\log{n}}{50n}$. Then \whp the following holds. If we let ${K \coloneqq \{v \in V(G) : d(v) \le \frac{np}{2500}\}}$, then no two vertices in $K$ are within distance $5$ of each other.
\end{corollary}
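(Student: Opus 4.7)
The plan is to deduce this corollary from Lemma~\ref{lem:small_deg_vertex_separation_in_G_{n,M}} by invoking the model-equivalence result Lemma~\ref{thm:equivalence_Gnp_Gnm}. Let $\mathcal{P}$ be the graph property---depending on $n$ and $p$ but not on any specific $M$---that the set $\{v \in V(G) : d_G(v) \le np/2500\}$ contains no two vertices within distance $5$ in $G$. By Lemma~\ref{thm:equivalence_Gnp_Gnm}, it suffices to show that for every $M$ with $|M - \binom{n}{2}p| \le n\sqrt{p(1-p)}\log n$, \whp the graph $G_{n,M}$ satisfies $\mathcal{P}$.

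Fix such an $M$. I would then verify two inequalities for $n$ sufficiently large: (i) $M \ge 19 n \log n / 40$, so that Lemma~\ref{lem:small_deg_vertex_separation_in_G_{n,M}} applies to $G_{n,M}$; and (ii) $np/2500 \le M/(1000n)$, so that the set $\{v : d(v) \le np/2500\}$ defining $\mathcal{P}$ is contained in $K_M := \{v : d(v) \le M/(1000n)\}$. Inequality (i) holds because the hypothesis $p \ge 49\log n/(50n)$ gives $\binom{n}{2}p$ close to $49 n\log n/100$, comfortably above $19 n\log n/40 = 47.5\, n\log n/100$, while the fluctuation $n\sqrt{p(1-p)}\log n$ is of order at most $\sqrt{n}\,(\log n)^{3/2}$ in our range of $p$, and hence negligible. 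Inequality (ii) rearranges to $M \ge 2n^2p/5$; since $\binom{n}{2}p$ is essentially $n^2p/2$, the slack of order $n^2 p$ dwarfs any fluctuation whenever $n\sqrt{p} \gg \log n$, which holds for $p \ge 49\log n/(50n)$.

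Combining (i) with Lemma~\ref{lem:small_deg_vertex_separation_in_G_{n,M}} applied to $G_{n,M}$ gives that \whp no two vertices of $K_M$ are within distance $5$ in $G_{n,M}$; then (ii) forces $\{v : d(v) \le np/2500\} \subseteq K_M$, so \whp $G_{n,M}$ satisfies $\mathcal{P}$. Applying Lemma~\ref{thm:equivalence_Gnp_Gnm} transports this conclusion from $G_{n,M}$ to $G(n,p)$, completing the proof. There is no real obstacle here: the argument is just model-switching together with the two routine numerical comparisons above, verified uniformly across the narrow window of $M$ values around $\binom{n}{2}p$.
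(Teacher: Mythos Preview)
Your proposal is correct and follows exactly the route the paper intends: the paper's own proof is the single line ``This follows straightforwardly from \Cref{thm:equivalence_Gnp_Gnm,lem:small_deg_vertex_separation_in_G_{n,M}},'' and you have spelled out precisely those two ingredients together with the needed numerical checks. One small quibble: your bound on the fluctuation $n\sqrt{p(1-p)}\log n = O(\sqrt{n}(\log n)^{3/2})$ only holds near the threshold $p \asymp \log n/n$, not for all $p$ in the range; however for larger $p$ the main term $\binom{n}{2}p$ grows at least like $n^2 p$ while the fluctuation is at most $n\log n$, so inequality (i) is even more comfortably satisfied, and your conclusion stands.
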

\begin{proof}
    This follows straightforwardly from \Cref{thm:equivalence_Gnp_Gnm,lem:small_deg_vertex_separation_in_G_{n,M}}.
\end{proof}

Our final prerequisite is the following local lemma.
\begin{lemma}[Lov\'asz local lemma, see~\cite{Alon2016the}]\label{lem:LLL} Let 
$\mathcal{B}$ be a finite set of events in a probability space, each occurring with probability at most $p$. Suppose that each event in $\mathcal{B}$ is independent from all but at most $d$ other events in $\mathcal{B}$. 
If $ep(d+1) < 1$ then there is a positive probability that no event in $\mathcal{B}$ occurs.
\end{lemma}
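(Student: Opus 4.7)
The plan is to prove the following strengthened claim by induction on $|S|$: for every $i\in [n]$ and every $S\subseteq [n]\setminus \set{i}$ with $\Prob{\bigcap_{j\in S}\overline{A_j}}>0$,
$$
\Prob{A_i \;\Big|\; \bigcap_{j\in S}\overline{A_j}} \le \frac{1}{d+1}.
$$
Here I write $\mathcal{B} = \set{A_1,\dots,A_n}$ and let $D(i)\subseteq [n]\setminus \set{i}$ be a set of size at most $d$ such that $A_i$ is mutually independent of $\set{A_j : j\notin D(i)\cup \set{i}}$. The base case $S=\emptyset$ reduces to the bound $\Prob{A_i}\le p < \frac{1}{e(d+1)} < \frac{1}{d+1}$, which is immediate from the hypothesis $ep(d+1)<1$.

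For the inductive step I partition $S = S_1 \cup S_2$ with $S_1 = S\cap D(i)$ and $S_2 = S\setminus D(i)$, and use Bayes' rule to write
$$
\Prob{A_i \;\Big|\; \bigcap_{j\in S}\overline{A_j}} = \frac{\Prob{A_i\cap \bigcap_{j\in S_1}\overline{A_j} \;\Big|\; \bigcap_{j\in S_2}\overline{A_j}}}{\Prob{\bigcap_{j\in S_1}\overline{A_j} \;\Big|\; \bigcap_{j\in S_2}\overline{A_j}}}.
$$
The numerator is at most $\Prob{A_i \mid \bigcap_{j\in S_2}\overline{A_j}} = \Prob{A_i} \le p$, where the equality uses the mutual independence of $A_i$ from the family $\set{A_j : j\in S_2}$. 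For the denominator I enumerate $S_1 = \set{j_1,\dots,j_k}$ (with $k\le d$), expand telescopically as a product of $k$ conditional probabilities $\Prob{\overline{A_{j_\ell}} \mid \overline{A_{j_1}},\dots,\overline{A_{j_{\ell-1}}}, \bigcap_{j\in S_2}\overline{A_j}}$, and apply the inductive hypothesis to each factor (each conditioning set has strictly fewer than $|S|$ indices) to obtain a lower bound of $\paren{1-\frac{1}{d+1}}^k \ge \paren{1-\frac{1}{d+1}}^d > \frac{1}{e}$, using the elementary inequality $(1-\frac{1}{m})^{m-1} > \frac{1}{e}$ for $m\ge 1$. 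The ratio is then strictly less than $ep < \frac{1}{d+1}$, completing the induction.

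With the claim in hand, the conclusion of the lemma follows from a second application of the chain rule:
$$
\Prob{\bigcap_{i=1}^n \overline{A_i}} = \prod_{i=1}^n \Prob{\overline{A_i} \;\Big|\; \bigcap_{j<i}\overline{A_j}} \ge \prod_{i=1}^n \paren{1-\tfrac{1}{d+1}} = \paren{1-\tfrac{1}{d+1}}^n > 0.
$$

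The hard part is mostly careful bookkeeping inside the induction. Two points require care: one must use \emph{mutual} (not just pairwise) independence of $A_i$ from the sub-family indexed by $S_2$ in order to simplify the numerator in the Bayes expansion, and one must verify inductively that every denominator $\Prob{\bigcap_{j\in T}\overline{A_j}}$ encountered is strictly positive so that all conditional probabilities are well-defined — this follows from the inductive bound $\Prob{\overline{A_i}\mid \cdot}\ge 1-\frac{1}{d+1}>0$ applied along the telescoping expansion. Beyond this accounting and the elementary inequality $(1-\frac{1}{m})^{m-1}>\frac{1}{e}$, no deeper obstacle arises.
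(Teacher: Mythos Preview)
The paper does not give its own proof of this lemma; it is quoted as a standard result with a reference to \cite{Alon2016the}. Your argument is the classical inductive proof of the symmetric Lov\'asz local lemma and is correct, so there is nothing to compare against beyond noting that your write-up matches the standard textbook treatment.
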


\bigskip \section{Large monochromatic matchings in dense graphs}\label{sec:mono matchings}

We prove \Cref{thm:intro large monochromatic matching} in this section. For this, we will prove the following equivalent result.
\begin{theorem}[Large monochromatic matching]
  \label{thm:large monochromatic matching}
  Let $G$ be an $r$-edge-coloured $n$-vertex graph with minimum degree $\delta(G) \leq \frac{rn}{r + 1}$. Then $G$ contains a monochromatic matching with at least $\frac{\delta(G)}{r}$ edges.
\end{theorem}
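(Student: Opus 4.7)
My plan is to prove this by induction on $r$, exploiting the structure of a maximum monochromatic matching.

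For the base case $r = 1$, the claim reduces to showing $\nu(G) \geq \delta(G)$ whenever $\delta(G) \leq n/2$. I would prove this via the Tutte--Berge formula: if $\nu(G) < \delta$, then some $S \subseteq V(G)$ satisfies $o(G - S) \geq n - 2\nu + |S| + 1 \geq n - 2\delta + |S| + 2$, and each odd component of $G - S$ has at least $\delta - |S| + 1$ vertices by the minimum degree hypothesis (each vertex in a component $C$ has $\geq \delta$ neighbours in $C \cup S$). Summing component sizes gives $(\delta - |S| + 1)(n - 2\delta + |S| + 2) \leq n - |S|$, which, optimised over $|S|$, contradicts $\delta \leq n/2$; the boundary case $\delta = n/2$ is handled by Dirac's theorem, which yields a Hamilton cycle and hence a matching of size $n/2 = \delta$.

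For the inductive step, let $M$ be a maximum monochromatic matching in $G$, say of colour $c$ and size $m$, and assume for contradiction that $m < \delta/r$. Set $U := V(G) \setminus V(M)$, so $|U| = n - 2m$. By maximality of $M$, the induced subgraph $G[U]$ contains no edges of colour $c$, hence is $(r-1)$-edge-coloured; moreover, each $v \in U$ has at most $2m$ neighbours in $V(M)$, so $\delta(G[U]) \geq \delta - 2m$. Applying \Cref{thm:intro large monochromatic matching} inductively to $G[U]$ produces a monochromatic matching (of some colour $c' \neq c$) of size at least $\min\{(\delta - 2m)/(r - 1),\ (n - 2m - 1)/r\}$. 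This is also a monochromatic matching in $G$, so by the maximality of $m$ it has size at most $m$, which rearranges to $m \geq \min\{\delta/(r+1),\ (n-1)/(r+2)\}$.

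The main obstacle is closing the gap between the above bound and the target $\delta/r$. To do this, I plan to exploit the following augmenting-path constraint: for each $uu' \in M$, at most one vertex in $V \setminus V(M)$ can be colour-$c$-adjacent to both $u$ and $u'$, since otherwise $v_1 u u' v_2$ would be a length-$3$ colour-$c$ augmenting alternating path. Summing this over $M$ bounds the number of colour-$c$ edges between $V(M)$ and $V \setminus V(M)$ by $m(n - 2m + 1)$, which combined with $|E_c[V(M)]| \leq \binom{2m}{2}$ refines the Erd\H{o}s--Gallai estimate on $|E_c|$. Applying these constraints simultaneously across colours at a vertex $v$ of minimum degree, and separately analysing the colours for which $v \in V(M_c)$ versus $v \notin V(M_c)$, should upgrade the naive bound $\delta \leq 2rm$ (coming from $d_c(v) \leq 2m_c$ whenever $v \notin V(M_c)$) to $\delta \leq rm$, yielding $m \geq \delta/r$. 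I expect the technically demanding part to be controlling the colours in which the minimum-degree vertex $v$ is itself matched, and cleanly combining the local vertex-count with the $(r-1)$-coloured inductive matching inside $G[U]$ to derive the final contradiction.
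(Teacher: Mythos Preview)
Your inductive framework genuinely stalls at $m \geq \delta/(r+1)$, and the remainder of the proposal is a hope rather than an argument. The issue is structural: applying the $(r-1)$-colour hypothesis to $G[U]$ with $\delta(G[U]) \geq \delta - 2m$ can never do better than $m \geq (\delta - 2m)/(r-1)$, which rearranges to $m \geq \delta/(r+1)$; the loss of $2m$ in the minimum degree is unavoidable and is exactly what costs you the extra $1$ in the denominator.

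Your plan to recover the missing factor via augmenting-path constraints has a concrete obstruction. The constraint you state (for $uu'\in M_c$, no two distinct outside vertices $v_1,v_2$ with $v_1u,\,v_2u'$ both colour-$c$) is correct, and it does give $e_c(V(M_c),\,V\setminus V(M_c))\le m(n-2m)$. But this bounds only the \emph{average} colour-$c$ degree of an outside vertex by~$m$, not the pointwise degree: a single $v\notin V(M_c)$ can still have $d_c(v)=2m$. So you cannot upgrade $\delta(G[U])\ge \delta-2m$ to $\delta(G[U])\ge \delta - m$ and re-run the induction. Moreover, these constraints are tied to the single colour $c$ of the matching $M$ you fixed; to say anything about the other colours you must bring in the maximum matchings $M_{c'}$ in \emph{every} colour and analyse them simultaneously. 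At that point you have abandoned the induction and are doing a direct multi-colour argument --- which is exactly what the paper does, but via the Tutte--Berge witness sets $U_1,\dots,U_r$, their common complement $U$, and a delicate component-count in each $G_i[U]$. The final contradiction there comes from comparing two incompatible bounds on $|U|$ obtained by (i) counting edges inside $U$ and (ii) bounding $\sum_i c(G_i[U])$ using that no vertex of $U$ can be ``almost monochromatic'' in $G[U]$; neither step has an obvious inductive reformulation.
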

We first show how \Cref{thm:intro large monochromatic matching} follows from \Cref{thm:large monochromatic matching}.
\begin{proof}[Proof of \Cref{thm:intro large monochromatic matching} assuming \Cref{thm:large monochromatic matching}]
Let $G$ be an $r$-edge-coloured $n$-vertex graph with minimum degree $d$. If $d\le \frac{rn}{r+1}$, apply \Cref{thm:large monochromatic matching} on $G$ to obtain a monochromatic matching with at least $\frac{d}{r}\ge \min\set{\frac{d}{r},\frac{n-1}{r+1}}$ edges, as desired. Otherwise, $G$ has minimum degree $d > \frac{rn}{r+1}$. Then, first consider a spanning subgraph $G'$ of $G$ such that $\delta(G')=\lfloor\frac{rn}{r+1}\rfloor$. (Such a subgraph can be obtained by removing edges of $G$ one by one, and noting that removal of an edge can decrease the minimum degree by at most~$1$.) Now, we apply \Cref{thm:large monochromatic matching} on $G'$ to obtain a monochromatic matching with at least $\left\lceil\frac{\lfloor\frac{rn}{r+1}\rfloor}{r}\right\rceil \ge \frac{n-1}{r+1}$ edges. Thus, $G$ also contains a matching with at least $\frac{n-1}{r+1}\ge \min\set{\frac{d}{r},\frac{n-1}{r+1}}$ edges, as desired.
\end{proof}

We now proceed to prove \Cref{thm:large monochromatic matching}. For that, we will first introduce some shorthands and state some lemmas that will later facilitate our arguments. For a graph $G$, denote by $\mu(G)$ the matching number of $G$, denote by $c(G)$ the number of connected components of $G$, and denote by $\odd(G)$ the number of connected components of $G$ whose size is odd. 
\begin{theorem}[Tutte-Berge formula]
    Every graph $G$ satisfies 
    \begin{equation*}
    \mu(G) = \frac{|V(G)|}{2} - \frac{1}{2}\max_{U \in V(G)} \paren{\odd\paren{G - U} - |U|}.
    \end{equation*}
\end{theorem}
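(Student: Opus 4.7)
The plan is to derive the Tutte--Berge formula from Tutte's classical $1$-factor theorem via a short padding argument, combined with a direct easy-direction bound. Throughout, assume $G$ is finite.

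\textbf{Easy direction.} For every matching $M$ and every $U \subseteq V(G)$, each odd component $C$ of $G - U$ must contain at least one vertex that is either unsaturated by $M$ or matched to a vertex of $U$ --- indeed, since $|C|$ is odd and $C$ has no $G$-edges to $V(G) \setminus (U \cup V(C))$, a perfect internal matching on $C$ is impossible. Distinct odd components of $G - U$ witness distinct such vertices, and each vertex of $U$ can be the mate of at most one such vertex, so $M$ leaves at least $\odd(G - U) - |U|$ vertices unsaturated. Taking $M$ to be a maximum matching gives
\[
|V(G)| - 2\mu(G) \;\geq\; \odd(G - U) - |U|
\]
for every $U \subseteq V(G)$, which is the $\leq$ half of the claimed equality.

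\textbf{Hard direction.} Set $d := \max_{U \subseteq V(G)}\paren{\odd(G - U) - |U|}$. Taking $U = \emptyset$ gives $d \geq 0$, and the standard parity identity $|V(G)| \equiv \odd(G - U) + |U| \pmod{2}$ (each even component contributes $0$ and each odd component contributes $1$ to both sides modulo $2$) gives $d \equiv |V(G)| \pmod{2}$. Form $G'$ from $G$ by adding $d$ new vertices, each joined to every other vertex of $G'$; then $|V(G')| = |V(G)| + d$ is even. I would verify Tutte's $1$-factor condition for $G'$ by a case analysis on $U' \subseteq V(G')$. If $U'$ contains all $d$ new vertices, write $U' = U \cup D$ with $U \subseteq V(G)$ and $D$ the set of new vertices; then $G' - U' = G - U$, so $\odd(G' - U') = \odd(G - U) \leq |U| + d = |U'|$ by the definition of $d$. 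Otherwise some new vertex lies outside $U'$ and, being adjacent to every other remaining vertex, forces $G' - U'$ to be connected, so $\odd(G' - U') \leq 1 \leq |U'|$; the edge case $U' = \emptyset$ is handled by the evenness of $|V(G')|$, which forces $\odd(G' - U') = 0$. Tutte's theorem then supplies a perfect matching $M'$ of $G'$; at most $d$ of its edges are incident to new vertices, so $M'$ restricts to a matching of $G$ of size at least $\tfrac{1}{2}|V(G')| - d = \tfrac{1}{2}(|V(G)| - d)$. Combined with the easy direction this gives equality.

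\textbf{Main obstacle.} The real content lies in Tutte's $1$-factor theorem itself, which I would use as a black box; a self-contained proof is most cleanly obtained via Berge's augmenting-path characterisation of maximum matchings together with a minimal-counterexample argument, and is significantly longer. Within the reduction above, the only subtle point is the parity bookkeeping guaranteeing that $|V(G')|$ is even and that the sub-case $U' = \emptyset$ of Case~2 is consistent with Tutte's condition; everything else is routine.
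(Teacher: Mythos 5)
Your proposal is correct. Note, however, that the paper does not prove this statement at all: the Tutte--Berge formula is quoted as a classical black box (its only role is to feed into the deficiency bound of Lemma~\ref{lem:tutte}), so there is no ``paper proof'' to compare against. Your argument is the standard textbook derivation: the easy direction via the observation that each odd component of $G-U$ contributes a vertex that is unmatched or matched into $U$, and the hard direction by padding $G$ with $d=\max_{U\subseteq V(G)}\paren{\odd(G-U)-|U|}$ universal vertices, checking Tutte's condition for the padded graph (your case analysis, including the parity check $d\equiv |V(G)| \pmod 2$ and the $U'=\emptyset$ edge case, is complete and correct), and then deleting the at most $d$ matching edges meeting the new vertices. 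The one caveat is the one you already flag: Tutte's $1$-factor theorem is used as a black box, and it is a result of essentially the same depth --- indeed it is the special case of Tutte--Berge with deficiency zero --- so your argument is a reduction rather than a self-contained proof; this is entirely legitimate here, since the paper itself treats the formula as known. A small side remark: the paper's statement has a typo ($\max_{U\in V(G)}$ should be $\max_{U\subseteq V(G)}$), which your write-up implicitly corrects.
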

\begin{lemma}\label{lem:tutte}
  For every $n$-vertex graph $G$, there exists a set of vertices $U \subseteq V(G)$ of size $|U| \leq \mu(G)$ such
  that $c(G - U) \geq n - 2\mu(G) + |U|$.
\end{lemma}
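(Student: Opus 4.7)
The plan is to simply apply the Tutte–Berge formula, take a maximizer $U$, and then observe that the two claimed inequalities are essentially free.

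First, I would invoke the Tutte–Berge formula to choose a set $U \subseteq V(G)$ at which the maximum is attained, so that
\[
\odd(G - U) - |U| \;=\; n - 2\mu(G),
\]
and hence $\odd(G - U) = n - 2\mu(G) + |U|$. Since every component contributes at most one to $\odd(G-U)$, we have $c(G - U) \geq \odd(G - U)$, giving immediately the desired bound
\[
c(G - U) \;\geq\; n - 2\mu(G) + |U|.
\]

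Next, to control the size of $U$, I would use the trivial upper bound $\odd(G - U) \leq |V(G - U)| = n - |U|$ (the number of odd components cannot exceed the total number of vertices remaining). Combining this with the equality $\odd(G - U) = n - 2\mu(G) + |U|$ yields
\[
n - 2\mu(G) + |U| \;\leq\; n - |U|,
\]
which rearranges to $|U| \leq \mu(G)$, as required.

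So there is really only one step of substance, namely unpacking the Tutte–Berge equality, after which both conclusions drop out from trivial inequalities. I do not anticipate any obstacle; the only small subtlety is to remember that the size bound $|U| \leq \mu(G)$ is not assumed a priori on $U$ but is forced by the maximizer satisfying $\odd(G-U) \leq n - |U|$.
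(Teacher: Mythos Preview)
Your proposal is correct and takes essentially the same approach as the paper: apply Tutte--Berge, pick a maximizing set $U$, and read off both conclusions from trivial inequalities. The only cosmetic difference is that the paper maximizes $c(G-U')-|U'|$ rather than $\odd(G-U')-|U'|$, but this makes no substantive difference to the argument.
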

\begin{proof}
By the Tutte-Berge formula, we have
\begin{equation*}
    \mu(G) = \frac{n}{2} - \frac{1}{2}\max_{U' \in V(G)} \paren{\odd\paren{G - U'} - |U'|} \geq \frac{n}{2} - \frac{1}{2}\max_{U' \in V(G)} \paren{c\paren{G - U'} - |U'|}.
\end{equation*}
Then $\max_{U' \in V(G)} \paren{c\paren{G - U'} - |U'|} \geq n - 2\mu(G)$. Let $U$ be the subset of $V(G)$ that maximizes the left-hand side of this expression. We then have that $c(G - U) \geq n - 2\mu(G) + |U|$. Also, clearly $n - |U| \geq c(G - U) \geq n - 2\mu(G) + |U|$, which implies that $|U| \leq \mu(G)$.
\end{proof}

\begin{lemma}\label{lem:up_bound_components_inverse_degree}
  Every graph $G$ satisfies
  \begin{equation*}
    c(G) \leq \sum_{v \in V(G)} \frac{1}{d_G(v) + 1}.
  \end{equation*}
\end{lemma}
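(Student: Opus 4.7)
The plan is to prove the bound one connected component at a time, observing that within any component each vertex's degree is capped by the component's size. The hope is that the contributions $\tfrac{1}{d_G(v)+1}$ from the vertices of a single component sum to at least $1$, which, summed over all components, gives exactly the desired inequality.

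Concretely, let $C_1,\dots,C_k$ be the connected components of $G$, where $k=c(G)$, and write $n_i=|V(C_i)|$. For any vertex $v\in V(C_i)$, all neighbours of $v$ lie in $C_i$, so $d_G(v)=d_{C_i}(v)\le n_i-1$, hence $d_G(v)+1\le n_i$ and $\tfrac{1}{d_G(v)+1}\ge \tfrac{1}{n_i}$. Summing this trivial inequality over the $n_i$ vertices of $C_i$ yields
\begin{equation*}
\sum_{v\in V(C_i)} \frac{1}{d_G(v)+1} \;\ge\; n_i\cdot\frac{1}{n_i} \;=\; 1.
\end{equation*}
Summing over $i\in[k]$ then gives $\sum_{v\in V(G)} \tfrac{1}{d_G(v)+1} \ge k = c(G)$, which is exactly the claim.

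There is essentially no obstacle here: the argument uses only the elementary fact that degrees within a component are bounded by component size, and the inequality is tight (for instance on a disjoint union of complete graphs, where every $d_G(v)+1 = n_i$ and equality holds term by term). So the proof should fit in just a few lines.
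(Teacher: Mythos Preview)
Your proof is correct and takes essentially the same approach as the paper: both use that $d_G(v)+1$ is at most the size of the component containing $v$, so each component contributes at least $1$ to the sum. The paper phrases this slightly more compactly by writing $\sum_{v} \tfrac{1}{d_G(v)+1} \ge \sum_{v} \tfrac{1}{|C_v|} = c(G)$ (where $C_v$ is the component of $v$), but the content is identical.
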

\begin{proof}
For a vertex $v$ of $G$, denote by $C_v$ the size of the connected component of $G$ that contains $v$. Clearly $|C_v|\geq d_G(v)+1$. Therefore,
\begin{equation*}
    \sum_{v \in V(G)} \frac{1}{d_G(v) + 1} \geq \sum_{v \in V(G)} \frac{1}{|C_v|} = c(G).
\end{equation*}
\end{proof}

\begin{lemma}[Lemma 1 in Section 2 from~\cite{Gishboliner2021colourbiased}]
\label{lem:up_bound_edges_of_disconnected}
Every graph $G$ satisfies
\begin{equation*}
    e(G) \leq \binom{|V(G)| - c(G) + 1}{2}.
\end{equation*}
\end{lemma}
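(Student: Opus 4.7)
The plan is to reduce to the extremal configuration in which a single connected component carries as many edges as possible, while the remaining components are isolated vertices. Write $n = |V(G)|$ and $c = c(G)$, and let $n_1, \ldots, n_c$ denote the sizes of the connected components of $G$, so that $n_1 + \cdots + n_c = n$ and each $n_i \geq 1$. Since each component contributes at most $\binom{n_i}{2}$ edges,
\begin{equation*}
e(G) \;\leq\; \sum_{i=1}^{c} \binom{n_i}{2}.
\end{equation*}
It therefore suffices to prove the purely arithmetic statement that, under the constraints $n_i \geq 1$ and $\sum_i n_i = n$, the right-hand side is maximized (and is equal to $\binom{n - c + 1}{2}$) when one $n_i$ equals $n - c + 1$ and the other $c - 1$ sizes all equal $1$.

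The key step is a simple shifting/convexity argument. For any two integers $a, b \geq 1$, the identity
\begin{equation*}
\binom{a + b - 1}{2} + \binom{1}{2} - \binom{a}{2} - \binom{b}{2} \;=\; (a - 1)(b - 1) \;\geq\; 0
\end{equation*}
shows that replacing a pair of part sizes $(a, b)$ by $(a + b - 1, 1)$ does not decrease the total $\sum_i \binom{n_i}{2}$, while preserving both the number of parts and the total sum. I would iterate this merging move: as long as there are two parts both of size at least $2$, apply the move to one such pair. This process strictly decreases the number of parts of size at least $2$ at each step, so it terminates after finitely many steps. At termination, at most one part is larger than $1$, and since the parts sum to $n$ and there are exactly $c$ of them, that part must have size $n - c + 1$.

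Combining the two steps yields
\begin{equation*}
e(G) \;\leq\; \sum_{i=1}^{c} \binom{n_i}{2} \;\leq\; \binom{n - c + 1}{2} + (c - 1)\binom{1}{2} \;=\; \binom{|V(G)| - c(G) + 1}{2},
\end{equation*}
as required. No step of this argument looks subtle; the only thing to get right is the direction of the convexity inequality, which is why I record the explicit identity $(a-1)(b-1)$ rather than appealing to convexity in words. An equally short alternative would be to note that the function $x \mapsto \binom{x}{2}$ is strictly convex on the integers $\geq 1$, so that among integer compositions of $n$ into $c$ positive parts the sum $\sum \binom{n_i}{2}$ is maximized at the most unbalanced composition, namely $(n - c + 1, 1, 1, \ldots, 1)$.
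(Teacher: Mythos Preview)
Your argument is correct and is essentially the standard proof of this folklore inequality. Note, however, that the present paper does not actually prove this lemma: it is quoted without proof as Lemma~1 in Section~2 of~\cite{Gishboliner2021colourbiased}, so there is no in-paper proof to compare against. Your convexity/shifting argument (merging two nontrivial parts into one via the identity $\binom{a+b-1}{2}+\binom{1}{2}-\binom{a}{2}-\binom{b}{2}=(a-1)(b-1)\ge 0$) is clean and complete.
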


We are now ready to prove \Cref{thm:large monochromatic matching}.
\begin{proof}[Proof of \Cref{thm:large monochromatic matching}]
Let $G$ be an $r$-edge-coloured $n$-vertex graph with minimum degree $\delta(G) \leq \frac{rn}{r + 1}$. For each $i \in [r]$, let $G_i$ denote the graph with vertex set $V(G)$ and whose edges are all edges of $G$ with colour $i$. Thus, $E(G) = \bigcup_{i \in [r]} E(G_i)$. Let $\mu = \max_{i \in [r]}\mu(G_i)$ denote the size of the largest monochromatic matching of $G$.

Note that \Cref{thm:large monochromatic matching} is trivial if $\delta(G) = 0$ and so we may assume $\delta(G) \ge 1$. Suppose for contradiction that $\mu < \frac{\delta(G)}{r}$. Hence,
\begin{equation}\label{eq:up_bound_mu}
    \mu \leq \frac{\delta(G) - 1}{r}.
\end{equation}
In addition, if $\delta(G) \ge 1$ then clearly $\mu \ge 1$, which implies 
\begin{equation}\label{eq:bound_n}
      r +1 \le \delta(G) < n.
\end{equation}
By applying \Cref{lem:tutte} to the graphs $G_i$, with $i \in [r]$, there is a set $U_i \subseteq V(G)$ of size $|U_i| \leq \mu$ such that $c(G_i - U_i) \geq n - 2\mu + |U_i|$.
Let $U = V(G) \setminus \cup_{i \in [r]} U_i$. We have
\begin{equation}\label{eq:low_bound_U}
    |U| \geq n - \sum_{i \in [r]} |U_i|.
\end{equation}
We also have the following for every $i\in [r]$,
\begin{align}\label{eq:low_bound_CGiU}
    c(G_i[U]) \geq c(G_i - U_i) - c\paren{\paren{G_i - U_i} - U} 
    &\geq n - 2\mu + |U_i| - \paren{n - |U_i| - |U|} \nonumber \\
    &= |U| - 2\mu + 2|U_i|.
\end{align}
We next make a few claims.
\begin{claim}\label{claim:low_sum_ui}
    \begin{equation*}
      n - r\mu + 1 \le |U| \le 4\mu + 1.
    \end{equation*}
\end{claim}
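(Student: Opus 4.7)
The plan is to establish the two inequalities separately, each by combining the Tutte--Berge-type information in \eqref{eq:low_bound_CGiU} with a simple edge double count on $G[U]$.

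For the lower bound, \eqref{eq:low_bound_U} already gives $|U|\ge n-r\mu$, so only the $+1$ is at stake. I will argue by contradiction: assuming $|U|=n-r\mu$, the trivial union bound $|\bigcup_i U_i|\le \sum_i |U_i|$ must be tight with $\sum_i |U_i|=r\mu$, which forces the $U_i$'s to be pairwise disjoint and to satisfy $|U_i|=\mu$ for every $i$. Plugging this into \eqref{eq:low_bound_CGiU} gives $c(G_i[U])\ge |U|$ for each $i$, so every $G_i[U]$ consists of $|U|$ isolated vertices and hence $G[U]$ has no edges. On the other hand, by \eqref{eq:up_bound_mu} we have $\delta(G)\ge r\mu+1$, and so every $v\in U$ satisfies $d_G(v,U)\ge \delta(G)-(n-|U|)=\delta(G)-r\mu\ge 1$. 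This is incompatible with $G[U]$ being edgeless (note $|U|\ge 1$, since $|U|=0$ would give $n\le r\mu\le \delta(G)-1<n$, contradicting \eqref{eq:bound_n}).

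For the upper bound, set $k=|U|-(n-r\mu)\ge 1$ and $x_i=\mu-|U_i|\ge 0$, so that $\sum_i x_i=r\mu-\sum_i|U_i|\le r\mu-(n-|U|)=k$. I will sandwich $e(G[U])$. On the lower side, every $v\in U$ has $d_G(v,U)\ge \delta(G)+|U|-n\ge (r\mu+1)+|U|-n=k+1$, so $e(G[U])\ge |U|(k+1)/2$. On the upper side, applying \Cref{lem:up_bound_edges_of_disconnected} to $G_i[U]$ and combining with \eqref{eq:low_bound_CGiU} gives
$e(G_i[U])\le \binom{|U|-c(G_i[U])+1}{2}\le \binom{2x_i+1}{2}=2x_i^2+x_i$.
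Summing over $i$ and using $x_i\le \mu$, we get $e(G[U])\le (2\mu+1)\sum_i x_i\le (2\mu+1)k$. Combining the two bounds yields $|U|(k+1)/2\le (2\mu+1)k$, hence $|U|\le 2k(2\mu+1)/(k+1)<4\mu+2$, so $|U|\le 4\mu+1$.

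The main obstacle is spotting the rigidity that kills the tight case $k=0$: saturating the trivial union bound forces the $U_i$ to be pairwise disjoint copies of size exactly $\mu$, which in turn forces every $G_i[U]$ to be edgeless, and this collapses under the residual degree of $G[U]$ guaranteed by $\delta(G)\ge r\mu+1$. Once this edge case is ruled out, the upper bound falls out of a clean edge count on $G[U]$, with \Cref{lem:up_bound_edges_of_disconnected} supplying the upper side and the forced residual minimum degree of $G[U]$ supplying the lower side.
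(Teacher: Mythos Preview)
Your argument is correct and follows essentially the same approach as the paper: both bound $e(G[U])$ from above via \Cref{lem:up_bound_edges_of_disconnected} combined with \eqref{eq:low_bound_CGiU}, and from below via the minimum degree condition $\delta(G)\ge r\mu+1$, then compare. The only difference is organizational: the paper derives both inequalities simultaneously from the single strict inequality $|U|(r\mu-n+|U|)<2(2\mu+1)(r\mu-n+|U|)$, whereas you first handle the $+1$ in the lower bound by a separate special-case argument and then run the edge count for the upper bound. Your separate lower-bound argument is correct but in fact unnecessary: your own edge count already rules out $k=0$, since it would give $|U|(k+1)/2\le (2\mu+1)k=0$, forcing $|U|=0$, which contradicts $|U|\ge n-r\mu\ge 1$.
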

\begin{claimproof}
Since $|U_i| \le \mu$ for every $i\in [r]$, we have $|U| \ge n -\sum_{i \in [r]} |U_i| \ge n - r\mu$. By \Cref{lem:up_bound_edges_of_disconnected}, for every $i\in [r]$, we have 
\begin{align*}
      2 e(G_i[U]) 
     \leq \paren{|U| - c(G_i[U]) + 1} \paren{|U| - c\paren{G_i[U]}} 
     &\stackrel{\eqref{eq:low_bound_CGiU}}{\leq} \paren{2\mu - 2|U_i| + 1} \paren{2\mu - 2|U_i|} \\
     &\leq 2\paren{2\mu + 1} \paren{\mu - |U_i|}. \\
\end{align*}
Using this, we also have that
\begin{align*}
      r\mu|U| \stackrel{\eqref{eq:up_bound_mu}}{<} \delta(G) |U| \leq \sum_{v \in U} d_G(v) &\leq |U| \paren{n - |U|} + \sum_{i \in [r]} 2e(G_i[U]) \\
      &\leq |U| \paren{n - |U|} + 2(2\mu + 1) \paren{r\mu - \sum_{i\in [r]} |U_i|} \\
      &\stackrel{\eqref{eq:low_bound_U}}{\leq} |U| \paren{n - |U|} + 2(2\mu + 1) \paren{r\mu - n + |U|}.
\end{align*}
Therefore, $|U| \paren{r\mu -n + |U|} < 2(2\mu + 1) \paren{r\mu - n + |U|}$. Since $r \mu - n + |U| \ge 0$, this implies that $|U| > n -  r \mu$ and $|U| < 2(2\mu + 1)$, which concludes the proof of the claim.
\end{claimproof}
  
\begin{claim}\label{claim:at_least_2_comp_in_GiU}
    For all $i \in [r]$, we have $c(G_i[U]) > 1$.
\end{claim}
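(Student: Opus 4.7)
The plan is to argue by contradiction: suppose $c(G_j[U]) \leq 1$ for some $j \in [r]$. I would first rule out the degenerate subcase $c(G_j[U]) = 0$ by observing that $\sum_{i \in [r]} |U_i| \leq r\mu \leq \delta(G) - 1 < n$ (using~\eqref{eq:up_bound_mu} and~\eqref{eq:bound_n}), so $U = V(G) \setminus \bigcup_{i \in [r]} U_i$ is nonempty and hence $c(G_j[U]) \geq 1$. The task is therefore to contradict the possibility $c(G_j[U]) = 1$, i.e.\ that $G_j[U]$ is connected.

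The main idea is to squeeze $|U_j|$ between two bounds that turn out to be incompatible. For the upper bound, substituting $c(G_j[U]) = 1$ into~\eqref{eq:low_bound_CGiU} yields $|U_j| \leq \mu - (|U| - 1)/2$; since $|U_j| \geq 0$, this already forces $|U| \leq 2\mu + 1$. For the lower bound, combining $\sum_{i \in [r]} |U_i| \geq n - |U|$ (which is~\eqref{eq:low_bound_U}) with $|U_i| \leq \mu$ for each $i \neq j$ (from Lemma~\ref{lem:tutte} applied to $G_i$) gives $|U_j| \geq n - |U| - (r-1)\mu$.

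Setting lower bound $\leq$ upper bound on $|U_j|$ and simplifying produces $|U| \geq 2n - 2r\mu - 1$; sandwiching this with the earlier bound $|U| \leq 2\mu + 1$ yields $n \leq (r+1)\mu + 1$. Finally, I would plug in $\mu \leq (\delta(G) - 1)/r$ (from~\eqref{eq:up_bound_mu}) and the hypothesis $\delta(G) \leq rn/(r+1)$ of Theorem~\ref{thm:large monochromatic matching}: a short computation gives
\[
(r+1)\mu + 1 \;\leq\; \frac{(r+1)(\delta(G) - 1)}{r} + 1 \;\leq\; \frac{(r+1) \cdot \frac{rn}{r+1} - (r+1)}{r} + 1 \;=\; n - \frac{1}{r} \;<\; n,
\]
contradicting $n \leq (r+1)\mu + 1$.

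The argument is essentially a careful manipulation of inequalities already assembled during the proof of Claim~\ref{claim:low_sum_ui}, together with the single new input that $c(G_j[U]) = 1$ gives the upper bound $|U_j| \leq \mu - (|U| - 1)/2$ via~\eqref{eq:low_bound_CGiU}. The only step requiring a bit of care is verifying that the degenerate case $c(G_j[U]) = 0$ is impossible, which follows from the simple observation that $U$ is nonempty.
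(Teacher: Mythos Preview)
Your proof is correct. The ingredients you use---inequality~\eqref{eq:low_bound_CGiU}, inequality~\eqref{eq:low_bound_U}, the bounds $|U_i|\le\mu$, inequality~\eqref{eq:up_bound_mu}, and the hypothesis $\delta(G)\le rn/(r+1)$---are exactly those of the paper, and the final numerical contradiction $n\le (r+1)\mu+1\le n-1/r$ is equivalent to the paper's conclusion $c(G_i[U])\ge (r+1)/r>1$.

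The only real difference is organizational: the paper proceeds directly, chaining
\[
c(G_i[U]) \;\ge\; |U|-2\mu+2|U_i| \;\ge\; n-\sum_{j}|U_j|-2\mu+2|U_i| \;\ge\; n-(r+1)\mu \;\ge\; \tfrac{r+1}{r} \;>\;1,
\]
whereas you rephrase the same chain as a contradiction, splitting it into an upper bound and a lower bound on $|U_j|$ and then reassembling. The direct route is shorter and avoids the separate treatment of the case $c(G_j[U])=0$, but there is no substantive difference in the mathematics.
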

\begin{claimproof}
We have
\begin{align*}
      c(G_i[U]) \stackrel{\eqref{eq:low_bound_CGiU}}{\geq} |U| - 2\mu + 2|U_i| \stackrel{\eqref{eq:low_bound_U}}{\geq} n - \sum_{j \in [r]} |U_j| - 2\mu + 2|U_i| 
      &\geq n - 2\mu - \sum_{j \in [r] \setminus \set{i}} |U_j| \\
      &\geq n - 2\mu - (r - 1)\mu \\ 
      &\stackrel{\eqref{eq:up_bound_mu}}{\geq} n - (r + 1)\paren{\frac{\delta(G) - 1}{r}} \\
      &\geq n - \frac{r + 1}{r}\paren{\frac{r}{r + 1}n - 1} \\
      &= \frac{r + 1}{r} > 1.
\end{align*}
\end{claimproof}
\begin{claim}
\label{claim:no_monochromatic_vertex_inside_U}
    For every vertex $v \in U$ and every $i \in [r]$, we have $d_{G_i[U]}(v) \le d_{G[U]} (v) - 2$.
\end{claim}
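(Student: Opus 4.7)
The plan is to proceed by contradiction. Suppose there exist $v \in U$ and $i \in [r]$ such that $d_{G_i[U]}(v) \geq d_{G[U]}(v) - 1$, equivalently, $\sum_{j \neq i} d_{G_j[U]}(v) \leq 1$, so $v$ has at most one edge of $G[U]$ whose colour differs from $i$.

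First I would establish a lower bound $d_{G[U]}(v) \geq 2$ via a simple degree count: since $d_G(v) \geq \delta(G)$ and at most $|V(G) \setminus U| = n - |U|$ of the edges incident to $v$ leave $U$, we have $d_{G[U]}(v) \geq \delta(G) - (n - |U|)$. By Claim~\ref{claim:low_sum_ui}, $|U| \geq n - r\mu + 1$, and by~\eqref{eq:up_bound_mu}, $\delta(G) \geq r\mu + 1$. Combining these gives $d_{G[U]}(v) \geq \delta(G) - r\mu + 1 \geq 2$, and in particular $d_{G_i[U]}(v) \geq 1$.

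Next I would analyse the component structure of $G_i[U]$. Let $C_v$ be the component of $v$ in $G_i[U]$. By~\eqref{eq:low_bound_CGiU}, $c(G_i[U]) \geq |U| - 2\mu + 2|U_i|$, so
\[
|C_v| \leq |U| - c(G_i[U]) + 1 \leq 2\mu - 2|U_i| + 1.
\]
Crucially, any edge of $G[U]$ from $v$ to $U \setminus C_v$ must have colour different from $i$ (otherwise such a vertex would lie in $C_v$), so $d_G(v, U \setminus C_v) \leq \sum_{j \neq i} d_{G_j[U]}(v) \leq 1$. Therefore
\[
d_{G[U]}(v) = d_G(v, C_v \setminus \{v\}) + d_G(v, U \setminus C_v) \leq (|C_v| - 1) + 1 \leq 2\mu - 2|U_i| + 1.
\]
Combining with the lower bound and the bound $n - |U| \leq r\mu - 1$ gives $\delta(G) - r\mu + 1 \leq 2\mu - 2|U_i| + 1$, which, after using $\delta(G) \geq r\mu + 1$, yields $|U_i| \leq \mu - 1/2$, and hence the integrality constraint $|U_i| \leq \mu - 1$.

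The hard part will be converting the refined bound $|U_i| \leq \mu - 1$ into a genuine contradiction, since on its own it is only one less than the \emph{a priori} bound $|U_i| \leq \mu$ from Lemma~\ref{lem:tutte}. The most natural route is to apply Lemma~\ref{lem:up_bound_components_inverse_degree} to $G_i[U]$: combined with the improved lower bound on $c(G_i[U])$ coming from $|U_i| \leq \mu - 1$ and a careful accounting of $\sum_{w \in U} d_{G_i[U]}(w) = 2e(G_i[U]) \leq (2\mu - 2|U_i|)(2\mu - 2|U_i| + 1)$, one should obtain a numerical inconsistency. An alternative route would be a matching-augmentation argument: the component $C_v$ has bounded size and contains all of $v$'s colour-$i$ neighbours in $U$, and by using rotations one might hope to exhibit a matching in $G_i$ of size exceeding $\mu$, contradicting the definition of $\mu$. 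Either way, reconciling $|U_i| \leq \mu - 1$ with the structural constraints is the crux, and I expect the successful argument to rely on a delicate interplay between the degree bound established in Step~1 and the component-size bound established in Step~2.
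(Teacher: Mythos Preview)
Your setup is correct and your preliminary bounds are valid, but the proposal is incomplete and, more importantly, aimed in the wrong direction. The inequality $|U_i| \leq \mu - 1$ that you extract is too weak to finish: it is only a unit improvement on the a priori bound $|U_i|\le \mu$, and neither of your suggested continuations (applying Lemma~\ref{lem:up_bound_components_inverse_degree} to a single colour, or a matching-augmentation) will close the gap. The reason is structural: you are bounding $|C_v|$ from \emph{above} via the crude estimate $|C_v|\le |U|-c(G_i[U])+1$, whereas the decisive information is that $|C_v|$ is \emph{large}, namely $|C_{i,v}|\ge 1+d_{G_i[U]}(v)\ge d_{G[U]}(v)\ge 1+t$ with $t=\sum_j(\mu-|U_j|)$.

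The missing idea is a two-vertex double count over all colours. Use Claim~\ref{claim:at_least_2_comp_in_GiU} to pick a vertex $u$ in a different component of $G_i[U]$, and note that for \emph{every} vertex $u\in U$ one has $\sum_{j\in[r]}|C_{j,u}|\ge r + d_{G[U]}(u)\ge r+t+1$. Now bound $\sum_{j\in[r]} c(G_j[U])$ from above by accounting, in colour $i$, for the two large components $C_{i,v}$ and $C_{i,u}$, and in each colour $j\ne i$ for the single component $C_{j,u}$; this yields $\sum_j c(G_j[U])\le r|U|-2t-1$. On the other hand, summing~\eqref{eq:low_bound_CGiU} over $j$ gives $\sum_j c(G_j[U])\ge r|U|-2t$, and these two are incompatible. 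Your degree lower bound $d_{G[U]}(v)\ge 1+t$ is exactly what feeds into this argument, so your Step~1 survives; it is Step~3 (upper-bounding $|C_v|$) that should be replaced by the global component count.
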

\begin{claimproof}
Let $t = \sum_{j \in [r]} \paren{\mu - |U_j|}$. Observe first that every vertex $v \in U$ satisfies
\begin{equation}
      \label{eq:low_deg_GU_v}
      d_{G[U]}(v) \geq \delta(G) - \sum_{j \in [r]}|U_j| \stackrel{\eqref{eq:up_bound_mu}}{\geq} 1 + \mu r - \sum_{j \in [r]} |U_j| = 1 + \sum_{j \in [r]} \paren{\mu - |U_j|} = 1 + t.
\end{equation}
For every vertex $v \in U$ and every $i \in [r]$, denote by $C_{i, u}$ the vertex-set of the connected component of $G_i[U]$ that contains $v$.

Suppose for a contradiction that there is some $v \in U$ and $i \in [r]$ such that $d_{G_i[U]}(v) \ge d_{G[U]}(v) - 1$. By \Cref{claim:at_least_2_comp_in_GiU}, the graph $G_i[U]$ has at least two components. Let $u$ be a vertex in $U \setminus C_{i, v}$, so that $C_{i, u} \neq C_{i, v}$.
Observe that
\begin{equation}
      \label{eq:low_C_iv}
      |C_{i, v}| \geq 1 + d_{G_i[U]}(v) \geq  d_{G[U]}(v)
      \stackrel{\Cref{eq:low_deg_GU_v}}{\geq} 1 + t.
\end{equation}
Furthermore, we have
\begin{equation}
      \label{eq:low_sum_C_ju}
      \sum_{j \in [r]} |C_{j, u}| \geq \sum_{j \in [r]} \paren{1 + d_{G_j[U]}(u)} = r + d_{G[U]}(u)
      \stackrel{\Cref{eq:low_deg_GU_v}}{\geq} r + t + 1.
\end{equation}
Then,
\begin{align*}
      \sum_{j \in [r]} c(G_j[U]) = c(G_i[U]) + \sum_{j \in [r] \setminus \set{i}} c(G_j[U]) &\leq 2 + \paren{|U| - |C_{i, v}| - |C_{i, u}|} + \sum_{j \in [r] \setminus \set{i}} \paren{1 + |U| - |C_{j, u}|} \\
      &= r + 1  + r|U| - |C_{i, v}| - \sum_{j \in [r]} |C_{j, u}| \\
      &\stackrel{\Cref{eq:low_C_iv},\Cref{eq:low_sum_C_ju}}{\leq} r|U| - 2t - 1.
\end{align*}
However, we also have that
\begin{equation*}
      \sum_{j \in [r]} c(G_j[U]) \stackrel{\Cref{eq:low_bound_CGiU}}{\geq} \sum_{j \in [r]} \paren{|U| - 2\mu + 2|U_j|} = r|U| - 2t,
\end{equation*}
a contradiction. This concludes the proof of \Cref{claim:no_monochromatic_vertex_inside_U}.
\end{claimproof}

\begin{claim}\label{claim:up_sum_ui}
\begin{equation*}
      2(n - r\mu) \le \paren{\frac{1}{3} + \frac{1}{|U| - n + r\mu}}|U|.
\end{equation*}
\end{claim}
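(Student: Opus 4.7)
The approach is to bound $\sum_{i\in[r]} c(G_i[U])$ both from above and from below, and then rearrange. Let $t := |U| - n + r\mu$; using $\sum_j |U_j| = n - |U|$, one sees $t = \sum_j (\mu - |U_j|)$, and by \Cref{claim:low_sum_ui} we have $t \ge 1$. Summing \eqref{eq:low_bound_CGiU} over $i \in [r]$ gives the lower bound
\begin{equation*}
\sum_{i\in[r]} c(G_i[U]) \ge r|U| - 2t.
\end{equation*}

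For the matching upper bound, I will apply \Cref{lem:up_bound_components_inverse_degree} to each $G_i[U]$ and swap the order of summation, reducing the task to the per-vertex estimate
\begin{equation*}
\sum_{i\in[r]} \frac{1}{d_{G_i[U]}(v) + 1} \le (r - 2) + \frac{1}{3} + \frac{1}{d_{G[U]}(v) - 1}
\end{equation*}
for every $v \in U$. Writing $D = d_{G[U]}(v)$ and $k_i = d_{G_i[U]}(v)$, the tuple $(k_1,\dots,k_r)$ lies in the polytope $\{k_i \ge 0,\ k_i \le D-2,\ \sum_i k_i = D\}$, where the upper bound on each $k_i$ is \Cref{claim:no_monochromatic_vertex_inside_U}. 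Note that $D \ge 3$: \eqref{eq:low_deg_GU_v} gives $D \ge 1 + t \ge 2$, and $D = 2$ would force all $k_i = 0$, contradicting $\sum_i k_i = 2$. Since $x \mapsto \frac{1}{x+1}$ is convex, the maximum of $\sum_i \frac{1}{k_i + 1}$ on this polytope is attained at an extreme point, where all but at most one coordinate equals $0$ or $D-2$; a short case analysis (whose extremal configuration is one coordinate equal to $D-2$, one equal to $2$, and the remaining $r-2$ equal to $0$, at least when $D \ge 5$) confirms the displayed per-vertex bound.

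Using $D \ge 1 + t$ to replace $\frac{1}{D-1}$ by $\frac{1}{t}$ and summing the per-vertex bound over $v \in U$ yields
\begin{equation*}
\sum_{i\in[r]} c(G_i[U]) \le |U| \paren{r - 2 + \frac{1}{3} + \frac{1}{t}}.
\end{equation*}
Chaining with the lower bound gives $r|U| - 2t \le |U|\paren{r - 2 + \frac{1}{3} + \frac{1}{t}}$, which simplifies to $2(|U| - t) \le |U|\paren{\frac{1}{3} + \frac{1}{t}}$. Since $|U| - t = n - r\mu$ and $t = |U| - n + r\mu$, this is exactly the claimed inequality. The main obstacle is establishing the per-vertex estimate: the convexity argument is routine in the generic regime, but the small cases $D \in \{3,4\}$ need to be checked directly because the generic extremal configuration degenerates there (for $D=3$ it becomes $r$--$3$ zeros and three $1$'s; for $D=4$ it becomes $r-2$ zeros and two $2$'s), and one should verify in each case that the resulting value is still bounded by $(r-2)+\tfrac{1}{3}+\tfrac{1}{D-1}$.
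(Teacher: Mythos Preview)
Your approach is essentially the paper's: bound $\sum_i c(G_i[U])$ from below via \eqref{eq:low_bound_CGiU} and from above via \Cref{lem:up_bound_components_inverse_degree} together with the per-vertex convexity estimate, then rearrange. One small correction: you assert $\sum_j |U_j| = n - |U|$, but only $\sum_j |U_j| \ge n - |U|$ holds in general (the $U_j$ need not be disjoint; this is \eqref{eq:low_bound_U}). This does not affect the argument, since the inequality goes the right way for the lower bound $\sum_i c(G_i[U]) \ge r|U| - 2t$, and $D \ge 1 + t$ follows directly from $d_{G[U]}(v) \ge \delta(G) - (n - |U|) \ge 1 + r\mu - (n - |U|)$ rather than from \eqref{eq:low_deg_GU_v} (which, with the paper's $t$, gives the a priori weaker bound $D \ge 1 + \sum_j(\mu - |U_j|)$).
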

\begin{claimproof}
We denote $d_i(v) = d_{G_i[U]}(v)$. By \Cref{lem:up_bound_components_inverse_degree}, we have
$c(G_i[U]) \leq \sum_{v \in U} \frac{1}{d_i(v) + 1}$ for every $i \in [r]$, and therefore
\newpage
\begin{equation*}
      \sum_{i \in [r]} c(G_i[U]) \leq \sum_{i \in [r]} \sum_{v \in U} \frac{1}{d_i(v) + 1} = \sum_{v \in U} \sum_{i\in [r]} \frac{1}{d_i(v) + 1}.
\end{equation*}
Observe that $\sum_{i \in [r]} d_i(v) = d_{G[U]}(v)$, and by \Cref{claim:no_monochromatic_vertex_inside_U}, we have $d_i(v) \leq d_{G[U]}(v) - 2$ for all $i \in [r]$. 
Note that for any numbers  $0 < x \leq y$, we have $\frac{1}{x + 1} + \frac{1}{y + 1} < \frac{1}{x} + \frac{1}{y + 2}$. This tells us that for $r$ variables $x_i \in \naturals\cup \{0\}$ with a fixed sum satisfying $x_i\le z$ for every $i\in [r]$, the expression $\sum_{i \in [r]} \frac{1}{x_i + 1}$ attains its maximum value when every $x_i$ except at most one equals $z$ or $0$. 
In particular, we conclude that the maximum of $\sum_{i \in [r]} \frac{1}{d_i(v) + 1}$ is at most the value of this expression when $d_i(v)$ are all zero except for one equal to $2$ and one equal to $d_{G[U]}(v) - 2$. Thus,
\begin{equation*}
      \sum_{i \in [r]} \frac{1}{d_i(v) + 1} \leq \frac{r - 2}{1} + \frac{1}{2+1} +
      \frac{1}{d_{G[U]}(v)-2 + 1} \leq r - 2 +\frac{1}{3} + \frac{1}{|U| - n + r\mu},
\end{equation*}
where we have used that $d_{G[U]}(v) - 1 \ge \delta(G) - 1 - (n - |U|) \ge r\mu - n + |U|$ by~\eqref{eq:up_bound_mu} (note that $|U| - n + r\mu > 0$ by~\Cref{claim:low_sum_ui}). In particular,
\begin{equation*}
      \sum_{i \in [r]} c(G_i[U]) \leq \sum_{v \in U} \paren{r - 2 + \frac{1}{3} + \frac{1}{|U| - n + r\mu}} = |U|
      \paren{r - 2+ \frac{1}{3} + \frac{1}{|U| - n + r\mu}}.
\end{equation*}
We also have that
\begin{equation*}
      \sum_{i \in [r]} c\paren{G_i[U]} \stackrel{~\eqref{eq:low_bound_CGiU}}{\geq} \sum_{i \in [r]}
      \paren{|U| - 2\mu + 2|U_i|} = r|U| - 2r\mu + 2\sum_{i \in [r]} |U_i|
      \stackrel{~\eqref{eq:low_bound_U}}{\geq}
      (r-2)|U| + 2(n-r\mu).
\end{equation*}
Combining the above inequalities, we obtain
\begin{equation*}
       (r-2)|U| + 2(n-r\mu) \leq |U| \paren{r - 2 + \frac{1}{3} + \frac{1}{|U| - n + r\mu}},
\end{equation*}
and canceling the $(r-2)|U|$ from both sides completes the proof of the claim.
\end{claimproof}

We will now show that the two inequalities of \Cref{claim:low_sum_ui} and \Cref{claim:up_sum_ui} cannot hold simultaneously. Let $x = n - r\mu$, noting that 
\begin{equation}\label{eq:bound_x}
      x \stackrel{\Cref{eq:up_bound_mu}}{\ge} n - \delta(G) + 1\ge \frac{n}{r+1} + 1 \stackrel{\Cref{eq:bound_n}}{>} 2.
\end{equation}
Let $y = |U| - n + r \mu$, noting that by  \Cref{claim:low_sum_ui}, 
\[
     1 \le y \le 4\mu + 1 - n + r\mu.
\]
By \eqref{eq:up_bound_mu}, we have $\mu \le \frac{\delta(G) - 1}{r} \le \frac{n}{r+1}- \frac{1}{r}$. Substituting this in, we have
\begin{equation}\label{eq:bounds_for_y}
    1 \le y \le (r+4)\left(\frac{n}{r+1} - \frac{1}{r}\right) + 1 - n < \frac{3n}{r+1}  \stackrel{\Cref{eq:bound_x}}{<} 3x.
\end{equation}
Now, the inequality of \Cref{claim:up_sum_ui} can be rewritten in terms of $x$ and $y$ as
  \[
  2x \le \left(\frac{1}{3} + \frac{1}{y}\right)(x+y),
  \]
  which we can rearrange to get
  \[
  0 \le y^2  - 5yx  + 3y + 3x = - (y-1)(3x-y) - 2y(x-2).
  \]
By \eqref{eq:bounds_for_y}, we see that $y - 1 \ge 0$ and $3x - y >0$. Moreover,  by \eqref{eq:bound_x} we also have $x - 2 > 0$. Therefore we conclude that $- (y-1)(3x-y) - 2y(x-2) < 0$, which is a contradiction. 
This finishes the proof of \Cref{thm:large monochromatic matching}.
\end{proof}

\bigskip \section{Large monochromatic linear forests and proof of \texorpdfstring{\Cref{thm:deterministic main}}{Theorem 1.2}}
\label{sec:lonG_{n,M}ono_paths}
In this section, we execute Step~2 as described in \Cref{sec:proof ideas}. Recall that a forest is called a \defin{linear forest} if each of its connected components is a path. The main result of this section is \Cref{thm:large monochromatic linear forest}, which finds a large monochromatic linear forest in an edge-coloured pseudorandom graph. Similar to \cite{Gishboliner2021colourbiased}, instead of random graphs, we work with more general pseudorandom graphs. 

We will now introduce some useful notation. Suppose $G$ is a graph. For two disjoint non-empty sets $V_1,V_2\subseteq V(G)$, the density of $(V_1,V_2)$ is defined as $d_G(V_1,V_2) = \frac{e_G(V_1,V_2)}{|V_1||V_2|}$. For $\gamma, p\in (0,1]$, the graph $G$ is called \defin{$(\gamma, p)$-pseudorandom} if for every disjoint non-empty sets $V_1,V_2\subseteq V(G)$ with $|V_1|,|V_2|\ge \gamma |V(G)|$, we have $|d_G(V_1,V_2)-p|\le \gamma p$. Define the density of $V$ as $d_G(V) = \frac{e_G(V)}{\binom{|V|}{2}}$. We also note the following well-known fact (for a proof, see e.g.~\cite[Section~3]{Gishboliner2021colourbiased}). 
\begin{lemma}[\cite{Gishboliner2021colourbiased}]
\label{lem:pseudorandomness on single set}
    If $G$ is a $(\gamma, p)$-pseudorandom graph, then every set $V\subseteq V(G)$ of size at least $2\gamma |V|$ satisfies $|d_G(V) -p| \le \gamma p$.
\end{lemma}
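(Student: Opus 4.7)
The plan is to reduce the edge count inside $V$ to bipartite edge counts, for which the pseudorandomness hypothesis directly applies. Write $n = |V(G)|$ and $v = |V| \geq 2\gamma n$, so any partition of $V$ into two parts of size $\lfloor v/2\rfloor$ and $\lceil v/2 \rceil$ has both parts of size at least $\gamma n$. For any such equipartition $V = A \sqcup B$, the $(\gamma,p)$-pseudorandomness of $G$ gives
\begin{equation*}
  (1 - \gamma) p |A| |B| \;\leq\; e_G(A, B) \;\leq\; (1 + \gamma) p |A| |B|.
\end{equation*}

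Next I average over all equipartitions. Pick $A \subseteq V$ uniformly at random among subsets of size $a := \lfloor v/2 \rfloor$, and let $B = V \setminus A$, so $|B| = b := \lceil v/2 \rceil$. For a fixed edge $\{x,y\} \in E(G[V])$, a direct count gives
\begin{equation*}
  \mathbb{P}[\{x,y\} \text{ is crossing}] = \frac{2ab}{v(v-1)}.
\end{equation*}
By linearity of expectation, $\mathbb{E}[e_G(A, B)] = e_G(V) \cdot \frac{2ab}{v(v-1)}$. On the other hand, every realization of $(A,B)$ satisfies the two-sided bipartite bound above, so the same inequality holds for the expectation. Equating the two expressions for $\mathbb{E}[e_G(A,B)]$ yields
\begin{equation*}
  (1 - \gamma)\, p \binom{v}{2} \;\leq\; e_G(V) \;\leq\; (1 + \gamma)\, p \binom{v}{2},
\end{equation*}
and dividing by $\binom{v}{2}$ gives $|d_G(V) - p| \leq \gamma p$, as desired.

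There is no real obstacle here: the only thing to check carefully is that each equipartition has both sides of size at least $\gamma n$, which is ensured by the hypothesis $|V| \geq 2\gamma n$, and that the crossing probability matches the expectation identity. Both are elementary. (Alternatively, one can avoid randomization entirely by fixing any single equipartition and using a slightly more careful identity relating $e_G(V)$, $e_G(A)$, $e_G(B)$, and $e_G(A,B)$; but the averaging argument above is cleaner and has the advantage of not requiring any control on $e_G(A)$ and $e_G(B)$ individually.)
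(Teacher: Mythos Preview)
Your averaging-over-equipartitions argument is correct: each random equipartition has both sides of size at least $\gamma n$ (up to the usual rounding conventions the paper explicitly adopts), so pseudorandomness applies to every realization, and linearity of expectation collapses the bipartite bound to the desired bound on $e_G(V)$. The paper itself does not prove this lemma but cites \cite[Section~3]{Gishboliner2021colourbiased}; your argument is a clean self-contained version of the standard proof.
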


For convenience, we will use the usual hierarchy notation throughout this section. For example, we say $\alpha \ll \beta, \gamma$ to mean that there is a non-decreasing function $g:(0,1]^2 \rightarrow (0,1]$ such that $\alpha \le g(\beta,\gamma)$. When there are multiple numbers in a hierarchy, they are chosen from right to left. We are now ready to state the main result of this section.

\begin{theorem} \label{thm:large monochromatic linear forest}
  Suppose $0 < \frac{1}{n} \ll \frac{1}{C}, \gamma \ll \alpha, \eps, \frac{1}{r} \le 1$. Let $G$ be an $n$-vertex $(\gamma,p)$-pseudorandom graph for some $p\in (0,1]$, and $H$ be an $\alpha$-residual spanning subgraph of $G$. Then, for every $r$-edge-colouring of $H$, there is a monochromatic linear forest of size $(1-\eps)\min\set{\frac{2\alpha n}{r}, \frac{2n}{r+1}}$ consisting of at most $C$ paths.
\end{theorem}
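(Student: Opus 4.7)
The plan is to reduce to \Cref{thm:large monochromatic matching} via a multicolour sparse regularity argument in the spirit of Gishboliner--Krivelevich--Michaeli~\cite{Gishboliner2021colourbiased}. The first step is to apply the Kohayakawa--R\"odl sparse regularity lemma simultaneously to the $r$ colour classes of $H$, where the ``upper-regularity'' hypothesis required by the sparse regularity lemma is supplied by $G$ being $(\gamma, p)$-pseudorandom. This produces an equipartition $V_0, V_1, \ldots, V_k$ of $V(H)$ with $|V_0| \le \eta' n$, $|V_1| = \cdots = |V_k| = m$, and all but at most an $\eta'$-fraction of pairs $(V_i, V_j)$ being $(\eta', p)$-regular in every colour class of $H$. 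Here $k$ is a bounded constant chosen large in terms of $\eps$, $\alpha$, $r$, and $\eta'$, and small enough that $k/2 \le C$.

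Next, I construct the coloured reduced graph $R$ on vertex set $[k]$: include the edge $ij$ whenever $(V_i, V_j)$ is regular in every colour class and $d_H(V_i, V_j) \ge \eta p$ for a small parameter $\eta$, and colour $ij$ by the colour $c \in [r]$ maximising $d_{H_c}(V_i, V_j)$, so that the $c$-bipartite subgraph on $(V_i, V_j)$ is $(\eta', p)$-regular with density at least $\eta p / r$. The core technical step is to show $\delta(R) \ge (\alpha - o(1)) k$. Fix $i \in [k]$: the $\alpha$-residual condition yields $\sum_{v \in V_i} d_H(v) \ge \alpha \sum_{v \in V_i} d_G(v)$, and pseudorandomness of $G$ together with \Cref{lem:pseudorandomness on single set} evaluates the right-hand side as $(\alpha - O(\gamma)) p |V_i| n$. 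Subtracting the negligible contributions of $e_H(V_i)$ and $e_H(V_i, V_0)$ then gives $\sum_{j \in [k] \setminus \set{i}} e_H(V_i, V_j) \ge (\alpha - o(1)) p m^2 k$. Since each term is at most $(1 + \gamma) p m^2$ by pseudorandomness, a direct averaging argument forces at least $(\alpha - o(1)) k$ of the $V_j$'s to satisfy $d_H(V_i, V_j) \ge \eta p$, giving the claimed bound on $\delta(R)$.

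Now \Cref{thm:large monochromatic matching} applied to $R$ yields a monochromatic matching $\mathcal{M}$ in some colour $c$ with $|\mathcal{M}| \ge (1 - o(1)) \min\set{\alpha k / r, \; k / (r + 1)}$. For each edge $ij \in \mathcal{M}$ the bipartite graph $H_c[V_i, V_j]$ is an $(\eta', p)$-regular pair of density at least $\eta p / r$, and a standard long-path lemma for (sparse) regular pairs supplies a path $P_{ij} \subseteq H_c[V_i, V_j]$ covering at least $(1 - o(1)) \cdot 2m$ vertices. Taking the disjoint union of the $P_{ij}$ then produces a monochromatic (colour $c$) linear forest with at most $|\mathcal{M}| \le k/2 \le C$ components and total size at least $(1 - o(1)) \cdot 2m \cdot |\mathcal{M}| \ge (1 - \eps) \min\set{\frac{2\alpha n}{r}, \; \frac{2n}{r + 1}}$, using that $mk \ge (1 - \eta') n$.

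The main obstacle is the transfer from the vertex-wise $\alpha$-residual condition of $H$ in $G$ to the pair-wise minimum degree condition on $R$ described above; the averaging step crucially exploits pseudorandomness to simultaneously lower-bound $\sum_{v \in V_i} d_G(v)$ and upper-bound each $e_H(V_i, V_j)$. A secondary technical point is the existence of nearly-spanning paths in $(\eta', p)$-regular pairs of density $\Omega(p)$, which is standard in the sparse regularity framework but relies on the pseudorandomness of $G$ to rule out pathological subgraph counts inside clusters.
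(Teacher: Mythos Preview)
Your approach is essentially identical to the paper's: apply multicolour sparse regularity, lower-bound the minimum degree of the coloured reduced graph via pseudorandomness and the $\alpha$-residual condition, invoke the monochromatic matching theorem, and extract near-spanning paths from the matched regular pairs. The paper uses Scott's version of the sparse regularity lemma (no exceptional set, no upper-regularity hypothesis needed) and quotes \Cref{lem:almost Hamilton path} for the path-in-regular-pair step, but these are cosmetic differences.

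There is one small gap in your write-up. Your argument for $\delta(R) \ge (\alpha - o(1))k$ only shows that, for each fixed $i$, at least $(\alpha - o(1))k$ indices $j$ satisfy the density condition $d_H(V_i, V_j) \ge \eta p$; it does not address the regularity requirement you built into $R$. Since the regularity lemma only guarantees that at most $\eta' \binom{k}{2}$ pairs are irregular globally, a single cluster could in principle be incident to far more than $\eta' k$ irregular pairs, which would destroy the degree bound for that vertex. The paper handles this in the standard way: it first restricts to the set $S$ of clusters incident to at most $\sqrt{\delta}\, t$ irregular pairs (so $|S| \ge (1-\sqrt{\delta})t$ by averaging), and then applies \Cref{thm:intro large monochromatic matching} to the intersection graph $R'' = R[S] \cap R'[S]$ where $R'$ records regularity. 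You should insert the same step before applying the matching theorem.
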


We use the above result to prove \Cref{thm:deterministic main} at the end of this section. Before proving \Cref{thm:large monochromatic linear forest}, we first prove the following corollary, which will later be useful in the proof of \Cref{thm:main,thm:hitting-time-main}. Note that the range of probability $p$ is relaxed below compared to what we assume in \Cref{thm:main}. 

\begin{corollary} \label{cor:random graph large monochromatic linear forest}
Let $r$ be a positive integer, and $\alpha, \eps \in (0,1]$. Then there exists $C >0$ such that if $p =  p(n) \ge \frac{C}{n}$ and $G \sim G(n,p)$, then with probability $1-o(n^{-3}),$ for every $r$-edge-colouring of every $\alpha$-residual spanning subgraph of $G$, there is a monochromatic linear forest of size  $(1-\eps)\min\set{\frac{2\alpha n}{r}, \frac{2n}{r+1}}$ consisting of at most $C$ paths. 
\end{corollary}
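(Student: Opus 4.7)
The plan is to deduce Corollary~4.2 directly from Theorem~4.1 together with Lemma~3.4. Since $G(n,p)$ with $p \geq C/n$ for a sufficiently large constant $C$ is pseudorandom with very high probability, once this pseudorandomness is in hand the corollary becomes a black-box application of Theorem~4.1 to every $\alpha$-residual spanning subgraph simultaneously.

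First I would fix the constants in the hierarchy. Given $r$, $\alpha$, and $\eps$, invoke Theorem~4.1 to obtain $\gamma_0 > 0$ and $C_0 > 0$ witnessing the hierarchy $\frac{1}{n} \ll \frac{1}{C_0}, \gamma_0 \ll \alpha, \eps, \frac{1}{r}$. Then set $C \coloneqq \max\{C_0, 100/\gamma_0^4\}$; note that $\gamma_0, C_0, C$ depend only on $r, \alpha, \eps$, as required. We assume $n$ is large enough that the hierarchy of Theorem~4.1 is satisfied.

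Next I would verify that $G \sim G(n,p)$ is $(\gamma_0, p)$-pseudorandom with probability $1 - o(n^{-3})$. Apply Lemma~3.4 with $\lambda = \gamma_0$: with probability $1 - o(n^{-3})$, every pair of sets $V_1, V_2 \subseteq V(G)$ with $p|V_1||V_2| \geq 100n/\gamma_0^2$ satisfies $e_G(V_1,V_2) = (1 \pm \gamma_0)\,p|V_1||V_2|$. Any disjoint non-empty $V_1, V_2$ with $|V_1|, |V_2| \geq \gamma_0 n$ obeys
\[
p|V_1||V_2| \;\geq\; p\,\gamma_0^2\, n^2 \;\geq\; C\gamma_0^2\, n \;\geq\; 100n/\gamma_0^2
\]
by the choice of $C$, so Lemma~3.4 applies to every such pair. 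Dividing by $|V_1||V_2|$ yields $|d_G(V_1, V_2) - p| \leq \gamma_0 p$, which is precisely the definition of $(\gamma_0, p)$-pseudorandomness.

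Finally, condition on this event. For every $r$-edge-colouring of every $\alpha$-residual spanning subgraph $H$ of $G$, Theorem~4.1 applied to $G$ (whose hypotheses are now satisfied with our chosen $\gamma_0$ and $C_0$) produces a monochromatic linear forest in $H$ of size at least $(1-\eps)\min\{\tfrac{2\alpha n}{r}, \tfrac{2n}{r+1}\}$ consisting of at most $C_0 \leq C$ paths, completing the proof. There is no serious obstacle here: all the substantive content lives in Theorem~4.1 and in the standard pseudorandomness estimate of Lemma~3.4, and the only task is to line up the two hierarchies of constants.
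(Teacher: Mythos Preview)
Your proof is correct and follows essentially the same approach as the paper: choose $\gamma$ and $C$ in the hierarchy required by the linear-forest theorem, verify $(\gamma,p)$-pseudorandomness of $G(n,p)$ with probability $1-o(n^{-3})$ via the edge-count concentration lemma, and then apply the linear-forest theorem as a black box. The only cosmetic difference is that the paper packages the pseudorandomness verification into a standalone lemma and cites it, whereas you inline that short calculation directly.
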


To prove this, we will use the following standard lemma, which says that random graphs are pseudorandom with appropriate parameters.
\begin{lemma} \label{lem:pseudorandom}
Suppose $0 < \frac{1}{C} \ll \gamma \le 1$. Let $G\sim G(n,p)$ with $p\ge \frac{C}{n}$. Then with probability $1-o(n^{-3})$, the graph $G$ is $(\gamma,p)$-pseudorandom.
\end{lemma}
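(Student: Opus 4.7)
The plan is a direct Chernoff-plus-union-bound argument, where the role of the hierarchy $\frac{1}{C} \ll \gamma$ is precisely to make the exponential savings win against the number of pairs of vertex subsets.

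Fix disjoint non-empty sets $V_1, V_2 \subseteq V(G)$ with $|V_1|, |V_2| \ge \gamma n$. The random variable $e_G(V_1, V_2)$ is a sum of independent $\mathrm{Bernoulli}(p)$ random variables, one for each pair in $V_1 \times V_2$, so it is binomial with mean $\mu := p|V_1||V_2| \ge p \gamma^2 n^2 \ge C \gamma^2 n$, using $p \ge C/n$. By Chernoff's bound (\Cref{lem:chernoff}) applied with $\eps := \gamma \le 1$,
\begin{equation*}
    \Prob{\bigl|e_G(V_1, V_2) - p|V_1||V_2|\bigr| \ge \gamma p|V_1||V_2|} \le 2\exp\!\left(-\frac{\gamma^2 \mu}{3}\right) \le 2\exp\!\left(-\frac{\gamma^4 C n}{3}\right).
\end{equation*}
This is exactly the event $|d_G(V_1, V_2) - p| > \gamma p$ we need to rule out for the given pair.

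Next I would take a union bound over all ordered pairs $(V_1, V_2)$ of disjoint subsets of $V(G)$. The number of such ordered pairs is at most $3^n$, since each vertex independently lies in $V_1$, in $V_2$, or in neither. Hence the probability that $G$ fails to be $(\gamma, p)$-pseudorandom is at most
\begin{equation*}
    2 \cdot 3^n \cdot \exp\!\left(-\frac{\gamma^4 C n}{3}\right) = 2 \exp\!\left(n \log 3 - \frac{\gamma^4 C n}{3}\right).
\end{equation*}
By the hierarchy $\tfrac{1}{C} \ll \gamma$, we may assume $C \ge \frac{100 \log 3}{\gamma^4}$, so the exponent is at most $-30 n \log 3$, which is certainly $o(n^{-3})$.

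There is no real obstacle here; the only subtlety is ensuring the Chernoff deviation bound controls enough to beat the $3^n$-type factor coming from the union bound, and this is precisely what the assumption $\tfrac{1}{C} \ll \gamma$ guarantees. If one wanted to be more economical one could restrict to sets of size \emph{exactly} $\lceil \gamma n\rceil$ (using monotonicity of the failure event in the set sizes), but this is not necessary since $3^n$ already suffices.
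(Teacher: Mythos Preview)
Your proof is correct and follows essentially the same approach as the paper: both arguments boil down to Chernoff's inequality combined with a union bound over pairs of vertex subsets, with the hierarchy $\tfrac{1}{C} \ll \gamma$ ensuring the exponential term beats the $3^n$-type count. The only cosmetic difference is that the paper invokes the pre-packaged \Cref{lem:large-sets-pseudorandom-property-Gnp} (verifying its hypothesis $p|A||B| \ge 100n/\gamma^2$ via $C\gamma^4 \ge 100$) rather than carrying out the Chernoff-plus-union-bound step explicitly as you do.
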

\begin{proof}
Every two disjoint sets $A,B\subseteq V(G)$ with $|A|,|B|\ge \gamma n$ satisfy 
\[
p|A||B|\ge p\gamma^2 n^2\ge C\gamma^2 n\ge \frac{100n}{\gamma^2},
\]
where the last inequality uses the fact that $\frac{1}{C} \ll \gamma$. Thus, by applying \Cref{lem:large-sets-pseudorandom-property-Gnp} with $\lambda = \gamma$, we obtain that the following happens with probability $1-o(n^{-3})$. Every disjoint $A,B\subseteq V(G)$ with $|A|,|B|\ge \gamma n$ satisfy $e_G(A,B) = (1\pm \gamma)p|A||B|$. This proves the lemma.
\end{proof}

\begin{proof}[Proof of \Cref{cor:random graph large monochromatic linear forest}]

Let $r, \alpha, \eps$ be given as in the statement of \Cref{cor:random graph large monochromatic linear forest}. Choose $C, \gamma$ such that $0< \frac{1}{C} \ll \gamma \ll \alpha, \eps$. If $p \ge \frac{C}{n}$, \Cref{lem:pseudorandom} implies that with probability $1-o(n^{-3})$, the graph $G$ is $(\gamma,p)$-pseudorandom. Then applying \Cref{thm:large monochromatic linear forest} gives the result. 
\end{proof}

We next focus on proving \Cref{thm:large monochromatic linear forest}, for which we use the following `multicolour' variant of the sparse regularity lemma. The same version was also used in~\cite{Gishboliner2021colourbiased}. 

We now introduce a few definitions for convenience. Consider a graph $G$. A pair $(V_1,V_2)$ of disjoint vertex-sets is called \defin{$(\delta, q)$-regular} if for every $V'_1\subseteq V_1$ and $V'_2\subseteq V_2$ with $|V'_i| \ge \delta |V_i|$ for all $i\in [2]$, we have $|d(V'_1,V'_2) - d(V_1,V_2)|\le \delta q$. A partition of a set is called an \defin{equipartition} if the sizes of any two parts differ by at most $1$. 
Let $G_1,\dots,G_r$ be graphs on the same vertex set $V$ of size $n$. An equipartition $\{V_1,\dots,V_t\}$ of $V$ is said to be \defin{$(\delta)$-regular} with respect to $G_1,\dots,G_r$ if for all but at most $\delta \binom{t}{2}$ pairs $(V_i,V_j)$ with $1\le i<j\le t$, the pair $(V_i,V_j)$ is $(\delta, q)$-regular in $G_\ell$ for every $\ell \in [r]$, where $q= \frac{\sum_{i\in [r]}e(G_i)}{\binom{n}{2}}$.

\begin{theorem} [Multicolour sparse regularity lemma~\cite{Scott2011szemeredi}] \label{thm:regularity}
Let $0 < \frac{1}{n} \ll \frac{1}{T} \ll \delta, \frac{1}{t_0}, \frac{1}{r} \le 1$. Then, for every collection $G_1,\dots,G_r$ of graphs on the same vertex set $V$, there is an equipartition of $V$ that is $(\delta)$-regular with respect to $(G_1,\dots,G_r)$, and has at least $t_0$ and at most $T$ parts.
\end{theorem}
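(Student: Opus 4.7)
The plan is to prove this by adapting the classical energy-increment proof of Szemerédi's regularity lemma to the multicolour sparse setting. For an equipartition $\mathcal{P} = \set{V_1, \ldots, V_t}$ of $V$, I would define the mean-square density index
\[
\Phi(\mathcal{P}) = \sum_{\ell \in [r]} \sum_{1 \le i < j \le t} \frac{|V_i||V_j|}{n^2} \cdot d_{G_\ell}(V_i, V_j)^2,
\]
and note that the crude bound $d_{G_\ell} \le 1$ together with $\sum_{i<j} (|V_i||V_j|/n^2) \cdot d_{G_\ell}(V_i, V_j) \approx q_\ell/2$ yields $\Phi(\mathcal{P}) \le q + o(1)$.

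The central step is the following refinement lemma: if $\mathcal{P}$ is not $(\delta)$-regular, then one can produce a refinement $\mathcal{P}'$ with at most $|\mathcal{P}| \cdot 2^{r|\mathcal{P}|^2}$ parts such that $\Phi(\mathcal{P}') \ge \Phi(\mathcal{P}) + c\delta^5 q^2$ for some absolute constant $c > 0$. To prove it, for each of the at least $\delta\binom{t}{2}$ pairs $(V_i, V_j)$ witnessing a $(\delta, q)$-regularity failure in some colour $\ell$, I would extract witness subsets $V_i' \subseteq V_i$, $V_j' \subseteq V_j$ of sizes at least $\delta|V_i|, \delta|V_j|$ with $|d_{G_\ell}(V_i', V_j') - d_{G_\ell}(V_i, V_j)| > \delta q$, and refine $V_i, V_j$ along these witnesses simultaneously across all bad pairs and colours. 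A standard Cauchy--Schwarz/variance calculation shows that each bad pair contributes a gain of at least $\delta^4 q^2 \cdot |V_i||V_j|/n^2$ to $\Phi$; summing over the $\delta \binom{t}{2}$ bad pairs (each of weight $\approx 1/t^2$ in an equipartition) yields the claimed total increment. A final equipartitioning step converts the refinement back into an equipartition at the cost of an $o(1)$ perturbation of $\Phi$.

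Starting from any equipartition with $t_0$ parts, iterating this refinement produces a $(\delta)$-regular equipartition with between $t_0$ and $T$ parts. The main technical obstacle --- and the content of Scott's original argument --- is bounding the number of iterations (and hence $T$) purely in terms of $\delta, r, t_0$, independent of the total density $q$. I would handle this by normalising $\Phi$ by $q^2$ and exploiting the identity $\sum_\ell q_\ell = q$, which forces the effective upper bound on the normalised index to depend on $r$ alone; the total number of iterations then depends only on $\delta, r$, so that $T$ is a tower-type function of $\delta, t_0, r$ as required by the hierarchy $\frac{1}{T} \ll \delta, \frac{1}{t_0}, \frac{1}{r}$.
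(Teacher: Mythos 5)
There is a genuine gap, and it sits exactly at the step you flag as ``the content of Scott's original argument.'' With the mean-square density index your own bounds give: total index $\Phi(\mathcal{P}) \le q/2$ (since $d_{G_\ell}\le 1$), and a gain of $c\delta^5 q^2$ per failed iteration. Hence the number of refinement steps is only bounded by $O\bigl(1/(\delta^5 q)\bigr)$, which depends on the density $q$ and therefore on $n$; in the intended application $q$ can be of order $\log n/n$, so $T$ would be forced to grow with $n$, violating the hierarchy $\frac{1}{n}\ll\frac{1}{T}\ll\delta,\frac{1}{t_0},\frac{1}{r}$. Your proposed repair --- normalising $\Phi$ by $q^2$ and invoking $\sum_\ell q_\ell = q$ --- does not work: the normalised index $\Phi/q^2$ is only bounded by $O(1/q)$, because an individual pair density $d_{G_\ell}(V_i,V_j)$ can be as large as $1$ even when $q=o(1)$ (a sparse graph may contain dense patches). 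A bound on $\Phi/q^2$ in terms of $r$ alone would require every pair density to be $O(q)$, i.e.\ an upper-uniformity/boundedness hypothesis in the Kohayakawa--R\"odl style --- and removing precisely that hypothesis is the point of Scott's theorem, which the present statement (quantified over \emph{every} collection $G_1,\dots,G_r$) needs.

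To close the gap one needs a different index. Scott replaces the square by a convex function of linear growth applied to the densities measured at scale $q$ (essentially $x\mapsto x^2/(x+1)$), so that the total index is bounded by an absolute constant times $r$, independently of $q$, while a defect form of Jensen's inequality still yields a gain of $\mathrm{poly}(\delta)$ whenever at least $\delta\binom{t}{2}$ pairs fail to be $(\delta,q)$-regular; this makes the iteration count, and hence $T$, depend only on $\delta, t_0, r$. Without such a bounded index (or some separate treatment of the few ``dense spots''), the mean-square-density iteration you describe cannot terminate in a number of steps independent of $n$. For context, the paper does not prove this lemma at all: it is quoted directly from Scott, so the comparison here is only between your argument and what is actually needed to make it go through.
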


\begin{lemma} [Lemma~4.4 in \cite{Ben2012the}] \label{lem:almost Hamilton path}
Let $n,k$ be positive integers, and let $G$ be a bipartite graph with sides $X$ and $Y$ of size $n$ each. Suppose that there is an edge between every pair of sets $X'\subseteq X$ and $Y'\subseteq Y$ with $|X'| = |Y'| = k$. Then, $G$ contains a path of length at least $2n - 4k$.
\end{lemma}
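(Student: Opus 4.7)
The plan is to prove the lemma by contradiction using the bipartite analogue of P\'osa's rotation--extension technique. Suppose for contradiction that a longest path $P = v_1 v_2 \cdots v_m$ in $G$ satisfies $m - 1 < 2n - 4k$. Since $G$ is bipartite and $P$ alternates between $X$ and $Y$, each side of $V(P)$ contains at most $\lceil m/2 \rceil \leq n - 2k$ vertices; consequently the unused sets $A := X \setminus V(P)$ and $B := Y \setminus V(P)$ each have at least $2k$ vertices. Maximality of $P$ implies that neither endpoint $v_1$ nor $v_m$ can be adjacent to a vertex outside $V(P)$, for otherwise we could prepend or append that vertex and produce a strictly longer path.

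Next, I would perform P\'osa rotations starting from $v_m$ while keeping $v_1$ fixed, assuming without loss of generality that $v_m \in Y$. Recall that given any path $P' = v_1 u_2 \cdots u_{m-1} u$ on the vertex set $V(P)$ and an edge $u u_i$ to an internal vertex $u_i$, the rotation yields the path $v_1 u_2 \cdots u_i u u_{m-1} \cdots u_{i+1}$ on the same vertex set, whose new endpoint $u_{i+1}$ lies in the same bipartition class as $u$ (as forced by the parity of an alternating path). Iterating such rotations, let $S \subseteq Y \cap V(P)$ denote the set of all vertices arising as the non-$v_1$ endpoint of some path obtainable from $P$ by a finite sequence of rotations. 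By the maximality of $P$, every $u \in S$ satisfies $N_G(u) \subseteq V(P)$.

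The main obstacle, and the heart of the argument, is to establish the key claim that $|S| \geq k$. This is the bipartite version of P\'osa's rotation lemma, and I would prove it by an iterative expansion argument. Starting from $S = \{v_m\}$, at each step either $|S|$ strictly grows by producing a new rotation endpoint on the $Y$ side (via some edge $u u_i \in E(G)$ with $u \in S$ generating a fresh element $u_{i+1} \notin S$), or $S$ becomes closed under rotations. Once $S$ is rotation-closed, supposing $|S| < k$, one pads $S$ to a $k$-subset $Y' \subseteq S \cup B$, chooses any $X' \subseteq A$ with $|X'| = k$, and invokes the hypothesis to obtain an edge between $Y'$ and $X'$. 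A short case analysis on whether the $Y$-endpoint of this edge lies in $S$ or in $B$ then either contradicts $N_G(u) \subseteq V(P)$ for some $u \in S$, or permits a rerouting of $P$ into a strictly longer path by splicing in the external edge and using the closure structure of $S$, contradicting maximality.

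Finally, once $|S| \geq k$ has been established, the conclusion follows in one step. Pick any $Y' \subseteq S$ and $X' \subseteq A$ with $|Y'| = |X'| = k$ (possible since $|A| \geq 2k$). The hypothesis produces an edge $uw$ with $u \in Y'$ and $w \in X' \subseteq X \setminus V(P)$. Since $u \in S$ forces $N_G(u) \subseteq V(P)$, the existence of the neighbor $w \notin V(P)$ is a contradiction. Hence $m - 1 \geq 2n - 4k$, completing the proof.
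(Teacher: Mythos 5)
Your set-up is fine: the counting that gives $|X\setminus V(P)|,|Y\setminus V(P)|\ge 2k$, the parity observation that all rotation endpoints stay in $Y$, the fact that every vertex of $S$ has all its neighbours on $V(P)$, and the final step (an edge between a $k$-subset of $S$ and a $k$-subset of $A$ yields a contradiction) would indeed finish the proof \emph{if} you knew $|S|\ge k$. But that key claim is exactly where the argument has a genuine gap. The hypothesis says nothing about sets of size smaller than $k$, so nothing forces the rotation-closed set $S$ to reach size $k$: P\'osa's lemma only gives $N_G(S)\subseteq V(P)$ with $|N_G(S)|<2|S|$, and for $|S|<k$ this is perfectly compatible with the $k$-set hypothesis. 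Your padding trick does not repair this. If the guaranteed edge between $Y'\subseteq S\cup B$ and $X'\subseteq A$ has its $Y$-endpoint in $B$, then it joins two vertices that both lie outside $V(P)$; such an edge is vertex-disjoint from $P$ and from every path obtainable from $P$ by rotations, so there is nothing to ``splice in'': edges inside $(X\setminus V(P))\times(Y\setminus V(P))$ are entirely consistent with $P$ being a longest path, and the ``closure structure of $S$'' provides no connection between this edge and $V(P)$. So the case analysis does not close, and the heart of the proof is missing; it is not clear it can be salvaged using only P\'osa's lemma plus the hypothesis, precisely because the hypothesis gives no expansion below scale $k$.

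Note that the paper itself does not prove this lemma (it is quoted from the cited source), so the comparison here is with a correct argument rather than with an in-paper proof. A standard route that avoids your difficulty entirely is a depth-first-search argument: run DFS on $G$ and stop at the first moment when the set $S$ of fully explored vertices and the set $T$ of unvisited vertices satisfy $|S|=|T|$ (this moment exists since $|T|-|S|$ changes by exactly one at each step). There are no edges between $S$ and $T$, so by hypothesis one cannot have both $|S\cap X|\ge k$ and $|T\cap Y|\ge k$, nor both $|S\cap Y|\ge k$ and $|T\cap X|\ge k$; a short case analysis then shows that the DFS stack, which spans a path, contains at least $2n-4k$ edges. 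If you wish to keep the rotation--extension framework, you need an additional idea to handle the off-path edges (your Case 2), not merely the closure of $S$ under rotations.
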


We are now ready to prove \Cref{thm:large monochromatic linear forest}.
\begin{proof}[Proof of \Cref{thm:large monochromatic linear forest}]
Let $q = e(H)/\binom{n}{2}$. By pseudorandomness of $G$, we have $e(G) = (1 \pm \gamma)p\binom{n}{2}$. By assumption, $q \binom{n}{2} = e(H)\le e(G) \le (1+\gamma) p \binom{n}{2}$. Thus, we have 
\begin{equation} \label{eq:p q relation}
q \le (1+\gamma)p.
\end{equation}
For every $c\in [r]$, let $H_c$ denote the subgraph of $H$ induced by the edges coloured with the $c$-th colour. 
Introduce new parameters $T,\delta,\eps'$ satisfying $\eps = 4\eps'$ and $\frac{1}{C}, \gamma \ll \frac{1}{T} \ll \delta \ll \eps', \alpha, \frac{1}{r}$. Thus, we have
\[
0< \frac{1}{n} \ll \frac{1}{T} \ll \delta, \frac{1}{r} \le 1.
\]
Since the above relation holds, we apply \Cref{thm:regularity} with $t_0 = \frac{1}{\delta}$ to obtain an $(\delta)$-regular equipartition $\{V_1,\ldots,V_t\}$ of the vertex set of $H$ with respect to $(H_1,\dots,H_r)$, where $t_0 \le t \le T$. Thus, we have
\begin{equation} \label{eq:t vs delta}
2\gamma \le \frac{1}{t} \le \delta.
\end{equation}
To avoid unimportant technicalities, we will assume that $t$ divides $n$ and $|V_i| = \frac{n}{t}$ for every $i\in [t]$.
For every $i\in [t]$, we have $|V_i| = \frac{n}{t} \ge 2\gamma n$, using \eqref{eq:t vs delta}. Thus, by pseudorandomness of $G$ and \Cref{lem:pseudorandomness on single set}, we have $d_G(V_i) = (1\pm \gamma)p$ for every $i\in [t]$, and we have $d_G(V_i,V_j) = (1\pm \gamma)p$ for every distinct $i, j\in [t]$. Let $V=V(H)$.
For every $i\in [t]$, we have 
\[
e_H(V_i,V)=\sum_{v\in V_i} d_H(v)
\ge \sum_{v\in V_i} \alpha d_G(v)
= \alpha e_G(V_i,V)
= \alpha \sum_{j\in [t]\setminus \{i\}} e_G(V_i,V_j)
\ge \alpha (1-\gamma)p \frac{(t-1)n^2}{t^2},
\]
and thus, noting that $e_H(V_i)\le e_G(V_i)$, we have
\begin{align}\label{eq:lower bound on density sum}
\sum_{j\in [t]\setminus \{i\}} d_H(V_i, V_j) 
= \frac{e_H(V_i,V\setminus V_i)}{(\frac{n}{t})^2} 
= \frac{e_H(V_i,V) - 2e_H(V_i)}{(\frac{n}{t})^2} 
&\ge \frac{\alpha (1-\gamma)p\frac{(t-1)n^2}{t^2} - 2(1+\gamma)p\binom{n/t}{2}}{(\frac{n}{t})^2} \nonumber
\\&\ge (1 - \eps')\alpha tp,
\end{align}
where the last step uses \eqref{eq:t vs delta} and $\gamma, \delta \ll \eps', \alpha$.
Consider the graph $R$ on the vertex set $[t]$ where $ij$ is an edge if and only if $d_H(V_i,V_j) \ge 2 r \delta q$. 
For every $i\in [t]$, noting that $d_H(V_i,V_j) \le d_G(V_i,V_j) \le (1+\gamma)p$ for every $j\in [t]\setminus \{i\}$, we have 
\begin{equation}\label{eq:upper bound on density sum}
\sum_{j\in [t]\setminus \{i\}} d_H(V_i, V_j) \le d_R(i) \cdot (1+\gamma)p + t \cdot 2 r \delta q,
\end{equation}
Combining \eqref{eq:p q relation}, \eqref{eq:lower bound on density sum}, and \eqref{eq:upper bound on density sum}, and using $\gamma, \delta \ll \eps', \alpha$, we have $d_R(i)\ge (1 - 2\eps')\alpha t$ for every $i\in [t]$.   

Consider the graph $R'$ on the vertex set $[t]$ where $ij$ is an edge if and only if $(V_i,V_j)$ is $(\delta,q)$-regular in $H_c$ for every $c\in [r]$. By definition, $e(R')\ge (1-\delta)\binom{t}{2}$. Using this, if we let $S = \{i\in [t]: d_{R'}(i)\ge (1-\sqrt{\delta})t\}$, then $|S|\ge (1-\sqrt{\delta})t$. 

Consider the graph $R''$ on the vertex set $S$ where $ij$ is an edge if and only if $ij$ is an edge in both $R[S]$ and $R'[S]$ (i.e., the subgraphs of $R$ and $R'$ induced by $S$). Note that for every $i\in S$, we have 
\begin{equation}\label{eq:lower bound on degree of R''}
d_{R''}(i)\ge d_R(i) + d_{R'}(i) - t - |[t]\setminus S| 
\ge (1 - 2\eps')\alpha t + (1 - \sqrt{\delta}) t - t - \sqrt{\delta} t
\ge (1 - 3\eps')\alpha t, 
\end{equation}
where the last step uses $\delta \ll \eps', \alpha$.
We now assign a colour from $[r]$ to every edge of $R''$ in the following way. For $ij \in E(R'')$, choose a colour $c\in [r]$ such that $d_{H_c}(V_i,V_j)\ge 2\delta q$ (such $c$ exists since $\sum_{c\in [r]} d_{H_c}(V_i,V_j) = d_H(V_i,V_j) \ge 2r\delta q$). Note that if the edge $ij \in E(R'')$ gets the colour $c$, then for every $V'_i\subseteq V_i$ and $V'_j\subseteq V_j$ with $|V'_i|,|V'_j|\ge \frac{\delta n}{t}$, we have $d_{H_c}(V'_i,V'_j) \ge d_{H_c}(V_i,V_j) - \delta q \ge \delta q > 0$, and thus by \Cref{lem:almost Hamilton path} the bipartite subgraph $H_c[V_i,V_j]$ contains a path of length at least $(2-4\delta)\frac{n}{t}$.

Now, using \eqref{eq:lower bound on degree of R''} and the fact that $|S|\ge (1-\sqrt{\delta})t$, we can apply \Cref{thm:intro large monochromatic matching} to find a monochromatic matching $\mathcal{M}\subseteq E(R'')$ of size $m:= \min\set{\frac{(1 - 3\eps')\alpha t}{r}, \frac{(1-\sqrt{\delta})t -1}{r+1}}$ in $R''$, say in colour $c\in [r]$. Thus, for every $ij \in E(\mathcal{M})$, the bipartite graph $H_c[V_i,V_j]$ contains a path of length at least $(2-4\delta)\frac{n}{t}$. Taking union of these paths gives us the desired monochromatic (in colour $c$) linear forest of size at least $m \cdot (2-4\delta)\frac{n}{t} \ge (1-\eps)\min\set{\frac{2\alpha n}{r}, \frac{2n}{r+1}}$, where we use $\delta\ll \eps' = \frac{\eps}{4}$. Clearly, the number of paths in this linear forest is exactly the size of the matching $\mathcal{M}$, which is at most $T\le C$. This finishes the proof of \Cref{thm:large monochromatic linear forest}.

\end{proof}

\subsection{Proof of \texorpdfstring{\Cref{thm:deterministic main}}{Theorem 1.2}}
\deterministic*
To prove this, we will use \Cref{thm:large monochromatic linear forest} and the following classical result by P\'osa~\cite{Posa1963ForceHamilton}.

\begin{lemma}[\cite{Posa1963ForceHamilton}]
  \label{lem:posa-force-hamilton}
  Let $t \geq 0$ and let $G$ be an $n$-vertex graph with minimum degree at
  least $\frac{n}{2} + t$. Let $E \subseteq E(G)$ be a set of edges that forms a linear forest and has size at most $2t$. Then there exists a Hamilton cycle in $G$ which uses all edges in $E$.
\end{lemma}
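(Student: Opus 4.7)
My plan is to prove \Cref{lem:posa-force-hamilton} by a contraction that reduces the general statement to the case where $E$ is a matching, and then to dispatch that matching case by Pósa's classical rotation--extension technique (the attribution in the lemma suggesting exactly this tool).

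\emph{Contraction and induction on $n$.} Write $E$ as a disjoint union of path components $P_1, \ldots, P_k$ with endpoints $a_i, b_i$ and interior vertex sets $I_i$, and set $s := |E| - k$. Form the auxiliary graph $H$ on $V(H) := V(G) \setminus \bigcup_i I_i$ by taking the induced subgraph of $G$ and adding the $k$ \emph{virtual} edges $a_i b_i$. Any Hamilton cycle of $H$ using every virtual edge lifts, by substituting $P_i$ for $a_i b_i$, to a Hamilton cycle of $G$ through $E$. A direct count yields
\[
\delta(H) \;\ge\; \delta(G) - s \;\ge\; \tfrac{|V(H)|}{2} + \paren{t - \tfrac{s}{2}},
\]
while the virtual edges form a matching of size $k \le 2t - s = 2(t - s/2)$. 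So $(H, \{a_i b_i\}, t - s/2)$ is a valid instance of the same lemma on strictly fewer vertices whenever $s \ge 1$, and induction on $n$ reduces everything to the case $s = 0$, where $E = M$ is itself a matching (with the subcase $M = \emptyset$ handled by Dirac's theorem).

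\emph{Matching case via rotation--extension.} For the matching case I invoke Pósa's rotation--extension argument, with the added rule that no rotation may cut an edge of $M$. Start from a longest path $P = v_0 v_1 \cdots v_\ell$ in $G$ that contains every edge of $M$ internally (built by taking the edges of $M$ as seed path components and then extending greedily using the Dirac-like degree condition). A rotation at the endpoint $v_\ell$ selects $v_i \in N_G(v_\ell) \cap V(P)$ with $v_i v_{i+1} \notin M$ and replaces $P$ by $v_0 \cdots v_i v_\ell v_{\ell-1} \cdots v_{i+1}$, yielding an equally long path that still contains $M$. Since each edge of $M$ forbids at most one rotation target at each endpoint, Pósa's expansion lemma gives reachable-endpoint sets $S_\ell$ and $S_0$ at the two ends whose sizes, together with the $t$-surplus in $\delta(G)$, comfortably exceed the $|M|$-forbidden count. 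Maximality of $|P|$ then forces $P$ to be a Hamilton path (otherwise some reachable endpoint is adjacent to a vertex outside $V(P)$, a contradiction), and a final pigeonhole on $|S_\ell| + |N(v_0)| \ge n$ produces an adjacency that closes $P$ into a Hamilton cycle; this cycle contains $M$ because $M \subseteq E(P)$ throughout.

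\emph{Main obstacle.} The matching case is the only real content: the factor of~$2$ in $|M| \le 2t$ reflects that each matching edge blocks at most one rotation at each of the path's two endpoints, and the $t$-surplus in the minimum-degree condition is precisely what keeps the reachable-endpoint sets large enough for Pósa's closing argument. Setting this balance up carefully---particularly, verifying the expansion argument under the restricted rotations---is the crux; the contraction and the induction in Step~1 are routine degree bookkeeping.
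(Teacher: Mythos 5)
The paper does not actually prove this lemma -- it is quoted from P\'osa's 1963 paper -- so there is no internal proof to compare against; judging your argument on its own merits: your Step 1 (replacing each path component of $E$ by a virtual edge between its endpoints) is correct, and the bookkeeping checks out: with $s=|E|-k$ interior vertices deleted one gets $\delta(H)\ge \frac{|V(H)|}{2}+(t-\tfrac{s}{2})$, $t-\tfrac{s}{2}\ge 0$, and a matching of size $k\le 2(t-\tfrac{s}{2})$, and a Hamilton cycle of $H$ through the virtual edges lifts correctly. The problems are in the matching case, which you yourself call the crux but do not actually carry out.

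Two concrete gaps. First, the starting object is not available: you need \emph{some} path of $G$ containing every edge of $M$ before you can speak of a longest one, and ``taking the edges of $M$ as seed path components and extending greedily'' does not produce it -- greedy extension lengthens each seed separately and never joins the $k$ components into one path. Merging the components is exactly where the degree surplus has to be used (via a rotation or crossing argument, or by passing to an edge-maximal counterexample so that a Hamilton path through $M$ exists by maximality), and omitting it leaves the heart of the proof missing. Second, the deduction ``maximality of $|P|$ forces $P$ to be a Hamilton path, otherwise some reachable endpoint is adjacent to a vertex outside $V(P)$'' is not valid: a longest $M$-containing path can fail to be spanning while every (reachable) endpoint has all its neighbours inside $V(P)$, since $|V(P)|\ge \frac{n}{2}+t+1$. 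The correct order is the reverse: first close a cycle on $V(P)$ -- e.g.\ by the Ore-type crossing count, which does work here: there are at least $\bigl(\frac{n}{2}+t\bigr)+\bigl(\frac{n}{2}+t\bigr)-\ell\ge 2t+1$ indices $i$ with $v_iv_\ell, v_0v_{i+1}\in E(G)$, of which at most $|M|\le 2t$ are blocked by $v_iv_{i+1}\in M$ -- and only then use connectivity ($\delta\ge \frac{n}{2}$) to extend: if the cycle is not spanning, some cycle vertex $z$ has a neighbour outside, and since $M$ is a matching one of the two cycle edges at $z$ avoids $M$, so breaking it yields a longer $M$-containing path, a contradiction. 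With these two steps supplied (and the restricted-rotation bookkeeping made explicit, in the spirit of the $\mathcal{M}$-respecting rotations of \Cref{lem:posa-expander}), your outline becomes a complete proof; as written it is not.
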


\begin{proof}[Proof of \Cref{thm:deterministic main}]
Let $r, \alpha, \eps$ be as in \Cref{thm:deterministic main}. We introduce a new parameter $\gamma$ which is sufficiently small relative to $\eps$, and we will assume $n$ to be sufficiently large with respect to $\gamma$ and $\eps$. Let $H$ be an $r$-edge-coloured $n$-vertex graph with minimum degree at least $\alpha n$.

Let $t := \min\set{2(\alpha -1)\frac{n}{2}, \frac{\alpha n}{r}, \frac{n}{r + 1}}$. Let $G$ be the $n$-vertex complete graph with the same vertex set as~$H$. Note that $G$ is $(\gamma, 1)$-pseudorandom, and $H$ is an $\alpha$-residual spanning subgraph of the complete graph $K_n$. Therefore, by \Cref{thm:large monochromatic linear forest}, $H$ contains a monochromatic linear forest of size $(1 - \eps)\cdot 2t$. Let $E$ be the edges of such a monochromatic linear forest. Clearly, the minimum degree of $H$ is at least $\alpha n = \frac{n}{2} + 2(\alpha -1)\frac{n}{2} \ge \frac{n}{2} + t$. Hence, by applying \Cref{lem:posa-force-hamilton} on $H$, we obtain a Hamilton cycle in $H$ which uses all the edges in $E$. Therefore, given that the set of edges $E$ is monochromatic, there is a Hamilton cycle in $H$ with one colour appearing at least $(1 - \eps)\cdot 2t$ times, as desired.
\end{proof}

\bigskip \section{Cleaning up}
\label{sec:clean-up}

In this section, we walk through Step~3 as described in \Cref{sec:proof ideas}. Going into this step, we have a monochromatic linear forest in the graph $H$ consisting of a constant number of paths spanning a large vertex set $S$. Let $\overline{S}$ be the complement of $S$. In order to adapt something analogous to the `resilience' result \cite[Theorem~1.6]{Montgomery2019resilient}, we would like it to be the case that for some $\delta > 0$, we have
\[
    d_{H}\left(v,\overline{S}\right) \ge \left(\frac{1}{2} + \delta\right)d_{G}(v, \overline{S})  \quad \text{for all } v \in \overline{S}.
\]
In addition, we would also like a somewhat large minimum degree condition in the graph $H[\overline{S}]$. Therefore, the first step of the clean-up process will be to move from $\overline{S }$  to $S$ all vertices $v$ with $d_H\left(v,\overline{S}\right)$ too small. The second step will be to amend and extend the linear forest to cover these extra vertices.

The following standard variant of Hall's Marriage Theorem will be handy in this section. The condition in the following lemma will sometimes be referred to as \emph{Hall's condition}.
\begin{lemma}[Bigamy Variant of Hall's Marriage Theorem]\label{thm:Hall_variant}
    Let $G$ be a bipartite graph with vertex classes $X$ and $Y$. Suppose for all $Z \subseteq Y$, we have $|N(Z)| \ge 2|Z|$. Then $G$ has two matchings $\mathcal{M}_1$ and $\mathcal{M}_2$ covering $Y$ such that the sets $V(\mathcal{M}_1)\cap X$ and $V(\mathcal{M}_2)\cap X$ are disjoint.
\end{lemma}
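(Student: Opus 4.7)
The plan is to reduce this to the standard Hall Marriage Theorem by duplicating the side $Y$. Construct an auxiliary bipartite graph $G'$ with vertex classes $X$ and $Y^*$, where $Y^* = Y_1 \sqcup Y_2$ consists of two disjoint copies of $Y$. For each $y \in Y$ with copies $y_1 \in Y_1$ and $y_2 \in Y_2$, set $N_{G'}(y_1) = N_{G'}(y_2) = N_G(y)$. I will look for a matching in $G'$ that saturates $Y^*$; any such matching decomposes into two matchings (the one using $Y_1$ and the one using $Y_2$), each of which covers $Y$ in $G$, and the $X$-endpoints of the two matchings are automatically disjoint because each vertex of $X$ is used at most once in the matching of $G'$.

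The remaining task is to verify Hall's condition for $G'$. Take any $Z^* \subseteq Y^*$ and let $Z \subseteq Y$ be its projection to $Y$ (i.e., the set of $y \in Y$ such that at least one of its two copies lies in $Z^*$). Then $|Z^*| \le 2|Z|$ since each vertex in $Y$ contributes at most two elements to $Z^*$. Moreover, by construction $N_{G'}(Z^*) = N_G(Z)$, so the hypothesis gives
\begin{equation*}
|N_{G'}(Z^*)| = |N_G(Z)| \ge 2|Z| \ge |Z^*|.
\end{equation*}
Thus Hall's condition holds in $G'$, so by the classical Hall Marriage Theorem there is a matching in $G'$ saturating $Y^*$.

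Finally, decompose this matching into $\mathcal{M}_1$ (the edges incident to $Y_1$, naturally identified with edges of $G$) and $\mathcal{M}_2$ (the edges incident to $Y_2$). Each of $\mathcal{M}_1$ and $\mathcal{M}_2$ covers $Y$, and because the matching of $G'$ uses each vertex of $X$ at most once, $V(\mathcal{M}_1) \cap X$ and $V(\mathcal{M}_2) \cap X$ are disjoint, as required. There is essentially no obstacle here: the whole argument is a routine reduction to Hall's theorem, and the only thing to double-check is the factor-of-two slack in the neighbourhood expansion, which is exactly what absorbs the duplication of $Y$.
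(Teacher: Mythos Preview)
Your proof is correct and follows exactly the approach indicated in the paper, which simply notes that the result follows from Hall's theorem applied to an auxiliary graph with each vertex of $Y$ duplicated. You have merely written out the details of that one-line proof.
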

\begin{proof}
This follows immediately from the usual version of Hall's Marriage Theorem by considering an auxiliary graph with each vertex in $Y$ duplicated.
\end{proof}

The main result of this section is the following. 

\begin{theorem} \label{thm:clean-up_main}
    Let $r\ge 2$, $C\ge 0$, $\eps \in (0,\frac{1}{2}]$, and $\alpha \in \left[\frac{1}{2} + \frac{1}{2r}, 1\right]$.
    Then there exists $\delta >0$ such that the random graph $G \sim G(n,p)$ with $p \ge \frac{\log{n}}{2n}$ satisfies the following with probability $1-o(n^{-3})$.
    Let $H$ be an $\alpha$-residual spanning subgraph of $G$ such that the following hold. 
    \begin{enumerate}[leftmargin=*,label = {\bfseries A\arabic{enumi}}]
    \item Every vertex $v\in V(H)$ satisfies $d_H(v)\ge 2$.
    \label{item:mindeg2}
    \item No two vertices $u,v\in V(H)$ satisfying $d_H(u),d_H(v)\le \frac{np}{5000}$ are within distance $5$ from each other.
    \label{item:property low_deg_vertices_far_apart}
    \end{enumerate}
    Let $\mathcal{P}$ be a linear forest in $H$ spanning a vertex set $S$ where ${|S| = (1-\eps) \min\Set{(2\alpha - 1)n, \frac{2\alpha n}{r}, \frac{2n}{r+1}}}$. Suppose $\mathcal{P}$ contains at most $C$ paths.
    Then, there exists a set $T \supseteq S$ and a linear forest $\mathcal{P}^*$ in $H$ covering $T$ such that, setting $U = V(H)\setminus T$, the following conditions hold.
    
    \begin{enumerate}[leftmargin=*,label = {\bfseries B\arabic{enumi}}]
    \item $d_{H}\left(u,U\right) \ge \max\left(\frac{p|U|}{5000}, \; \left(\frac{1}{2}+\delta\right)d_{G}\left(u,U\right)\right)$ for all $u \in U$.
    \label{eq:degree_in_U}
    \item $|U| \ge n - |S| - \frac{300r}{\delta^2 p}$.
    \label{eq:size_of_U}    
    \item All but at most $\frac{2000r}{\delta^2 p}$ edges of $\mathcal{P}$ are included in $\mathcal{P}^*$.\label{eq:most_of_path_used}
    \item There are at most $\frac{2000r}{\delta^2 p}$ paths in $\mathcal{P}^*$.
    \label{eq:number_of_paths}
    \item All paths in $\mathcal{P}^*$ have at least $3$ vertices, with endpoints lying in $U$ and all other vertices lying in~$T$. \label{eq:matching}
    \item If $W$ is the set of all endpoints of paths in $\mathcal{P}^*$, then $d_H(u, W) \le (np)^{1/5}$ for all $u \in U$.
    \label{eq:deg_U_to_M}
    \end{enumerate}
\end{theorem}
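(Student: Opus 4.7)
The plan is to modify $\mathcal{P}$ in three stages: (i) absorb vertices of $V(H)\setminus S$ that have low degree in $H$; (ii) absorb vertices $v\in \overline{S}$ whose $H$-neighbourhood in $\overline{S}$ is unbalanced compared to its $G$-neighbourhood; and (iii) trim and then extend each resulting path endpoint into a carefully chosen subset of $U$. Throughout, condition on the standard high-probability properties of $G\sim G(n,p)$: $\Delta(G)\le 10np$ (\Cref{lem:upper_bound_degrees}), pseudorandomness $e_G(A,B)=(1\pm\lambda)p|A||B|$ for disjoint $A,B$ of size at least $\gamma n$ (\Cref{lem:large-sets-pseudorandom-property-Gnp}), and the small-set edge bound (\Cref{lem:small-sets-cannot-contain-too-many-edges}); fix $\gamma,\lambda\ll \delta\ll \eps,1-\alpha,1/r$.

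\textbf{Phases 1 and 2 (absorbing bad vertices).} Let $K=\{v\in V(H):d_H(v)\le np/5000\}$. Since $d_H(v)\ge \alpha d_G(v)$ and $\alpha>1/2$, we have $K\subseteq\{v:d_G(v)\le np/2500\}$, so by A2 the elements of $K$ are pairwise at distance $>5$; Chernoff then gives $|K|=O(1/p)$ with probability $1-o(n^{-3})$. For each $v\in K\setminus S$, pick two neighbours $x_v,y_v\in N_H(v)$ and add the length-$2$ path $x_v v y_v$ to $\mathcal{P}$, breaking any existing path at $x_v$ or $y_v$ if needed; because $K$-vertices lie far apart, these operations do not interfere, and at most $2|K|$ edges of $\mathcal{P}$ are removed while at most $3|K|$ new paths are introduced. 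Call the result $\mathcal{P}_1$ with cover $S_1$. Now set $\overline{S_1}=V(H)\setminus S_1$ and
\[
B=\left\{v\in \overline{S_1}: d_H(v,\overline{S_1})<\max\left(\tfrac{p|\overline{S_1}|}{2500},\,(\tfrac{1}{2}+3\delta)d_G(v,\overline{S_1})\right)\right\}.
\]
Since $|S|\le (1-\eps)(2\alpha-1)n$, we have $|\overline{S_1}|\ge 2(1-\alpha)n+\tfrac{\eps}{2}(2\alpha-1)n$. Combining $d_H(v,\overline{S_1})\ge d_G(v,\overline{S_1})-(1-\alpha)d_G(v)$ with pseudorandomness and $\Delta(G)\le 10np$ shows that every $v$ whose $d_G(v)$ and $d_G(v,\overline{S_1})$ are close to their expected values satisfies the defining inequality; hence $B$ consists only of atypical vertices, and a Chernoff/union-bound argument gives $|B|=O(1/p)$ with probability $1-o(n^{-3})$. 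Absorb each $v\in B$ via a short path through $v$ exactly as in Phase~1, producing a linear forest $\mathcal{P}_2$ with cover $S_2$; by construction, every $u\in U_2:=V(H)\setminus S_2$ satisfies $d_H(u,U_2)\ge \max(p|U_2|/5000,(\tfrac{1}{2}+\delta)d_G(u,U_2))$.

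\textbf{Phase 3 (endpoints in $U$).} Let $E$ be the set of endpoints of $\mathcal{P}_2$, so $|E|\le 2|\mathcal{P}_2|=O(1/p)$, and let $U_2^{\mathrm{good}}=\{u\in U_2:d_H(u,E)\le (np)^{1/6}\}$. Pseudorandomness gives $d_G(u,E)=O(p|E|)=O(1)$ on average, and \Cref{lem:small-sets-cannot-contain-too-many-edges} caps $|U_2\setminus U_2^{\mathrm{good}}|$ at $o(1/p)$. For each endpoint $e\in E$, iteratively trim one edge of $\mathcal{P}_2$ at $e$ whenever $d_H(e,U_2^{\mathrm{good}})$ is below a threshold $\Omega(np)$; the total number of trims is at most $|E|+$(number of atypical vertices)$=O(1/p)$, since every failed trim hits a distinct atypical vertex and every successful trim terminates that endpoint. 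For each resulting (possibly trimmed) endpoint, $d_H(\cdot,U_2^{\mathrm{good}})=\Omega(np)\gg |E|$, so the bigamy variant of Hall's theorem (\Cref{thm:Hall_variant}) applied to the bipartite graph between $E$ and $U_2^{\mathrm{good}}$ trivially satisfies its hypothesis and yields two disjoint matchings covering $E$. Extending each path of $\mathcal{P}_2$ at both ends using these matchings produces $\mathcal{P}^*$, whose new endpoint set $W$ lies inside $U_2^{\mathrm{good}}$. Set $T=V(\mathcal{P}^*)\setminus W$ and $U=V(H)\setminus T=U_2\setminus W$. Then B3 and B4 follow from the bounds $|K|,|B|,|\mathrm{trims}|=O(1/p)$; B2 from $|T\setminus S|=O(1/p)$; B5 from the construction (short absorbed paths keep $v$ interior, giving $\ge 3$ vertices after extension); B6 from $W\subseteq U_2^{\mathrm{good}}$; and B1 from the Phase~2 construction together with the observation that removing the $O(1/p)$ vertices of $W$ from $U_2$ does not disturb $d_H(u,U)$ appreciably.

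The main obstacle is achieving the bounds $|K|,|B|=O(1/p)$ with probability $1-o(n^{-3})$ \emph{uniformly} over $H$ and $\mathcal{P}$, which themselves depend on the random $G$: this requires pseudorandomness strong enough to survive a union bound over all possible sets $\overline{S_1}$ of the required size. A secondary subtlety is coordinating the trimming step with the bigamy Hall argument so that the final endpoint set $W$ is forced inside $U_2^{\mathrm{good}}$ and absorbed short paths retain at least $3$ vertices, thereby preserving both B5 and the sharp bound $(np)^{1/5}$ in B6 simultaneously.
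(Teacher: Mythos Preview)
Your three-stage outline mirrors the paper's structure, but two of the stages contain genuine gaps.

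\textbf{Phase 2 does not establish B1.} You define $B$ once with respect to $\overline{S_1}$, absorb it, and then assert that every $u\in U_2=\overline{S_2}$ satisfies the degree condition. This does not follow. A vertex $u\notin B$ only satisfies $d_H(u,\overline{S_1})\ge(\tfrac12+3\delta)d_G(u,\overline{S_1})$; after you delete the $O(1/p)$ absorbed vertices to pass from $\overline{S_1}$ to $U_2$, you must subtract up to $|\overline{S_1}\setminus U_2|$ from both sides. At the threshold $p=\Theta(\log n/n)$ one has $1/p=\Theta(n/\log n)$ while $np=\Theta(\log n)$, so $1/p\gg np$, and a single round of absorption can wipe out the entire $H$-degree of $u$ into $U_2$. (Nothing prevents the adversary choosing $H$ so that some $u$ has all its $\Theta(\log n)$ neighbours among the absorbed set.) The paper handles this by \emph{iterating}: it repeatedly moves any vertex violating the condition with respect to the \emph{current} complement into $T$, and only afterwards bounds the total number of moves by a pseudorandomness contradiction (their Lemma~6.1). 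Your buffer $3\delta\to\delta$ cannot absorb an additive error of order $1/p$.

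\textbf{Phase 3 proves the wrong inequality for B6.} You set $U_2^{\mathrm{good}}=\{u:d_H(u,E)\le(np)^{1/6}\}$, where $E$ is the \emph{old} endpoint set, then force the \emph{new} endpoint set $W$ inside $U_2^{\mathrm{good}}$, and conclude B6 ``from $W\subseteq U_2^{\mathrm{good}}$''. But $W\subseteq U_2^{\mathrm{good}}$ says that vertices of $W$ have small degree into $E$; B6 requires that every $u\in U$ has small degree into $W$. These are unrelated statements. Since $W$ is a set of size $O(1/p)$ chosen \emph{after} seeing $H$, nothing stops some $u$ from having all of $N_H(u)$ inside $W$. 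The paper's remedy is to pick $W'\subseteq U$ \emph{randomly} (each vertex with probability $(np)^{-5/6}$) and use the Lov\'asz Local Lemma to guarantee simultaneously that (i) every $u\in U$ has $d_H(u,W')\le(np)^{1/5}$ and (ii) every old endpoint has enough neighbours in $W'$ for the bigamy Hall step to succeed; the final $W$ is then contained in this $W'$.

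The remaining ingredients you sketch (handling $K$ via short paths using A1--A2, the bigamy Hall extension, the bookkeeping for B2--B4) are essentially as in the paper, but without the iteration in Phase~2 and the LLL selection in Phase~3 the proof does not go through.
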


As briefly described in \Cref{sec:prop-binom-rand}, we relax the bound on $p$ compared to \Cref{thm:main} and use the specific error probability $o(n^{-3})$ in the above result to facilitate the proof of our hitting time result \Cref{thm:hitting-time-main}. Since in this generality the properties \ref{item:mindeg2} and \ref{item:property low_deg_vertices_far_apart} do not hold, we separately assume them about the graph $H$. However, this will not be a problem when we come to prove \Cref{thm:hitting-time-main}, as there we will have~\ref{item:mindeg2} by assumption and~\ref{item:property low_deg_vertices_far_apart} \whp by~\Cref{lem:small_deg_vertex_separation_in_G_{n,M}}.

\begin{proof}[Proof of \Cref{thm:clean-up_main}]
Throughout the proof, we will assume the following to support our arguments. 
\begin{enumerate}[leftmargin=*,label = {\bfseries P0}]
\item Let $r, C, \eps, \alpha$ be as in \Cref{thm:clean-up_main}. We choose $\delta >0$ sufficiently small relative to $\eps$ and $r$. We also assume $n$ to be sufficiently large relative to all of these constants. Let $p \ge \frac{\log n}{2n}$. 
\label{item:parameter hierarchy}
\end{enumerate}
Then with probability $1-o(n^{-3})$, the random graph $G$ has the following properties. We provide brief justifications afterwards.
\begin{enumerate}[leftmargin=*,label = {\bfseries P\arabic{enumi}}]
\item Every vertex $v\in V(G)$ satisfies $d_G(v) \le 10np$. \label{item:property 1}
\item Let $A,B\subseteq V(G)$, not necessarily disjoint. Then \label{item:property 2 both}
    \begin{enumerate} 
    \item if $p|A||B|\ge \frac{100n}{\delta^2}$, then $e_G(A,B) = (1 \pm \delta)p|A||B|$, and
    \label{item:property 2}
    \item if $p|A||B|\ge 150n$, then $e_G(A,B) \ge \frac{p|A||B|}{2500}$.
    \label{item:property 2 alt version}
    \end{enumerate}
    \item For every set $S$ of size at most $\frac{6000r}{\delta^2 p}$, we have $e_G(S)\le |S|(np)^{1/8}$.
    \label{item:property 3}
    \item We have $|\{v \in V: d_G(v) \le \frac{np}{2500}\}|\le \frac{200}{p}$.
    \label{item:property 4}
\end{enumerate}
Indeed,~\ref{item:property 1} is immediate from~\Cref{lem:upper_bound_degrees}. \ref{item:property 2 both} is immediate by applying~\Cref{lem:large-sets-pseudorandom-property-Gnp} with $\lambda = \delta$ and $\lambda = 1 - \tfrac{1}{2500}$ respectively. 
To get \ref{item:property 3}, apply \Cref{lem:small-sets-cannot-contain-too-many-edges} with $a=\frac{15}{16}$ and $b=\frac{1}{8}$ and then notice that $(np)^{1/16} \ge (\frac{\log{n}}{2})^{1/16} \ge \frac{6000r}{\delta^2}$ using the assumptions in \ref{item:parameter hierarchy}, and so $n(np)^{-15/16} \ge \frac{6000r}{\delta^2 p}$. To see that \ref{item:property 4} holds, apply \Cref{lem:large-sets-pseudorandom-property-Gnp} with $A = \{v \in V: d_G(v) \le \frac{np}{2500}\}$ and $B = V(G)$. We have $e_G(A,V(G)) \le \frac{|A|np}{2500}$ by definition of $A$, and so by \Cref{lem:large-sets-pseudorandom-property-Gnp} we must have $p|A||V(G)| = p|A|n \le \left(\frac{2500}{2499}\right)^2 100n$, which gives $|A| < \frac{200}{p}$, as required.

To prove \Cref{thm:clean-up_main}, it is sufficient to prove \ref{eq:degree_in_U}--\ref{eq:deg_U_to_M} assuming that $G$ is an $n$-vertex graph satisfying the above properties. (In other words, \ref{item:property 1}--\ref{item:property 3} are the only properties of $G(n,p)$ we will use in our proof.)

Let $K =\{v \in V(H): d_H(v) \le \frac{np}{5000}\}$. For every $v\in K$, we have $d_G(v)\le \frac{d_H(v)}{\alpha}\le \frac{np}{2500}$, using $\alpha \ge \frac{1}{2}$. Thus, by \ref{item:property 4}, we deduce that $|K|\le \frac{200}{p}$. 

Now, using \ref{item:mindeg2}, for each vertex $v \in K$ we can pick two neighbours $x_1(v)$ and $x_2(v)$ of $v$ in $H$. For each $v \in K$, define $P_v$ to be the path $x_1(v),v,x_2(v)$. By \ref{item:property low_deg_vertices_far_apart}, all $v$ in $K$ are distance at least 5 apart in $H$ and therefore, the paths in $\{P_v: v \in K\}$ are pairwise vertex disjoint. Moreover, for any vertex $y \in V(H)$, $y$ cannot be adjacent to vertices in distinct $P_u,P_v$, $u,v \in K$, or $u,v$ would be distance at most 4 apart. In particular, for all $y \in V(H)$, $y$ has at most two neighbours in $\bigcup_{v\in K}\{x_1(v),x_2(v)\}$. Let $K' \coloneqq \bigcup_{v\in K}\{x_1(v),x_2(v)\}$.
We now record the following properties of $H$ for later use.
\begin{enumerate}[leftmargin=*, label = {\bfseries A\arabic{enumi}}, start=3]
    \item The set $K \coloneqq \{v \in V(H): d_H(v) \le \frac{np}{5000}\}$ contains at most $\frac{200}{p}$ vertices.
    \label{item:property K small}
    \item There is a set $K'\subseteq V(H)$ with a partition $\{K'_v: v\in K\}$ such that for every $v\in K$, the set $K'_v$ consists of exactly two neighbours of $v$ in $H$. (In particular, $|K'|=2|K|$.) Moreover, every vertex $u\in V(H)$ has at most two neighbors in $K'$.
    \label{item:property of K'}
\end{enumerate}
We will use the following two technical lemmas in our proof. We first prove \Cref{thm:clean-up_main} assuming these lemmas and later prove them at the end of this section.

\begin{lemma}\label{lem:combined}
Let $r,C,\eps,\alpha,\delta,n,p$ be as in \ref{item:parameter hierarchy}.
Let $G$ be an $n$-vertex graph satisfying \ref{item:property 2 both}--\ref{item:property 3}, and let $H$ be an $\alpha$-residual spanning subgraph of $G$ satisfying \ref{item:mindeg2}--\ref{item:property of K'}. Let $K$ and $K'$ be as in \ref{item:property K small} and \ref{item:property of K'}.
Let $S \subseteq V(H)$ be such that $|S| = (1-\eps) \min\Set{(2\alpha - 1)n, \frac{2\alpha n}{r}, \frac{2n}{r+1}}$.
Then, there exist sets $A$ and $T$ with $A\subseteq S \cup K \cup K' \subseteq T$ such that, setting $U = V(H)\setminus T$, the following hold. 
\begin{enumerate}[leftmargin=*,label = {\bfseries C\arabic{enumi}}]
    \item
    $d_{H}\left(u,U\right) \ge \max\left(\frac{p|U|}{5000}, \; \left(\frac{1}{2}+\delta\right)d_{G}\left(u,U\right)\right)$ for all $u \in U\cup A$. 
   \label{item:degree_in_unused_part}
    \item $|T\setminus A| \le \frac{200r}{\delta^2 p}.$ 
    \label{item:size_SminusA}
    \item There exists an ordering $y_1,y_2,y_3,\ldots $ on $T \setminus A$ such that for all sets $Z \subseteq T \setminus A$,
    \[\left|\bigcup_{y_i \in Z} (N_H(y_i) \cap (A \cup \{y_1,y_2,\ldots,y_{i-1}\}))\right| \ge 2|Z|.\] \label{item:Hall's condition}
\end{enumerate}
\end{lemma}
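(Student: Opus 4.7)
The plan is to build $T$, $A$, $U$ via an iterative degree-cleaning procedure. Initialize $T_0 = S \cup K \cup K'$; as long as there is $v \in V \setminus T$ with $d_H(v, V \setminus T) < \max(p|V \setminus T|/5000,\, (1/2+\delta)d_G(v, V \setminus T))$, move $v$ into $T$. Let $v_1, v_2, \ldots, v_k$ be the vertices added in their order of addition, let $U = V \setminus T$ be the final complement, and define $A = (S \cup K \cup K') \cap Y(U)$, where $Y(U)$ denotes the set of vertices satisfying the above degree condition with respect to the final $U$. Condition~\ref{item:degree_in_unused_part} is then immediate from the construction of $U$ and the definition of $A$.

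To verify~\ref{item:size_SminusA}, note that $T \setminus A = V \setminus Y(U)$, the set of vertices violating the degree condition. Write this as $X_1 \cup X_2$, where $X_1 = \set{v : d_H(v, U) < p|U|/5000}$ and $X_2 = \set{v : d_H(v, U) < (1/2+\delta) d_G(v, U)}$. For $X_2$, combining the $\alpha$-residual inequality $e_H(X_2, V) \ge \alpha e_G(X_2, V)$ with the defining inequality $e_H(X_2, U) < (1/2+\delta) e_G(X_2, U)$ gives $(1-\alpha) e_G(X_2, T) \ge (\alpha - 1/2 - \delta) e_G(X_2, U)$. Whenever $|X_2|$ exceeds the threshold for~\ref{item:property 2} to apply to both $(X_2, U)$ and $(X_2, T)$ (using that $|U| = \Omega_r(n)$, since $|T| \le |S| + o(n)$), this forces $|T| \ge (2\alpha - 1 - O(\delta))n$, contradicting $|T| \le (1-\eps)(2\alpha - 1)n + o(n)$ for $\delta$ chosen small enough relative to $\eps$. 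An analogous argument using~\ref{item:property 2 alt version} together with the $\alpha$-residual bound handles $X_1$.

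For~\ref{item:Hall's condition}, the ordering is obtained by first listing the unsafe vertices in $(S \cup K \cup K') \setminus A$ in an order that respects the protective structure from $K, K'$ and the linear forest $\mathcal{P}$, and then listing $v_1, \ldots, v_k$ in the order of their addition. For each added $y_{\ell + i} = v_i$, the allowed set $A \cup \set{y_1, \ldots, y_{\ell + i - 1}}$ equals $T_{i-1}$, so it suffices that $d_H(v_i, T_{i-1}) \ge 2$; this holds by the iterative construction when $v_i \notin K$ (since $d_H(v_i) > np/5000$ dominates the degree lost to $U$ at the time of insertion), and by the two guaranteed $K'$-neighbours when $v_i \in K$. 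An analogous analysis using protective and forest neighbours treats the unsafe vertices in $(S \cup K \cup K') \setminus A$, and the bigamy Hall condition is then deduced by applying~\Cref{thm:Hall_variant} to the induced bipartite graph with the chosen ordering. The hardest part is this last step: the individual condition (each $y_i$ has $\ge 2$ neighbours in the allowed set) does not automatically imply the global Hall-type condition for every $Z \subseteq T \setminus A$, so delicate bookkeeping of the dependency graph inside $T \setminus A$ is required, leveraging the smallness $|T \setminus A| = O_{\delta, r}(1/p)$ compared to $|A|$ and the rigidity of the $K, K'$ protective structure to rule out cyclic dependencies.
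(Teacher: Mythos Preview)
Your construction of $T$, $U$, and $A$ is the same as the paper's, and \ref{item:degree_in_unused_part} does follow immediately. However, there are genuine gaps in your arguments for \ref{item:size_SminusA} and especially \ref{item:Hall's condition}.

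For \ref{item:size_SminusA}, your computation $(1-\alpha)e_G(X_2,T) \ge (\alpha - \tfrac12 - \delta)e_G(X_2,U)$ is correct, but to turn this into $|T| \gtrsim (2\alpha-1)n$ via \ref{item:property 2} you need $|U|$ to be linear in $n$, which you justify by ``$|T| \le |S| + o(n)$''. That is exactly what you are trying to prove, so the argument is circular as stated. The paper avoids this by truncating: it sets $k = \min\bigl(\ell,\tfrac{200r}{\delta^2 p}\bigr)$ and carries out the edge count between $Z$ and $S_k$ rather than between $Z$ and the final $T$; since $|S_k| \le |S'| + \tfrac{200r}{\delta^2 p}$ is controlled a~priori, \ref{item:property 2} applies.

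The more serious problem is \ref{item:Hall's condition}. First, you invoke a linear forest $\mathcal{P}$, but this lemma has no linear forest in its hypotheses; only the set $S$ is given. Second, and more importantly, showing that each $y_i$ individually has at least two neighbours in $A \cup \{y_1,\dots,y_{i-1}\}$ does \emph{not} imply the union bound $\bigl|\bigcup_{y_i \in Z} N_H(y_i) \cap A_{i-1}\bigr| \ge 2|Z|$: if many $y_i$ shared the same two backward neighbours, the individual condition would hold while the Hall condition fails. Your closing paragraph acknowledges this but does not resolve it; the appeal to \Cref{thm:Hall_variant} is backwards (Hall's theorem consumes a Hall condition, it does not produce one), and ``ruling out cyclic dependencies'' is not an argument. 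The paper's approach is quite different: it orders the vertices of $(T\setminus A)\setminus K$ greedily so as to \emph{maximise} the degree into $A_{i-1}\setminus K'$, and proves that this degree is not merely $\ge 2$ but of order $np$ (at least $\tfrac{np}{24000r}$). The Hall condition then follows by a density contradiction: if it failed for some $Z$, the set $Z \cup \bigl(\bigcup_{y_i\in Z} N_H(y_i)\cap A_{i-1}\bigr)$ would have size at most $3|Z| \le \tfrac{600r}{\delta^2 p}$ yet contain $\Omega(|Z|\cdot np)$ edges, contradicting \ref{item:property 3}. The vertices of $K$ are handled separately at the end of the ordering, using the disjoint pairs in $K'$ guaranteed by \ref{item:property of K'}. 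This large-degree-plus-sparsity mechanism is the missing idea in your sketch.
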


\begin{lemma}\label{lem:Hall_to_extend_paths}
    Let $H$ be a graph, and let $A$ and $T$ be sets such that $A\subseteq T\subseteq V(H)$ and \ref{item:Hall's condition} holds. Let $\mathcal{P}_1$ be a linear forest in $H$ spanning $A$.
    Then, there exists a linear forest $\mathcal{P}_2$ in $H$ spanning $T$ such that the following hold.
    \begin{enumerate}[leftmargin=*,label = {\bfseries D\arabic{enumi}}]
        \item The endpoints of all paths in $\mathcal{P}_2$ lie in $A$. \label{eq:endpoints_in_R}
        \item All but at most $4|T \setminus A|$ edges of $\mathcal{P}_1$ are used in $\mathcal{P}_2$.
        \label{item:few edges left}
        \item The number of paths in $\mathcal{P}_2$ minus the number of paths in $\mathcal{P}_1$ is at most $3|T\setminus A|$.
        \label{item:few paths added}
    \end{enumerate}
\end{lemma}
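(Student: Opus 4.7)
The plan is to begin by interpreting hypothesis \ref{item:Hall's condition} as the bigamy Hall condition for the bipartite auxiliary graph with parts $Y = T \setminus A$ and $X = A \cup (T \setminus A)$, where each $y_i \in Y$ is joined to $N_H(y_i) \cap (A \cup \{y_1, \ldots, y_{i-1}\})$. Applying \Cref{thm:Hall_variant} then produces matchings $M_1, M_2$ that together assign to each $y_i$ a pair of distinct anchors $a_i, b_i \in N_H(y_i) \cap (A \cup \{y_j : j < i\})$, with the crucial property (coming from $V(\mathcal{M}_1) \cap V(\mathcal{M}_2) \cap X = \emptyset$) that each vertex of $X$ serves as an anchor for at most one $y_i$ in total.

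With the anchors fixed, the next step is to process $y_1, y_2, \ldots, y_k$ in order (where $k = |T \setminus A|$), starting from $F_0 = \mathcal{P}_1$ and inductively constructing linear forests $F_1, \ldots, F_k$ with $F_i$ spanning $A \cup \{y_1, \ldots, y_i\}$ and with every path endpoint lying in $A$; the final forest $\mathcal{P}_2 = F_k$ will then be the desired one. At step $i$, the plan is to insert $y_i$ using the edges $y_i a_i$ and $y_i b_i$: if $a_i b_i$ is already an edge of $F_{i-1}$, I would simply subdivide it (losing at most one $\mathcal{P}_1$-edge and leaving the path count unchanged); otherwise I would perform path surgery based on the degrees of $a_i, b_i$ in $F_{i-1}$ and on whether they lie in the same component, deleting at most two well-chosen edges of $F_{i-1}$ before adding $y_i a_i$ and $y_i b_i$. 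A sub-case check would then show that in each scenario $y_i$ becomes internal, at most one new path is created, and at most two edges of $\mathcal{P}_1$ are lost per step.

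The most delicate point, and the main obstacle, is maintaining the \textit{``endpoints in $A$''} invariant. This will require the deleted edges at every step to always be chosen from $\mathcal{P}_1$ (and hence between two vertices of $A$), so that any new endpoints produced by a split stay in $A$. Establishing the availability of such edges uses bigamy in two ways. First, at any vertex $v \in A$ of degree two in $F_{i-1}$, at least one of its two edges must be an unremoved $\mathcal{P}_1$-edge, since the only non-$\mathcal{P}_1$ edges at $v$ come from $v$ being used as an anchor and bigamy allows at most one such. Second, whenever $a_i$ and $b_i$ lie in the same component of $F_{i-1}$ (so that adding $y_i a_i, y_i b_i$ would create a cycle), the $a_i$--$b_i$ path in $F_{i-1}$ must contain an edge both of whose endpoints lie in $A$: otherwise the edge along this path incident to $a_i$ (if $a_i \in A$) would be an ``added'' edge $a_i y_j$, forcing $a_i$ to have been used as an anchor of $y_j$ as well as of $y_i$, contradicting bigamy (with a parallel argument tracing along the chain of anchors when $a_i \in T \setminus A$). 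Summing at most $2$ lost edges and at most $1$ new path per step over the $k$ iterations then yields $|E(\mathcal{P}_1) \setminus E(\mathcal{P}_2)| \le 2k \le 4|T \setminus A|$ and at most $k \le 3|T \setminus A|$ additional paths, giving \ref{item:few edges left} and \ref{item:few paths added}, while the maintained invariant yields \ref{eq:endpoints_in_R}. The slack of $4|T \setminus A|$ and $3|T \setminus A|$ beyond $2k$ and $k$ comfortably absorbs the handful of edge cases—particularly when both anchors lie in $T \setminus A$ with a long chain of previously inserted $y_j$'s accumulated along the connecting path—where the cleanest fix uses a slightly more wasteful split.
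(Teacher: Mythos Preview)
Your plan diverges from the paper's after obtaining the two matchings from bigamy Hall, and the divergence introduces a genuine obstacle. The paper does not insert the $y_i$ one at a time into a growing forest. Instead it regards the anchor data as a directed graph $\mathcal{D}$ on $T$, with each $y_i$ having out-edges to $\phi_1(y_i)$ and $\phi_2(y_i)$; bigamy forces in-degree at most one, so $\mathcal{D}$ is an acyclic union of binary out-trees whose leaves all lie in $A$. One then extracts a linear forest $\mathcal{P}_\phi$ directly from $\mathcal{D}$: repeatedly pick a vertex $x$ with no remaining in-neighbour, follow $\phi_1$ from $x$ until reaching $A$, likewise follow $\phi_2$, and record the resulting path (which has both endpoints in $A$ by construction). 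Finally $\mathcal{P}_2$ is simply $\mathcal{P}_\phi$ together with $\mathcal{P}_1$ restricted to $A\setminus A'$, where $A'$ is the set of $A$-endpoints of $\mathcal{P}_\phi$. Since $|A'|\le 2|T\setminus A|$, the bounds \ref{item:few edges left} and \ref{item:few paths added} drop out immediately, and \ref{eq:endpoints_in_R} holds by design.

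Your iterative scheme, by contrast, cannot in general maintain the invariant ``all endpoints lie in $A$'' once anchors start landing in $T\setminus A$. Take $y_1,y_2$ with all four anchors in $A$, then $y_3$ with anchors $\{y_1,y_2\}$, and $y_4$ with anchors $\{y_3,a\}$ for some $a\in A$; this configuration is consistent with \ref{item:Hall's condition} and with bigamy. After steps $1$--$3$ the only two edges at $y_3$ in $F_3$ are $y_3y_1$ and $y_3y_2$, so at step $4$ you are forced to delete one of them to make room for $y_4y_3$---but either deletion leaves $y_1$ or $y_2$ as an endpoint outside $A$, and no further local surgery using only the edges $y_4y_3,\,y_4a$ can repair this. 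Your bigamy argument that ``the edge at $a_i$ along the $a_i$--$b_i$ path must go to $A$'' is valid only when $a_i\in A$; when $a_i=y_j\in T\setminus A$, the edges at $a_i$ are themselves anchor edges rather than $\mathcal{P}_1$-edges, and neither the promised ``tracing along the chain'' nor the slack in the constants $4|T\setminus A|$ and $3|T\setminus A|$ helps, because it is the invariant \ref{eq:endpoints_in_R} that breaks, not merely a count. The paper's global extraction from the tree structure of $\mathcal{D}$ sidesteps this entirely, since it never needs an intermediate forest to satisfy \ref{eq:endpoints_in_R}.
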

We now use these lemmas to conclude the proof of \Cref{thm:clean-up_main}. Recall that $\mathcal{P}[S \setminus (K \cup K')]$ denotes the subgraph of $\mathcal{P}$ induced by $S \setminus (K \cup K')$. Since $\mathcal{P}$ is a linear forest, $\mathcal{P}[S \setminus (K \cup K')]$ is also a linear forest. Then let $\mathcal{P}_0$ be the linear forest formed by taking the union of $\mathcal{P}[S \setminus (K \cup K')]$ and all the paths of $\{P_v:v\in K\}$. Note that $\mathcal{P}_0$ spans $S \cup K \cup K'$.

Take $A$ and $T$ with $A\subseteq S\cup K \cup K'\subseteq T$ as given by \Cref{lem:combined}. We will next construct a linear forest $\mathcal{P}_2$ spanning $T$, ensuring that the endpoints of the paths in $\mathcal{P}_2$ lie in $A$. This will allow us to extend to a desired linear forest $\mathcal{P}^*$ using \ref{item:degree_in_unused_part}. For this, let $\mathcal{P}_1$ be the subgraph of $\mathcal{P}_0$ induced by $A$. Since $\mathcal{P}_0$ is a linear forest, $\mathcal{P}_1$ is also a linear forest.
Finally, let $\mathcal{P}_2$ be the linear forest spanning $T$ given by~\Cref{lem:Hall_to_extend_paths} applied to the linear forest $\mathcal{P}_1$. 
Note that \ref{eq:degree_in_U} follows directly from \ref{item:degree_in_unused_part}.

We have
\begin{align*}
    |U| = n - |T| = n - |A| - |T\setminus A| \ge n - |S \cup K \cup K'| - |T\setminus A|
    &\ge n - |S| - 3|K| - |T\setminus A| \\
    &\ge n - |S| - \frac{300r}{\delta^2 p},
\end{align*}
where in the final step we have used \ref{item:property K small} and \ref{item:size_SminusA}. This proves~\ref{eq:size_of_U}.

By construction, the number of edges in $\mathcal{P}$ not in $\mathcal{P}_0$ is at most $2|K \cup K'|\le 6|K|$. Similarly, the number of edges in $\mathcal{P}_0$ not in $\mathcal{P}_1$ is at most $2|(S\cup K \cup K')\setminus A|$. By \ref{item:few edges left}, the number of edges in $\mathcal{P}_1$ not in $\mathcal{P}_2$ is at most $4|T\setminus A|$. Thus, the number of edges in $\mathcal{P}$ not in $\mathcal{P}_2$ is at most 
\[
6|K| + 2|(S\cup K \cup K')\setminus A| + 4|T\setminus A|\le 6|K| + 6|T\setminus A|\le \frac{2000r}{\delta^2 p},
\]
where the last inequality uses \ref{item:property K small} and \ref{item:size_SminusA}. This proves \ref{eq:most_of_path_used} for $\mathcal{P}_2$ (in place of $\mathcal{P}^*$). 

The number of paths in $\mathcal{P}[S \setminus (K \cup K')]$ is at most $|K| + |K'| = 3|K|$ more than the number of paths in $\mathcal{P}$. Thus the number of paths in $\mathcal{P}_0$ minus the number of paths in $\mathcal{P}$ is at most $3|K|+|K| = 4|K|$.
By construction, the number of paths in $\mathcal{P}_1$ minus the number of paths in $\mathcal{P}_0$ is at most $|(S\cup K \cup K')\setminus A|$. By \ref{item:few paths added}, the number of paths in $\mathcal{P}_2$ minus the number of paths in $\mathcal{P}_1$ is at most $3|T\setminus A|$. By assumption, the number of paths in $\mathcal{P}$ is at most $C \le \frac{200r}{\delta^2 p}$, where this inequality follows from the assumptions in \ref{item:parameter hierarchy}. Thus, the number of paths in $\mathcal{P}_2$ is at most 
\[
4|K| + |(S\cup K \cup K')\setminus A| + 3|T\setminus A| + \frac{200r}{\delta^2 p}\le 4|K| + 4|T\setminus A| + \frac{200r}{\delta^2 p}\le \frac{1000r}{\delta^2 p},
\]
where the last inequality uses \ref{item:property K small} and \ref{item:size_SminusA}.
This proves \ref{eq:number_of_paths} for $\mathcal{P}_2$ (in place of $\mathcal{P}^*$). 

Note that 
\begin{equation}\label{eq:lower bound on U}
    |U|\ge n-|S|-|T\setminus A| \;\overset{\text{\ref{item:size_SminusA}}}{\ge}\; n-\frac{2n}{r+1} -\frac{200r}{\delta^2 p}
    \quad\overset{\text{\ref{item:parameter hierarchy}}}{\ge} \quad\frac{n}{4}. 
\end{equation}
Let $Z$ be the set of endpoints of paths in $\mathcal{P}_2$, noting that $|Z| \le \frac{2000r}{\delta^2p}$. By \ref{eq:endpoints_in_R}, we have $Z \subseteq A$. Thus, by \ref{item:degree_in_unused_part}, each vertex $v \in U\cup Z$ satisfies $d_H(v,U) \ge \frac{p|U|}{5000} \ge \delta np$, where the last inequality uses \eqref{eq:lower bound on U} and \ref{item:parameter hierarchy}. By \ref{item:property 1}, for every $v\in V(H)$, we have $d_H(v,U) \le d_G(v) \le 10np$. Combining the last two inequalities, we get 
\begin{equation}\label{eq:bounds on degree to U}
    \delta np \le d_H(v,U) \le 10np \quad \text{ for every }  v \in U\cup Z.
\end{equation}

For every path $P$ in $\mathcal{P}_2$ with distinct endpoints $z_1,z_2\in Z$, we now wish to find two edges $z_1w_1, z_2w_2\in E(H)$ with $w_, w_2\in U$ so that appending these edges can extend $P$ to a path with endpoints in $U$. Although this will help us establish \ref{eq:matching}, we need to keep a few things in mind. First, we need to simultaneously ensure that \ref{eq:deg_U_to_M} holds. Second, in order for \ref{eq:matching} to hold, for any $z \in Z$ that is the only vertex of its path in $\mathcal{P}_2$, we will need two edges from $z$ to $U$ so that the path can be extended to have both endpoints in $U$. Third, all vertices in $U$ used to extend different paths in $\mathcal{P}_2$ must be distinct. This motivates us to construct two matchings $\mathcal{M}_1, \mathcal{M}_2$ between $Z$ and $U$ such that $\mathcal{M}_1, \mathcal{M}_2$ cover $Z$ and the sets $V(\mathcal{M}_1)\cap U$ and $V(\mathcal{M}_2)\cap U$ are disjoint. If we can do so, then we will construct a desired linear forest $\mathcal{P}^*$ from $\mathcal{P}_2$ in the following way. Consider the edge set $E\subseteq E(\mathcal{M}_1)$ formed by taking all edges in $\mathcal{M}_1$ incident to the vertices $z\in Z$ that are the only vertex of its path in $\mathcal{P}_2$. We now define $\mathcal{P}^*$ to be the graph obtained from adding the edges in $E\cup E(\mathcal{M}_2)$ to the linear forest $\mathcal{P}_2$. By construction, $\mathcal{P}^*$ is a linear forest and satisfies \ref{eq:matching}.
Moreover, note that as \ref{eq:most_of_path_used} and \ref{eq:number_of_paths} hold for $\mathcal{P}_2$, they will also hold for $\mathcal{P}^*$. We still need to ensure \ref{eq:deg_U_to_M}.

To this end, we will use the Lov\'asz Local Lemma to find a set $W'\subseteq U$ so that \ref{eq:deg_U_to_M} holds for $W'$ (in place of $W)$. We will then construct the matchings $\mathcal{M}_1, \mathcal{M}_2$ between $Z$ and $W'$ with the properties discussed in the last paragraph. This will ensure \ref{eq:deg_U_to_M}. Fix $q \coloneqq (np)^{-5/6}$. Select a random set $W' \subseteq U$ by including each vertex of $U$ independently with probability $q$. For every $v\in U\cup Z$, let $B_v$ denote the event that it does not hold that $d_H(v,W') = (1\pm \frac{1}{2})qd_H(v,U)$. Clearly, for every $v\in V(H)$, the random variable $d_H(v,W')$ is distributed as $\bin(d_H(v,U),q)$. Thus, we have $\Expect(d_H(v,W'))= qd_H(v,U)\ge \delta (np)^{1/6} $,  where the last inequality uses \eqref{eq:bounds on degree to U}. Therefore, by Chernoff's bound, for every $v\in U\cup Z$, we have the following.
\begin{align*}
    \Prob{B_v} \le 2\exp\left(-\frac{\Expect(d_H(v,W'))}{12}\right) \le 2\exp\left(-\frac{\delta(np)^{1/6}}{12}\right) \le (np)^{-3},
\end{align*}
where the last inequality uses the facts that $np \ge \frac{\log{n}}{2}$ and $n$ is sufficiently large relative to $\delta$.

Let $\mathcal{B} = \{B_v : v \in U \cup Z\}$. Observe that if two distinct vertices $u,v$ have no common neighbours in $H$ then the events $B_u$ and $B_v$ are independent. In particular, $B_u$ and $B_v$ are \emph{not} independent only if $u$ and $v$ are at most distance $2$ apart. By \ref{item:property 1}, for every vertex $v$, the number of vertices at distance at most $2$ from $v$ is at most $(10np)^2$. Thus, each event in $\mathcal{B}$ is independent from all but at most $(10np)^2$ others. 

We now apply \Cref{lem:LLL} with $\mathcal{B} = \{B_v : v \in U \cup Z\}$, $p = (np)^{-3}$, and $d = \left(10np\right)^2$. As $ep(d+1)<1$, there is a positive probability that no event in $\mathcal{B}$ occurs. In particular, this means that there must exist some set $W' \subseteq U$ such that 
\[
\text{every vertex} \;\; u \in U \;\; \text{satisfies} \;\; d_H(u,W') \le \frac{3}{2} q d_H(u,U) \overset{\eqref{eq:bounds on degree to U}}{\le} 15(np)^{1/6} \overset{\text{\ref{item:parameter hierarchy}}}{\le} (np)^{1/5}
\]
\[
\text{and every vertex} \;\; z \in Z \;\; \text{satisfies} \;\; d_H(z,W') \ge \frac{1}{2} q d_H(z,U) \overset{\eqref{eq:bounds on degree to U}}{\ge} \frac{\delta}{2}(np)^{1/6}.
\]
Fix such a set $W'$. This $W'$ clearly satisfies \ref{eq:deg_U_to_M}. To finish the proof, we will use \Cref{thm:Hall_variant} on the bipartite subgraph of $H$ induced by $Z,W'$ to obtain the matchings $\mathcal{M}_1, \mathcal{M}_2$ between $Z$ and $W'$ with the desired properties as described before. 
We will check Hall's condition that for all $Z' \subseteq Z$, we have $|N_H(Z') \cap W'| \ge 2|Z'|$. Suppose, for contradiction, this does not hold for some non-empty $Z'$. We know that $d_H(z,W') \ge \frac{\delta}{2}(np)^{1/6}$ for all $z \in Z$ and so
\begin{equation}\label{eq:lower bound on edges in Z' and its neighbor}
e_H(Z',N_H(Z')\cap W') \ge \sum_{z \in Z'} d_H(z,W') \ge \frac{\delta}{2}|Z'|(np)^{1/6}.
\end{equation}
Now, since $|N_H(Z') \cap W'| < 2|Z'|$, we have 
$|Z' \cup \left(N_H(Z') \cap W'\right)| < 3|Z'| \le 3|Z| \le \frac{6000r}{\delta^2 p}$. 
Thus, applying \ref{item:property 3} with $S=Z' \cup \left(N(Z') \cap W'\right)$, we have 
\[
e_H(Z',N(Z')\cap W') \le e_G(Z' \cup \left(N(Z') \cap W'\right)) \overset{\text{\ref{item:property 3}}}{\le} 3|Z'|(np)^{1/8},
\]
which contradicts \eqref{eq:lower bound on edges in Z' and its neighbor} because $np\ge \frac{\log{n}}{2}$ and $n$ is sufficiently large relative to $\delta$. Thus, Hall's condition holds, and we can apply~\Cref{thm:Hall_variant} to find the two matchings $\mathcal{M}_1, \mathcal{M}_2$ as required. 

If we denote by $W$ the set of all endpoints of paths in $\mathcal{P}^*$, then we have $W\subseteq W'$. Thus, it is now easy to see that as \ref{eq:deg_U_to_M} holds for $W'$, it also holds for $W$. This finishes the proof of \Cref{thm:clean-up_main}.
\end{proof}

We now prove the lemmas used in the proof of \Cref{thm:clean-up_main}.
\begin{proof}[\textbf{Proof of \Cref{lem:combined}}]
In this proof, let $V=V(H)$, and for a set $Y\subseteq V$, we write $\overline{Y}$ to denote $V\setminus Y$.

Let $S' = S \cup K \cup K'$.
We define a sequence of sets as follows. Let $S_0=S'$ and for all $i \ge 0$ do the following:
 if there exists some vertex $v \not\in S_i$ such that $d_H(v,\overline{S_i}) < \max\set{\frac{p|\overline{S_i}|}{5000}, \left(\tfrac{1}{2}+\delta\right)d_G(v,\overline{S_i})}$, set $x_{i+1}$ to be such a vertex and let $S_{i+1}= S_i \cup \{x_{i+1}\}$. 
Otherwise, if there are no such vertices, set $T=S_i$ and end the process.

Let $\ell = |T\setminus S'|$ denote the number of steps in this process before it ends, so in particular $T=S_{\ell}$. Let $U = \overline{T}$. By construction we have $d_H\left(u,U\right) \ge \max\set{\frac{p|U|}{5000}, \left(\tfrac{1}{2}+\delta\right)d_G(u,U)}$ for all $u \in U$ and so \ref{item:degree_in_unused_part} holds for vertices in $U$.
Let $A \coloneqq \{v \in S' : d_H(v,U) \ge \max\set{\frac{p|U|}{5000}, \left(\tfrac{1}{2}+\delta\right)d_G(v,U)}\}$. Then \ref{item:degree_in_unused_part} also holds for vertices in $A$.

Next, we will show that \ref{item:size_SminusA} holds. By definition of $A$, we see that for every $y \in S' \setminus A$ we have 
\[
d_H(y,U) < \max\left\{\frac{p|U|}{5000}, \left(\tfrac{1}{2}+\delta\right)d_G(y,U)\right\} \le \max\left\{\frac{p|\overline{S}|}{5000}, \left(\tfrac{1}{2}+\delta\right)d_G(y,\overline{S})\right\}
\]
and thus,
\begin{equation}\label{eq:S minus A has high edges in T}
d_H(y,T) = d_H(y) - d_H(y,U) > d_H(y) - \max\left\{\frac{p|\overline{S}|}{5000}, \left(\tfrac{1}{2}+\delta\right)d_G(y,\overline{S})\right\}.
\end{equation}
For every $i \in [\ell]$, using how the set $S_i$ was constructed, we have 
\[d_H(x_i,\overline{S_{i-1}}) < \max\left\{\frac{p|\overline{S_{i-1}}|}{5000}, \left(\tfrac{1}{2}+\delta\right)d_G(x_i,\overline{S_{i-1}})\right\} \le \max\left\{\frac{p|\overline{S}|}{5000}, \left(\tfrac{1}{2}+\delta\right)d_G(x_i,\overline{S})\right\}
\]
and thus,
\begin{equation}\label{eq:T minus S has high edges in S}
d_H(x_i,T) \ge d_H(x_i,S_{i-1}) = d_H(x_i) - d_H(x_i,\overline{S_{i-1}}) 
> d_H(x_i) - \max\left\{\frac{p|\overline{S}|}{5000}, \left(\tfrac{1}{2}+\delta\right)d_G(x_i,\overline{S})\right\}.
\end{equation}
In particular, by~\eqref{eq:S minus A has high edges in T} and~\eqref{eq:T minus S has high edges in S}, we see that for all $y \in T \setminus A$, we have
\begin{equation}\label{eq:T minus A has high edges in T}
d_H(y,T) > d_H(y) - \max\left\{\frac{p|\overline{S}|}{5000}, \left(\tfrac{1}{2}+\delta\right)d_G(y,\overline{S})\right\} \ge \alpha d_G(y) - \max\left\{\frac{p|\overline{S}|}{5000}, \left(\tfrac{1}{2}+\delta\right)d_G(y,\overline{S})\right\}.
\end{equation}
Recall that $\ell = |T\setminus S'|$. Let $k = \min\set{\ell,\frac{200r}{\delta^2 p}}$. Define the set 
\[
Z =
\begin{cases*} 
\{x_1,\dots,x_k\}, & if $k< \ell$
\\T\setminus A, & otherwise.
\end{cases*}
\]
For every $i\in [k]$, we have $d_G(x_i,S_k)\ge d_G(x_i,S_{i-1})\ge d_H(x_i,S_{i-1})$ and for every $y\in T\setminus A$, we have $d_G(y,S_{\ell})= d_G(y,T)\ge d_H(y,T)$. 
Thus, using \eqref{eq:T minus S has high edges in S}, \eqref{eq:T minus A has high edges in T}, and the definition of $Z$, we have the following (for both the cases $k< \ell$ and $k=\ell$).
\[
e_G(Z,S_k) = \sum_{y \in Z} d_G(y,S_k) > \sum_{y \in Z}\left(\alpha d_G(y) - \max\left\{\frac{p|\overline{S}|}{5000}, \left(\tfrac{1}{2}+\delta\right)d_G(y,\overline{S})\right\}\right).
\]
Define $L \coloneqq \left\{y \in V: \left(\tfrac{1}{2}+\delta\right)d_G(y,\overline{S})< \frac{p|\overline{S}|}{5000}\right\}$.
Then we have 
\begin{align}
    e_G(Z,S_k) &> \sum_{y \in Z}\alpha d_G(y) - \sum_{y \in Z \setminus L} \left(\tfrac{1}{2}+\delta\right)d_G(y,\overline{S}) - \sum_{y \in L\cap Z}\frac{p|\overline{S}|}{5000} \nonumber\\
    &= \alpha e_G(Z,V)  - \left(\tfrac{1}{2}+\delta\right)e_G(Z\setminus L,\overline{S}) - \frac{p|L\cap Z||\overline{S}|}{5000} \label{eq:T minus A to S lower bound}
\end{align}
Assume for contradiction that $|T\setminus A| > \frac{200r}{\delta^2 p}$.  Then, $|Z|\ge \frac{200r}{\delta^2 p}$ and so $p|Z|n >  \frac{100n}{\delta^2}$. In particular, by~\ref{item:property 2}, we have
\begin{equation}\label{eqn:bound_Z_V}
    e_G(Z,V) \ge (1-\delta)p|Z|n.
\end{equation}
Note that $e_G(L,\overline{S}) < \frac{p|L||\overline{S}|}{\left(\tfrac{1}{2}+\delta\right)5000} < \frac{p|L||\overline{S}|}{2500}$ and therefore, by \ref{item:property 2 alt version}, we must have $p|L||\overline{S}| <  150n$. Since $|\overline{S}| = n-|S| \ge \left(1-\frac{2}{r+1}\right)n \ge \frac{n}{3}$, we get in particular that $|L| \le \frac{450}{p}$. 
Thus, $|Z \setminus L| \ge \frac{200r}{\delta^2 p} - |L| \ge \frac{150r}{\delta^2 p}$, using the fact that $\delta$ is sufficiently small. Since $|\overline{S}| \ge \frac{n}{3}$, we have $p|Z\setminus L||\overline{S}| > p\cdot \frac{150r}{\delta^2 p} \cdot \frac{n}{3} \ge \frac{100n}{\delta^2}$. Thus by \ref{item:property 2}, we have
\begin{equation}\label{eqn:bound_ZL_Sbar}
e_G(Z\setminus L,\overline{S}) \le (1+\delta)p|Z\setminus L||\overline{S}|.
\end{equation}
Substituting~\eqref{eqn:bound_Z_V} and~\eqref{eqn:bound_ZL_Sbar} into~\eqref{eq:T minus A to S lower bound}, we obtain
\begin{align}
    e_G(Z,S_k) &> \alpha(1-\delta)p|Z|n  - \left(\tfrac{1}{2}+\delta\right)(1+\delta)p|Z\setminus L||\overline{S}| - \frac{p|L\cap Z||\overline{S}|}{5000}\nonumber \\
    &= p|Z|\left((1-\delta)\alpha n - \left(\tfrac{1}{2}+\delta\right)(1+\delta)|\overline{S}| \right) + p|L\cap Z||\overline{S}|\left(\left(\tfrac{1}{2}+\delta\right)(1+\delta) - \tfrac{1}{5000}\right) \nonumber \\
    &\ge p|Z|\left(\alpha n - \tfrac{1}{2}|\overline{S}| -3\delta n\right) + 0 \nonumber \\
    &= p|Z|\left(\alpha n - \tfrac{1}{2}n + \tfrac{1}{2}|S| -3\delta n\right).
 \label{eq:lower bound Z and S cup Z}
\end{align}
Note that $|S_k|\le |S'| + \frac{200r}{\delta^2 p} = |S| + 3|K| + \frac{200r}{\delta^2 p} \le |S| + \delta n$, using \ref{item:property K small} and \ref{item:parameter hierarchy}.
Note also that $p|Z||S_k|\ge p|Z||S| \ge  p\cdot \frac{200r}{\delta^2 p} \cdot |S| \ge \frac{100n}{\delta^2}$, where the last inequality uses the fact that $|S| \ge \frac{(1-\eps)n}{r}\ge \frac{n}{2r}$. Thus, using \ref{item:property 2}, we have 
\begin{equation}\label{eq:upper bound Z and S cup Z}
    e_G(Z,S_k) \le (1+\delta)p|Z||S_k|\le (1+\delta)p|Z|\left(|S| + \delta n\right) 
    \le p|Z|\left(|S| + 3\delta n\right).
\end{equation}
Combining \eqref{eq:lower bound Z and S cup Z} and \eqref{eq:upper bound Z and S cup Z}, we obtain 
\[
|S| + 3\delta n \;\; \ge \;\; \alpha n - \tfrac{1}{2}n + \tfrac{1}{2}|S| -3\delta n,
\]
which, upon rearranging, gives
\[
|S| \;\; \ge \;\; (2\alpha - 1) n -12\delta n,
\]
giving a contradiction to $|S| \le (1-\eps)(2\alpha - 1)n$ because $\delta$ is sufficiently small relative to $\eps$ and $r$. This finishes the proof of \ref{item:size_SminusA}.

Finally, we will prove \ref{item:Hall's condition}. We order the vertices of $T\setminus A$ as follows. For convenience, we define $T' \coloneqq (T \setminus A) \setminus K$. Let $y_1$ be a vertex of $T'$ such that $d_H(y_1, A\setminus K')$ is maximal. Suppose we have defined $y_1,y_2,\dots y_{i}$ for $1 \le i < |T'|$. We then define $y_{i+1}$ to be a vertex in $T'\setminus\{y_1,\ldots,y_{i}\}$ such that $d_H\big(y_{i+1}, (A\cup\{y_1,\ldots,y_{i}\})\setminus K'\big)$ is maximal. Finally, we let the final vertices in the ordering of $T\setminus A$ be the vertices in $(T \setminus A)\cap K$ in any order. We will show that \ref{item:Hall's condition} holds with respect to this ordering of the vertices in $T\setminus A$. Define $A_0 \coloneqq A$, and for every $i \in [|T\setminus A|]$, define $A_i \coloneqq A\cup\{y_1,\ldots,y_i\}$.

Now fix a set $Z \subseteq T\setminus A$. Our goal is therefore to show that $\left|\bigcup_{y_i \in Z} N_H(y_i) \cap A_{i-1} \right| \ge 2|Z|$.
We will divide $Z$ according to whether $y_i$ is in $K$ or not. By \ref{item:property of K'}, every $y \in K$ has two neighbours in $K'$ and these neighbours are distinct for distinct $y \in K$. Since $K' \subseteq T \setminus K \subseteq A_{i-1}$ for $i > |(T\setminus A)\setminus K|$, we therefore have 
\begin{equation}\label{eqn:Z_cap_K}
\left|\left(\bigcup_{y_i \in Z \cap K} N_H(y_i) \cap A_{i-1} \right)\cap K'\right| \ge 2|Z \cap K|.
\end{equation}

Next, we consider $Z \setminus K$. We first show that $d_H(y,T\setminus K') > \frac{np}{12000r}$ for all $y \in T'$ by splitting into cases according to whether $y$ is in $L$, recalling that $L = \left\{y \in V: \left(\tfrac{1}{2}+\delta\right)d_G(y,\overline{S})< \frac{p|\overline{S}|}{5000}\right\}$.

Consider $y \in T' \cap L $, so $(\tfrac{1}{2} + \delta)d_G(y,\overline{S}) < \frac{p|\overline{S}|}{5000}$ and $d_H(y) > \frac{pn}{5000}$ (the latter one follows by \ref{item:property K small}). For such $y$, using \eqref{eq:T minus A has high edges in T} and the fact that the fact that $|S| \ge \frac{(1-\eps)n}{r}\ge \frac{n}{2r}$, we have 
\begin{equation}
d_H(y,T) > d_H(y) - \frac{p|\overline{S}|}{5000} 
> \frac{pn}{5000} - \frac{p(n-|S|)}{5000}
\ge \frac{np}{10000r}.
\label{eq:lb_degree_L_to_T}
\end{equation}

Consider $y \in T'\setminus L $, so $(\tfrac{1}{2} + \delta)d_G(y,\overline{S}) \ge \frac{p|\overline{S}|}{5000}$ and $d_G(y)\ge d_H(y) > \frac{pn}{5000}$. For such $y$, using \eqref{eq:T minus A has high edges in T} and \ref{item:parameter hierarchy}, we have 
\begin{equation}
d_H(y,T) > \alpha d_G(y) - (\tfrac{1}{2} + \delta)d_G(y,\overline{S}) 
\ge \left(\alpha - \tfrac{1}{2} - \delta\right)d_G(y)
\ge \left(\frac{1}{2r} - \delta\right)d_G(y) \ge \frac{np}{11000r}.
\label{eq:lb_degree_T_minus_L_to_T}
\end{equation}
So by~\eqref{eq:lb_degree_L_to_T} and~\eqref{eq:lb_degree_T_minus_L_to_T}, we conclude that $d_H(y,T) > \frac{np}{11000r}$ for all $y \in T'$. Moreover, by \ref{item:property of K'}, we know that each $y \in V(H)$ has at most two neighbours in $K'$ and so for all $y \in T'$ we have
\begin{equation}\label{eq: lower bound to T minus K'}
    d_H(y,T\setminus K') > \frac{np}{11000r} - 2 > \frac{np}{12000r},
\end{equation}
where we use $np \ge \frac{\log{n}}{2}$ and the fact that $n$ is sufficiently large.

We will now show that for all $i \in [|T'|]$, we have $d_H\big(y_{i},A_{i-1}\setminus K' \big) \ge \frac{np}{24000r}$.
Suppose for contradiction that for some $i \in [|T'|]$, we have $d_H\big(y_{i},A_{i-1}\setminus K'\big) < \frac{np}{24000r}$. Then letting $B \coloneqq T' \setminus \{y_1,\ldots,y_{i-1}\}$, we have $d_H(y, A_{i-1}\setminus K') < \frac{np}{24000r}$ for every $y \in B$. Since $B \subseteq T'$, by~\eqref{eq: lower bound to T minus K'} we have $d_H(y,T\setminus K') > \frac{np}{12000r}$ for all $y \in B$. 
Therefore, for all $y \in B$, we have 
\begin{align*}
    d_H(y,B) = d_H(y,T'\setminus\{y_1,\ldots,y_{i-1}\}) 
    &= d_H(y,T\setminus K') - d_H(y, A_{i-1}\setminus K') \\
    &> \frac{np}{12000r} - \frac{np}{24000r} = \frac{np}{24000r}
\end{align*}
 From this, we conclude that 
 \begin{equation}\label{eqn:lower_bound_B}
 e_G(B) \ge e_H(B) > \frac{np}{48000r}|B|
 \end{equation}
 By~\ref{item:size_SminusA}, we know $|B|\le \frac{200r}{\delta^2 p}$. Thus, using \ref{item:property 3}, we have 
\begin{equation}\label{eqn:upper_bound_B}
 e_G(B)\le |B|(np)^{1/8}.
 \end{equation}
Since $n$ is sufficiently large and $np \ge \frac{\log{n}}{2}$, \eqref{eqn:lower_bound_B} and \eqref{eqn:upper_bound_B} give a contradiction. Therefore we must have for all $i \in [|T'|]$,
\begin{equation}\label{eq:lower_bound_degree_y_i}
     d_H\big(y_{i}, A_{i-1}\setminus K' \big) \ge \frac{np}{24000r}.
\end{equation}

We can now show that $\left|\left(\bigcup_{y_i \in Z \setminus K} N_H(y_i) \cap A_{i-1}\right)\setminus K'\right| \ge 2|Z \setminus K|$. Suppose for contradiction that this does not hold, i.e.~that $|\bigcup_{y_i \in Z \setminus K}  N_H(y_i) \cap (A_{i-1}\setminus K')| < 2|Z \setminus K|$. We have by~\eqref{eq:lower_bound_degree_y_i} that
\begin{align}\label{eq:lower bound on final contradiction}
e_G\left((Z \setminus K) \cup \left(\bigcup_{y_i \in Z \setminus K} N_H(y_i) \cap (A_{i-1}\setminus K')\right)\right) &\ge e_H\left((Z \setminus K) \cup \left(\bigcup_{y_i \in Z \setminus K} N_H(y_i) \cap (A_{i-1}\setminus K')\right)\right) \nonumber \\
&\ge \frac{1}{2} \sum_{y_i \in Z \setminus K}d_H\big(y_{i}, A_{i-1}\setminus K' \big) \nonumber \\
&\ge |Z \setminus K|\frac{np}{48000r}.
\end{align}
Since $Z \setminus K \subseteq T\setminus A$, we know by~\ref{item:size_SminusA} that $|Z \setminus K| \le \frac{200r}{\delta^2 p}$. Thus, by assumption, 
\[
\left|(Z \setminus K) \cup \left(\bigcup_{y_i \in Z \setminus K} N_H(y_i)\cap (A_{i-1}\setminus K')\right)\right| < |Z \setminus K| + 2|Z \setminus K| \le \frac{600r}{\delta^2 p}.
\]
Therefore, using~\ref{item:property 3} we have
\begin{align}\label{eq:upper bound on final contradiction}
e_G\left((Z \setminus K) \cup \left(\bigcup_{y_i \in Z \setminus K} N_H(y_i) \cap (A_{i-1}\setminus K')\right)\right) 
&\le \left|(Z \setminus K) \cup \left(\bigcup_{y_i \in Z \setminus K} N_H(y_i) \cap (A_{i-1}\setminus K')\right)\right|(np)^{1/8} \nonumber \\
&\le 3|Z \setminus K|(np)^{1/8}.
\end{align}
Now, \eqref{eq:lower bound on final contradiction} and \eqref{eq:upper bound on final contradiction} contradict each other since $n$ is sufficiently large and $np \ge \frac{\log{n}}{2}$. Therefore, our assumption must have been false, and we must, in fac,t have 
\[
    \left|\left(\bigcup_{y_i \in Z \setminus K } N_H(y_i) \cap A_{i-1}\right)\setminus K'\right| \ge 2|Z \setminus K|.
\]
Combining this with \eqref{eqn:Z_cap_K}, we can now conclude that
 \begin{align*}
 \left|\bigcup_{y_i \in Z} N_H(y_i) \cap A_{i-1} \right|  
 &= \left|\bigcup_{y_i \in Z \setminus K } (N_H(y_i) \cap A_{i-1})  \cup \bigcup_{y_i \in Z \cap K } (N_H(y_i) \cap A_{i-1})\right|
 \\
 &\ge \left|\Bigg(\bigcup_{y_i \in Z \setminus K } N_H(y_i) \cap A_{i-1}\Bigg)\setminus K'\right| +  \left|\Bigg(\bigcup_{y_i \in Z \cap K } N_H(y_i) \cap A_{i-1}\Bigg) \cap K'\right| \\
 &\ge 2|Z \setminus K| + 2|Z \cap K| = 2|Z|.
 \end{align*}
This establishes \ref{item:Hall's condition}, completing the proof of \Cref{thm:clean-up_main}.
\end{proof}

\begin{proof}[\textbf{Proof of \Cref{lem:Hall_to_extend_paths}}]
Construct an auxiliary bipartite graph $B$ with the vertex classes $X$ and $Y$, where part $X$ has vertices labelled by the vertices of $T$ and part $Y$ has vertices labelled by the vertices of $T\setminus A$. We fix an ordering $y_1,y_2,\ldots $ on the vertices of $T \setminus A$ satisfying \ref{item:Hall's condition}. Put an edge in $B$ between a vertex in $X$ labelled by $x$ and a vertex in $Y$ labelled by $y$ if both of the following conditions hold:
\begin{itemize}
    \item $x$ and $y$ are adjacent, and
    \item either $x \in A$, or if $x \in T \setminus A$ then $x$ precedes $y$ in the ordering on $T \setminus A$.
\end{itemize}
By~\ref{item:Hall's condition}, we see that for all $Z \subseteq Y$ we have $|N_B(Z)| = |\bigcup_{y_i \in Z} (N_H(y_i) \cap (A \cup \{y_1,y_2,\ldots,y_{i-1}\}))| \ge 2|Z|$ and thus Hall's condition holds. In particular, using \Cref{thm:Hall_variant} we find two matchings $\mathcal{M}_1, \mathcal{M}_2$ in $B$ such that $\mathcal{M}_1, \mathcal{M}_2$ cover $Y$ and the sets $V(\mathcal{M}_1)\cap X$ and $V(\mathcal{M}_2)\cap X$ are disjoint. The matchings $\mathcal{M}_1, \mathcal{M}_2$ correspond to two injective functions $\phi_1, \phi_2: T\setminus A \rightarrow T$, where $x$ and $\phi_i(x)$ are adjacent in $\mathcal{M}_i$ for every $i\in [2]$ and $x\in T\setminus A$, and the images of $\phi_1$ and $\phi_2$ are disjoint.

We now consider the directed graph $\mathcal{D}$ on vertex set $T$ where there is a directed edge from $x$ to $\phi_1(x)$ and from  $x$ to $\phi_2(x)$ for all $x \in T \setminus A$. Note that if the directions are ignored, $\mathcal{D}$ is a subgraph of $H$. Each $x \in T\setminus A$ has exactly two out-neighbours in $\mathcal{D}$, and each out-neighbour is either in $A$ or earlier than $x$ in the ordering on $T \setminus A$. Moreover, as the images of the injective functions $\phi_1$ and $\phi_2$ are disjoint, each vertex in $T$ has at most one in-neighbour. In particular, this means that $\mathcal{D}$ contains no cycles (directed or otherwise). More precisely, letting $O$ be the set of vertices in $T \setminus A$ that have no in-neighbour, $\mathcal{D}$ is a disjoint union of directed binary trees where $O$ is the set of root vertices and all edges in a tree are directed away from the root, ending in leaves in $A$.

We now inductively define a collection of paths which will form a linear forest $\mathcal{P}_{\phi}$ covering $T\setminus A$. We will construct a sequence of sets $T\setminus A = V_0 \supsetneq V_1 \supsetneq V_2 \supsetneq \ldots$ and a sequence of paths $P_1,P_2,P_3,\ldots$ such that for every $i\ge 0$, the set $V_i$ has the property that 
\begin{equation}\label{prop:all out neighbours}
\text{for every $x\in V_i$, we have $\phi_1(x), \phi_2(x)\in V_i$.}
\end{equation}
First, set $V_0 = T \setminus A$. Clearly, $V_0$ satisfies the property \eqref{prop:all out neighbours}. Now, let $i \ge 0$ and we have a set $V_i$ satisfying \eqref{prop:all out neighbours}. If $V_i$ is empty, then end the process. Otherwise, we do the following to obtain $V_{i+1}$ and $P_i$.  Take $x \in V_i$ such that $x$ has no in-neighbours in $\mathcal{D}[V_i]$. Let $r \ge 1$ be the smallest integer such that $\phi_1^r(x) \in A$ and let $s\ge 1$ be the smallest integer such that $\phi_2^s(x) \in A$ (these exist by the property \eqref{prop:all out neighbours} of $V_i$). (Here, $\phi_i^t$ denotes the $t$ times composition of the map $\phi_i$.) Let  $P_i$ be the path on edges $\{\phi_1^i(x)\phi_1^{i+1}(x): 0\le i < r\}\cup \{\phi_2^i(x)\phi_2^{i+1}(x) : 0\le i < s\} $. Let $V_{i+1} = V_i\setminus V(P_i)$. Now, continue with the $(i+1)$-st step, noting that $V_{i+1}$ satisfies the property \eqref{prop:all out neighbours}.

Note that the paths $P_1,P_2,P_3,\ldots$ created by this process are pairwise vertex-disjoint. Let $\mathcal{P}_{\phi}$ be the linear forest containing every path $P_i$. Note that by construction $\mathcal{P}_{\phi}$ covers $T\setminus A$. Also, for each path $P_i$, the endpoints lie in $A$ and all other vertices lie in $T\setminus A$. Let $A' = A\cap (\bigcup_{i\ge 0}V(P_i))$ be the set of all endpoints of the paths in $\mathcal{P}_{\phi}$. Note that $|A'| \le 2|\mathcal{P}_\phi| \le 2|T\setminus A|$. 

We define $\mathcal{P}_2$ to be the linear forest obtained from taking the union of $\mathcal{P}_{\phi}$ and the subgraph of $\mathcal{P}_1$ induced by $A \setminus A'$. Clearly, $\mathcal{P}_2$ spans $T$.
\ref{eq:endpoints_in_R} follows from the construction.
The number of edges of  $\mathcal{P}_1$ that are not used in $\mathcal{P}_2$ is at most $2|A'| \le 4 |T \setminus A|$. This proves \ref{item:few edges left}. Denote the number of paths in a linear forest $\mathcal{P}$ by $\#(\mathcal{P})$. Then, $\#(\mathcal{P}_2) \le \#(\mathcal{P}_1)+|A'| + \#(\mathcal{P}_{\phi}) \le \#(\mathcal{P}_1) + 3|T\setminus A|$, showing \ref{item:few paths added}. This finishes the proof of \Cref{lem:Hall_to_extend_paths}.
\end{proof}

\bigskip \section{Resilience of Hamiltonicity in random graphs under inclusion of a matching \texorpdfstring{$\mathcal{M}$}{M}}\label{sec:resilience of Hamiltonicity}

In this section, we execute Step~4 as described in \Cref{sec:proof ideas}. Going into this step, we have a linear-sized set $U$ such that the graph $H[U]$ has high minimum degree (as in \ref{eq:degree_in_U}). We also have an `almost monochromatic' linear forest ($\mathcal{P}^*$ in \cref{thm:clean-up_main}) covering $V(H)\setminus U$ such that the endpoints of the paths lie in $U$. Our final goal is to patch these paths together to a Hamilton cycle in $H$. To facilitate that, we represent by a matching $\mathcal{M}$ the set of all edges between two endpoints of a path in the linear forest. The main result of this section is the following (where the set~$V$ should be thought of as the set~$U$ from the above discussion).
\begin{theorem}
  \label{thm:main result of 7th section}
  Let $\eps, K >0$. If $p \ge \frac{\log n}{2n}$, then the random graph $G \sim G(n,p)$ satisfies the following with probability $1-o(n^{-3})$. Let $V \subseteq V(G)$ be a vertex set of size at least $\epsilon n$ and $H$ be a spanning subgraph of $G[V]$ satisfying $d_H(v)\ge \max\set{\frac{p|V|}{5000}, \left(\frac{1}{2} + \epsilon\right)d_{G[V]}(v)}$ for every $v\in V$. Let $\mathcal{M}$ be a matching on $V$ such that $|V(\mathcal{M})|\le \frac{K}{p}$ and every vertex $v\in V$ satisfies $d_H(v,V(\mathcal{M}))\le (np)^{1/5}$. Then there is a Hamilton cycle in $H + \mathcal{M}$ that contains all edges of $\mathcal{M}$.
\end{theorem}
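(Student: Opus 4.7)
The plan is to adapt Montgomery's proof of \Cref{thm:hittimg time montgomery}, which uses P\'osa's rotation-extension technique together with booster edges from the random host graph $G$. The key new ingredient is to perform all rotations and extensions in an ``$\mathcal{M}$-respecting'' manner: once an edge of $\mathcal{M}$ is included in the current path or cycle, no subsequent rotation deletes it. Equivalently, one may contract each matching edge to a super-vertex and search for a Hamilton cycle in the contracted graph; I find the in-place formulation cleaner, since the bounds $|V(\mathcal{M})| \le K/p$ and $d_H(v, V(\mathcal{M})) \le (np)^{1/5}$ are already tailored to it.

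First I would record the pseudorandomness-based expansion properties of $H$. Combining \Cref{lem:large-sets-pseudorandom-property-Gnp,lem:small-sets-cannot-contain-too-many-edges} applied to $G \sim G(n,p)$ with the hypotheses $d_H(v) \ge \frac{p|V|}{5000}$ and $d_H(v) \ge (\tfrac{1}{2} + \eps) d_{G[V]}(v)$, one deduces that $H$ is a robust expander: for every $S \subseteq V$ with $|S|$ at most a small constant fraction of $|V|$, we have $|N_H(S) \setminus S| \ge c|S|$ for some constant $c > 2$ depending on~$\eps$. This is the input P\'osa's argument requires.

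Next I would run the modified rotation-extension process. Begin with a longest $\mathcal{M}$-respecting path $P$ in $H + \mathcal{M}$, i.e.~one that uses every edge of $\mathcal{M}$ both of whose endpoints it visits; such a $P$ exists because whenever the greedy extension reaches a matched vertex $u$, one can force the next edge of $P$ to be the $\mathcal{M}$-edge at $u$. Then form the rotation tree of $P$, restricted to rotations whose deleted edge is not in $\mathcal{M}$. The number of rotations forbidden along any single branch is bounded by $|E(\mathcal{M})| \le K/(2p)$, which is negligible compared to the expansion $c|S|$ accrued over logarithmically many levels. The hypothesis $d_H(v, V(\mathcal{M})) \le (np)^{1/5}$ further guarantees that $\mathcal{M}$-vertices do not dominate neighborhoods, so the set $\mathcal{R}(P)$ of endpoints reachable by $\mathcal{M}$-respecting rotations has size $\Omega(|V|)$.

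Finally, the booster argument closes things out. A booster is an edge $xy$ of $G[V]$ between $\mathcal{R}(P)$ and the rotation-reachable set at each such endpoint whose inclusion extends $P$ into a longer path or closes it into a Hamilton cycle; expansion plus $|\mathcal{R}(P)| = \Omega(|V|)$ gives $\Omega(n^2)$ candidate boosters. Using \Cref{prop:sparse-containment-Gnp}, the bad event that some ``non-extendable'' $H$ survives in $G$ can be estimated to have probability $o(n^{-3})$ uniformly over all choices of $V$, $H$, and $\mathcal{M}$, so iterating produces a Hamilton cycle containing every edge of $\mathcal{M}$. The main obstacle is analyzing the rotation trees under the $\mathcal{M}$-respecting constraint: one must verify that the standard three-rotation/neighborhood arguments (see~\cite{Krivelevich2015Hamiltonicity}) still yield linear-sized rotation sets after excising the $O(K/p)$ forbidden rotations, and the technical probabilistic estimates for non-extendability must hold uniformly over all admissible triples $(V, H, \mathcal{M})$, which requires careful bookkeeping on the parameters used in \Cref{prop:sparse-containment-Gnp} to absorb the union bound with room to spare for the $o(n^{-3})$ target.
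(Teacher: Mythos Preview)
Your overall strategy---$\mathcal{M}$-respecting rotations followed by a booster argument, in the style of Montgomery---matches the paper's, and your use of the hypotheses $|V(\mathcal{M})| \le K/p$ and $d_H(v, V(\mathcal{M})) \le (np)^{1/5}$ to control the loss from forbidden rotations is in the right spirit. However, the booster step as you describe it has a genuine gap. You propose to union-bound over ``all choices of $V$, $H$, and $\mathcal{M}$'' via \Cref{prop:sparse-containment-Gnp}, but that proposition only controls sums over subgraphs with at most $\delta p n^2$ edges, whereas $H$ itself has $\Theta(p|V|^2)$ edges; a direct union bound over all admissible $H$ is hopeless. The paper's fix (\Cref{thm:expander}) is to first extract from $H$ a \emph{sparse} spanning $\mathcal{M}$-respecting $2$-expander $H_0$ with $e(H_0) \le \delta p n^2$, and to run all rotation and booster arguments relative to $H_0$ rather than $H$; the union bound is then only over sparse $H_0$ (and a sparse helper $H'$), which \Cref{prop:sparse-containment-Gnp} can absorb.

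There is a second, related issue. In the $(\tfrac12+\eps)$-residual setting, having $\Omega(n^2)$ candidate booster edges in $G$ does not by itself force one to lie in $H$: the adversary may delete up to $(\tfrac12-\eps)d_{G[V]}(v)$ edges at each $v$, and your endpoint sets are only guaranteed to have size $\Omega(|V|)$. The paper handles this by using \emph{two-edge} boosters $\{uv,e\}$: for each first-round endpoint $v$ one shows (\Cref{lem:find-linear-v-linear-u-many-e-boost}) that the set $U_v$ of $u$ for which some helper $e \in E(H-H_0)$ makes $\{uv,e\}$ a booster has size at least $(\tfrac12+\tfrac{\eps}{8})|V|$---crucially, more than half---and then an averaging argument (\Cref{lem:many-boosters-in-residual}) forces some $v$ to retain an $H$-neighbour in $U_v$. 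Your single-edge booster description does not provide a mechanism for guaranteeing a booster survives into $H$.
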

We will prove the above result by splitting it into two parts (i.e., \Cref{thm:expander,thm:join_paths_to_cycle_main}), which will then be combined to conclude the proof at the end of this section.

\subsection{\texorpdfstring{$\mathcal{M}$-respecting $2$-expander in subgraphs of random graphs}{M-respecting 2-expander in subgraphs of random graphs}}
\label{sec:expander}

We start with a key definition in our argument, as briefly mentioned in the proof sketch in Section~\ref{sec:proof ideas}.
\begin{definition}
Let $H$ be a graph on a vertex set $V$, and let $\mathcal{M}$ be a matching on $V$. We say that $H$ is an \defin{$\mathcal{M}$-respecting $2$-expander} if $H$ is connected, and for any vertex set $U \subseteq V$ with $|U| \leq \frac{|V|}{8}$, we have $|N_H(U) \setminus V(\mathcal{M})| \geq 2|U|$.
\end{definition}
The main result of this subsection is the following.
\begin{theorem} \label{thm:expander}
  For every $\eps, K >0$ there exists $C > 0$ such that, if $p \ge \frac{C}{n}$, then the random graph $G \sim G(n,p)$ satisfies the following with probability $1-o(n^{-3})$. Let $V \subseteq V(G)$ be a vertex set of size at least $\epsilon n$ and $H$ be a spanning subgraph of $G[V]$ satisfying $d_H(v)\ge \max\set{\frac{p|V|}{5000}, \left(\frac{1}{2} + \epsilon\right)d_{G[V]}(v)}$ for every $v\in V$. Let $\mathcal{M}$ be a matching on $V$ such that $|V(\mathcal{M})|\le \frac{K}{p}$ and every vertex $v\in V$ satisfies $d_H(v,V(\mathcal{M}))\le (np)^{1/5}$. Then $H$ contains a spanning $\mathcal{M}$-respecting $2$-expander with at most $\eps p n^2$ edges.
\end{theorem}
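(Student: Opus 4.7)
The plan is to construct $H^*$ by random sparsification of $H$: include each edge of $H$ independently with probability $q = q(\eps)$, and verify the required properties via the probabilistic method. The pseudorandom properties of $G \sim G(n,p)$ that we will need are standard consequences of the lemmas in \Cref{sec:prop-binom-rand} and hold with probability $1 - o(n^{-3})$: the max-degree bound $d_G(v) \leq 10np$, large-set pseudorandomness $e_G(A,B) = (1 \pm \gamma)p|A||B|$, and small-set sparsity $e_G(S) \leq |S|(np)^{1/8}$ for $|S|$ small. We fix such a $G$ and argue deterministically from here on.

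Writing $V' = V \setminus V(\mathcal{M})$, we handle the three required conclusions as follows. For the edge count, $\Expect{e(H^*)} = q \cdot e(H) \leq q \cdot pn^2/2$, so taking $q$ to be a small multiple of $\eps$ and applying Chernoff yields $e(H^*) \leq \eps p n^2$ with probability bounded away from $0$. For the $\mathcal{M}$-respecting $2$-expansion, fix $U \subseteq V$ with $|U| = u \leq |V|/8$; the bad event $|N_{H^*}(U) \cap V'| < 2u$ is equivalent to the existence of some $N \subseteq V' \setminus U$ with $|N| = 2u - 1$ such that every $H$-edge from $U$ to $V' \setminus (U \cup N)$ is missing from $H^*$. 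For each fixed $(U,N)$ this has probability at most $(1-q)^{e_H(U, V' \setminus (U \cup N))}$. We lower bound $e_H(U, V' \setminus (U \cup N))$ by starting from $\sum_{v \in U} d_H(v, V') \geq u \cdot (p|V|/5000 - (np)^{1/5})$ and subtracting $e_H(U, U \cup N) \leq e_G(U \cup N)$, the latter controlled either by small-set sparsity of $G$ (when $|U \cup N|$ is below the pseudorandom threshold) or by pseudorandomness itself (when it is large). In both regimes this yields $e_H(U, V' \setminus (U \cup N)) = \Omega(u p |V|)$. A union bound over the at most $n^{3u}$ pairs $(U, N)$ with $|U| = u$, and then over $u \geq 1$, shows that the expansion property holds with positive probability provided $q \cdot p|V| \geq C' \log n$ for a sufficiently large constant $C'$. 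Connectivity of $H^*$ then follows from expansion: any connected component $C$ of size at most $|V|/2$ would, applied to a subset of $C$ of size $\min(|C|, |V|/8)$, force a neighbour outside $C$, a contradiction.

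The main obstacle is balancing the two constraints on $q$: it must be small ($q \leq O(\eps)$) to control the edge count, yet large ($q \geq C' \log n/(p|V|)$) for the expansion union bound. Since $|V| \geq \eps n$, this amounts to requiring $np \geq \Omega(\log n / \eps^2)$, which is comfortable in the ambient setting of \Cref{thm:main result of 7th section} where $p \geq \log n/(2n)$; in the borderline regime $p \geq C/n$ of \Cref{thm:expander} where $np$ is only a large constant, the straightforward random sparsification fails for single-vertex expansion, so one instead builds $H^*$ as a hybrid: a deterministic constant-degree skeleton of $H$-edges into $V'$ (obtained via Hall's theorem, whose condition is verified from the pseudorandomness of $G$ combined with the degree hypothesis on $H$) to absorb single-vertex and small-set expansion, layered with a random sparsification at larger scales. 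Verifying this hybrid construction carefully, and in particular checking that the total edge count remains at most $\eps p n^2$ while the union bounds at each scale sum to $o(1)$, is the most delicate part of the argument.
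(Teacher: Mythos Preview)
Your hybrid strategy---a deterministic sparse skeleton for small-set expansion, overlaid with random sparsification for larger sets---is the same high-level plan as the paper's (there $H_1$ is a minimal spanning subgraph of $H$ with minimum degree $\lfloor \eps |V|p/2\rfloor$, and $H_2 \sim H(\eps/3)$). However, there is a genuine gap in your large-$U$ calculation.

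You claim $e_H(U,\, V'\setminus(U\cup N)) = \Omega(up|V|)$ for all $u \le |V|/8$, starting from $\sum_{v\in U} d_H(v,V') \ge u\cdot p|V|/5000$ and subtracting $e_G(U\cup N)$. But once $|U\cup N| \approx 3u$ is large enough for pseudorandomness, the subtracted term is $\Theta(pu^2)$; for $u$ near $|V|/8$ this is of order $p|V|^2/64$, which swamps the main term $up|V|/5000 \approx p|V|^2/40000$, and your lower bound becomes negative. The minimum-degree hypothesis $d_H(v)\ge p|V|/5000$ is simply too weak in this range: you must invoke the $(\tfrac12+\eps)$-residual condition, which gives $e_H(U,V) \ge (\tfrac12+\eps)e_G(U,V) \approx (\tfrac12+\eps)pu|V|$, and then subtracting $e_G(U,U\cup N) \lesssim 3pu\cdot|V|/8$ leaves $\Omega(pu|V|)$ since $\tfrac12+\eps - \tfrac38 > 0$. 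The paper does exactly this: for disjoint $A,B$ with $|A|\ge K/p$ and $|B|\ge |V|/2$ it shows $e_H(A,B)\ge \eps p|A||V|/2$ via the residual hypothesis, and then the random layer $H_2$ retains an $A$--$B$ edge with probability beating a $4^n$ union bound.

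A second, smaller gap: your connectivity argument (``apply expansion to a subset of a small component of size $\min(|C|,|V|/8)$'') does not yield a contradiction when $3|V|/8 \le |C| \le |V|/2$, since then $|C\setminus U| \ge |V|/4$ and $N_{H^*}(U)\setminus V(\mathcal{M})$ can fit entirely inside $C$. The $\mathcal{M}$-respecting $2$-expansion for $|U|\le |V|/8$ does not by itself force connectivity. The paper handles this separately using the same large-set property of $H_2$: a component $A$ with $|A|\le |V|/2$ must satisfy $|A| > K/p$ (else $H_1$ expands it), and then $e_{H_2}(A, V\setminus A) > 0$ gives the contradiction.
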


\begin{proof}
Without loss of generality, we assume $\eps\le \frac{1}{5000}$. We choose $C$ to be sufficiently large with respect to $\eps$ and $K$. We also assume $n$ to be sufficiently large relative to all these constants to support our arguments whenever needed. We use the following standard definition in our proof.
\begin{definition}
A graph $G$ is a \defin{$(t,k)$-expander} if $|N(U)|\ge t|U|$ for every subset $U\subseteq V(G)$ with $|U|\le k$.
\end{definition}
\addtocounter{theorem}{-1}
\begin{claim}\label{clm:tk-expander}
The random graph $G$ satisfies the following with probability $1-o(n^{-3})$. Every subgraph $H$ of $G$ satisfying the assumptions in \Cref{thm:expander} contains a spanning $\left((np)^{1/4}, \frac{K}{p}\right)$-expander with at most $\frac{\eps p n^2}{2}$ edges. 
\end{claim}
\addtocounter{theorem}{1}
\begin{claimproof}
Applying~\Cref{lem:small-sets-cannot-contain-too-many-edges} with $a = b = \frac{2}{3}$, we have that with probability $1-o(n^{-3})$,
\begin{equation}\label{eq:smallsetssparse}
\text{all sets $S$ of size at most $\frac{2Kn}{(np)^{3/4}}$ satisfies $e_G(S)\le |S|(np)^{2/3}$ edges.}
\end{equation}
It now suffices to prove \Cref{clm:tk-expander} assuming that $G$ satisfies \Cref{eq:smallsetssparse}. For that, fix a subgraph $H$ of $G$ that satisfies the assumptions in \Cref{thm:expander}. Consider a minimal (all proper subgraphs of $H_1$ have lower minimum degree) spanning subgraph $H_1$ of $H$ whose minimum degree is $\lfloor\frac{\eps |V|p}{2}\rfloor \le \delta(H)$. By minimality, every edge in $H_1$ has an endpoint of degree $\lfloor\frac{\eps |V|p}{2}\rfloor$ and so the number of edges in $H_1$ is at most $|V|\lfloor\frac{\eps |V|p}{2}\rfloor \le \frac{\eps p n^2}{2}$. Suppose for contradiction that there is $A\subseteq V(H_1)$ with $|A|\le \frac{K}{p}$ such that $|N_{H_1}(A)|< |A|(np)^{1/4}$. This gives a set $S = A\cup N_{H_1}(A)$ of size at most $2|A|(np)^{1/4} \le \frac{2Kn}{(np)^{3/4}}$ with at least $|A|\cdot \lfloor\frac{\eps |V|p}{2}\rfloor\cdot \frac{1}{2} \ge |S| \cdot \frac{\eps^2 (np)^{3/4}}{8}$ edges in $H_1$, and thus also in $G$. However, this contradicts \eqref{eq:smallsetssparse}, completing the proof of \Cref{clm:tk-expander}.
\end{claimproof}

We are now back to proving \Cref{thm:expander}.
By \Cref{lem:large-sets-pseudorandom-property-Gnp}, with probability $1-o(n^{-3})$, we have that if $A,B\subseteq V(G)$ satisfy $|A|\ge \frac{K}{p}$ and $|B|\ge \frac{\eps n}{2}$, then 
\begin{equation}
\left(1-\frac{\eps}{2}\right)p|A||B|\le e_G(A,B)\le \left(1+\frac{\eps}{2}\right)p|A||B|
\label{eq:largesetsdense}
\end{equation}
We will now prove \Cref{thm:expander} assuming that $G$ satisfies the above property and \Cref{clm:tk-expander}. Fix a subgraph $H$ of $G$ satisfying the assumptions in \Cref{thm:expander}. Apply \Cref{clm:tk-expander} to find $H_1$, a subgraph of $H$, that is a spanning $\left((np)^{1/4}, \frac{K}{p}\right)$-expander with at most $\frac{\eps p n^2}{2}$ edges. Using \eqref{eq:largesetsdense}, for every disjoint $A,B\subseteq V$ with $|A|\ge \frac{K}{p}$ and $|B|\ge \frac{|V|}{2}$, we have
\begin{align*}
    e_H(A,B)
    = e_H(A,V) - e_H(A, V \setminus B) 
    &\ge (\tfrac{1}{2} + \eps)e_G(A,V) - e_G(A,V \setminus B) \\
    &= e_G(A,B) - \left(\tfrac{1}{2}-\epsilon\right)e_G(A,V) \\
    &\ge \left(1 - \tfrac{\epsilon}{2}\right)p|A||B| - \left(\tfrac{1}{2} - \epsilon\right) \left(1 + \tfrac{\epsilon}{2}\right)p|A||V| \\
    &\ge \left(1 - \tfrac{\epsilon}{2}\right)p|A|\frac{|V|}{2} - \left(\tfrac{1}{2} - \epsilon\right) \left(1 + \tfrac{\epsilon}{2}\right)p|A||V| \\
    &\ge \frac{\eps p|A||V|}{2}\ge \frac{K\eps^2 n}{2}.
\end{align*}
Let $H_2$ be a random subgraph of $H$ where each edge is included independently with probability $\frac{\eps}{3}$ (i.e., $H_2 \sim H(\frac{\eps}{3})$). Thus, the probability that there exist some disjoint $A,B\subseteq V$ with $|A|\ge \frac{K}{p}$, $|B|\ge \frac{|V|}{2}$, and $e_{H_2}(A,B)=0$ is at most 
\[
(2^n)^2 \left(1-\frac{\eps}{3}\right)^{K\eps^2 n/2} \le 4^n e^{-K\eps^3 n/6} = o(1).
\]
By \eqref{eq:largesetsdense}, we have $e(G)\le p n^2$ and thus by Chernoff's bound, \whp $e(H_2)\le \frac{\eps p n^2}{2}$. Thus, we can take a subgraph $H_2$ of $H$ such that 
\begin{equation}\label{eq:property of H2}
\text{$e(H_2)\le \frac{\eps p n^2}{2}$ and $e_{H_2}(A,B)>0$ for every disjoint $A,B\subseteq V$ with $|A|\ge \frac{K}{p}$, $|B|\ge \frac{|V|}{2}$.}
\end{equation}

Let $H_0 = H_1 + H_2$. We aim to show that $H_0$ is a desired expander. Clearly, $e(H_0)\le \eps pn^2$. Since $H_1$ is an $\left((np)^{1/4}, \frac{K}{p}\right)$-expander, for each $A\subseteq V$ with $|A|\le \frac{K}{p}$, we have 
\[
|N_{H_0}(A)\setminus V(\mathcal{M})|\ge |N_{H_1}(A)| - |N_{H}(A)\cap V(\mathcal{M})|\ge |A|(np)^{1/4} - |A|(np)^{1/5} \ge 2|A|.
\]
Next, for each $A\subseteq V$ with $\frac{K}{p} \le |A|\le \frac{|V|}{8}$, there are no edges in $H_2$ between $A$ and $V\setminus (A\cup N_{H_2}(A))$, and thus by~\eqref{eq:property of H2}, we must have $|A\cup N_{H_2}(A)|\ge \frac{|V|}{2}$. Hence, $|N_{H_2}(A)|\ge \frac{|V|}{2} - |A|\ge 3|A|$, where the last inequality uses $|A|\le \frac{|V|}{8}$. Consequently, $|N_{H_0}(A)\setminus V(\mathcal{M})|\ge |N_{H_2}(A)| - |V(\mathcal{M})|\ge 3|A| - \frac{K}{p}\ge 2|A|$, where we have used $\frac{K}{p} \le |A|$. 

The only thing remaining is to show that $H_0$ is connected. Indeed, suppose for a contradiction that $H_0$ has a connected component with vertex set $A$ such that $|A|\le \frac{|V|}{2}$. Then, since $N_{H_0}(A) = \emptyset$ and $H_1$ is an $\left((np)^{1/4}, \frac{K}{p} \right)$-expander, we have $|A|> \frac{K}{p}$. This, together with the fact that there are no edges in $H_2$ between $A$ and $V\setminus A$, gives a contradiction to~\eqref{eq:property of H2}.
\end{proof}

\subsection{\texorpdfstring{Utilising $\mathcal{M}$-respecting $2$-expander to construct Hamilton cycle containing $\mathcal{M}$}{Utilising M-respecting 2-expander to construct Hamilton cycle containing M}}
\label{sec:join_paths_to_cycle}

The main result of this subsection is the following modified version of Theorem~1.6 in \cite{Montgomery2019resilient}.
\begin{theorem}
  \label{thm:join_paths_to_cycle_main}
  For any $\eps, K >0$ there exist $\delta, C > 0$ such that, if $p \geq \frac{C}{n}$, then the random graph $G \sim G(n,p)$ satisfies the following with probability $1 - o(n^{-3})$.\ Let $V \subseteq V(G)$ be a vertex set of size at least $\epsilon n$ and $H$ be a $\paren{\frac{1}{2} + \eps}$-residual spanning subgraph of $G[V]$. Let $\mathcal{M}$ be a matching on $V$ of size at most $\frac{K n}{\log{n}}$ such that $H$ contains a spanning $\mathcal{M}$-respecting $2$-expander with at most $\delta p n^2$ edges. Then there is a Hamilton cycle in $H + \mathcal{M}$ that contains all edges of $\mathcal{M}$.
\end{theorem}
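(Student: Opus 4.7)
The plan is to adapt the proof of Theorem~1.6 in~\cite{Montgomery2019resilient} (the resilient Hamiltonicity result) to the $\mathcal{M}$-respecting setting. The key observation is that Pósa's rotation-extension technique can be performed so as to preserve every edge of $\mathcal{M}$, provided we only rotate along edges not in $\mathcal{M}$. A sufficient condition for this is that each rotation uses a neighbour of the current endpoint lying outside $V(\mathcal{M})$, and the $\mathcal{M}$-respecting $2$-expander hypothesis is precisely tailored to supply enough such safe rotations.

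First, I would set up the rotation mechanism. Given a path $P = v_0 v_1 \ldots v_\ell$ in $H + \mathcal{M}$ containing all edges of $\mathcal{M}$, a rotation at $v_0$ using a neighbour $v_i \in N(v_0) \cap V(P)$ produces the path $v_{i-1} v_{i-2} \ldots v_0 v_i v_{i+1} \ldots v_\ell$, removing the edge $v_{i-1} v_i$ and adding $v_0 v_i$. To maintain $\mathcal{M} \subseteq P$, we require $v_{i-1} v_i \notin \mathcal{M}$, which is automatic whenever $v_i \notin V(\mathcal{M})$. Let $F$ denote the spanning $\mathcal{M}$-respecting $2$-expander supplied by the hypothesis. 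Starting from a longest path in $F + \mathcal{M}$ containing $\mathcal{M}$ (which exists because $F$ is connected), I would iterate safe rotations from both endpoints: the condition $|N_F(U) \setminus V(\mathcal{M})| \geq 2|U|$ gives the same expansion of the endpoint set as in the classical Pósa argument, and after $O(\log n)$ rounds one obtains a quadratic number of pairs $(u,v)$ of endpoints, each arising from a longest path in $F + \mathcal{M}$ containing $\mathcal{M}$.

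Second, I would use booster edges to extend $P$ or close it into a Hamilton cycle. A booster is an edge $e \in E(H) \setminus E(F)$ whose addition to $F + \mathcal{M}$ creates either a strictly longer path or a Hamilton cycle (containing $\mathcal{M}$ in either case). The $\Omega(n^2)$ endpoint pairs from the previous step yield $\Omega(n^2)$ candidate boosters. Using Chernoff-type estimates on the edges of $G(n,p)$ combined with the $(\tfrac{1}{2}+\eps)$-residual hypothesis, we obtain that many of these candidate edges lie in $H \setminus F$, so at least one booster is available. Since each booster strictly increases either the path length or yields a Hamilton cycle, after at most $n$ booster applications we reach a Hamilton cycle in $H + \mathcal{M}$ containing all of $\mathcal{M}$. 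The probabilistic ingredient is a union bound over all possible `bad' subgraph configurations where no booster exists; the sparsity $e(F) \leq \delta p n^2$ together with~\Cref{prop:sparse-containment-Gnp} bounds the number of relevant sparse structures sufficiently tightly that the failure probability is $o(n^{-3})$.

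The main obstacle will be verifying that the union-bound enumeration of~\cite{Montgomery2019resilient} extends cleanly to the $\mathcal{M}$-respecting setting. The main point is that the set $V(\mathcal{M})$ of `obstructing' vertices has size at most $2|\mathcal{M}| \leq 2Kn/\log n = o(n)$, so expansion bounds of the form `linear in $|V|$' only lose a lower-order term, which is easily absorbed. A more delicate point is that we must perform all rotations \emph{within the sparse expander} $F$ alone (so that the sparsity of the non-booster subgraph is controlled for the union bound), and it is exactly the $\mathcal{M}$-respecting expansion hypothesis on $F$ -- rather than merely an expansion hypothesis on the full graph $H$ -- that makes this viable.
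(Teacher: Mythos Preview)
Your proposal correctly identifies the $\mathcal{M}$-respecting rotation mechanism and the role of the sparse expander in the union bound, but it has a genuine gap at the booster step that makes the argument fail as written.

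You define a booster as a \emph{single} edge $e \in E(H)\setminus E(F)$ and claim that the $(\tfrac12+\eps)$-residual hypothesis together with Chernoff gives a booster in $H$. This is exactly where the classical P\'osa argument breaks down in the resilience setting. From \Cref{lem:posa-expander} you only get endpoint sets of size $\ge |V|/8$ on each side, so the booster candidates incident to any fixed vertex $v$ number at most about $|V|/8$. The adversary forming $H$ sees $G$ (and hence $F$ and the booster candidates) and may delete up to $(\tfrac12-\eps)d_{G[V]}(v)$ edges at $v$; since $|V|/8 < (\tfrac12-\eps)|V|$, the adversary can remove \emph{all} single-edge boosters while remaining $(\tfrac12+\eps)$-residual. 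No Chernoff estimate on $G$ rescues this, because the residual condition is per-vertex rather than global, and one cannot union-bound over the (dense) choices of $H$.

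The paper (following Montgomery) circumvents this via \emph{two-edge} boosters $\{uv,e\}$: one additional rotation through a helper edge $e$ enlarges, for each $v$ in the P\'osa endpoint set, the set $U_v$ of admissible second endpoints to size $> (\tfrac12+\tfrac{\eps}{8})|V|$ (\Cref{lem:find-linear-v-linear-u-many-e-boost}). Now the residual condition \emph{forces} an $H$-edge from $v$ into $U_v$. The helper edges $e$ themselves must also lie in $H$, which is arranged by first extracting a sparse random subgraph $H'\subseteq H-H_0$ (\Cref{lem:find-sparse-helper-for-expander}) so that the union bound over $(H_0,H')$ is still affordable, and then showing a booster pair exists in $E(H)\cup E(H')$ (\Cref{lem:many-boosters-in-residual}). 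Your outline omits this two-edge/sparse-helper machinery, and without it the step ``at least one booster is available in $H$'' does not go through.
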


In order to prove this theorem, we will first need to prove several lemmas. The structure of which results rely on each other is shown in \Cref{fig:lemmas_find_hamilton_cycle} below.

\begin{figure}[h] 
  \centering
  \begin{tikzpicture}[
    node distance = 5mm and -10mm,
    start chain = going below,
    block/.style = {draw, rounded corners, thick, align=center, minimum height=2em,
      on chain},
    plate/.style={draw, rounded corners, thick, dotted, align=right, inner sep=1.5mm, label={[xshift=-1.8cm,yshift=.65cm]south east:#1}}
    ]
    \node (A) [block] { \cref{thm:join_paths_to_cycle_main}};
    \node (B) [block] [below left=of A]  {\cref{lem:find-sparse-helper-for-expander}};
    \node (C) [block] [below right=of A]  {\cref{lem:many-boosters-in-residual}};
    \node (D) [block] [below=of B] 
    {\cref{lem:find-linear-v-linear-u-many-e-boost}};
    \node (E) [block] [below=of D] 
    {\cref{lem:posa-expander}\\P\'osa rotation};

    \node[plate=Boosters, dotted, fit=(D) (B) (C)] (plate1) {};
    
    \draw[->, thick, to path={-- (\tikztotarget)}]
    (E) edge (D) (D) edge (B);
    \draw[->, thick,  to path={-- (\tikztotarget)}]
    (B) edge (A) (C) edge (A);
  \end{tikzpicture}
  \caption{Overview of the proof of~\cref{thm:join_paths_to_cycle_main}}
  \label{fig:lemmas_find_hamilton_cycle}
\end{figure}

As mentioned earlier, \Cref{thm:join_paths_to_cycle_main} is a modification of \cite[Theorem~1.6]{Montgomery2019resilient}, and its proof is also very similar. The role of the matching $\mathcal{M}$ is new in \Cref{thm:join_paths_to_cycle_main} as compared to \cite[Theorem~1.6]{Montgomery2019resilient}. In order to obtain a Hamilton cycle containing all edges of $\mathcal{M}$, the main novelty we use here is to appropriately modify P\'osa's rotation and extension technique to an `$\mathcal{M}$-respecting' analogue. This `$\mathcal{M}$-respecting' concept is made precise in the following definitions.

\begin{definition}
  Let $H$ be a graph on a vertex set $V$, and let $\mathcal{M}$ be a matching on $V$.
  \begin{itemize}
  \item A path (respectively cycle) $P$ in $H + \mathcal{M}$ is an \defin{$\mathcal{M}$-respecting path} (respectively \defin{cycle}) if every pair $\set{x, y} \in E(\mathcal{M})$ either belongs to $E(P)$, or is disjoint with $V(P)$.
  \item Given an $\mathcal{M}$-respecting path $x_1, \dots, x_k$ in $H + \mathcal{M}$, an \defin{$\mathcal{M}$-respecting rotation with $x_1$ fixed} is made by breaking an edge $x_ix_{i + 1}$, for some $i\in [k-1]$, that satisfies $x_ix_k \in E(H)$ and $\set{x_ix_{i + 1}} \not\in \mathcal{M}$, and considering the new $\mathcal{M}$-respecting path $x_1, \dots, x_i, x_k, x_{k - 1}, \dots, x_{i + 1}$.
  \end{itemize}
\end{definition}

The following Lemma generalizes P\'{o}sa's rotation-expansion technique to $\mathcal{M}$-respecting paths. A path with endpoints $u$ and $v$ is often referred to as \defin{$uv$-path}. 
\begin{lemma}
  \label{lem:posa-expander}
  Let $H$ be a graph, and let $\mathcal{M}$ be a matching on $V(H)$. Assume that $H$ is
  an $\mathcal{M}$-respecting $2$-expander graph. Let $P$ be a longest $\mathcal{M}$-respecting path in $H + \mathcal{M}$ which has $v$ as an endpoint. Then, there are at least $\frac{|V(H)|}{8}$ vertices $u \in V(P)$ for which there is an $\mathcal{M}$-respecting $vu$-path in $H + \mathcal{M}$ with vertex set $V(P)$.
\end{lemma}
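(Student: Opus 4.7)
The plan is to adapt the classical Pósa rotation-extension argument to the $\mathcal{M}$-respecting setting. Let $S$ denote the set of all vertices $u \in V(P)$ for which there exists an $\mathcal{M}$-respecting $vu$-path in $H + \mathcal{M}$ with vertex set $V(P)$; in particular, the other endpoint of $P$ lies in $S$. The goal is to show $|S| \geq |V(H)|/8$, so I will assume for contradiction that $|S| < |V(H)|/8$ and derive a contradiction from the $\mathcal{M}$-respecting $2$-expander property, which in this range gives $|N_H(S) \setminus V(\mathcal{M})| \geq 2|S|$.

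A key preliminary observation is that $N_H(u) \subseteq V(P)$ for every $u \in S$. Indeed, take any $\mathcal{M}$-respecting $vu$-path $Q$ with vertex set $V(P)$ and any $w \in N_H(u) \setminus V(P)$. If $w \notin V(\mathcal{M})$, then appending the edge $uw$ to $Q$ produces a longer $\mathcal{M}$-respecting path, contradicting the maximality of $P$. If $w \in V(\mathcal{M})$, then its matching partner $w'$ must lie outside $V(Q) = V(P)$ (since $Q$ is $\mathcal{M}$-respecting), so appending $uw$ followed by the $\mathcal{M}$-edge $ww'$ again gives a longer $\mathcal{M}$-respecting path and a contradiction. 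In particular, $N_H(S) \setminus V(\mathcal{M}) \subseteq V(P)$.

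Next, for each $u \in S$, fix an $\mathcal{M}$-respecting $vu$-path $Q_u = y^u_1 = v, y^u_2, \ldots, y^u_k = u$ with vertex set $V(P)$. For any $w = y^u_j \in N_H(u) \setminus V(\mathcal{M})$ with $1 \leq j \leq k - 2$, the edge $y^u_j y^u_{j+1}$ cannot lie in $\mathcal{M}$, since $y^u_j \notin V(\mathcal{M})$; the $\mathcal{M}$-respecting rotation at position $j$ is therefore valid and produces an $\mathcal{M}$-respecting path from $v$ to $y^u_{j+1}$ with vertex set $V(P)$, showing $y^u_{j+1} \in S$. This is the step where the $\mathcal{M}$-respecting expander definition pays off: by restricting attention to $N_H(u) \setminus V(\mathcal{M})$, every rotation we attempt is automatically admissible.

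The final step is the standard Pósa double-rotation bookkeeping to derive the contradiction. For each vertex $w \in N_H(S) \setminus V(\mathcal{M})$, selecting some $u \in S$ with $w \in N_H(u)$ and writing $w = y^u_j$, one observes that either $j = k$ and $w \in S$, or $j = k-1$ and $w$ lies in the ``predecessor set'' $T := \{y^u_{k-1} : u \in S\}$, or $j \leq k-2$ and the rotation above places $y^u_{j+1} \in S$ and identifies $w$ as the predecessor of an $S$-vertex on a rotated path. Applying a second layer of rotations on the already rotated paths $Q_u$ (themselves longest $\mathcal{M}$-respecting paths) then gives $|T| \leq |S|$, so that $|N_H(S) \setminus V(\mathcal{M})| \leq 2|S|$, which together with the hypothesis $|S| < |V(H)|/8$ contradicts the strict expansion inherent to the $\mathcal{M}$-respecting $2$-expander condition (e.g.\ by producing one ``extra'' reachable endpoint from the connectivity of $H$). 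The main obstacle is the combinatorial bookkeeping across the many possible paths $Q_u$, but the matching $\mathcal{M}$ introduces no new difficulty because every problematic rotation would require a break-edge incident to $V(\mathcal{M})$, and such vertices have already been excluded from the expansion count.
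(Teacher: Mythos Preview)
Your overall shape is right and your first three paragraphs are correct: defining $S$, showing $N_H(S)\subseteq V(P)$, and observing that a rotation at $w\in N_H(u)\setminus V(\mathcal{M})$ is always admissible (since the broken edge cannot be in $\mathcal{M}$). The genuine gap is in the final paragraph, where you try to deduce $|N_H(S)\setminus V(\mathcal{M})|\le 2|S|$ by tracking positions of $w$ along the \emph{varying} paths $Q_u$. When $w=y^u_j$ with $j\le k-2$, you conclude $y^u_{j+1}\in S$ and call $w$ ``the predecessor of an $S$-vertex on a rotated path''. But this predecessor relation depends on \emph{which} $Q_u$ you happened to pick for $w$; two different vertices $w_1\ne w_2$ can easily be predecessors of the \emph{same} $s\in S$ on two different paths $Q_{u_1},Q_{u_2}$. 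So the implied map from $N_H(S)\setminus V(\mathcal{M})$ into $S\cup T$ is not injective (or even $2$-to-$1$), and nothing you have written bounds its image. The hand-wave to ``standard P\'osa double-rotation bookkeeping'' does not repair this: that bookkeeping works precisely because one argues on the \emph{fixed} original path $P$, not on the moving $Q_u$'s.

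What is missing is the following key observation (and it is the heart of the paper's argument): if $z\in N_H(S)$ and none of $z,z^-,z^+$ lies in $S$, where $z^-,z^+$ are the neighbours of $z$ on the \emph{original} path $P$, then the edges $z^-z$ and $zz^+$ are never broken in any sequence of $\mathcal{M}$-respecting rotations, so they survive in every $Q_u$. Rotating $Q_u$ at $z$ would then force one of $z^-,z^+$ into $S$, a contradiction \emph{unless} the broken edge is in $\mathcal{M}$, i.e.\ $z\in V(\mathcal{M})$. Hence for every $z\in N_H(S)\setminus V(\mathcal{M})$ one of $z^-,z^+$ (on $P$!) is in $S$, giving $N_H(S)\setminus V(\mathcal{M})\subseteq S^-\cup S^+$ and thus $|N_H(S)\setminus V(\mathcal{M})|\le 2|S|-1$, the $-1$ coming from the endpoint of $P$ having only one $P$-neighbour. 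This also gives you the strict inequality you need against the expander hypothesis, without the ad hoc appeal to connectivity at the end.
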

\begin{proof}
Let $U$ be the set of vertices $u \in V(P)$ such that one can obtain a $uv$-path by taking $P$ and iteratively performing $\mathcal{M}$-respecting rotations with $v$ fixed. In order to show that $|U| \geq \frac{|V(H)|}{8}$, it is enough to prove that $|N_H(U)| < 2|U| + |V(\mathcal{M}) \cap N_H(U)|$. Indeed, assuming this, and assuming also that $|U| < \frac{|V(H)|}{8}$, given that $H$ is an $\mathcal{M}$-respecting $2$-expander, we have $2|U| \leq |N_H(U) \setminus V(\mathcal{M})| = |N_H(U)| - |V(\mathcal{M}) \cap N_H(U)|$, a contradiction.

Observe first that $N_H(U) \subseteq V(P)$. Indeed, if there is an edge $ux \in E(H)$ with $u \in U$ and $x \not\in V(P)$, then, by defining $P_u$ to be an $\mathcal{M}$-respecting $vu$-path in $H + \mathcal{M}$ with vertex set $V(P)$, the path $P_u + ux$ is not $\mathcal{M}$-respecting, since otherwise it is a longer $\mathcal{M}$-respecting path than $P_u$. Therefore, $x$ belongs to an edge $xy$ of $\mathcal{M}$. We also have that $y \in V(P_u)$ since otherwise, $P_u + ux + xy$ is a longer $\mathcal{M}$-respecting path in $H + \mathcal{M}$ than $P_u$. However, the edge $xy \in E(\mathcal{M})$ with $x \not\in V(P_u)$ and $y \in V(P_u)$ contradicts that $P_u$ is an $\mathcal{M}$-respecting path in $H + \mathcal{M}$. Thus, $N_H(U) \subseteq V(P)$.

For each vertex $z \in V(P)$ that is not an endpoint of $P$, we denote its furthest and closest neighbours in $P$ from $v$ as $z^+$ and $z^-$ respectively. For $z = v$, or $z$ being the endpoint of $P$ other than $v$, we use $z^+$ or $z^-$ respectively to denote the neighbour of $z$ in $P$. We now make the following claim, recalling that $N_H(U) \subseteq V(P)$.
\begin{claim}\label{claim:neigh_in_U_or_I_in_M}
    For every $z \in N_H(U)$, either $\{z^-, z^+\} \cap U \neq \emptyset$, or $z\in V(\mathcal{M})$.
\end{claim}
\begin{claimproof}
Suppose that neither $z^-$ nor $z^+$ belongs to $U$. We also have that $z \not\in U$ given that $z \in N_H(U)$. Since $z \in N_H(U)$, there exists $y \in U\cap N_H(z)$ such that there is an $\mathcal{M}$-respecting $vy$-path $P_y$ in $H + \mathcal{M}$ with vertex set $V(P)$, which can be achieved by taking $P$ and iteratively performing $\mathcal{M}$-respecting rotations with $v$ fixed. Note that if an edge $ab$ is broken when performing an $\mathcal{M}$-respecting rotation with $v$ fixed, then either $a$ or $b$ is an endpoint of the resulting $\mathcal{M}$-respecting path from such rotation, and hence, either $a$ or $b$ belongs to $U$. Therefore, given that $z, z^-, z^+ \not\in U$, the edges $z^-z, zz^+$ are never broken when performing $\mathcal{M}$-respecting rotations starting from $P$ to obtain $P_y$. Thus, $z^-z, zz^+ \in E(P_y)$. Assume without loss of generality that, among $z^+$ and $z^-$, $z^+$ is the vertex furthest from $v$ in $P_y$. Clearly $z^+ \neq y$ since $z^+ \not\in U$. Observe that $zz^+ \in E(\mathcal{M})$, since otherwise we can perform an $\mathcal{M}$-respecting rotation with $v$ fixed using the edge $zy \in E(H) \setminus E(P_y)$, and breaking the edge $zz^+$ to get an $\mathcal{M}$-respecting path in $H + \mathcal{M}$ with $z^+$ as an end-vertex, which contradicts that $z^+ \not\in U$. We then have that $z\in V(\mathcal{M})$, as desired.
\end{claimproof}
  
Let $u$ be the endpoint of $P$ different from $v$. Trivially, for every vertex in $U \setminus \{u\}$, at most two of its neighbours in $P$ belong to $N_H(U)$, and at most one neighbour of $u$ in $P$ belongs to $N_H(U)$. Also, at most $|V(\mathcal{M}) \cap N_H(U)|$ vertices of $V(\mathcal{M})$ belong to $N_H(U)$. By \Cref{claim:neigh_in_U_or_I_in_M}, all the vertices of $N_H(U)$ can be obtained in one of these two ways. We then conclude that $|N_H(U)| \leq 2|U| - 1 + |V(\mathcal{M}) \cap N_H(U)|$, as desired. This finishes the proof of \Cref{lem:posa-expander}
\end{proof}

The following lemma corresponds to Lemma 3.5 of~\cite{Montgomery2019resilient}. The proof is essentially the same, though we need to modify the classical definition of `boosters' as follows and adapt the proof accordingly.
\begin{definition}
Let $H$ be a graph on a vertex set $V$, and let $\mathcal{M}$ be a matching on $V$. A set $B \subseteq \binom{V}{2}$ of pairs of vertices is called an \defin{$\mathcal{M}$-respecting-booster for $H$} if the graph $H + \mathcal{M} + B$ contains either a longer $\mathcal{M}$-respecting path than a longest $\mathcal{M}$-respecting path in $H + \mathcal{M}$, or an $\mathcal{M}$-respecting Hamilton~cycle.
\end{definition}
\begin{lemma}
  \label{lem:find-linear-v-linear-u-many-e-boost}
  For every $ \eps, K > 0$ there exist $\delta, C > 0$ such that, if $p \geq \frac{C}{n}$, then $G \sim G(n, p)$ has the following property with probability $1 - o(n^{-3})$. For every set $V \subseteq V(G)$ of size $|V| \geq \eps n$, every matching $\mathcal{M}$ on $V$ of size at most $\frac{K n}{\log{n}}$, every spanning subgraph $H_0$ of $G[V]$ that is an $\mathcal{M}$-respecting $2$-expander graph with at most $2\delta p n^2$ edges, and every $(\frac{1}{2} + \eps)$-residual spanning subgraph $H$ of $G[V]$, the following happens. For at least $\frac{|V|}{8}$ vertices $v \in V$, there is some set $U_v \subseteq V$ of size $|U_v| \geq \left(\frac{1}{2} + \frac{\eps}{8}\right)|V|$ and disjoint subset  $E_{v, u} \subseteq E(H - H_0)$, for $u\in U_v$, of size $|E_{v, u}| \geq \frac{50}{\eps \delta}$ such that, for each $u \in U_v$ and $e \in E_{v, u}$, $\{uv, e\}$ is an $\mathcal{M}$-respecting booster for $H_0$.
\end{lemma}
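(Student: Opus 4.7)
We adapt the proof of \cite[Lemma~3.5]{Montgomery2019resilient} to the $\mathcal{M}$-respecting setting, applying \Cref{lem:posa-expander} twice and combining this with the edge density of $H$. We may assume that $H_0 + \mathcal{M}$ contains no $\mathcal{M}$-respecting Hamilton cycle, for otherwise any choice of $U_v$ and $E_{v,u}$ works. Fix a longest $\mathcal{M}$-respecting path $P$ in $H_0 + \mathcal{M}$ and fix an endpoint $x_0$ of $P$. By \Cref{lem:posa-expander} applied with $x_0$ fixed, we obtain a set $V^\star$ of at least $|V|/8$ vertices $v$ that appear as the opposite endpoint of some $\mathcal{M}$-respecting $x_0 v$-path $P_v$ on $V(P)$; these serve as the vertices in the conclusion.

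For a fixed $v \in V^\star$, call an edge $e = wx \in E(H - H_0)$ \emph{useful} if $w \in V \setminus (V(P) \cup V(\mathcal{M}))$ and $x \in V(P)$: a useful edge on its own extends $P_v$ to an $\mathcal{M}$-respecting path of length $|V(P)| + 1$, so $\{uv, e\}$ is automatically a booster for \emph{every} $u \in V$. By the $(\tfrac{1}{2} + \eps)$-residual condition on $H$, \Cref{lem:large-sets-pseudorandom-property-Gnp}, and $|V(\mathcal{M})| \le Kn/\log n = o(|V|)$, the pool $\mathcal{E}_v$ of useful edges has size $\Omega(\eps p\, |V(P)|\,|V \setminus V(P)|)$ minus the $\le 2\delta p n^2$ edges of $H_0$. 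Apply \Cref{lem:posa-expander} a second time with $v$ fixed to $P_v$ to obtain a set $U^\star_v \subseteq V(P)$ of size $\ge |V|/8$ of ``rotation-reachable'' endpoints; for $u \in U^\star_v$ the edge $uv$ closes the $\mathcal{M}$-respecting $vu$-path $P_{v,u}$ into an $\mathcal{M}$-respecting cycle $C_{v,u}$ on $V(P)$, and each useful $e = wx$ then combines with $uv$ by breaking a non-matching edge of $C_{v,u}$ adjacent to $x$ (which exists since $\mathcal{M}$ is a matching) to yield a longer $\mathcal{M}$-respecting path.

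We then set $U_v = U^\star_v \cup (V \setminus (V(P) \cup V(\mathcal{M})))$, and in the regime where $|V(P)|$ is close to $|V|$, we enlarge $U^\star_v$ further by performing one additional $\mathcal{M}$-respecting rotation using an $H$-edge rather than an $H_0$-edge; the minimum-degree condition $d_H(u) \ge (\tfrac{1}{2} + \eps)d_{G[V]}(u)$ together with pseudorandomness forces this enlargement to reach almost all of $V(P) \setminus V(\mathcal{M})$, so $|U_v| \ge (\tfrac{1}{2} + \eps/8)|V|$ in all regimes, with the $o(|V|)$ loss from matching conflicts absorbed into the slack. Finally, since $|\mathcal{E}_v| \gg |U_v| \cdot 50/(\eps\delta)$ once $\delta$ is small and $C$ large relative to $\eps$ and $K$, we arbitrarily partition $\mathcal{E}_v$ into disjoint subsets $E_{v,u}$ of size $\ge 50/(\eps\delta)$. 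The hard part is the intermediate regime of $|V(P)|$, where neither the extension mechanism nor the pure $H_0$-rotation mechanism alone yields a $(\tfrac{1}{2} + \eps/8)|V|$-sized $U_v$; bridging the two via one $H$-edge-assisted rotation requires careful tracking of the $\mathcal{M}$-respecting condition, and here the smallness of $V(\mathcal{M})$ is essential. All pseudorandomness properties of $G(n,p)$ we invoke hold with probability $1 - o(n^{-3})$ by \Cref{lem:chernoff,lem:large-sets-pseudorandom-property-Gnp,lem:small-sets-cannot-contain-too-many-edges}.
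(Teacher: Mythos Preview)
Your sketch has two genuine gaps that prevent it from going through.

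\textbf{First, the booster-edge mechanism is wrong in the main case.} Your ``useful'' edges are extension edges $wx$ with $w\in V\setminus(V(P)\cup V(\mathcal{M}))$ and $x\in V(P)$. When $V(P)=V$ --- which is precisely the hard case, and the one that always occurs once the longest $\mathcal{M}$-respecting path is Hamiltonian but no Hamilton cycle exists --- there are \emph{no} such edges, so $\mathcal{E}_v=\emptyset$ and you cannot populate any $E_{v,u}$. The paper's proof uses a completely different source for the $E_{v,u}$: it fixes a small set $A\subseteq V(P)$ of rotation-reachable endpoints (size $\eps|V|/30$), and for each $u\in B=V\setminus A$ takes $X_u$ to be the set of pairs $\{a,b\}$ with $a\in A$, $b\in V(P)\setminus A$ such that rotating $P_a$ via the edge $ab$ yields an $\mathcal{M}$-respecting path ending at $u$. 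The crucial point is that each such pair lies in a \emph{unique} $X_u$, so the $X_u$ are disjoint and $\sum_u|X_u|\ge |A|(|V(P)|-2|A|-e(\mathcal{M}))$. One then sets $E_u=(X_u\setminus E(H_0))\cap E(H)$ and uses the $(\tfrac12+\eps)$-residual condition on $H$, applied to the vertices of $A$, together with a Chernoff argument showing that these $H$-edges cannot be concentrated in fewer than $(\tfrac12+\tfrac{\eps}{8})|V|$ of the $u$'s. Your ``one additional $H$-edge-assisted rotation'' paragraph gestures at something like this, but it is incompatible with your definition of $\mathcal{E}_v$: in the paper the rotation edge \emph{is} the booster edge $e$, whereas you are trying to assign extension edges that simply do not exist when $V(P)=V$.

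\textbf{Second, the union bound over $H_0$ is missing.} The conclusion must hold for \emph{every} $\mathcal{M}$-respecting $2$-expander $H_0\subseteq G[V]$ with at most $2\delta pn^2$ edges, and there are $\exp(\Theta(\delta\log(1/\delta)\,pn^2))$ such subgraphs --- far too many for a $1-o(n^{-3})$ probability per choice. Merely citing \Cref{lem:chernoff,lem:large-sets-pseudorandom-property-Gnp,lem:small-sets-cannot-contain-too-many-edges} does not address this. The paper's key device is that the sets $X_u,Y_u,Z_a$ and the events $F_1,F_2,F_3$ are defined purely in terms of the abstract pair $(H_0,\mathcal{M})$ and depend only on $E(G)\setminus E(H_0)$; hence the failure event $\overline{F(H_0,\mathcal{M},v)}$ is \emph{independent} of $\{H_0\subseteq G\}$. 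One can then bound the total failure probability by $\sum_{H_0}\Prob{\overline{F(H_0,\mathcal{M},v)}}\cdot\Prob{H_0\subseteq G}$ and control the sum $\sum_{H_0}\Prob{H_0\subseteq G}$ via \Cref{prop:sparse-containment-Gnp}, which is beaten by the $\exp(-\eps^5pn^2/10^6)$ bound on the first factor for $\delta$ small. Without this independence-plus-entropy argument, the statement is not established for all $H_0$ simultaneously.
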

\begin{proof}
Let $\eps, K >0$. Choose $\delta >0$ sufficiently small relative to $\eps$, and then choose $C$ sufficiently large relative to $\delta$, $\eps$, and $K$. We will assume $n$ to be sufficiently large with respect to all these constants to support our arguments.
Let $\cH$ be the set of all pairs $(H_0, \mathcal{M})$ such that $H_0$ is a graph with $V(H_0)\subseteq V(G)$, $|V(H_0)|\ge \eps n$, and $e(H_0)\le 2\delta p n^2$, $\mathcal{M}$ is a matching on $V(H_0)$ of size at most $\frac{K n}{\log{n}}$, and $H_0$ is an $\mathcal{M}$-respecting $2$-expander. Let $(H_0, \mathcal{M}) \in \cH$, and denote $V = V(H_0)$. By~\Cref{lem:posa-expander}, there is a set $W_0\subseteq V$ of at least $\frac{|V|}{8}$ vertices that appear at the end of a longest $\mathcal{M}$-respecting path in $H_0 + \mathcal{M}$.

For each $v \in W_0$, let $F(H_0, \mathcal{M}, v)$ be the event that, for every $(\frac{1}{2} + \eps)$-residual spanning subgraph $H$ of $G[V]$, there is some set $U \subseteq V$ with $|U| \geq (\frac{1}{2} + \frac{\eps}{8})|V|$, and disjoint subset  $E_u \subseteq E(H - H_0)$, for $u \in U$, so that $|E_u| \geq \frac{50}{\eps \delta}$ and, for each $u \in U$ and $e \in E_u$, the set $\{uv, e\}$ is an $\mathcal{M}$-respecting booster for $H_0$.

\begin{claim}\label{claim:prob-not-FHv}
    For every $(H_0, \mathcal{M}) \in \cH$ and every $v \in W_0$, we have $\Prob{\overline{F(H_0, \mathcal{M}, v)}} \leq \exp\left(-\frac{\eps^5 pn^2}{10^6}\right)$.
\end{claim}
\begin{claimproof}
Let $(H_0, \mathcal{M}) \in \cH$ and $v \in W_0$ be fixed. Denote $V=V(H_0)$. Let $P \subseteq H_0$ be a longest $\mathcal{M}$-respecting path in $H_0 + \mathcal{M}$ that has $v$ as an end-vertex. We apply \Cref{lem:posa-expander} to choose a set $A \subseteq V(P)$ of size $|A| = \frac{\eps |V|}{30}$ such that for every $a\in A$, there is an $\mathcal{M}$-respecting $va$-path $P_a$ in $H_0 + \mathcal{M}$ with vertex set $V(P)$. Let $B = V \setminus A$. 
    
For $u \in B \cap V(P)$, let $X_u$ be the set of pairs $\{a, b\}$ with $a \in A$ and $b \in V(P) \setminus A$ such that we can take $P_a$ and perform an $\mathcal{M}$-respecting rotation on $H_0 + ab$ with $v$ fixed using the edge $ab$, to get an $\mathcal{M}$-respecting path in $H_0 + \mathcal{M}$ with $u$ as an end-vertex. Given a pair $\{a, b\}$, if this pair belongs to a set $X_u$, then $u$ is the neighbour of $b$ closest to $a$ in $P_a$. Therefore, for every such pair, the vertex $u$ such that $\{a, b\} \in X_u$ is uniquely determined. Hence, the sets $X_u$ are disjoint and satisfy $|X_u|\le |A|$. For a pair $\{a, b\} \in X_u$, we can also see that the set $\{uv, ab\}$ is an $\mathcal{M}$-respecting booster for $H_0$. Indeed, after performing the above rotation in the graph $H_0 + ab$, we get an $\mathcal{M}$-respecting $vu$-path $P_u$ in $H_0 + \mathcal{M}$. Then, the graph $\mathcal{C} = P_u + uv$ is an $\mathcal{M}$-respecting cycle. If $\mathcal{C}$ is not a Hamilton cycle in $H_0 + \mathcal{M} + \{uv, ab\}$, there are vertices $w \in V \setminus V(\mathcal{C})$ and $x\in V(\mathcal{C})$ such that $xw\in E(H_0)$, since $H_0$ is connected. Given that $\mathcal{M}$ is a matching, one of the neighbours of $x$ in $\mathcal{C}$, say $y$, is such that $xy \not\in E(\mathcal{M})$. Then, the path $\mathcal{C} - xy + xw$ has length strictly larger than $P_a$. If this path is not $\mathcal{M}$-respecting, there is a vertex $z \not\in V(\mathcal{C}) \cup \{w\}$ with $wz \in E(\mathcal{M})$, and the path $\mathcal{C} - xy + xw + wz$ is $\mathcal{M}$-respecting and longer than $P_a$.

For each $a \in A$, there are at least $|V(P)| - |A| - e(\mathcal{M})$ values of $b \in V(P) \setminus A$ for which $P_a$ can be rotated using $ab$ to produce an $\mathcal{M}$-respecting path. Each such rotation will give a different endpoint, at most $|A|$ of which can be in $A$. Thus, there are at least $|V(P)| - 2|A| - e(\mathcal{M})$ vertices $u \in V(P) \setminus A = B \cap V(P)$ for which $a$ is a member of a set of $X_u$. Therefore, counting over all $a \in A$, we have 
\[
\left|\cup_{u \in B \cap V(P)} X_u\right| \geq |A|\left( |V(P)| - 2|A| - e(\mathcal{M}) \right).
\]
    
For each $u \in B \setminus V(P)$, let $X_u$ be the set of pairs $\{u, a\}$ with $a \in A$. Clearly, the sets $X_u$ are disjoint from each other, as well as from each set $X_u$, $u \in B \cap V(P)$. Also, for each $u \in B \setminus V(P)$, $|X_u| = |A|$. The set $\{ua\}$ is an $\mathcal{M}$-respecting booster for $H_0$. Indeed, if the path $P_a + ua$ is not $\mathcal{M}$-respecting, then there is a vertex $z \not\in V(P_a) \cup \{u\}$ such that $uz \in E(\mathcal{M})$, and the path $P_a + ua + uw$ is an $\mathcal{M}$-respecting path longer than $P_a$. Trivially, the set $\{uv, ua\}$ is also an $\mathcal{M}$-respecting booster for $H_0$. Hence,
\begin{equation}\label{eq:X-is-a-set-of-boosters}
    \text{for every~}e \in X_u \text{,~} u\in B \text{, the set~} \set{uv, e} \text{~is an $\mathcal{M}$-respecting booster for~} H_0.
\end{equation}
We also have the following for every $u \in B$,
\begin{equation}\label{eq:x_u-at-most-A}
    |X_u| \leq |A|.
\end{equation}
Given that $|\cup_{u \in B\setminus V(P)} X_u| = |A|\left( |V| - |V(P)| \right)$, we have
\begin{align}\label{eq:bound_X_u}
    |\cup_{u \in B} X_u| &\geq |A| \left( |V| - |V(P)| \right) + |A| \left( |V(P)| - 2|A| - e(\mathcal{M}) \right) \nonumber \\
    &= |A||V| - 2|A|^2 - |A|e(\mathcal{M}).
\end{align}
Note that since $\delta$ is sufficiently small relative to $\eps$, we have $\frac{\eps |A||V|}{14} \geq 2|A|^2 + 2\delta p n^2$, and therefore
\begin{equation}\label{eq:bound_simplify_AH}
    |A||V| - 2|A|^2 - 2\delta p n^2 \geq \left( 1 - \frac{\eps}{14} \right) |A||V|.
\end{equation}
For each $u \in B$, let $Y_u \coloneqq X_u \setminus E(H_0)$. Then, since $e(\mathcal{M}) \leq \frac{Kn}{\log n} \leq \frac{\eps |V|}{20}$, we have
\begin{equation}\label{eq:bound_Y_u}
    |\cup_{u \in B} Y_u| \stackrel{\eqref{eq:bound_X_u}}{\geq} |A| |V| - 2|A|^2 - |A|e(\mathcal{M}) - 2\delta p n^2 \stackrel{\eqref{eq:bound_simplify_AH}}{\geq} \left(1 - \frac{\eps}{8}\right)|A||V|.
\end{equation}
For each $a \in A$, let $Z_a$ be the members of $\cup_{u \in B} Y_u$ that contain $a$, and let $Z_A = \cup_{u \in A} Z_a$ so that $Z_A = \cup_{u \in B} Y_u$, and hence,
\begin{equation}\label{eq:size_ZA}
    |Z_A| = |\cup_{u \in B} Y_u| \stackrel{\eqref{eq:bound_Y_u}}{\geq} \left(1 - \frac{\eps}{8}\right) |A| |V|.
\end{equation}
We now define the following events:
\begin{enumerate}[label = {$F_{\arabic*}$:}, leftmargin= \widthof{S000000}]
    \item\label{item:f1}
      $\left|Z_A \cap E(G)\right| \geq \paren{1 - \frac{\eps}{4}} p |A||V|$.
    \item\label{item:f2}
      $\sum_{a \in A} d_{G[V] - H_0}(a) \leq \paren{1 + \frac{\eps}{8}} p
      |A||V|$.
    \item\label{item:f3} Every set $U \subseteq B$ of size $|U| \leq (\frac{1}{2} + \frac{\eps}{8}) |V|$ satisfies $|\cup_{u \in U} \left( Y_u \cap E(G) \right)| < \left(\frac{1}{2} + \frac{\eps}{4} \right) p |A| |V|$.
\end{enumerate}
We next bound the probability of the above events $F_1, F_2, F_3$. For $F_1$, note that, by~\eqref{eq:size_ZA}, we have $\Expect{|Z_A \cap E(G)|} \geq (1 - \frac{\eps}{8}) p |A||V|$. Then, by Chernoff's bound and the fact that $|A||V| \geq \frac{\eps^3 n^2}{30}$, we have
\begin{equation*}
    \Prob{\overline{F_1}} \leq \exp\paren{-\frac{\eps^5 p n^2}{10^5}}.
\end{equation*}
For $F_2$, given that $|A||V| \geq \frac{\eps^3 n^2}{30}$, and using \Cref{prop:weird-chernoff} we have
\begin{equation*}
    \Prob{\overline{F_2}} \leq \Prob{\sum_{a \in A} d_{G[V] - H_0}(a) > \paren{1 + \frac{\eps}{8}} p |A| |V|} \leq \exp\paren{-\frac{\eps^5 p n^2}{10^5}}.
\end{equation*}
For $F_3$, let $U \subseteq B$ with $|U| \leq \left(\frac{1}{2} + \frac{\eps}{8}\right)|V|$. By~\eqref{eq:x_u-at-most-A}, we have that $|\cup_{u \in U} Y_u| \leq |U| |A| \leq \paren{\frac{1}{2} + \frac{\eps}{8}} |A| |V|$.
Then, by Chernoff's bound and the fact that $|A||V| \geq \frac{\eps^3 n^2}{30}$, we have
\begin{equation*}
    \Prob{\left| \cup_{u \in U} \paren{Y_u \cap E(G)} \right| \geq \paren{\frac{1}{2} + \frac{\eps}{4}} p |A| |V|} \leq \exp\paren{-\frac{\eps^5 p n^2}{10^5}}.
\end{equation*}
Taking a union bound over all sets $U \subseteq B$, and using $pn^2\ge Cn$ where $C$ is sufficiently large, we obtain
\begin{equation*}
    \Prob{\overline{F_3}} \leq 2^n \cdot 2 \exp\paren{-\frac{\eps^5 p n^2}{10^5}} \leq \exp\paren{-\frac{\eps^5 pn^2}{2 \cdot 10^5}}.
\end{equation*}
Therefore, $\Prob{F_1 \land F_2 \land F_3} \geq 1 - \exp\left(-\frac{\eps^5 p n^2}{10^6}\right)$. Suppose now that the events $F_1$, $F_2$, and $F_3$ hold. We will show that $F(H_0, \mathcal{M}, v)$ also holds, which will complete the proof of the claim.

Let $H$ be a $(\frac{1}{2} + \eps)$-residual spanning subgraph of $G[V]$. For each $u \in B$, let $E_u = Y_u \cap E(H)$. By~\eqref{eq:X-is-a-set-of-boosters}, for each $e \in E_u$, the set $\{uv, e\}$ is an $\mathcal{M}$-respecting booster for $H_0$. Let $U \subseteq B$ be the set of vertices for which $|E_u| \geq \frac{50}{\eps\delta}$. We will prove that $|U| \geq (\frac{1}{2} + \frac{\eps}{8}) |V|$. For this, it is enough to show that $|\cup_{u \in U}(Y_u \cap E(G))| \geq (\frac{1}{2} + \frac{\eps}{4}) p |A| |V|$. Then, the lower bound for $|U|$ follows immediately given that $F_3$ holds. Since $\delta$ is sufficiently small relative to $\eps$, and then $C$ is sufficiently large relative to $\delta$, we have
\begin{align*}
    |\cup_{u \in U} \paren{Y_u \cap E(G)}| \geq& |\cup_{u \in B} \paren{Y_u \cap E(G)}| - |\cup_{u \in B \setminus U} \paren{Y_u\cap E(G - H)}| - \sum_{u \in B \setminus U} |E_u| \\
    \geq& |Z_A \cap E(G)| - |Z_A \cap E(G - H)| - \frac{50}{\eps\delta} |B \setminus U|\\
    \stackrel{F_1}{\geq}& \paren{1 - \frac{\eps}{4}} p |A| |V| - \sum_{a \in A} \paren{\frac{1}{2} - \eps} d_{G[V]}(a) - \frac{50|V|}{\eps \delta} \\
    \stackrel{F_2}{\geq}& \paren{1 - \frac{\eps}{4}} p |A| |V| - \paren{\frac{1}{2} - \eps} \paren{1 + \frac{\eps}{8}} p|A| |V| - 2e(H_0) - \frac{10^4 p |A| |V|}{\eps^3 \delta C} \\
    \geq& \paren{\frac{1}{2} - \frac{\eps}{4} + \eps - \frac{\eps}{8} - \frac{120 \delta}{\eps^3} - \frac{10^4}{\eps^3 \delta C}} p |A| |V| \\
    \geq& \paren{\frac{1}{2} + \frac{\eps}{4}} p |A| |V|.
\end{align*}
This finishes the proof of \Cref{claim:prob-not-FHv}.
\end{claimproof}
  
Note that the events $F(H_0, \mathcal{M}, v)$ and $H_0 \subseteq G$ are independent. Therefore, the probability that there exists a pair $(H_0, \mathcal{M}) \in \cH$ and a vertex $v \in W_0$ such that $F(H_0, \mathcal{M}, v)$ does not hold and $H_0 \subseteq G$ is at most
  
\begin{align*}
    \sum_{\left(H_0, \mathcal{M}\right) \in \cH} & \sum_{v \in W_0} \Prob{\overline{F(H_0, \mathcal{M}, v)}} \cdot \Prob{H_0\subseteq G} \\
    &\leq \exp\left(-\frac{\eps^5 pn^2}{10^6}\right) \cdot\sum_{\left(H_0, \mathcal{M}\right) \in \cH} \sum_{v \in W_0} \Prob{H_0 \subseteq G} &\text{(by \Cref{claim:prob-not-FHv})}\\
    &\leq \exp\left(-\frac{\eps^5 pn^2}{10^6}\right) n^{\frac{2Kn}{\log{n}}} \cdot n \cdot \sum_{H_0 \subseteq K_n, e(H_0) \leq 2\delta pn^2}  \Prob{H_0 \subseteq G} \\
    &\leq \exp\left(3Kn - \left(\frac{\eps^5}{10^6} - 2\delta \log\left(\frac{\e}{2\delta}\right) \right) pn^2 \right), &\text{(by \Cref{prop:sparse-containment-Gnp})}
\end{align*}
where we have naively bounded the number of matchings $\mathcal{M}$ of size at most $\frac{Kn}{\log n}$ by $n^{\frac{2Kn}{\log{n}}}$. Thus, since $\delta$ is sufficiently small relative to $\eps$, and $C$ is sufficiently large relative to $\eps$ and $K$, the probability that $F(H_0, \mathcal{M}, v)$ holds for all pairs $(H_0, \mathcal{M}) \in \cH$ with $H_0 \subseteq G$ and all vertices $v \in W_0$ is $1 - o(n^{-3})$, as desired. This completes the proof of \Cref{lem:find-linear-v-linear-u-many-e-boost}.
\end{proof}

The next two Lemmas correspond to Lemmas 3.4 and 3.7 of \cite{Montgomery2019resilient} respectively, and their proofs are almost identical -- the only distinction being that we rely on $\mathcal{M}$-respecting boosters, rather than classically defined boosters. We include the proofs for completeness. The following definition will be helpful.
\begin{definition}\label{def:M respecting booster}
  Given two edge-disjoint graphs $H_0$ and $H'$ with $V(H_0) = V(H') \eqqcolon V$ and a matching $\mathcal{M}$ on $V$, we say $H_0$ has \defin{$\eps$-many $\mathcal{M}$-respecting boosters with help from $H'$} if there are at least $\eps|V|$ vertices $v \in V$ for which there are at least $(\frac{1}{2} + \eps)|V|$ many vertices $u \in V \setminus \{v\}$ for which there exists an edge $e \in E(H_0) \cup E(H')$ such that $\{uv, e\}$ is an $\mathcal{M}$-respecting booster for $H_0$.
\end{definition}

\begin{lemma}
  \label{lem:find-sparse-helper-for-expander}
  For each $\eps, K > 0$ there exist $\delta, C > 0$ such that, if $p \geq \frac{C}{n}$, then $G \sim G(n, p)$ has the following property with probability $1 - o(n^{-3})$. For every set $V \subseteq V(G)$ of size $|V| \geq \eps n$, every matching $\mathcal{M}$ on $V$ of size at most $\frac{K n}{\log{n}}$, every spanning subgraph $H_0$ of $G[V]$ that is an $\mathcal{M}$-respecting $2$-expander with $e(H_0) \leq 2\delta pn^2$, and every $(\frac{1}{2} + \eps)$-residual spanning subgraph $H$ of $G[V]$, there is some graph $H' \subseteq H - H_0$ with $e(H') \leq 2\delta pn^2$ such that $H_0$ has $\paren{\frac{\eps}{16}}$-many $\mathcal{M}$-respecting boosters with help from $H'$.
\end{lemma}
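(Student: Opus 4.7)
The plan is to apply Lemma~\ref{lem:find-linear-v-linear-u-many-e-boost} and then extract the required sparse helper $H'$ by random edge sampling. First, choose constants $C, \delta$ so that Lemma~\ref{lem:find-linear-v-linear-u-many-e-boost} applies with parameters $\eps, K$, shrinking $\delta$ if needed so that Chernoff also delivers $e(G[V]) \leq 2p|V|^2$ for every $V \subseteq V(G)$ with $|V| \geq \eps n$; together these happen with probability $1 - o(n^{-3})$ over $G$. Working on this event, fix any valid tuple $(V, H_0, \mathcal{M}, H)$. Lemma~\ref{lem:find-linear-v-linear-u-many-e-boost} then provides a set $W \subseteq V$ with $|W| \geq |V|/8$ and, for each $v \in W$, a set $U_v \subseteq V$ of size at least $\left(\tfrac{1}{2} + \tfrac{\eps}{8}\right)|V|$ together with pairwise disjoint subsets $E_{v,u} \subseteq E(H - H_0)$, for $u \in U_v$, of size at least $\tfrac{50}{\eps\delta}$, such that $\{uv, e\}$ is an $\mathcal{M}$-respecting booster for $H_0$ whenever $e \in E_{v,u}$.

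The plan is then to let $H'$ be a random subgraph of $H - H_0$ in which each edge is included independently with probability $q \coloneqq \tfrac{\eps\delta}{50}\log(100/\eps)$, which is at most $\delta/2$ for $\eps$ bounded above. A Chernoff bound applied to $e(H - H_0) \leq e(G[V]) \leq 2pn^2$ then gives $e(H') \leq 2\delta pn^2$ with probability $1 - o(1)$. For each fixed $v \in W$ and $u \in U_v$, the event $\{E_{v,u} \cap E(H') = \emptyset\}$ has probability at most $(1 - q)^{50/(\eps\delta)} \leq \exp(-50q/(\eps\delta)) \leq \eps/100$, and disjointness of the $E_{v,u}$ across $u \in U_v$ makes these events mutually independent. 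Letting $Y_v$ count the $u \in U_v$ for which $E_{v,u}$ misses $H'$, we obtain $\Expect{Y_v} \leq \eps|V|/100$, and an additive Chernoff bound yields $\Prob{Y_v \geq \eps|V|/16} \leq \exp(-\Omega(\eps^2 |V|))$.

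A union bound over $v \in W$ then shows that with probability at least $1 - |V|\exp(-\Omega(\eps^2 |V|)) = 1 - o(1)$, every $v \in W$ retains at least $|U_v| - \eps|V|/16 \geq \left(\tfrac{1}{2} + \tfrac{\eps}{16}\right)|V|$ partners $u \in U_v$ with some edge $e \in E_{v,u} \cap E(H')$, each producing an $\mathcal{M}$-respecting booster $\{uv, e\}$ for $H_0$ with $e \in E(H_0) \cup E(H')$. Since $|W| \geq |V|/8 \geq \tfrac{\eps}{16}|V|$, this certifies that $H_0$ has $\tfrac{\eps}{16}$-many $\mathcal{M}$-respecting boosters with help from $H'$. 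Both the sparsity and the booster event then hold simultaneously with positive probability, so the required $H'$ exists. The main care point is the simultaneous tuning of $q$: small enough ($q \leq \delta$) to guarantee sparsity yet large enough ($q \gtrsim \eps\delta \log(1/\eps)$) to hit almost every $E_{v,u}$; this is compatible precisely because Lemma~\ref{lem:find-linear-v-linear-u-many-e-boost} delivers $|E_{v,u}| \geq 50/(\eps\delta)$, the appropriate inverse of $q$, after which the remaining work is routine Chernoff/union-bound bookkeeping.
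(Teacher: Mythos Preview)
Your proof is correct and follows essentially the same route as the paper: invoke Lemma~\ref{lem:find-linear-v-linear-u-many-e-boost}, then randomly sparsify $H - H_0$ and use Chernoff plus a union bound over $v \in W$ to retain enough partners $u$ with $E_{v,u}\cap E(H')\neq\emptyset$. The only cosmetic difference is that the paper samples each edge with probability exactly $\delta$ (which already yields $(1-\delta)^{50/(\eps\delta)}\le e^{-50/\eps}\le \eps/64$), avoiding the more elaborate choice of $q$.
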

\begin{proof}
Let $\delta, C > 0$ be constants for which \Cref{lem:find-linear-v-linear-u-many-e-boost} holds with $\eps, K$. Then with probability $1 - o(n^{-3})$, $G$ has the property from \Cref{lem:find-linear-v-linear-u-many-e-boost} and we have $e(G)\le pn^2$ by an easy application of Chernoff's bound. Let $V$, $\mathcal{M}$, $H_0$, and $H$ be as in the statemetn of \Cref{lem:find-sparse-helper-for-expander}. By the conclusion of \Cref{lem:find-linear-v-linear-u-many-e-boost}, we find a set $W \subseteq V$ of $\frac{|V|}{8}$ vertices $v$ for which there is some set $U_v \subseteq V$ of size $|U_v| \geq \paren{\frac{1}{2} + \frac{\eps}{8}}|V|$ and disjoint subsets $E_{uv} \subseteq E(H - H_0)$, for $u \in U_v$, of size $|E_{uv}| \geq \frac{50}{\eps \delta}$ so that, for each $u \in U_v$ and $e \in E_{uv}$, $\set{uv, e}$ is an $\mathcal{M}$-respecting booster for $H_0$.

Now, consider a random subgraph $H'$ of $H - H_0$ by retaining each edge independently with probability~$\delta$. By Chernoff's bound, \whp we have $e(H') \leq 2\delta pn^2$. For every $v \in W$, let $U_v' \subseteq U_v$ be such that for every $u\in U'_v$, we have $E_{uv} \cap E(H') \neq \emptyset$, so that $\Prob{u \not\in U_v'} \leq \paren{1 - \delta}^{50 /\eps\delta} \leq \exp{\paren{-\frac{50}{\eps}}} \leq \frac{\eps}{64}$. Since the sets $E_{uv}$, for $u \in U_v$, are disjoint, the events that $u \not\in U_v'$, for $u \in U_v$, are independent. Thus, by another application of Chernoff's bound and using the fact that $|U_v| \geq \paren{\frac{1}{2} + \frac{\eps}{8}}|V|$, we have
\begin{equation*}
    \mathbb{P}\left(|U_v'| \geq \paren{\frac{1}{2} + \frac{\eps}{16}}|V|\right) \geq \mathbb{P}\left(|U_v'| \geq \paren{1 - \frac{\eps}{32}} |U_v|\right) = 1 - o(n^{-1}).
\end{equation*}
Hence, taking a union bound over vertices $v\in W$, \whp some graph $H' \subseteq H - H_0$ exists such that $e(H') \leq 2\delta pn^2$ and $|U'_v| \geq \paren{\frac{1}{2} + \frac{\eps}{16}} |V|$ for every $v \in W$. This finishes the proof of \Cref{lem:find-sparse-helper-for-expander}.
\end{proof}

\begin{lemma}
  \label{lem:many-boosters-in-residual}
  For every $\eps, K > 0$, there exist $\delta, C > 0$ such that, if $p \geq \frac{C}{n}$, then $G \sim G(n, p)$ has the following property with probability $1 - o(n^{-3})$. For every set $V \subseteq V(G)$ of size $|V|\ge \eps n$, every matching $\mathcal{M}$ on $V$ of size at most $\frac{K n}{\log{n}}$, every pair $H_0, H'$ of edge-disjoint spanning subgraphs of $G[V]$ such that $e(H_0), e(H') \leq 2\delta pn^2$ and $H_0$ has $\eps$-many $\mathcal{M}$-respecting boosters with help from $H'$, and every $\frac{1}{2}$-residual spanning subgraph $H$ of $G[V]$, there exist edges $e_1, e_2 \in E(H) \cup E(H')$ such that $\set{e_1, e_2}$ is an $\mathcal{M}$-respecting booster for $H_0$.
\end{lemma}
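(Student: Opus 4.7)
The plan is to exploit the given booster structure to reduce the task to finding a single pair $(v,u)$ with $v\in W$, $u\in U_v$, and $uv\in E(H)\cup E(H')$, where $W\subseteq V$ with $|W|\ge\eps|V|$ and $U_v\subseteq V\setminus\{v\}$ with $|U_v|\ge(\tfrac12+\eps)|V|$ are the sets guaranteed by \Cref{def:M respecting booster}, each $u\in U_v$ coming with an edge-witness $e_{v,u}\in E(H_0)\cup E(H')$ such that $\{uv,e_{v,u}\}$ is an $\mathcal{M}$-respecting booster for $H_0$. Granting such a pair, if $e_{v,u}\in E(H')$ we set $e_1=uv$, $e_2=e_{v,u}$; if instead $e_{v,u}\in E(H_0)$, then $H_0+\mathcal{M}+\{uv,e_{v,u}\}=H_0+\mathcal{M}+\{uv\}$, so $\{uv\}$ alone is already a booster and we may take $e_1=e_2=uv$. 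In either case, $\{e_1,e_2\}\subseteq E(H)\cup E(H')$ is the desired booster.

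To produce such a pair, I argue by contradiction. If no such pair exists, then for every $v\in W$ and every $u\in U_v$ we have $uv\notin E(H)\cup E(H')$; in particular every $G$-neighbour of $v$ in $U_v$ must come via an edge of $G[V]\setminus(H\cup H')\subseteq G[V]\setminus H$. Since $H$ is $\tfrac12$-residual, $d_{G[V]\setminus H}(v)\le\tfrac12\, d_{G[V]}(v)$, and hence
\[
  d_{G[V]}(v,U_v)\le \tfrac12\, d_{G[V]}(v)\qquad\text{for all }v\in W.
\]
Summing over $W$ gives $\sum_{v\in W}d_{G[V]}(v,U_v)\le \tfrac12\, e_G(W,V)$. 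On the other hand, with high probability over $G$, \Cref{lem:large-sets-pseudorandom-property-Gnp} yields $e_G(W,V)\le(1+\delta)p|W||V|$ (since $|W||V|\ge\eps^3 n^2$), while applying a Chernoff-type bound (\Cref{prop:weird-chernoff}, to accommodate arcs $(v,u)$ and $(u,v)$ sharing the same underlying edge) to the arcs in $\{(v,u):v\in W,\, u\in U_v\}$ gives
\[
\sum_{v\in W}d_{G[V]}(v,U_v)\ge(1-\tfrac{\eps}{10})\,p\sum_{v\in W}|U_v|\ge(1-\tfrac{\eps}{10})(\tfrac12+\eps)\,p|W||V|.
\]
For $\delta$ sufficiently small relative to $\eps$, these two bounds are incompatible, yielding the required pair.

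The main obstacle is making this concentration hold uniformly over all valid configurations $(V,\mathcal{M},H_0,H')$, since the arc set depends on the full configuration. I would handle this via union bound: at most $2^n$ choices of $V$; at most $\exp(O(Kn))$ matchings $\mathcal{M}$ of size $\le Kn/\log n$; and, by \Cref{prop:sparse-containment-Gnp}, the sum of $\Prob{H_0\cup H'\subseteq G}$ over edge-disjoint pairs with $e(H_0),e(H')\le 2\delta pn^2$ is bounded by $\exp(O(\delta\log(1/\delta))\,pn^2)$. Conditioned on $H_0\cup H'\subseteq G$, the arc set has size at least $\eps^3 n^2/2$, so Chernoff's bound gives failure probability at most $\exp(-\Omega(\eps^5 pn^2))$. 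Choosing $\delta$ small enough relative to $\eps$ absorbs the $\delta\log(1/\delta)\,pn^2$ factor into the exponent, and choosing $C$ large enough relative to $\eps$ and $K$ makes the total at most $o(n^{-3})$, as required.
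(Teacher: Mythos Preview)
Your proposal is correct and follows essentially the same approach as the paper: both apply \Cref{prop:weird-chernoff} to the booster arc set $\{(v,u):v\in W,\,u\in U_v\}$, combine it with the pseudorandomness bound on $e_G(W,V)$ from \Cref{lem:large-sets-pseudorandom-property-Gnp}, and absorb the union bound over all sparse pairs $(H_0,H')$ via \Cref{prop:sparse-containment-Gnp}. The only stylistic differences are that you phrase the argument as a contradiction and work with $d_{G[V]}(v,U_v)$ after conditioning on $H_0\cup H'\subseteq G$, whereas the paper works directly with $d_{G-H_0-H'}(x,V_x)$ so that independence is immediate; also, you keep track of the case $e_{v,u}\in E(H_0)$ explicitly (taking $e_1=e_2=uv$), while the paper's formulation of $V_x$ restricts to witnesses in $E(H')$.
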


\begin{proof}
Choose constants $\delta, C > 0$ such that $\delta$ is sufficiently small relative to $\eps$, and $C$ is sufficiently large relative to $\eps$ and $K$. Let $\cH$ be the set of triples $\paren{H_0, H', \mathcal{M}}$ where $H_0$ and $H'$ are edge-disjoint graphs such that $V(H_0) = V(H') \subseteq V(G)$, $|V(H_0)| \geq \eps n$, $e(H_0), e(H') \leq 2\delta pn^2$, $\mathcal{M}$ is a matching on $V(H_0)$ of size at most $\frac{K n}{\log{n}}$, and $H_0$ has $\eps$-many $\mathcal{M}$-respecting boosters with help from $H'$.

For a triple $\paren{H_0, H',\mathcal{M}} \in \cH$, and a vertex $x \in V(H_0)$, let $V \coloneqq V(H_0)$, and let $V_x\subseteq V \setminus \set{x}$ be the set of vertices $v$ for which there exists an edge $e \in E(H')$ so that $\set{xv, e}$ is an $\mathcal{M}$-respecting booster for $H'$. Let $X = \set{x \in V : |V_x| \geq \paren{\frac{1}{2} + \eps} |V|}$. By our choice of $H_0$ and $H'$, and using \Cref{def:M respecting booster}, we have $|X| \geq \eps |V| \geq \eps^2 n$. Let $G \sim G(n, p)$ with $p \geq \frac{C}{n}$, and let $F(H_0, H',\mathcal{M})$ be the event that ${\sum_{x \in X} d_{G - H_0 - H'}(x, V_x) \geq (\frac{1}{2} + \frac{\eps}{4}) p |X| |V|}$. Since $\delta$ is sufficiently small relative to $\eps$, we have
\begin{align*}
    \Expect\paren{\sum_{x \in X} d_{G - H_0 - H'}(x, V_x)} 
    &\geq \paren{\frac{1}{2} + \eps} p |X||V| - 2e(H_0) - 2e(H_1) \\
    &\geq \paren{\frac{1}{2} + \frac{\eps}{2}} p |X| |V| \geq \frac{\eps^3 p n^2}{2}.
\end{align*}
Then, by \Cref{prop:weird-chernoff}, we have
\begin{equation*}
    \Prob{\overline{F\paren{H_0, H',\mathcal{M}}}} \leq 4 \exp\paren{-\paren{\frac{\eps}{4}}^2 \cdot \frac{\eps^3 p n^2}{18}} \leq \exp\paren{-\frac{\eps^5 p n^2}{10^3}}.
\end{equation*}
Observe that, since the graphs $H_0$ and $H'$ are edge-disjoint, the three events $F(H_0, H',\mathcal{M})$, $H_0 \subseteq G$, and $H' \subseteq G$ are independent. Then,
\begin{align*}
    \sum_{\mathclap{\paren{H_0, H',\mathcal{M}} \in \cH}} \Prob{\overline{F\paren{H_0, H',\mathcal{M}}} \land \set{H_0, H'\subseteq G}} 
    &\leq \sum_{\mathclap{\paren{H_0, H',\mathcal{M}} \in \cH}} \Prob{\overline{F\paren{H_0, H',\mathcal{M}}}} \Prob{H_0\subseteq G} \Prob{H' \subseteq G} \\
    &\leq n^{\frac{2Kn}{\log{n}}}\exp\paren{-\frac{\eps^5 p n^2}{10^3}} \paren{\quad\quad \underset{\mathclap{\substack{H_0 \subseteq K_n
    \\ e(H_0) \leq 2\delta p n^2}}}{\sum} \ \Prob{H_0 \subseteq G}}^2 \\
    &\leq \exp\paren{2Kn - \paren{\frac{\eps^5}{10^3} - 4\delta \log\paren{\frac{\e}{2\delta}}} p n^2},
\end{align*}
where we have bounded the number of matchings $\mathcal{M}$ of size at most $\frac{Kn}{\log{n}}$ by $n^{\frac{2Kn}{\log{n}}}$ and used \Cref{prop:sparse-containment-Gnp}. Hence, since $\delta$ is sufficiently small relative to $\eps$, and $C$ is sufficiently large with respect to $\eps$ and $K$, with probability $1 - o(n^{-3})$ the event $F(H_0, H',\mathcal{M})$ holds for every $\paren{H_0, H',\mathcal{M}} \in \cH$ with $H_0, H' \subseteq G$. By \Cref{lem:large-sets-pseudorandom-property-Gnp}, the graph $G$ satisfies the following with with probability $1-o(n^{-3})$. 
\begin{equation}\label{eq:property to prove 7.12}
  \text{If $X, U \subseteq V(G)$ are sets of size $|X| \geq \eps^2 n$ and $|U| \geq \eps n$, then $e_G(X, U) \leq \paren{1 + \frac{\eps}{3}} p |X| |U|$.} 
\end{equation}
Thus, to finish our proof, it is enough to establish the conclusion of \Cref{lem:many-boosters-in-residual} assuming the two aforementioned properties that hold with probability $1-o(n^{-3})$.

To this end, assume \eqref{eq:property to prove 7.12} holds and fix a triple $\paren{H_0, H',\mathcal{M}} \in \cH$ with $H_0, H' \subseteq G$, and assume that $F(H_0, H',\mathcal{M})$ holds. We will use the above notation for one such fixed pair. Then, we have
\begin{equation*}
    \sum_{x \in X} \paren{d_{G - H_0 - H'}\paren{x, V_x} - \frac{d_{G[V]}(x)}{2}} \geq \paren{\frac{1}{2} + \frac{\eps}{4}} p|X||V| - \frac{e_G(X, V)}{2} \stackrel{\eqref{eq:property to prove 7.12}}{>} 0.
\end{equation*}
Then, there exists a vertex $x \in X$ with $d_G(x, V_x) > \frac{d_{G[V]}(x)}{2}$. Thus, for any $\frac{1}{2}$-residual subgraph $H \subseteq G$ on $V$, we have $d_H(x, V_x) \geq d_G(x, V_x) - \frac{d_{G[V]}(x)}{2} > 0$, which implies that $x$ has a neighbour $v \in V_x$ in the graph $H$. Therefore, by the definition of the set $V_x$, there is an edge $e \in E(H')$ such that $\set{e, xv}$ is an $\mathcal{M}$-respecting booster for $H_0$. This holds for each $(H_0, H', \mathcal{M}) \in \cH$ with $H_0, H' \subseteq G$ and each $\frac{1}{2}$-residual subgraph $H \subseteq G$ on $V(H_0)$. This finishes the proof of \Cref{lem:many-boosters-in-residual}.
\end{proof}

We are now ready to prove \Cref{thm:join_paths_to_cycle_main}.

\begin{proof}[Proof of \Cref{thm:join_paths_to_cycle_main}]
Let $\delta, C > 0$ be such that \Cref{lem:find-sparse-helper-for-expander} holds for $\eps, K$, \Cref{lem:many-boosters-in-residual} holds for $\frac{\eps}{16}, K$, and such that $C \delta \geq 2$. Let $p \geq \frac{C}{n}$, and let $G \sim G(n, p)$. Then, with probability $1-o(n^{-3})$, the graph $G$ has the property in \Cref{lem:find-sparse-helper-for-expander} for $\eps, K$ and the property in \Cref{lem:many-boosters-in-residual} for $\frac{\eps}{16},K$. Let $V \subseteq V(G)$ be a vertex set of size $|V| \geq \eps n$, and let $H$ be a $(\frac{1}{2} + \eps)$-residual spanning subgraph of $G[V]$. Let $\mathcal{M}$ be a matching on $V$ of size at most $\frac{K n}{\log{n}}$ such that $H$ contains a spanning $\mathcal{M}$-respecting $2$-expander $H_0$ with $e(H_0) \leq \delta p n^2$.

For each $i = 1,\ldots,n$, we inductively find $e_{i, 1}, e_{i, 2} \in E(H)$ such that $\set{e_{i, 1}, e_{i, 2}}$ is an $\mathcal{M}$-respecting booster for $H_{i - 1}$, and we let $H_i \coloneqq H_{i - 1} + e_{i, 1} + e_{i, 2}$. Indeed, given that $e(H_{i - 1}) \leq \delta p n^2 + 2n \leq 2\delta p n^2$ (since $C\delta \ge 2$), by the property from \Cref{lem:find-sparse-helper-for-expander}, there is some graph $H' \subseteq H - H_{i - 1}$ with $e(H') \leq 2\delta pn^2$ so that $H_{i - 1}$ has $\paren{\frac{\eps}{16}}$-many $\mathcal{M}$-respecting boosters with help from $H'$. Then, by \Cref{lem:many-boosters-in-residual}, there are $e_{i, 1}, e_{i, 2} \in E(H)$ such that $\set{e_{i, 1}, e_{i, 2}}$ is an $\mathcal{M}$-respecting booster for $H_{i - 1}$, as desired.

Therefore, we have added $n$ many $\mathcal{M}$-respecting boosters to $H_0$ to obtain $H_n \subseteq H$, which contains an $\mathcal{M}$-respecting Hamilton cycle, that is, a Hamilton cycle in $H + \mathcal{M}$ that contains all edges in $\mathcal{M}$.
\end{proof}

We now combine \Cref{thm:expander,thm:join_paths_to_cycle_main} to prove \Cref{thm:main result of 7th section}, which will be readily useful in the next section to prove \Cref{thm:main,thm:hitting-time-main}.
\begin{proof}[Proof of \Cref{thm:main result of 7th section}]
Let $\eps, K >0$. Choose $\delta, C >0$ such that \Cref{thm:join_paths_to_cycle_main} holds. Let ${\eps'=\min\set{\eps, \delta}}$, and let $C'$ be such that \Cref{thm:expander} holds with $(\eps, K) = (\eps', K)$. We will assume $n$ to be sufficiently large relative to all these constants to support our arguments, and let $p\ge \frac{\log n}{2n}$. Then, the random graph $G\sim G(n,p)$ satisfies the following with probability $1-o(n^{-3})$. Let $V$, $H$, and $\mathcal{M}$ be as in the assumptions of \Cref{thm:main result of 7th section}. Since $\eps'\le \eps, \delta$, by the conclusion of \Cref{thm:expander}, the graph $H$ contains a spanning $\mathcal{M}$-respecting $2$-expander with at most $\delta pn^2$ edges. Now, this, together with the assumptions on $V$, $H$, and $\mathcal{M}$, satisfies the assumptions of \Cref{thm:join_paths_to_cycle_main}, noting that $e(\mathcal{M}) = \frac{|V(\mathcal{M})|}{2} \le \frac{K}{2p} \le \frac{Kn}{\log n}$. Thus, the conclusion of \Cref{thm:join_paths_to_cycle_main} holds, i.e., there is a Hamilton cycle in $H + \mathcal{M}$ that contains all edges of $\mathcal{M}$, as desired.
\end{proof}

\bigskip \section{Wrapping up the proof of the main results}
\label{sec:wrap up}

We can now use all the components from previous sections to prove our main results. 
\main*
\begin{proof}[Proof of~\Cref{thm:main}]
Let $r \geq 2$, $\alpha \in [\frac{1}{2} + \frac{1}{2r}, 1]$. Without loss of generality, we assume that $\eps \in (0,1]$. Let $\eps_0 = \frac{\eps}{2}$, and we choose a constant $C$ sufficiently large relative to $\eps_0$. Choose constants $\eps_1, K >0$ such that $\eps_1$ is sufficiently small relative to $\eps_0$ and $C$, and $K$ is sufficiently large with respect to $\eps_1$. We will assume $n$ to be sufficiently large with respect to all these constants to support our arguments. Let $p \geq \frac{\log{n} + \log{\log{n}} + \omega\paren{1}}{n}$, and $G \sim G(n, p)$. Then, \whp the graph $G$ satisfies the conclusions of the following statements: (i)~\Cref{lem:lower_bound_degrees}, (ii)~\Cref{cor:small_deg_vertices}, (iii)~\Cref{cor:random graph large monochromatic linear forest} with $(\eps, C) = (\eps_0, C)$, (iv)~\Cref{thm:clean-up_main} with $(\eps, C, \delta) = (\eps_0, C, \eps_1)$, and (v)~\Cref{thm:main result of 7th section} with $(\eps, K) = (\eps_1, K)$. In the remainder of the proof, we will assume these \whp properties of $G$ to establish the conclusion of \Cref{thm:main}. Doing so is clearly sufficient to prove \Cref{thm:main}.
To this end, let $H$ be an $\alpha$-residual spanning subgraph of $G$ whose edges are coloured with $r$ colours. 
Denote $k\coloneqq \min\set{(2\alpha - 1)n, \frac{2\alpha n}{r}, \frac{2n}{r+1}}$.

By the conclusion of \Cref{cor:random graph large monochromatic linear forest} with $(\eps, C) = (\eps_0, C)$, we obtain a monochromatic linear forest $\cP$ in $H$ of size $(1 - \eps_0)\min\left\{ \frac{2\alpha n}{r}, \frac{2n}{r+1} \right\}$ consisting of at most $C$ paths. Since $\min\left\{ \frac{2\alpha n}{r}, \frac{2n}{r+1} \right\}\geq k$, by possibly removing some edges from $\cP$, we may (and do) assume that the monochromatic linear forest $\cP$ spans a set $S\subseteq V(H)$ of size $|S| = (1 - \eps_0) k$ and $\cP$ still has at most $C$ paths. Thus, $e(\cP)\ge (1 - \eps_0) k - C$.

We now check the hypotheses of \Cref{thm:clean-up_main} about $H$, namely \ref{item:mindeg2} and \ref{item:property low_deg_vertices_far_apart}. To see \ref{item:mindeg2}, we use the conclusion of \Cref{lem:lower_bound_degrees} to deduce that every vertex $v\in V(H)$ satisfies $d_H(v) \geq \alpha d_G(v) \geq 2\alpha > 1$. To see \ref{item:property low_deg_vertices_far_apart}, first note that every vertex $v\in V(H)$ with $d_H(v)\le \frac{np}{5000}$ satisfies $d_G(v)\le \frac{d_H(v)}{\alpha}\le \frac{np}{2500}$. This observation, together with the conclusion of \Cref{cor:small_deg_vertices}, implies the validity of \ref{item:property low_deg_vertices_far_apart}.

Now, by the conclusion of \Cref{thm:clean-up_main} with $(\eps, C, \delta) = (\eps_0, C, \eps_1)$, we obtain a linear forest $\mathcal{P}^*$ covering a set $T \supseteq S$ such that, by letting $U = V(H) \setminus T$, the following hold.
\begin{enumerate}[leftmargin=*,label = {\bfseries E\arabic{enumi}}]
  \item $d_{H[U]}(v) \geq \max\set{\frac{p|U|}{5000}, \paren{\frac{1}{2} + \eps_1}d_{G[U]}(v)}$ for all $v \in U$.
  \label{item:E1}
  \item $|U| \geq n - |S| - \frac{300r}{\eps_1^2 p} \geq \eps_1 n$.
  \label{item:E2}
  \item All but at most $\frac{2000r}{\eps_1^2 p} \le \eps_1 n$ edges of $\mathcal{P}$ are included in $\mathcal{P}^*$.
  \label{item:E3}
  \item There are at most $\frac{2000r}{\eps_1^2 p}\le \frac{K}{2p}$ paths in $\mathcal{P}^*$.
  \label{item:E4}
  \item All paths in $\mathcal{P}^*$ have at least $3$ vertices, with endpoints lying in $U$ and all other vertices lying in $T$.
  \label{item:E5}
  \item If $W$ is the set of endpoints of the paths in $\mathcal{P}^*$, then $d_H(u, W) \leq (np)^{1 / 5}$ for all $u \in U$.
  \label{item:E6}
\end{enumerate}
  
We now define a matching $\mathcal{M}$ on $U$ by adding an edge between two vertices in $U$ if they are endpoints of a path in $\mathcal{P}^*$. Observe that, since \ref{item:E5} holds and $\mathcal{P}^*$ covers $T$, if we find a Hamilton cycle in the graph $H[U] + \mathcal{M}$ that contains all edges of $\mathcal{M}$, we can then replace each edge of the cycle that is also an edge of $\mathcal{M}$ by its corresponding monochromatic path in $\mathcal{P}^*$ to obtain a Hamilton cycle in $H$ containing all the paths in $\mathcal{P}^*$. Then, by \ref{item:E3}, all but at most $\eps_1 n$ edges of $\mathcal{P}$ are included in this Hamilton cycle, implying that the number of edges of $\mathcal{P}$ in the cycle is at least $e(\cP) - \eps_1 n \geq (1 - \eps_0)k - \eps_1 n \ge (1 - \eps) k$, and all of these edges are of the same colour. Therefore, to complete the proof of \Cref{thm:main}, it is enough to find a Hamilton cycle in the graph $H[U] + \mathcal{M}$ that contains all edges in $\mathcal{M}$.

We now wish to apply the conclusion of \Cref{thm:main result of 7th section} with $(\eps, K) = (\eps_1, K)$ on the set $U$ in place of~$V$ and the graph $H[U]$ in place of $H$. The required hypotheses follows from \ref{item:E1}, \ref{item:E2}, \ref{item:E4} with the fact that $|V(\mathcal{M})|$ is at most twice the number of paths in $\mathcal{P}^*$, and finally \ref{item:E6} with the fact that $W=V(\mathcal{M})$. Thus, the conclusion of \Cref{thm:main result of 7th section} obtains a Hamilton cycle in the graph $H[U] + \mathcal{M}$ that contains all edges of~$\mathcal{M}$, as desired. This completes the proof of \Cref{thm:main}.
\end{proof}

We now provide a proof of \Cref{thm:hitting-time-main}, which is very similar to the above proof of \Cref{thm:main}. 
\hittingtime*
\begin{proof}[Proof of \Cref{thm:hitting-time-main}]
Let $r \geq 2$, $\alpha \in [\frac{1}{2} + \frac{1}{2r}, 1]$. Without loss of generality, we assume that $\eps \in (0,1]$. Let $\eps_0 = \frac{\eps}{2}$, and we choose a constant $C$ sufficiently large relative to $\eps_0$. Choose constants $\eps_1, K >0$ such that $\eps_1$ is sufficiently small relative to $\eps_0$ and $C$, and $K$ is sufficiently large with respect to $\eps_1$. We will assume $n$ to be sufficiently large with respect to all these constants to support our arguments. By \Cref{lem:few_edges_GnM_degree_less_2}, \whp the random graph process $\set{G_{n,M}}$ satisfies: (i) for every $M < \frac{19n \log n}{40}$, we have $\delta(G_{n, M}) < 2$. Thus, it suffices to prove \Cref{thm:hitting-time-main} only in the range $M\ge \frac{19n \log n}{40}$. By \Cref{lem:small_deg_vertex_separation_in_G_{n,M}}, \whp the random graph process $\{G_{n,M}\}$ satisfies: (ii) for every $M \ge \frac{19n\log{n}}{40}$, no two vertices with degree at most $\frac{M}{1000n}$ in $G_{n,M}$ are within distance $5$ of each other.

Now, by \Cref{lemma:model-switching}, for every $M\ge \frac{19n\log n}{40}$, the graph $G\sim G_{n,M}$ satisfies, with probability $1-o(n^{-2})$, the conclusions of the following statements with $p= M/\binom{n}{2}$: (iii)~\Cref{cor:random graph large monochromatic linear forest} with $(\eps, C) = (\eps_0, C)$, (iv)~\Cref{thm:clean-up_main} with $(\eps, C, \delta) = (\eps_0, C, \eps_1)$, and (v)~\Cref{thm:main result of 7th section} with $(\eps, K) = (\eps_1, K)$. If for every $M\ge \frac{19n\log n}{40}$, we can establish the conclusion of \Cref{thm:hitting-time-main} using the conclusions of (ii)--(v), then the proof of \Cref{thm:hitting-time-main} follows from a simple union bound over all choices of $M\ge \frac{19n\log n}{40}$. (Here, the error probability $o(n^{-2})$ for each $M$ is crucial.) Thus, in the remainder of the proof, we focus on showing the conclusion of \Cref{thm:hitting-time-main} assuming the conclusions of (ii)--(v) for every $M\ge \frac{19n\log n}{40}$.
To do this, we fix $M\ge \frac{19n\log n}{40}$ and $G\sim G_{n,M}$ and $p=M/\binom{n}{2}$. We assume $\delta(G)\ge 2$ and let $H$ be an $\alpha$-residual spanning subgraph of $G$ whose edges are coloured with $r$ colours. 
Denote $k\coloneqq \min\set{(2\alpha - 1)n, \frac{2\alpha n}{r}, \frac{2n}{r+1}}$.

As before, by the conclusion of \Cref{cor:random graph large monochromatic linear forest} with $(\eps, C) = (\eps_0, C)$, we obtain a monochromatic linear forest $\cP$ in $H$ of size $(1 - \eps_0)\min\left\{ \frac{2\alpha n}{r}, \frac{2n}{r+1} \right\}$ consisting of at most $C$ paths. Since $\min\left\{ \frac{2\alpha n}{r}, \frac{2n}{r+1} \right\}\geq k$, by possibly removing some edges from $\cP$, we may (and do) assume that the monochromatic linear forest $\cP$ spans a set $S\subseteq V(H)$ of size $|S| = (1 - \eps_0) k$ and $\cP$ still has at most $C$ paths. Thus, $e(\cP)\ge (1 - \eps_0) k - C$.

We now check the hypotheses of \Cref{thm:clean-up_main} about $H$, namely \ref{item:mindeg2} and \ref{item:property low_deg_vertices_far_apart}. To see \ref{item:mindeg2}, we use our assumption that $\delta(G)\ge 2$ to deduce that every vertex $v\in V(H)$ satisfies $d_H(v) \geq \alpha d_G(v) \geq 2\alpha > 1$. To see \ref{item:property low_deg_vertices_far_apart}, first note that every vertex $v\in V(H)$ with $d_H(v)\le \frac{np}{5000}$ satisfies $d_G(v)\le \frac{d_H(v)}{\alpha}\le \frac{np}{2500}\le \frac{M}{1000n}$. This observation, together with the assumption of (ii), implies the validity of \ref{item:property low_deg_vertices_far_apart}.

While the remainder of the proof is identical to that of \Cref{thm:main}, we include the details again for completeness. By the conclusion of \Cref{thm:clean-up_main} with $(\eps, C, \delta) = (\eps_0, C, \eps_1)$, we obtain a linear forest $\mathcal{P}^*$ covering a set $T \supseteq S$ such that, by letting $U = V(H) \setminus T$, the properties in \ref{item:E1}--\ref{item:E6} hold.
  
We now define a matching $\mathcal{M}$ on $U$ by adding an edge between two vertices in $U$ if they are endpoints of a path in $\mathcal{P}^*$. Observe that, since \ref{item:E5} holds and $\mathcal{P}^*$ covers $T$, if we find a Hamilton cycle in the graph $H[U] + \mathcal{M}$ that contains all edges of $\mathcal{M}$, we can then replace each edge of the cycle that is also an edge of $\mathcal{M}$ by its corresponding monochromatic path in $\mathcal{P}^*$ to obtain a Hamilton cycle in $H$ containing all the paths in $\mathcal{P}^*$. Then, by \ref{item:E3}, all but at most $\eps_1 n$ edges of $\mathcal{P}$ are included in this Hamilton cycle, implying that the number of edges of $\mathcal{P}$ in the cycle is at least $e(\cP) - \eps_1 n \geq (1 - \eps_0)k - \eps_1 n \ge (1 - \eps) k$, and all of these edges are of the same colour. Therefore, it is enough to find a Hamilton cycle in the graph $H[U] + \mathcal{M}$ that contains all edges in $\mathcal{M}$.

We now wish to apply the conclusion of \Cref{thm:main result of 7th section} with $(\eps, K) = (\eps_1, K)$ on the set $U$ in place of~$V$ and the graph $H[U]$ in place of $H$. The required hypotheses follows from \ref{item:E1}, \ref{item:E2}, \ref{item:E4} with the fact that $|V(\mathcal{M})|$ is at most twice the number of paths in $\mathcal{P}^*$, and finally \ref{item:E6} with the fact that $W=V(\mathcal{M})$. Thus, the conclusion of \Cref{thm:main result of 7th section} obtains a Hamilton cycle in the graph $H[U] + \mathcal{M}$ that contains all edges of~$\mathcal{M}$, as desired. This completes the proof of \Cref{thm:hitting-time-main}.
\end{proof}

To finish, we prove the following corollary of \Cref{thm:main}.
\corGp*
\begin{proof}[Proof of~\Cref{cor:G(p)}]
Let $r, \alpha$ be as in \Cref{cor:G(p)}. Without loss of generality, we choose $\eps$ to be sufficiently small, and then choose $C$ sufficiently large relative to $\eps$. Let $\eps' = \eps^2$. Let $n, p$ also be as in \Cref{cor:G(p)}. Let $G$ be an $n$-vertex graph with minimum degree at least $\alpha n$. We will use the following claims, which follow from standard applications of Chernoff's bound and union bound.
\begin{claim}
    With high probability, the graph $G(p)$ satisfies $d_{G(p)}(v) \ge \left(1 - \eps' \right)\alpha np$ for all vertices $v$.
\end{claim}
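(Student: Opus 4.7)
The plan is to prove this claim by a direct application of Chernoff's bound to each vertex degree followed by a union bound over all $n$ vertices.

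Fix a vertex $v \in V(G)$. Since every edge of $G$ is retained in $G(p)$ independently with probability $p$, the random variable $d_{G(p)}(v)$ is distributed as $\bin(d_G(v), p)$, and hence $\Expect d_{G(p)}(v) = d_G(v) p \geq \alpha n p$, using the minimum degree assumption on $G$. Applying \Cref{lem:chernoff} with parameter $\eps'$, we get
\begin{equation*}
  \Prob{d_{G(p)}(v) \le (1 - \eps') \alpha n p} \le \Prob{d_{G(p)}(v) \le (1 - \eps') \Expect d_{G(p)}(v)} \le 2 \exp\!\paren{-\frac{(\eps')^2 \alpha n p}{3}}.
\end{equation*}

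Since $p \ge \frac{C \log n}{n}$, we have $np \ge C \log n$, and thus choosing $C$ sufficiently large relative to $\eps'$ and $\alpha$ guarantees that this probability is at most $n^{-2}$. A union bound over all $n$ vertices then shows that with probability at least $1 - n^{-1} = 1 - o(1)$, every vertex $v$ satisfies $d_{G(p)}(v) \ge (1 - \eps') \alpha n p$, as required.

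The argument is essentially routine; the only mild care is to ensure the constant $C$ in the statement of \Cref{cor:G(p)} is chosen large enough (relative to $\eps$, $\alpha$, and the implicit constants appearing in \Cref{thm:main}) so that the Chernoff tail beats a union bound over $n$ vertices. There is no real obstacle — the claim is a standard concentration statement and will be combined with a dual upper-bound claim (on $d_G(v)$) to verify that $G(p)$ is an appropriate $\alpha''$-residual spanning subgraph of $G$ for some $\alpha'' = (1 - O(\eps))\alpha$, so that \Cref{thm:main} can be applied to $G(p)$ to deduce \Cref{cor:G(p)}.
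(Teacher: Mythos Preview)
Your proof is correct and takes essentially the same approach as the paper, which simply states that the claim follows from standard applications of Chernoff's bound and union bound. Your write-up fills in exactly those details.
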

\begin{claim}
    With high probability, the graph $G(n,p)$ satisfies $d_{G(n,p)}(v) \le \left(1 + \eps' \right) np$ for all vertices~$v$.
\end{claim}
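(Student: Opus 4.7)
The plan is to apply Chernoff's bound (\Cref{lem:chernoff}) vertex-by-vertex and then union bound. For each fixed $v \in V(G(n,p))$, the degree $d_{G(n,p)}(v)$ is distributed as $\operatorname{Bin}(n-1, p)$, so its expectation is $(n-1)p \le np$. Writing $np = \frac{n}{n-1}(n-1)p$ and noting that $\frac{n}{n-1} \le 1 + \eps'/2$ for $n$ sufficiently large, it suffices to bound the probability that $d_{G(n,p)}(v)$ exceeds $(1 + \eps'/2)(n-1)p$, i.e.\ that it deviates from its expectation by at least a $(\eps'/2)$-fraction.

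I would then apply \Cref{lem:chernoff} with $\eps \mapsto \eps'/2$ (which is at most $3/2$ since $\eps' = \eps^2 \le 1$) to obtain
\begin{equation*}
  \Prob{d_{G(n,p)}(v) \ge (1 + \eps')np} \;\le\; \Prob{|d_{G(n,p)}(v) - (n-1)p| \ge (\eps'/2)(n-1)p} \;\le\; 2 \exp\!\left(-\frac{(\eps')^2 (n-1)p}{12}\right).
\end{equation*}
Because $p \ge \frac{C \log n}{n}$, the right-hand side is at most $2\exp\!\left(-\frac{(\eps')^2 C \log n}{13}\right) = 2 n^{-(\eps')^2 C / 13}$ for $n$ large. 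Choosing $C$ sufficiently large relative to $\eps$ (hence to $\eps' = \eps^2$) makes this bound at most $n^{-2}$.

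A union bound over the $n$ vertices then yields that the failure probability is at most $n \cdot n^{-2} = n^{-1} = o(1)$, establishing the claim. There is no real obstacle here: this is a routine Chernoff-plus-union-bound calculation, with the only care being the harmless slack between $(n-1)p$ and $np$, which is absorbed into the $\eps'/2$ deviation. Choosing $C \ge 26/\eps^4$ (say) comfortably suffices.
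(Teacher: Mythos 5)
Your proposal is correct and matches the paper's intended argument: the paper simply asserts this claim follows from ``standard applications of Chernoff's bound and union bound,'' which is exactly the per-vertex Chernoff estimate and union bound you carry out (with the minor $(n-1)p$ versus $np$ slack handled harmlessly). Nothing further is needed.
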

Now, we think of $G(p)$ as a subgraph of $G(n,p)$ in the natural way. To be more precise, consider a graph $G'\sim G(n,p)$ and consider the spanning subgraph $H'$ of $G'$ whose edge set is $E(G')\cap E(G)$. Clearly, $H'\sim G(p)$. Using the above claims, \whp $H'$ is an $\alpha'$-residual spanning subgraph of $G'$ for $\alpha' \coloneqq (1-2\eps')\alpha$. If $\alpha' \ge \frac{1}{2} + \frac{1}{2r}$, \Cref{cor:G(p)} follows immediately from \Cref{thm:main} with $(\alpha, \eps) = (\alpha', \frac{\eps}{2})$, since 
\[
\left(1-\frac{\eps}{2}\right)\min\left\{2\alpha' - 1, \frac{\alpha'}{r}, \frac{2}{r+1} \right\} \ge (1-\eps)\min\left\{2\alpha - 1, \frac{\alpha}{r}, \frac{2}{r+1} \right\}.
\]
Note that if $\alpha' < \frac{1}{2} + \frac{1}{2r}$, then it is easy to check that $(1-\eps)(2\alpha - 1) < \frac{1}{r}$. Also, by \Cref{thm:Dirac theorem in random graphs}, \whp the graph $H'$ contains a Hamilton cycle. Thus, the trivial lower bound that any Hamilton cycle in $H'$ with any $r$-edge colouring has some colour appearing at least $\frac{n}{r}$ times suffices to finish the proof of \Cref{cor:G(p)}.
\end{proof}

\bibliographystyle{plainurl}
\bibliography{mybib}

\end{document}